\newcommand{\End}{\mathrm{End}}
\newcommand{\Ric}{\mathrm{Ric}}
\newcommand{\scal}{\mathrm{scal}}
\newcommand{\e}{\epsilon}
\newcommand{\n}{\nabla}
\renewcommand{\L}{\mathcal{L}}
\newcommand{\E}{\mathcal E}
\renewcommand{\o}{\omega}
\newcommand{\tr}{\mathrm{tr}}
\renewcommand{\a}{\alpha}
\newcommand{\de}{\delta}
\renewcommand{\d}{\partial}
\newcommand{\abs}[1]{\left\lvert#1\right\rvert}
\newcommand{\norm}[1]{\left\lVert#1\right\rVert}
\newcommand{\ldr}[1]{\langle #1\rangle}
\renewcommand{\H}{\mathcal H}
\newcommand{\T}{\mathcal T}
\renewcommand{\E}{\mathcal E}
\renewcommand{\Re}{\mathrm{Re}}
\renewcommand{\O}{\mathcal O}
\newcommand{\on}{\mathring \n}
\newcommand{\mR}{\mathcal R}
\newenvironment{itquote}
  {\begin{quote}\itshape}
  {\end{quote}\ignorespacesafterend}
\theoremstyle{plain}
\newtheorem{thm}{Theorem}[section]
\newtheorem{prop}[thm]{Proposition}
\newtheorem{lemma}[thm]{Lemma}
\newtheorem{cor}[thm]{Corollary}
\theoremstyle{definition}
\newtheorem{definition}[thm]{Definition}
\newtheorem{remark}[thm]{Remark}
\newtheorem{example}[thm]{Example}
\newtheorem{assumption}[thm]{Assumption}
\newcommand{\R}[0]{\mathbb{R}}							
\newcommand{\C}[0]{\mathbb{C}}							
\newcommand{\K}[0]{\mathbb{K}}							
\newcommand{\N}[0]{\mathbb{N}}							
\newcommand{\Z}[0]{\mathbb{Z}}							
\newcommand{\D}[0]{\mathcal{D}}							
\newcommand{\U}[0]{\mathcal U}
\newcommand{\dv}{\text{ }dV}
\newtheorem{lem}[thm]{Lemma}
\title[Heat kernel estimates on ALE manifolds]{Long-time estimates for heat flows on asymptotically locally Euclidean manifolds}
\author{Klaus Kröncke}
\address{University of Hamburg, Department of Mathematics, Bundesstraße 55, 20146 Hamburg, Germany}
\email{klaus.kroencke@uni-hamburg.de}
\author{Oliver L. Petersen}
\address{Department of Mathematics, Uppsala University, Box 480, 75106 Uppsala, Sweden}
\email{oliver.petersen@math.uu.se}
\subjclass[2010]{Primary 35K08; Secondary 58J05; 53C25.}
\keywords{heat kernel, ALE manifolds, Schr\"{o}dinger operators}
\begin{document}
\hbadness=100000
\vbadness=100000

\begin{abstract}
We consider the heat equation associated to Schr\"{o}dinger operators acting on vector bundles on \emph{asymptotically locally Euclidean} (ALE) manifolds. 
Novel $L^p - L^q$ decay estimates are established, allowing the Schr\"{o}dinger operator to have a non-trivial $L^2$-kernel. 
We also prove new decay estimates for spatial derivatives of arbitrary order, in a general geometric setting.
Our main motivation is the application to stability of non-linear geometric equations, primarily Ricci flow, which will be presented in a companion paper.
The arguments in this paper use that many geometric Schr\"{o}dinger operators can be written as the square of Dirac type operators.
By a remarkable result of Wang, this is even true for the Lichnerowicz Laplacian, under the assumption of a parallel spinor.
Our analysis is based on a novel combination of the Fredholm theory for Dirac type operators on ALE manifolds and recent advances in the study of the heat kernel on non-compact manifolds.
\end{abstract}

\maketitle

\tableofcontents

\begin{sloppypar}
\section{Introduction}
Let $V$ be a vector bundle with metric connection $\nabla$ on a manifold $M^n$.
A Schr\"{o}dinger operator acting on sections of $V$
 is an operator of the form $\Delta_V = \nabla^*\nabla+\mathcal{R}$, where $\nabla^*$ is the formal adjoint of $\nabla$ and $\mathcal{R}$ is a smooth symmetric endomorphism field on $V$. Given $\Delta_V$, we have a natural evolution problem
\begin{align*}
\partial_tu + \Delta_V u=0,\qquad u|_{t=0}=u_0
\end{align*}
for the associated heat equation.
Assuming that the potential $\mR$ is bounded (or decays at infinity) and that $\Delta_V$ is non-negative in the $L^2$-sense, we are interested in the evolution operator
\begin{equation} \label{eq: heat operator}
	e^{-t\Delta_V}: L^p \to L^q
\end{equation}
mapping the initial data $u_0$ to the solution $u(t, \cdot)$, which we will, by abuse of notation, also call the heat kernel.
One would like to understand the following questions:
\begin{enumerate}[(i)]
	\item What are sharp decay estimates in time for the heat kernel \eqref{eq: heat operator}?
	\item Does each spatial derivative of the heat kernel decay faster?
	\item What if $\Delta_V$ has a non-trivial $L^2$-kernel (null space)?
	\item What if $V$ is a non-trivial vector bundle?
\end{enumerate}
In the case that $M = \R^n$ and $\Delta_V=\Delta$ is the scalar Laplacian or
$M$ is compact, the answers to all these questions are classical by now. 
There is a vast literature on these questions on general non-compact manifolds (see the surveys \cites{Cou03,Gri99,Sal10}), but
all work so far has only considered a subset of these questions, see \cites{CZ07,CDS16,Devyver2014,Devyver2018,S2013} for some important recent work.
The main purpose of this paper is to present new methods, which simultaneously treat all questions $\text{(i-iv)}$ on a large class of non-compact manifolds, known as \emph{asymptotically locally Euclidean} (ALE) manifolds.
In particular, we prove decay estimates for arbitrary order spatial derivatives, using the Fredholm theory for Dirac type operators on ALE manifolds.
This extends recent results (see e.g.\ \cites{CD03,Devyver2018,MO19}), where decay of the first order spatial derivative of the heat kernel on functions is proven. 

Our main motivation for this work is an application to Ricci flow.
The simplest non-trivial non-compact \emph{Ricci-flat} manifolds are ALE manifolds. 
In a companion paper, we are interested in proving their (non-linear) $L^p$-stability under Ricci flow.
The linearized Ricci flow equation can, after a choice of gauge, be put in the form \eqref{eq: heat operator}, where $\Delta_V$ is the Lichnerowicz Laplacian. 
Since the moduli space of Ricci-flat ALE manifolds is non-trivial in general, the Lichnerowicz Laplacian will typically have a non-trivial $L^2$-kernel, motivating $\text{(iii)}$.
In order to prove stability under Ricci flow, we need sharp estimates for the associated heat kernel, also for the derivatives, motivating $\text{(i)}$ and $\text{(ii)}$.
The Lichnerowicz Laplacian acts on symmetric $2$-tensors, which in general are sections in a non-trivial vector bundle, also at infinity, motivating $\text{(iv)}$.
The key observation we use is due to Wang in \cite{Wang91}, where he proves that the Lichnerowicz Laplacian can, provided the manifold carries a parallel spinor, be written as the square of a Dirac type operator. The assumption of having a parallel spinor leads to many other well-known geometric consequences which we will discuss and use in Section \ref{sec : geometric operators 2}.

Our main results in the present paper will, in particular, apply to any square of a Dirac type operator, with suitable asymptotic structure at infinity.
In addition to the Lichnerowicz Laplacian, the results apply to many geometric operators, including the Laplace-Beltrami, the Hodge-Laplace and classical and twisted Dirac operators.
In each case, one gets a slightly different decay result. 
This is closely related to growth and decay rates of harmonic sections at infinity.
We investigate this systematically throughout the paper.

\subsection{Geometric setup}
Before we explain the main results of this paper, let us introduce the geometric setting we are working in.
We define
\[
	\R^n_{> 1} := \R^n \backslash \overline{B_1} = (1, \infty) \times S^{n-1},
\]
where $B_1$ is the ball of radius $1$ around the origin.
We will from now on assume that $n \geq 3$.
Let $\Gamma$ be a finite subgroup of $SO(n)$, which acts freely on $S^{n-1}$.
The quotient $S^{n-1}/\Gamma$ is a smooth compact manifold with induced metric $g_{S^{n-1}/\Gamma}$.
We define the \emph{locally Euclidean cone}
\[
	\R^n_{> 1}/ \Gamma := (1, \infty) \times \left(S^{n-1}/\Gamma\right),
\]
with the flat metric $\mathring g = dr^2 + r^2g_{S^{n-1}/\Gamma}$ and Levi-Civita connection $\on$.
Given a tensor field $h$, the notation
\[
	h \in \O_\infty\left(r^\a\right)
\]
means that for each $k \in \N_0$, there is a constant $C_k > 0$, such that
\[
	\big|\on^kh\big|_{\mathring g} \leq C_k r^{\a - k}
\]
as $r \to \infty$ in $\R^n_{> 1}/ \Gamma$.
We consider manifolds which are \emph{asymptotic} to a locally Euclidean cone in the following sense:

\begin{definition}[ALE and AE manifolds] \label{def: ALE}
A complete Riemannian manifold $(M^n, g)$ is called \emph{asymptotically locally Euclidean} (ALE for short) with one end, if there is a compact subset $K \subset M$, a real number $\tau>0$ (called the order) and a diffeomorphism $\phi: M_\infty := M \backslash K \to \R^n_{> 1}/\Gamma$ such that
\begin{equation} \label{eq: decay metric}
	\phi_*g - \mathring g \in \O_\infty\left(r^{-\tau}\right).
\end{equation}
The diffeomorphism $\phi$ will also be called \enquote{coordinate system at infinity}. 
If $\Gamma=\left\{1\right\}$, we call $(M^n,g)$ \emph{asymptotically Euclidean} (AE for short).
\end{definition}
This definition naturally extends to ALE and AE manifolds with a finite number of ends. 
In this paper, we allow multiple ends unless stated otherwise. 
AE manifolds can be easily constructed by conformally blowing up a compact Riemannian manifold $(M,g)$ at a finite number of points. 
Similarly, ALE manifolds which can be constructed by conformally blowing up a compact Riemannian orbifold at its (finitely many) orbifold points. 
We are particularly interested in (but do not restrict our analysis to) \emph{Ricci-flat} manifolds, which are harder to construct, but do exist:

\begin{example}[Ricci-flat ALE manifolds]
The simplest example of a Ricci-flat ALE manifold (different from $\R^n$) is the Eguchi-Hanson manifold.
Let $\a_1, \a_2, \a_3$ be the standard left-invariant one-forms on $S^3$.
For each $\e > 0$, define the Eguchi-Hanson metric
\[
	g_{eh, \e} := \frac{r^2}{\left(r^4 + \e^4\right)^{\frac12}}\left(dr \otimes dr + r^2 \a_1 \otimes \a_1\right) + \left(r^4 + \e^4\right)^{\frac12}\left(\a_2 \otimes \a_2 + \a_3 \otimes \a_3\right)
\]
for $r >0$.
After we quotient by $\Z_2$, we can smoothly glue in an $S^2$ at $r = 0$ to get the (complete) Eguchi-Hanson manifold $(TS^2,g_{eh,\e})$, which is ALE with $\Gamma=\Z_2$ and hyperk\"{a}hler, hence Ricci-flat. 
This is an example of Kronheimer's classification of hyperk\"{a}hler ALE manifolds \cite{Kro89}: 
Each $4$-dimensional hyperk\"{a}hler ALE manifold is diffeomorphic to a minimal resolution of $(\R^4\setminus\left\{0\right\})/\Gamma$, where $\Gamma\subset \mathrm{SU}(2)$ be a discrete subgroup acting freely on $S^3$.
\end{example}

Let throughout the paper $V \to M$ be a real or complex vector bundle, equipped with a positive definite symmetric or Hermitian inner product $\ldr{\cdot,\cdot}$ and a compatible connection $\n$.
We will consider \emph{Schr\"{o}dinger operators} of the form
\[
	\n^*\n + \mR,
\]
where $\mR$ is a smooth symmetric endomorphism field of $V$.
We will assume that $\n^*\n + \mR$ is asymptotic to the flat Laplacian $-\sum_{i=1}^{n}\partial^2_{x_ix_i}$ at infinity in a suitable sense, so that we can apply the elliptic theory.
In order to do this, we need the restriction of $V$ to $M_\infty$ to be a quotient by $\Gamma$ of a trivial bundle $\R^n_{> 1} \times \K^m$, where $\K\in\left\{\R,\C\right\}$. 
In other words, we want $V$ to satisfy the same $\Gamma$-equivariance at infinity, as the manifold does:

\begin{assumption}[The vector bundle at infinity] \label{ass: V}
We assume that there is a representation
\begin{equation} \label{eq: representation} 
	\Gamma \to \End(\K^m),
\end{equation}
respecting the standard Riemannian or Hermitian inner product, and a bundle isomorphism $\Phi$, such that the following diagram commutes:
\[
	\begin{tikzcd}
V_\infty \arrow[r, "\Phi"] \arrow[d]
& \left(\R^n_{> 1} \times \K^m\right)/\Gamma \arrow[d] \\
M_\infty \arrow[r, "\phi"]
& \R^n_{> 1}/\Gamma
\end{tikzcd}.
\]
\end{assumption}

\begin{remark}
The representation \eqref{eq: representation} is in most geometric examples simply induced by the action of $\Gamma$ on $\R^n_{> 1}$, such examples include the tensor bundle and the spinor bundle.
Note, however, that the action of $\Gamma$ is indeed non-trivial in these examples.
\end{remark}

\subsection{Main results}
\subsubsection{Almost Euclidean heat kernel estimates}

We have the following assumptions on our Schr\"{o}dinger operator $\n^*\n + \mR$:

\begin{definition} \label{def: asympt Eucl Laplacian}
Let $(M,g)$ be an ALE manifold as in Definition \ref{def: ALE} and let $\Phi$ be a trivialization as in Assumption \ref{ass: V}.
A Schr\"{o}dinger operator
\[
	\Delta_V := \n^*\n + \mR,
\]
with a symmetric endomorphism field $\mR$, is said to be \emph{asymptotic to a Euclidean Laplacian} if 
\begin{align}
	\Phi_*\n - \on
		&\in \O_\infty\left(r^{- 1 - \tau}\right), \label{eq: nabla convergence} \\
	\Phi_* \mR
		&\in \O_\infty\left(r^{-2-\tau}\right), \label{eq: R convergence}
\end{align}
where $\on$ is the connection $\left(\R^n_{> 1} \times \K^m\right)/\Gamma$, induced by the trivial connection.
\end{definition}

\begin{remark}
For example the connection $\n$ could be the Levi-Civita connection on the tensor or spinor bundles and the potential $\mR$ could be expressed in terms of the curvature of $M$.
In these cases, \eqref{eq: nabla convergence} and \eqref{eq: R convergence} typically follow from \eqref{eq: decay metric}.
\end{remark}

\noindent
A Schr\"{o}dinger operator $\Delta_V$ which is asymptotic to a Euclidean Laplacian is self-adjoint on $L^2(M,V)$ and $\ker_{L^2}(\Delta_V)$ is finite dimensional (see Lemma \ref{le: projection onto kernel} for the latter statement).
For fixed $1 \leq p \leq q \leq \infty$, the natural heat kernel estimate one would like to have is 
\begin{equation} \label{eq: Euclidean estimate}
	\norm{e^{-t\Delta_V }u}_{L^q} \leq Ct^{-\frac n2 \left(\frac1p - \frac1q\right)}\norm{u}_{L^p}
\end{equation}
for $u\in L^p$ and $t>0$, where $C = C(p,q)$.
For the Euclidean Laplacian $\Delta$ on $\R^n$, this estimate holds for any $1 \leq p \leq q \leq \infty$ and any $u\in L^p$.
However, if the Schr\"{o}dinger operator has an $L^2$-kernel (null space), then \eqref{eq: Euclidean estimate} cannot hold true, since elements in the kernel are \emph{stationary} solutions to the heat equation.
However, since $\Delta_V$ is self-adjoint, $L^2$-orthogonality is preserved under the heat flow.
It is therefore a reasonable aim to prove decay for initial data \emph{$L^2$-orthogonal} to the kernel:

\begin{definition}
Let $\Delta_V=\nabla^*\nabla+\mathcal{R}$ be a Schr\"{o}dinger operator which is asymptotic to a Euclidean Laplacian, in the sense of Definition \ref{def: asympt Eucl Laplacian}.
\begin{itemize}
\item[(i)] Assume that $\ker_{L^2}(\Delta_V)=\left\{0\right\}$. The heat kernel is said to satisfy \emph{Euclidean heat kernel estimates} if for each ${1 \leq p \leq q \leq \infty}$ the estimate \eqref{eq: Euclidean estimate} holds, for each $u \in L^p$.
\item[(ii)] Assume that $\ker_{L^2}(\Delta_V) \subset \O_{\infty}\left(r^{-n}\right)$.
The heat kernel is said to satisfy \emph{almost Euclidean heat kernel estimates} if for each $1 < p \leq q < \infty$, the estimate \eqref{eq: Euclidean estimate} holds for each $u \in L^p$, which is $L^2$-orthogonal to $\ker_{L^2}(\Delta_V)$.
\end{itemize}
\end{definition} 
The condition $\ker_{L^2}(\Delta_V) \subset \O_{\infty}\left(r^{-n}\right)$ implies that $\ker_{L^2}(\Delta_V) \subset L^p$, for all $p \in (1,\infty]$. 
Thus, the orthogonal projection $\Pi:L^p\to \ker_{L^2}(\Delta_V)$ is well-defined for all $p\in [1,\infty)$.
Our first main result is the following:
\begin{thm}[Heat kernel estimate] \label{thm: main heat kernel estimate}
Let $(M,g)$ be an ALE manifold and $\Delta_V=\nabla^*\nabla+\mathcal{R}$ be a Schr\"{o}dinger operator on a vector bundle $V$ over $M$, which is asymptotic to a Euclidean Laplacian in the sense of Definition \ref{def: asympt Eucl Laplacian}.
Assume that
\[
	(\Delta_V u, u)_{L^2} \geq 0
\]
for all $u \in C_c^\infty$, and that $\ker_{L^2}(\Delta_V) \subset \O_{\infty}\left(r^{-n}\right)$.
Then $e^{-t\Delta_V}$ satisfies almost Euclidean heat kernel estimates.
\end{thm}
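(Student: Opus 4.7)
My plan is to prove Theorem~\ref{thm: main heat kernel estimate} via the classical Varopoulos framework relating Sobolev-type inequalities to heat kernel ultracontractivity, carried out on the $L^2$-orthogonal complement of $\ker_{L^2}(\Delta_V)$. Since $\Delta_V$ is self-adjoint on $L^2$ and non-negative, the semigroup $e^{-t\Delta_V}$ commutes with the orthogonal projection $\Pi$ onto $\ker_{L^2}(\Delta_V)$, and $\Pi$ is bounded on $L^p$ for every $p \in (1, \infty)$ by the decay hypothesis $\ker_{L^2}(\Delta_V) \subset \mathcal{O}_\infty(r^{-n})$. It therefore suffices to prove
\[
\norm{e^{-t\Delta_V} u}_{L^q} \leq C t^{-\frac{n}{2}\left(\frac{1}{p}-\frac{1}{q}\right)} \norm{u}_{L^p}
\]
for $u \in L^p$ with $u \perp \ker_{L^2}(\Delta_V)$. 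By Varopoulos's theorem, this will follow from a Sobolev-type inequality
\[
\norm{u}_{L^{2n/(n-2)}}^2 \leq C \, (\Delta_V u, u)_{L^2}, \quad u \perp \ker_{L^2}(\Delta_V),
\]
on the form domain of $\Delta_V^{1/2}$, combined with the $L^2$-contractivity of $e^{-t\Delta_V}$ and the usual duality-plus-Riesz--Thorin argument that yields the full range $1 < p \leq q < \infty$.

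To establish the Sobolev inequality on $\ker^\perp$, I would write $(\Delta_V u, u)_{L^2} = \norm{\nabla u}_{L^2}^2 + (\mathcal{R} u, u)_{L^2}$ and combine three ingredients. First, the Euclidean-type Sobolev inequality $\norm{u}_{L^{2n/(n-2)}} \leq C_S \norm{\nabla u}_{L^2}$ on ALE manifolds, which holds by Euclidean volume growth together with a Kato-inequality reduction to the scalar case. Second, the Hardy inequality $\int |u|^2/r^2 \leq C_H \int |\nabla u|^2$ on ALE, which absorbs the contribution of $\mathcal{R}$ outside a sufficiently large compact set into $\norm{\nabla u}_{L^2}^2$ with arbitrarily small constant, thanks to the decay $|\mathcal{R}| \leq C r^{-2-\tau}$. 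Third, a Poincar\'e-type estimate
\[
\norm{u}_{L^2(K)}^2 \leq C \, (\Delta_V u, u)_{L^2}, \quad u \perp \ker_{L^2}(\Delta_V),
\]
on a sufficiently large compact set $K$, to control the remaining local part of the potential contribution.

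The Poincar\'e-type estimate is the crux of the argument and the step I expect to require the most care. Although $\ker_{L^2}(\Delta_V)$ is a discrete eigenspace at $0$, the essential spectrum of $\Delta_V$ reaches down to $0$ by the asymptotic Euclidean structure, so there is no genuine spectral gap to exploit. I would therefore argue by contradiction: assume there is a sequence $u_n \perp \ker_{L^2}(\Delta_V)$ with $\norm{u_n}_{L^2(K)} = 1$ and $(\Delta_V u_n, u_n) \to 0$. Absorbing the $\mathcal{R}$-term as above shows that $\norm{\nabla u_n}_{L^2}$ is bounded, so Rellich--Kondrachov yields a subsequence converging in $L^2_{\mathrm{loc}}$ to some $u$ with $\norm{u}_{L^2(K)} = 1$ and $\nabla u \in L^2$. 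The remaining task is to upgrade $u$ to a globally $L^2$ element of $\ker_{L^2}(\Delta_V)$: here I expect to use the asymptotic Euclidean structure of $\Delta_V$ together with weighted Sobolev and Fredholm estimates on ALE manifolds (the Dirac-type machinery highlighted in the introduction) to rule out harmonic sections with $\nabla u \in L^2$ which escape $\ker_{L^2}(\Delta_V)$. Orthogonality then forces $u = 0$, contradicting $\norm{u}_{L^2(K)} = 1$ and closing the argument.
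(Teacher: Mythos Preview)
There is a real gap at the Varopoulos step. The classical implication ``Sobolev inequality $\Rightarrow$ $L^p$--$L^q$ ultracontractivity for the full range'' requires the semigroup to be sub-Markovian (positivity-preserving and $L^\infty$-contractive); for a vector-bundle semigroup generated by $\nabla^*\nabla+\mR$ with sign-indefinite $\mR$, neither property holds, and there is no a priori $L^p$-boundedness of $e^{-t\Delta_V}$ for $p\neq 2$. From your Sobolev inequality $\norm{u}_{L^{2n/(n-2)}}^2 \le C(\Delta_V u,u)$ on $\ker^\perp$ together with $L^2$-contractivity you only obtain $\norm{e^{-t\Delta_V}}_{2\to 2n/(n-2)}\le Ct^{-1/2}$, and hence by duality and the semigroup property the estimate for $p,q$ in the window $\bigl[\tfrac{2n}{n+2},\tfrac{2n}{n-2}\bigr]$; Riesz--Thorin cannot push beyond this without a further endpoint. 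The Nash iteration that would upgrade Sobolev to full ultracontractivity needs uniform $L^1$-boundedness (or $L^p$-boundedness for some $p<2$) of the semigroup as an \emph{input}, which is not available here and is in fact one of the things to be proved.

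For comparison, the paper does not go through a Sobolev inequality. It replaces $\Delta_V$ by $\Delta_V+\alpha\Pi$, factors the resolvent as $(\Delta_V+\alpha\Pi+\lambda)^{-1}=(1-\T_\lambda)^{-1}(\H+\lambda)^{-1}$ with $\H=\nabla^*\nabla+\mR_+$ and $\T_\lambda=(\H+\lambda)^{-1}(\mR_- -\alpha\Pi)$, and proves that $(1-\T_\lambda)^{-1}$ is bounded on every $L^p$, $p\in(1,\infty)$, uniformly in $\lambda\ge 0$ (Lemma~\ref{le: compensating resolvent}). The key spectral step --- showing that the compact operator $\A_0=\H^{-1/2}(\mR_- -\alpha\Pi)\H^{-1/2}$ has no eigenvalue~$1$, Step~3 of Lemma~\ref{le: H to 1 sub 0 lambda to itself} --- is essentially your Sobolev inequality on $\ker^\perp$, so that part of your plan is on target. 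But the paper then feeds this into the resolvent machinery to obtain uniform $L^p$-boundedness of the semigroup for all $p\in(1,\infty)$ (Lemma~\ref{lemma: the two heat kernel estimates}), which is precisely the missing ingredient your Varopoulos argument would need.
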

For a Schr\"{o}dinger operator which is asymptotic to a Euclidean Laplacian, one in general has $\ker_{L^2}(\Delta_V)\subset\O_{\infty}\left(r^{2-n}\right)$ (see e.g.\ \cite{pacini}).
The motivation for assuming that elements in the $L^2$-kernel of $\Delta_V$  decay as $r^{-n}$, comes from the fact that harmonic differential forms on ALE manifolds typically have this decay, see Proposition \ref{prop : decay harmonic forms}.
This may be applied in many geometric situations, which we present below.

\begin{remark}\label{rmk : Euclidean heat kernel estimates}
Our heat kernel estimate is nicely complemented by a result of Devyver \cite{Devyver2014}*{Thm.\ 3}, where he proves that Euclidean estimates hold if the $L^2$-kernel is trivial.
Note also that, in case $n \geq 9$, then Theorem \ref{thm: main heat kernel estimate} follows by a very general result of Devyver in \cite{Devyver2018}*{Thm.\ 1.7}.
After the first preprint version of this paper was completed, Devyver also generalized his results to lower dimensions, see the published version of \cite{Devyver2018}.
Let us also mention another interesting approach, using microlocal analysis, to obtain pointwise decay for the heat kernel (not derivatives) on functions by Sher in \cite{S2013}*{Thm.\ 2}, based on the resolvent estimates in \cite{GH08}. 
\end{remark}

\subsubsection{Derivative estimates}

Let us now turn to the derivative estimates for the heat kernel. 
Our method is based on the elliptic theory for Dirac type operators on ALE manifolds, c.f.\ Section \ref{sec: Elliptic Estimates}.
We will prove derivative estimates for Schr\"{o}dinger operator, which are squares of Dirac type operators of the following type:
\begin{definition} \label{def: asympt Eucl Dirac}
Let $\Phi$ be a trivialization as in Assumption \ref{ass: V}.
Consider a first order formally self-adjoint differential operator
\[
	\D_V := A\circ\n + B
\]
acting on sections of $V$, where $A$ is a smooth homomorphism field from $T^*M\otimes V$ to $V$
 and $B$ is a smooth endomorphism field on $V$.
Then $\D_V$ is called a \emph{Dirac type operator} if $\D_V^2$ is a Schr\"{o}dinger operator and called \emph{asymptotic to a Euclidean Dirac operator} if
\begin{align}
	\Phi_*A - A_0 &\in \O_\infty\left(r^{ - \tau}\right), \label{eq: decay A} \\
	\Phi_*B &\in \O_\infty\left(r^{ -1 - \tau}\right), \label{eq: decay B}
\end{align}
where $A_0$ is a constant homomorphism such that
\begin{equation} \label{eq: Delta at infinity}
	\on^*\on = A_0(\on) \circ A_0(\on)
\end{equation}
on $\left(\R^n_{> 1} \times \K^m\right)/ \Gamma$.
\end{definition}

\noindent
Important examples are the Hodge de Rham operator, the classical Dirac operator and twisted versions of it on ALE manifolds. 
Via an embedding of bundles, even the Lichnerowicz Laplacian is the square of the twisted Dirac operator on spinor valued one-forms, under the assumption of a parallel spinor.

\begin{remark} \label{rmk: Positivity Dirac squared}
Note that the assumptions \eqref{eq: decay A}, \eqref{eq: decay B} and \eqref{eq: Delta at infinity} imply that $\Delta_V := \D_V^2$ is a Schr\"{o}dinger operator, which is asymptotic to a Euclidean Laplacian, in the sense of Definition \ref{def: asympt Eucl Laplacian}.
The non-negativity in this case is automatic:
\[
	(\Delta_V u, u)_{L^2} = (\D_V u, \D_V u)_{L^2} \geq 0
\]
for all $u \in C^\infty_c$.
Moreover, if $\ker_{L^2}\left(\Delta_V\right) \subset \O_\infty\left(r^{-n}\right)$, then $\ker_{L^2}\left(\Delta_V\right) = \ker_{L^2}\left(\D_V\right)$.
\end{remark}

Motivated by the Euclidean case, one would ideally like to get derivative estimates
\begin{equation} \label{eq: derivative estimate}
\norm{\n^k e^{-t\Delta_V}u}_{L^p} \leq Ct^{-\frac k2}\norm{u}_{L^p}
\end{equation}
for $1 \leq p \leq \infty$, and all $u \in L^p$, which is $L^2$-orthogonal to $\ker_{L^2}(\Delta_V)$.
However, on non-flat manifolds the story becomes very delicate. 
We will show below that for Schr\"{o}dinger operators, which are squares of Dirac type operators as above, we in fact obtain \eqref{eq: derivative estimate} for a large range of $k$ and $p$.
Interestingly, there seems to be a certain threshold, beyond which we only get the following weaker estimate:
\begin{equation} \label{eq: weak derivative estimate}
	\norm{\nabla^k\circ e^{-t\Delta_V }u}_{L^{p}} \leq Ct^{-\frac{n}{2p}-\frac{l}{2}+\e} \norm{u}_{L^p}
\end{equation}
for all $t \geq t_0$, where $l\in\N_0$ is a fixed non-negative integer, the constants $\e>0$ and $t_0 > 0$ can be made arbitrarily small and $C=C(k,p,t_0,\epsilon)$.
In order to state our main result for the derivative of the heat kernel, let us make the following definition:

\begin{definition}\label{def : Derivative estimates}
We say that $e^{-t\Delta_V}$ satisfies \emph{derivative estimates of degree $l \in \N_0 \cup \{\infty\}$} if for each $k \in \N_0$, we have the estimate \eqref{eq: derivative estimate} for all
\begin{align}
	p &\in (1, \infty), \quad \text{if } k \leq l, \label{eq: derivative k leq l} \\
	p &\in \left(1, \frac n{k-l} \right), \quad \text{if } k \geq l+1, \label{eq: k geq l plus 1 p small}
\end{align} 
and the estimate \eqref{eq: weak derivative estimate} for all
\begin{align}
	p &\in (1, \infty) \backslash \left(1, \frac n{k-l} \right), \quad \text{if } k \geq l+1, \label{eq: k geq l plus 1 p large}
\end{align} 
and all $u \in L^p$, which are $L^2$-orthogonal to $\ker_{L^2}(\Delta_V)$. Note that the decay rates match nicely at $p=\frac{n}{k-l}$.
\end{definition}

As we will see later in Theorem \ref{thm : better derivative estimates introduction},
the degree $l$ corresponds, in a certain sense, to the growth rate of the \emph{slowest growing harmonic section}.
(In fact, we will be able to allow slower growing harmonic sections, but we then require that the covariant derivative of the section to an appropriate order vanishes.)
There might, in general, exist bounded harmonic sections on the ALE manifold.
Without further assumptions, one therefore would hope for derivative estimates of degree $l = 0$, which is our next main result:

\begin{thm}[Derivative estimates]\label{thm : Derivative estimates introduction}
Let $(M,g)$ be an ALE manifold and $\Delta_V=\D_V^2$, where $\D_V$ is a formally self-adjoint Dirac type operator on a vector bundle $V$ over $M$, which is asymptotic to a Euclidean Dirac operator in the sense of Definition \ref{def: asympt Eucl Dirac}.
Assume that $\ker_{L^2}(\Delta_V) \subset \O_{\infty}\left(r^{-n}\right)$.
Then $e^{-t\Delta_V}$ satisfies derivative estimates of degree $0$.
\end{thm}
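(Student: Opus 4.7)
The plan is to combine three ingredients: the almost-Euclidean heat kernel estimate of Theorem \ref{thm: main heat kernel estimate}, the algebraic identity $\Delta_V = \D_V^2$ together with spectral calculus on $L^2$, and the elliptic Fredholm theory for $\D_V$ on ALE manifolds developed in Section \ref{sec: Elliptic Estimates}.

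\textbf{Step 1 (short time).} For $t \in (0,1]$, local parabolic regularity suffices. Since $(M,g)$ has bounded geometry and $\mR$ is bounded, Moser--De Giorgi type iteration on geodesic balls of radius $\sqrt{t}$ yields
\[
\|\nabla^k e^{-t\Delta_V} u\|_{L^p} \leq C_{k,p}\, t^{-k/2} \|u\|_{L^p}, \qquad 0 < t \leq 1, \ p \in (1,\infty), \ k \in \N_0.
\]

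\textbf{Step 2 (semigroup decomposition).} For $t \geq 1$ factor
\[
\nabla^k e^{-t\Delta_V} u = \bigl(\nabla^k e^{-\Delta_V}\bigr) \bigl(e^{-(t-1)\Delta_V} u\bigr),
\]
and apply Theorem \ref{thm: main heat kernel estimate} to the second factor. It then suffices to establish a fixed-time bound $\nabla^k e^{-\Delta_V} : L^q \to L^p$ for suitably chosen $q \geq p$, and then optimize the pair $(q,p)$ to maximize the $t$-decay contributed by the main heat kernel estimate.

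\textbf{Step 3 (fixed-time bound via $\D_V$).} Since $\Delta_V = \D_V^2$ and $\D_V$ is self-adjoint, spectral calculus gives
\[
\|\D_V^k e^{-\Delta_V}\|_{L^2 \to L^2}^2 \leq \sup_{\lambda \geq 0} \lambda^k e^{-2\lambda} < \infty.
\]
The elliptic theory of $\D_V$ from Section \ref{sec: Elliptic Estimates} upgrades a bound on $\D_V^k v$ to a bound on $\nabla^k v$ in weighted $L^p$ spaces, provided $v$ is $L^2$-orthogonal to $\ker_{L^2}(\D_V) = \ker_{L^2}(\Delta_V)$ (see Remark \ref{rmk: Positivity Dirac squared}) and the weight avoids the indicial exponents of $\D_V$ at infinity. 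Combined with Sobolev embedding on the ALE end, this produces the boundedness of $\nabla^k e^{-\Delta_V}$ between the appropriate $L^q$ and $L^p$ spaces.

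\textbf{Step 4 (dichotomy).} For $p \in (1, n/k)$ one can choose $q \in (1,\infty)$ satisfying $\tfrac1q = \tfrac1p - \tfrac kn$, so the heat kernel estimate of Theorem \ref{thm: main heat kernel estimate} for $e^{-(t-1)\Delta_V}: L^p \to L^q$ yields
\[
\|\nabla^k e^{-t\Delta_V} u\|_{L^p} \leq C\, t^{-\frac n2 \left(\frac 1p - \frac 1q\right)} \|u\|_{L^p} = C\, t^{-k/2} \|u\|_{L^p},
\]
which is the strong estimate \eqref{eq: derivative estimate}. For $p \geq n/k$ no such finite $q$ exists, so one must pick $q = 1/\e$ close to $\infty$ (but compatible with $q \geq p$), losing an $\e > 0$ in the time exponent and producing the weak estimate \eqref{eq: weak derivative estimate} with $l = 0$.

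\textbf{Main obstacle.} The decisive and most delicate piece is Step 3, the elliptic-to-covariant-derivative passage. Translating the spectral bound on $\D_V^k$ into a genuine $L^p$-bound for $\nabla^k$ requires weighted Sobolev/Fredholm analysis on the ALE ends, with weights tuned to avoid the indicial roots of $\D_V$. The hypothesis $\ker_{L^2}(\Delta_V) \subset \O_\infty(r^{-n})$ enters precisely here: it guarantees that the $L^2$-kernel is contained in every $L^q$ with $q \in (1,\infty]$, so that the orthogonal projector $\Pi$ onto it is bounded across the entire range $p \in (1,\infty)$ and the Fredholm analysis can be carried out uniformly modulo this finite-dimensional subspace. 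Without this decay, unbounded harmonic sections would contaminate the cokernel and the derivative estimate would fail already at moderate $p$.
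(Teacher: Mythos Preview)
Your outline has a genuine gap in Steps~2--3, and the paper's proof proceeds quite differently at exactly that point.

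\textbf{The problem with your factorization.} In Step~2 you write
\[
\nabla^k e^{-t\Delta_V} = \bigl(\nabla^k e^{-\Delta_V}\bigr)\circ e^{-(t-1)\Delta_V}
\]
and propose to apply Theorem~\ref{thm: main heat kernel estimate} to the second factor as an $L^p\to L^q$ map with $q>p$, leaving you to bound the first factor as $L^q\to L^p$. But on an infinite-volume manifold a map $L^q\to L^p$ with $q>p$ is not available by soft arguments: the heat kernel is smoothing in the direction $L^p\to L^q$, not the reverse, and Young's inequality fails (you would need a convolution kernel in $L^r$ with $\tfrac1r=1+\tfrac1p-\tfrac1q>1$). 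Your Step~3 does not address this: spectral calculus gives only $\|\D_V^k e^{-\Delta_V}\|_{2\to 2}<\infty$, and the sentence ``combined with Sobolev embedding on the ALE end, this produces boundedness of $\nabla^k e^{-\Delta_V}$ between the appropriate $L^q$ and $L^p$'' is exactly the missing content. Sobolev embedding goes the wrong way here too ($\|v\|_{L^q}\lesssim\|\nabla^k v\|_{L^p}$, not the reverse).

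\textbf{What the paper actually does.} The paper never factorizes at a fixed time. Instead it proves directly that
\[
\|\D_V^k e^{-t\Delta_V}\|_{p\to p}\leq C\,t^{-k/2}\qquad\text{for all }p\in(1,\infty),
\]
(Lemma~\ref{lem : higher derivatives}). This is the hard step: it relies on Theorem~\ref{thm_special_derivative_estimates}, whose proof occupies all of Subsection~\ref{subsec: commuting operators} and uses Davies' weighted-$L^2$ method, localized Gaussian off-diagonal bounds (Lemma~\ref{lemma_local_L2}), Phragm\'en--Lindel\"of complex interpolation (Lemma~\ref{le: complex analysis}), and a covering argument to pass from $p=2$ to general $p$. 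Only after this $L^p\to L^p$ decay for $\D_V^k$ is in hand does the paper invoke the elliptic estimate (Corollaries~\ref{cor: elliptic dirac gradient estimate 2} and~\ref{cor: elliptic dirac gradient estimate 3}) in the form
\[
\|\nabla^k u_t\|_{L^p}\leq C\bigl(\|\D_V^k u_t\|_{L^p}+\|u_t\|_{L^q}\bigr),
\]
applied with $u_t=e^{-t\Delta_V}u_0$ at each fixed $t$; both terms on the right are now controlled by $\|u_0\|_{L^p}$ with the correct $t$-power, and the choice of $q$ gives the dichotomy in Step~4. So your Step~4 is correct in spirit, but the input it needs from Step~3 is much deeper than $L^2$ spectral calculus.
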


To the best of our knowledge, this is the first result about the long-time behavior of arbitrarily high derivatives under the heat flow on ALE manifolds.
So far, derivatives of the heat flow have only been studied for Schr\"{o}dinger operators on functions and, in that case, only the first derivative, c.f.\ the discussion in Subsection \ref{sec: Riesz} below.
The following proposition provides a simple way of improving the degree $l$ in Theorem \ref{thm : Derivative estimates introduction}:
\begin{prop}\label{prop : better derivative estimates 0 introduction}
For all $0 \leq m \leq l$, assume that $\D_m$ are formally self adjoint Dirac type operators on $V\otimes T^*M^{\otimes m}$, which are asymptotic to a Euclidean Dirac operator, in the sense of Definition \ref{def: asympt Eucl Dirac}.
Assume also that
\begin{equation} \label{commuting}
\begin{split}
  \nabla\circ (\D_{m-1})^2=(\D_m)^2\circ\nabla, \qquad \nabla\circ\nabla^*\leq C\cdot (\D_m)^2,\qquad\nabla^*\circ\nabla\leq C \cdot(\D_{m-1})^2
\end{split}
\end{equation}
holds for $1\leq m \leq l$.
If
\begin{itemize}
	\item[(i)] $\ker_{L^2}(\D_l)\subset \O_{\infty}(r^{-n})$, then $e^{-t\D_0^2}$ satisfies derivative estimates of degree $l$,
	\item[(ii)] $\ker_{L^2}(\D_l) = \{0\}$, then $e^{-t\D_0^2}$ satisfies strong derivative estimates of degree $l$.
	Here, \enquote{strong} is explained in Definition \ref{def : strong derivative estimates} below.
\end{itemize}
\end{prop}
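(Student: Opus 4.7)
The plan is to propagate the heat flow up the tower $\D_0,\D_1,\ldots,\D_l$ via the commutation relation. From $\nabla\circ\D_{m-1}^2=\D_m^2\circ\nabla$, a standard semigroup argument (both sides solve the heat equation on $V\otimes T^*M^{\otimes m}$ with the same initial data $\nabla u$) yields the intertwining
$$\nabla\circ e^{-t\D_{m-1}^2}=e^{-t\D_m^2}\circ\nabla,\qquad 1\leq m\leq l.$$
Iterating this and simultaneously decomposing $e^{-t\D_0^2}$ into $k+1$ equal time-steps $s=t/(k+1)$ gives, for every $0\leq k\leq l$, the factorization
$$\nabla^k\circ e^{-t\D_0^2}=e^{-s\D_k^2}\circ\nabla\circ e^{-s\D_{k-1}^2}\circ\nabla\circ\cdots\circ\nabla\circ e^{-s\D_0^2},$$
in which each interior factor $\nabla\circ e^{-s\D_m^2}$ is exactly the object Theorem \ref{thm : Derivative estimates introduction} controls one level up the tower.

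A crucial preliminary observation is that the comparison $\nabla^*\nabla\leq C\cdot\D_{m-1}^2$ already forces $\ker_{L^2}(\D_j)=0$ for every $0\leq j\leq l-1$. Indeed, any $u\in L^2\cap\ker\D_j$ satisfies $\|\nabla u\|_{L^2}^2=(\nabla^*\nabla u,u)\leq C(\D_j^2 u,u)=0$, so $u$ is parallel; since $|u|$ is then constant on the infinite-volume ALE manifold, $u$ must vanish. Consequently only the top operator $\D_l$ may carry a non-trivial $L^2$-kernel, and for $l\geq 1$ the $L^2$-kernel of $\D_0^2=\D_V$ is automatically trivial, so no orthogonality assumption on $u$ is needed. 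Theorem \ref{thm : Derivative estimates introduction} therefore furnishes degree-$0$ derivative estimates for every $e^{-t\D_m^2}$ in the chain, the kernel decay hypothesis being used only at the top level.

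With these ingredients in place, for $k\leq l$ I would estimate the factorized expression term by term: the outer factor $e^{-s\D_k^2}$ is bounded on $L^p$ for every $p\in(1,\infty)$ by Theorem \ref{thm: main heat kernel estimate}, and each interior factor $\nabla\circ e^{-s\D_m^2}$ contributes $Cs^{-1/2}$ on $L^p$ within the range $p\in(1,n)$ supplied by Theorem \ref{thm : Derivative estimates introduction}. Composing over the $k$ interior factors yields the desired $t^{-k/2}$-decay and hence the strong bound on $L^p$ for $p\in(1,n)$. To reach the full range $p\in(1,\infty)$, I would interpolate the chain through intermediate $L^{p_m}$-spaces chosen in each segment, using the $L^p\to L^q$ Euclidean heat estimate of Theorem \ref{thm: main heat kernel estimate} on some of the $e^{-s\D_m^2}$-factors to shift between admissible ranges while preserving the cumulative $t^{-k/2}$-decay. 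For $k>l$, split $\nabla^k=\nabla^{k-l}\circ\nabla^l$, apply the $k=l$ case to the inner factor, and invoke the degree-$0$ estimate for $e^{-t\D_l^2}$ on the outer $\nabla^{k-l}$; this reproduces the strong/weak dichotomy of Definition \ref{def : Derivative estimates} at the transition threshold $p=n/(k-l)$. Part (ii) goes identically, the vanishing of $\ker_{L^2}(\D_l)$ promoting the Euclidean estimates to hold unconditionally and propagating to \emph{strong} derivative estimates of degree $l$.

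The principal technical obstacle I anticipate lies in the interpolation step: the degree-$0$ bound $\|\nabla e^{-s\D_m^2}\|_{L^p\to L^p}\leq Cs^{-1/2}$ is only available for $p<n$, so pushing the strong estimate to all $p\in(1,\infty)$ for $k\leq l$ requires a delicate choice of intermediate $L^{p_m}$ along the chain, balancing each $s^{-1/2}$-gain from a derivative factor against the losses incurred when bridging between $L^p$-scales.
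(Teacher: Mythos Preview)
Your structural outline matches the paper's: the intertwining $\nabla\circ e^{-t\D_{m-1}^2}=e^{-t\D_m^2}\circ\nabla$, the factorization of $\nabla^k\circ e^{-t\D_0^2}$ into single-$\nabla$ pieces, the observation that $\nabla^*\nabla\leq C\D_{m-1}^2$ forces $\ker_{L^2}(\D_m)=0$ for $m<l$, and the reduction of $k>l$ to degree-$0$ estimates for $\D_l$ are all exactly what the paper does.

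The gap is the step you yourself flag as the ``principal technical obstacle''. You propose to bound each factor $\nabla\circ e^{-s\D_m^2}$ via Theorem~\ref{thm : Derivative estimates introduction}, which only gives $\|\nabla\circ e^{-s\D_m^2}\|_{p\to p}\leq Cs^{-1/2}$ for $p\in(1,n)$, and then to reach $p\geq n$ by threading intermediate $L^{p_m}$-spaces using the heat estimate $\|e^{-s\D_m^2}\|_{p\to q}\leq Cs^{-\frac n2(\frac1p-\frac1q)}$. This cannot work: that heat estimate is only available for $p\leq q$, so once you are in $L^p$ with $p>n$ there is no way to descend to an exponent below $n$ where the strong $\nabla$-bound lives. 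Every route through the chain that starts and ends in $L^p$ with $p>n$ is forced to apply at least one $\nabla$-factor at an exponent $\geq n$, and Theorem~\ref{thm : Derivative estimates introduction} then only yields the weak rate $s^{-\frac n{2p}+\epsilon}$, which falls short of the claimed $t^{-k/2}$.

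The paper closes this gap by appealing instead to Theorem~\ref{thm_special_derivative_estimates}. That theorem takes as input precisely the two-sided comparisons $\nabla^*\nabla\leq C\D_{m-1}^2$ and $\nabla\nabla^*\leq C\D_m^2$ together with the commutation relation, and outputs $\|\nabla\circ e^{-t\D_{m-1}^2}\|_{p\to p}\leq Ct^{-1/2}$ \emph{for every} $p\in(1,\infty)$ in one stroke. Its proof is not elementary interpolation: it goes through Davies' weighted $L^2$ estimates, off-diagonal Gaussian bounds, and a Phragm\'en--Lindel\"of complex-interpolation argument. Once this single-derivative bound is available on the full range, the factorization you wrote down gives the degree-$l$ estimates immediately, with no further interpolation needed.
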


\begin{definition} \label{def : strong derivative estimates}
We say that $e^{-t\Delta_V}$ satisfies \emph{strong derivative estimates of degree $l \in \N_0$} if the estimates in Definition \ref{def : Derivative estimates} hold and, additionally, in case $k \geq l+1$ and $p \in (1, \infty) \backslash \left( 1, \frac n{k-l} \right]$, we may set $\e = 0$ in the estimate \eqref{eq: weak derivative estimate}.
\end{definition}

\begin{example}\label{rem : better derivative estimates 0}
The Hodge de Rham operator $d+d^*$ on $\R^n$ satisfies the assumptions of Proposition \ref{prop : better derivative estimates 0 introduction} for every $l\in\N$.
\end{example}

We give further applications of Theorem \ref{thm: main heat kernel estimate}, Theorem \ref{thm : Derivative estimates introduction} and Proposition \ref{prop : better derivative estimates 0 introduction} to various concrete geometric differential operators below.

\subsubsection{Improved derivative estimates}
Theorem \ref{thm : Derivative estimates introduction} can be improved by putting assumptions on the  harmonic sections. 
In order to explain the method, let us first recall that elements in $\ker\left(\Delta_V\right)=\ker\left(\D_V^2\right)$ have an expansion near infinity, in the integer growth rates
\[
	\hdots, r^{-n}, r^{1-n}, r^{2-n}, 1, r, r^2, \hdots,
\]
coming from the asymptotic rates of harmonic functions near infinity on $\R^n_{> 1}$. 
The idea of the improved derivative estimates can be loosely formulated as follows:
\begin{itquote}
If we can control the behavior of harmonic sections up to growth rate $l-1$, then derivative estimates hold of degree $l$.
\end{itquote}
More precisely, we would like to have is the following implication:
\begin{equation} \label{eq: weak implication derivatives}
	\forall k \leq l: \quad \D_V^k u = 0, \quad u = o \left(r^k\right) \quad \Longrightarrow \quad \n^k u = 0.
\end{equation}
Under this assumption, the main theorem below says that (weak) derivative estimates of degree $l$ hold.
Note that the stronger implication
\begin{equation} \label{eq: strong implication derivatives}
	\forall k \leq l: \quad \D_V^2 u = 0, \quad u = o \left(r^k\right) \quad \Longrightarrow \quad u = 0,
\end{equation}
clearly implies \eqref{eq: weak implication derivatives}.
\begin{remark}
Let us give an example, which shows that the implication \eqref{eq: strong implication derivatives} really is less general than \eqref{eq: weak implication derivatives}:
We choose $M=\R^n$ and $\D_V = d + d^*$ on differential forms. 
Any linear function $u$ on $\R^n$ solves $\D_V^2 u = \Delta u = 0$. 
Therefore \eqref{eq: weak implication derivatives} does not hold for $l = 1$.
However, $\n^2 u = 0$, so \eqref{eq: strong implication derivatives} is true for $l = 1$.
In fact, it is even true for any $l\in\N$: One can show that any function $u$ satisfying $(d+d^*)^ku=0$ and $u\in o(r^k)$ is a polynomial of degree $\leq k-1$, c.f.\ the computations in \cite{BGM71}. Therefore, $\nabla^ku=0$ as well.
\end{remark}

In many applications, we are only interested in the decay for the heat kernel on \emph{a part} of the vector bundle $V$.
For example, we think of forms of a certain degree or as the spinors of positive or negative chirality.
We will include this in the main theorem of this section. Before we state the result, we need to introduce a slightly weaker version of Definition \ref{def : Derivative estimates}:
\begin{definition}\label{def : weak derivative estimates}
We say that $e^{-t\Delta_V}$ satisfies \emph{weak derivative estimates of degree $l \in \N$} if the estimates in Definition \ref{def : Derivative estimates} are satisfied for all $p$ and $k$, except if
\begin{align}
\begin{split}
	p
		&\in \left\{\frac{n}{k},\frac{n}{k-1},\ldots,n\right\}, \quad \text{if }k\leq l, \\
	p
		&\in \left\{\frac{n}{k},\frac{n}{k-1},\ldots,\frac{n}{k-l+1}\right\}, \quad \text{if } k \geq l+1.
\end{split} \label{eq: k leq l exceptional}
\end{align}
in which case we instead have the following  slightly weaker estimate: 
For each $\epsilon>0$ and each $t_0>0$, there exists a constant ${C = C(k,p,\epsilon,t_0})$, such that
\[
	\norm{\nabla^k \circ e^{-t\Delta_V }u}_{L^p} \leq Ct^{-\frac k2+\epsilon}\norm{u}_{L^p}
\]
holds for every $u$, which is $L^2$-orthogonal to $\ker_{L^2}(\Delta_V)$ and for every $t \geq t_0$.
\end{definition}
In other words, \enquote{weak} means that we loose an arbitrarily small $\e > 0$ in decay at certain exceptional values of $p$. Now we can state the main result of this section.
\begin{thm}[Improved derivative estimates]\label{thm : better derivative estimates introduction}
Let $(M,g)$ be an ALE manifold and $\D_V$ be a formally self-adjoint Dirac type operator on a vector bundle $V$ over $M$, which satisfies the assumptions of Theorem \ref{thm : Derivative estimates introduction}. Let $E \subset V$ be a parallel subbundle such
 that
\begin{align}\label{eq: D^2 preserves E}
	\D_V^2 u \in C^\infty(E)
\end{align}
 for all $u\in C^{\infty}(E)$. Assume additionally that there exists an $l \in \N$ such that the implication
 \begin{equation}
	\forall k \leq l: \quad \D_V^k u = 0, \quad u = o \left(r^k\right) \quad \Longrightarrow \quad \n^k u = 0
\end{equation}
 holds for all $u\in C^{\infty}(E)$.
Then, $e^{-t(\D_V)^2}|_E$ satisfies weak derivative estimates of degree $l$.
\end{thm}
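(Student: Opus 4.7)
Since $E$ is parallel and $\D_V^2$ preserves $E$ by \eqref{eq: D^2 preserves E}, the restriction $P := \D_V^2|_E$ is a nonnegative self-adjoint Schr\"{o}dinger operator on $L^2(E)$ to which Theorem \ref{thm: main heat kernel estimate} applies, and the orthogonal projection onto $E$ commutes with $e^{-t\D_V^2}$. The plan is to establish the claimed derivative estimates through the spectral calculus factorization
\begin{equation*}
\n^k \circ e^{-t P}
= \bigl(\n^k \circ P^{-k/2}\bigr) \circ \bigl(P^{k/2} \circ e^{-t P}\bigr),
\end{equation*}
valid on sections $L^2$-orthogonal to $\ker_{L^2}(P)$. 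The second factor is standard: for $p = 2$ it has operator norm bounded by $Ct^{-k/2}$ directly from the spectral theorem, and for general $p \in (1,\infty)$ one splits $e^{-tP} = e^{-tP/2} \circ e^{-tP/2}$, combines the $L^p \to L^2$ and $L^2 \to L^p$ bounds from Theorem \ref{thm: main heat kernel estimate}, and absorbs the $t^{-k/2}$ gain in the middle via $L^2$ spectral calculus.

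\textbf{Riesz factor via Fredholm theory.}
The heart of the proof is the $L^p$-boundedness of the higher-order Riesz transform $\n^k P^{-k/2}$ at the prescribed values of $p$, and this is where the hypothesis on slowly growing harmonic sections enters decisively. Invoking the Fredholm theory for Dirac-type operators on ALE manifolds from Section \ref{sec: Elliptic Estimates}, the operator $\D_V^k$ on suitable weighted Sobolev spaces has finite-dimensional kernel consisting of harmonic sections of $V$ whose asymptotic growth is controlled by the weight, and is Fredholm at all non-indicial weights. The hypothesis
\begin{equation*}
\forall k' \leq l : \quad \D_V^{k'} u = 0,\ u = o(r^{k'}) \implies \n^{k'} u = 0 \quad \text{on } C^\infty(E)
\end{equation*}
guarantees that every element of $\ker(\D_V^{k'})$ of growth strictly below $r^{k'}$ is annihilated by $\n^{k'}$, hence contributes trivially to $\n^k P^{-k/2}$ when restricted to $E$. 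Combining this with the Calder\'{o}n--Zygmund theory for Riesz transforms of asymptotically Euclidean Schr\"{o}dinger operators yields $L^p(E)$-boundedness of $\n^k P^{-k/2}$ for $p \in (1, \infty)$ when $k \leq l$ and for $p \in (1, n/(k-l))$ when $k \geq l+1$, matching \eqref{eq: derivative k leq l} and \eqref{eq: k geq l plus 1 p small}. Composition with the heat factor then produces the sharp estimate \eqref{eq: derivative estimate} in these ranges.

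\textbf{Weak estimates and the main obstacle.}
For $p$ in the exceptional range \eqref{eq: k geq l plus 1 p large}, the next indicial weight of $\D_V^k$ below the one controlled by the hypothesis corresponds to a harmonic section of growth rate $r^{k-l}$, which is no longer ruled out; Fredholm theory therefore only applies after shifting the weight by an arbitrarily small $\e > 0$. This perturbation propagates to the time variable and yields precisely the weak estimate \eqref{eq: weak derivative estimate}, in accordance with Definition \ref{def : weak derivative estimates}. The main obstacle I expect is the careful bookkeeping at the threshold exponent $p = n/(k-l)$: the decay rates coming from Fredholm theory and from the heat semigroup must match continuously across this value (as indicated in Definition \ref{def : Derivative estimates}), and the restriction to the parallel subbundle $E$ must be propagated through every weighted elliptic argument. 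The latter is made possible by \eqref{eq: D^2 preserves E}, so the spectral calculus and weighted elliptic theory on $V$ descend consistently to $E$; nonetheless, identifying the precise obstruction classes on $E$ and ruling them out using only the given one-sided implication (rather than the stronger \eqref{eq: strong implication derivatives}) is the most delicate point.
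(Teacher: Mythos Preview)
Your factorization $\n^k e^{-tP} = (\n^k P^{-k/2})\circ(P^{k/2}e^{-tP})$ is not the paper's route, and both factors have genuine gaps.

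For the heat factor, the sketch ``$L^p\to L^2$, spectral calculus, $L^2\to L^p$'' does not close: the almost Euclidean estimates of Theorem~\ref{thm: main heat kernel estimate} only go from $L^p$ to $L^q$ with $p\le q$, so for $p>2$ you cannot pass through $L^2$ and back. The paper instead proves $\norm{\D_V^k e^{-t\D_V^2}}_{p\to p}\le Ct^{-k/2}$ (Lemma~\ref{lem : higher derivatives}) by iterating Theorem~\ref{thm_special_derivative_estimates}, whose proof uses Davies' weighted $L^2$ technique, localized off-diagonal bounds, and a Phragm\'en--Lindel\"of argument. This is a substantial piece of Section~\ref{subsec: commuting operators}, not a consequence of the spectral theorem.

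For the Riesz factor, the Fredholm theory of Section~\ref{sec: Elliptic Estimates} and the hypothesis on harmonic sections are stated for the \emph{differential} operator $\D_V^k:C^\infty(E)\to C^\infty(V)$, which does not map $E$ to itself; they do not directly yield $L^p$-boundedness of $\n^k P^{-k/2}$, where $P^{k/2}=(\D_V^2|_E)^{k/2}$ is a nonlocal operator on $E$. Converting between $\D_V^k$ and $|\D_V|^k$ would itself require a Riesz-type bound for $\mathrm{sgn}(\D_V)$, which is not available. The appeal to unspecified Calder\'on--Zygmund theory is therefore circular: as discussed in Section~\ref{sec: Riesz}, boundedness of the Riesz transform is essentially equivalent to the derivative estimate you are trying to prove.

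The paper avoids both problems by never inverting anything. It first proves the purely elliptic inequality $\norm{\n^k u}_{L^p}\le C\norm{\D_V^k u}_{L^p}$ on $W^{k,p}_{k-n/p}(E)$ (Lemma~\ref{le: optimal up to l}), using the semi-Fredholm splitting of Lemma~\ref{le: semi-Fredholm}: one writes $W^{k,p}_\delta(E)=\ker(\D_V^k|_E)\oplus X_\delta$ with $\delta=k-\tfrac np$, observes that kernel elements lie in $o(r^\delta)\subset o(r^k)$, and invokes the hypothesis to kill $\n^k$ on the kernel. Then one simply applies this inequality to $u=e^{-t\D_V^2}u_0$ and uses Lemma~\ref{lem : higher derivatives}. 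The exceptional values $k-\tfrac np\in\N_0$ (where the weighted operator fails to be Fredholm) are handled by Gagliardo--Nirenberg interpolation with a nearby admissible exponent, which is where the $\epsilon$-loss in Definition~\ref{def : weak derivative estimates} arises; the large-$p$ range $k-\tfrac np\ge l$ uses a two-term estimate (Lemma~\ref{le: large k estimate}) with an auxiliary $L^q$ norm of $\D_V^l u$.
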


As already mentioned, it is useful to remember that \eqref{eq: strong implication derivatives} implies \eqref{eq: weak implication derivatives}.
In other words, if there are no harmonic sections of growth less than $l$, then weak derivative estimates hold of degree $l$.

For the Laplace-Beltrami operator on functions, the following simple special case of Theorem \ref{thm : better derivative estimates introduction} follows:

\begin{cor}[Improved derivative estimates for the heat kernel on functions]\label{cor : better derivative estimates}
Let $(M,g)$ be an ALE manifold and $\Delta$ be its Laplace Beltrami operator acting on functions.
Assume that there is an $l \in \N_0$, such that if $u \in o(r^l)$ and $\Delta u$ is constant, then $u$ is constant (and thus $\Delta u = 0$).
Then $e^{-t\Delta}$ satisfies weak derivative estimates of degree $l$.
\end{cor}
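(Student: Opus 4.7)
The plan is to apply Theorem~\ref{thm : better derivative estimates introduction} to the Dirac type operator $\D_V = d + d^*$ acting on $V = \Lambda^* T^*M$, with parallel subbundle $E = \Lambda^0 T^*M$ of scalar functions. The structural hypotheses are standard: $d + d^*$ is formally self-adjoint and asymptotic to a Euclidean Dirac operator because the metric is ALE; the kernel condition $\ker_{L^2}((d+d^*)^2) \subset \O_\infty(r^{-n})$ is given by Proposition~\ref{prop : decay harmonic forms}; $E$ is clearly parallel; and $(d+d^*)^2 = \Delta$ preserves the form degree, so $\D_V^2$ maps $C^\infty(E)$ into itself.

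The content is to verify the implication required by Theorem~\ref{thm : better derivative estimates introduction}: for each $k \leq l$ and each smooth function $u$ with $\D_V^k u = 0$ and $u = o(r^k)$, one must show that $\n^k u = 0$. On scalar functions $d^* u = 0$, so $\D_V u = du = \n u$, $\D_V^{2m} u = \Delta^m u$, and $\D_V^{2m+1} u = d(\Delta^m u)$. Setting $m := \lfloor k/2 \rfloor$, the assumption $\D_V^k u = 0$ is equivalent to $\Delta^m u = 0$ (for $k = 2m$) or $\Delta^m u$ being constant (for $k = 2m+1$); in either case $\Delta^{m+1} u = 0$. Since $\Delta^{m+1}$ is uniformly elliptic on the ALE end with coefficients approaching the Euclidean ones, interior Schauder estimates on balls of radius $r/2$ centered at points of distance $r$ upgrade $u = o(r^k)$ to $|\n^J u|(x) = o(r^{k-J})$ for $J \leq k+2$, and in particular $|\Delta^j u|(x) = o(r^{k-2j})$ for $0 \leq j \leq m+1$.

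One then iterates the corollary's hypothesis downward: for each $j = m-1, m-2, \ldots, 0$, set $v_j := \Delta^j u$; then $\Delta v_j = \Delta^{j+1} u$ is constant (by the previous step, or by $\D_V^k u = 0$ at the top), while $v_j = o(r^{k-2j}) \subseteq o(r^l)$ because $k - 2j \leq k \leq l$. The hypothesis of the corollary then forces $v_j$ to be constant, and descending all the way to $j = 0$ shows that $u$ itself is constant, so $\n^k u = 0$. The boundary cases $k = 0$ (trivial) and $k = 1$ (immediate from $du = \n u = 0$) require no iteration. With the implication verified, Theorem~\ref{thm : better derivative estimates introduction} delivers weak derivative estimates of degree $l$ for $e^{-t\Delta}$ on functions. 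The only technical subtlety I anticipate is preserving the little-$o$ rate through the elliptic regularity step, which is nonetheless routine on an ALE end where the coefficients of $\Delta^{m+1}$ are uniformly close to the flat ones.
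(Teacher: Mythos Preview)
Your proposal is correct and follows essentially the same route as the paper: both arguments reduce to Theorem~\ref{thm : better derivative estimates introduction} with $V=\Lambda^*T^*M$, $\D_V=d+d^*$, $E=\Lambda^0T^*M$, and both verify the key implication by showing the stronger statement that any function $u$ with $(d+d^*)^ku=0$ and $u=o(r^k)$ is constant, via a downward iteration applying the corollary's hypothesis to $\Delta^j u$. The only cosmetic difference is that the paper phrases the descent as an induction on $k$ (stepping $k\mapsto k-2$), while you index by $j=\lfloor k/2\rfloor-1,\ldots,0$; and where the paper invokes ``a standard argument similar to Proposition~\ref{prop : decay harmonic forms}'' to get $u\in\O_\infty(r^{k-1})$, you spell this out as rescaled interior Schauder estimates on annuli, which is exactly what that standard argument amounts to.
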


In \cite{CCH06}, the authors studied the Riesz transform on AE manifolds and show that it is bounded on $L^p$ if  $p<n$. As a consequence of this result (c.f.\ also the discussion in Subsection \ref{sec: Riesz} below), 
\[
\norm{\nabla \circ e^{-t\Delta}}_{p \to p}\leq C t^{-\frac{1}{2}}
\]
for $p\in (1,n)$. 
On the other hand, if the manifold has more than one end, they show unboundedness of the Riesz transform for $p\geq n$ which is due to the existence of non-constant bounded harmonic functions on the manifold.
Thus, they are not able to establish this estimate for $p\geq n$.

Our Theorem \ref{thm : better derivative estimates introduction} states that a connection between optimality of derivative estimates and non-existence of non-constant harmonic functions exists in a much greater generality: We found that the optimality of estimates on  the k'th derivative relies on the non-existence of k-harmonic sections with growth rate less than $r^k$, whose k'th derivative vanishes.
\begin{example}
The condition \eqref{eq: weak implication derivatives} is satisfied for the trivial line bundle $E=\Lambda^0\R^n \cong \R$ as a subbundle of $V=\Lambda \R^n$ with the Hodge de Rham operator $d+d^*$.
\end{example}

\subsubsection{A discussion about the threshold in the derivative estimates} \label{sec: Riesz}
A long-standing problem in harmonic analysis is the question for which $p$ the \textit{Riesz transform}
\begin{align}\label{eq : Riesz transform}
\nabla\circ \Delta^{-\frac{1}{2}}:L^p(M)\to L^p(T^*M)
\end{align}
is a bounded operator. 
This problem has been studied in the context of derivative estimates, (see e.g. the papers \cites{Devyver2018,CDS16}), but also as an independent problem on asymptotically Euclidean \cite{CCH06} and asymptotically conical manifolds \cites{GH08,GH09}.
In fact, it is straightforward to see that \eqref{eq : Riesz transform} implies the optimal derivative estimate
\begin{align}\label{eq : optimal_derivative_estimates}
	\norm{\nabla\circ e^{-t\Delta}}_{p \to p}
		\leq \norm{\nabla\circ \Delta^{-\frac{1}{2}}}_{p \to p} \norm{\Delta^{\frac{1}{2}}\circ e^{-t\Delta}}_{p \to p}
		\leq C t^{-\frac{1}{2}}.
\end{align}
However, the converse implication \eqref{eq : optimal_derivative_estimates} $\Rightarrow$ \eqref{eq :  Riesz transform} was established in \cite{ACDH}*{Thm.\ 1.3}, provided that the volume doubling property and the scaled Poincar\'{e} inequalities do hold. Both conditions are satisfied for ALE manifolds with one end.

The latter result suggests a strong link between derivative estimates of arbitrary order and elliptic estimates. 
As the proofs of our results build on optimal elliptic estimates and invertibility of elliptic operators, we believe that the decay rates we established are optimal (possibly up to removing the arbitrarily small factor $\e>0$).

\begin{remark}\label{rem : interpretation decay rates}
A heuristic way of interpreting the heat kernel estimates and derivative estimates of degree $l$ is the following:
For simplicity, assume that $u$ is rotationally symmetric at infinity. 
Then $u\in L^p$ means that $u=o(r^{-\frac{n}{p}})$, i.e.\ $L^p$ corresponds to the spatial decay rate $r^{-\frac{n}{p}}$. Similarly, $L^q$ corresponds to the spatial decay rate $r^{-\frac{n}{q}}$. On the other hand the norm of the map 
	\[
	e^{-t\Delta_V}:L^p\to L^q
	\] 
has the temporal decay rate	$t^{-\frac n2\left(\frac1p-\frac1q\right)}=t^{\frac12\left(-\frac np-\left(-\frac nq\right)\right)}$. 
Heuristically, one gets this temporal decay rate by taking the difference of the two spatial decay rates $-\frac np$ and $-\frac nq$, corresponding to $L^p$ and $L^q$, and multiplying by a factor $1/2$, which comes from parabolic rescaling $(t,r)\mapsto (\alpha^2t,\alpha r)$. 
Moreover, if $\nabla^ku\in L^p$ (i.e. $\nabla^ku=o(r^{-\frac{n}{p}})$),
one heuristically has $u=o(r^{k-\frac{n}{p}})$ due to elliptic estimates for weighted Sobolev spaces.
If $k\in\N$ is so small that $k-\frac{n}{p}<l$, then the norm  of the map
\[
	\nabla^k\circ e^{-t\Delta_V}:L^p\to L^p
\] 
has temporal decay rate $t^{-\frac{k}{2}}=t^{\frac{1}{2}\left(-\frac np-\left(k-\frac np\right)\right)}$. In this case, one gets this temporal rate by taking the difference of the spatial decay (and potentially growth) rates $o(r^{-\frac{n}{p}})$ and $o(r^{k-\frac{n}{p}})$ and again multiplying by the factor $1/2$. 
If $k\in\N$ is so large that $k-\frac{n}{p}\geq l$, the norm  of the map
	\[
	\nabla^k\circ e^{-t\Delta_V}:L^p\to L^p
	\] 
has (up to an arbitrary small $\epsilon>0)$ temporal decay rate $t^{-\frac{n}{2}\frac{1}{p}-\frac{l}{2}}=t^{\frac12\left(-\frac{n}{p}-l\right)}$. This can be heuristically explained as follows: There is a critical growth rate $o(r^{l})$, $l\in\N_0$ (which is, in view of Theorem \ref{thm : Derivative estimates introduction}, the growth rate of a harmonic section whose behavior we can not control) which acts as a barrier. In this case, the temporal decay rate is obtained by taking the difference of the spatial decay rate $r^{-\frac{n}{p}}$ and the critical spatial growth rate $r^l$ (which is smaller than $r^{k-\frac{n}{p}}$) and again multiplying by the factor $1/2$.
\end{remark}

\subsubsection{Application to geometric operators}\label{subsubsec : geometric operators}\label{subsec : applications}

The first prominent example to which we apply Theorem \ref{thm: main heat kernel estimate} and Theorem \ref{thm : Derivative estimates introduction} is the Hodge Laplacian:
\begin{cor}\label{cor : heat flow hodge laplace}
Let $(M,g)$ be an ALE manifold and $\Delta_H=(d+d^*)^2$ be the Hodge Laplacian on the exterior algebra $\Lambda M$. Suppose that $\mathcal{H}_1(M):=\ker_{L^2}(\Delta_{H}|_{T^*M})=\left\{0\right\}$. Then, $e^{-t\Delta_H}$, satisfies almost Euclidean heat kernel estimates and derivative estimates of degree $0$.
\end{cor}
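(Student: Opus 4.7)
The plan is to realize the Hodge Laplacian as the square of the Hodge--de Rham operator $\D_V := d+d^*$, verify the asymptotic conditions so that $\D_V$ is a Dirac type operator asymptotic to a Euclidean Dirac operator in the sense of Definition \ref{def: asympt Eucl Dirac}, and then invoke Theorem \ref{thm: main heat kernel estimate} and Theorem \ref{thm : Derivative estimates introduction} directly. The only nontrivial input beyond Definition \ref{def: asympt Eucl Dirac} is to check the kernel decay $\ker_{L^2}(\Delta_H) \subset \O_\infty(r^{-n})$, which is where the hypothesis $\mathcal{H}_1(M)=\{0\}$ enters.

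First I would check the asymptotic structure. Set $V = \Lambda M$; via the natural identification $V|_{M_\infty} \cong (\R^n_{>1}\times\Lambda\K^n)/\Gamma$ induced by $\phi$ and the $\Gamma$-action on $\Lambda\K^n$ coming from $SO(n)$, Assumption \ref{ass: V} is satisfied. Writing $d + d^* = A(\n) + B$ via Clifford multiplication, the model operator $A_0$ is Clifford multiplication on the flat cone, and since the Riemann curvature of the flat model vanishes, the Weitzenb\"ock identity reduces to $(\on)^*\on = (A_0(\on))^2$, giving \eqref{eq: Delta at infinity}. The bounds \eqref{eq: decay A} and \eqref{eq: decay B} then follow from $\phi_*g - \g \in \O_\infty(r^{-\tau})$, which forces the difference of Christoffel symbols to lie in $\O_\infty(r^{-1-\tau})$; the zeroth order piece $B$, built from these Christoffels, sits in $\O_\infty(r^{-1-\tau})$. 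By Remark \ref{rmk: Positivity Dirac squared}, $\Delta_H = \D_V^2$ is then a non-negative Schr\"odinger operator asymptotic to a Euclidean Laplacian with $\ker_{L^2}(\Delta_H) = \ker_{L^2}(\D_V) = \bigoplus_{k=0}^n \mathcal{H}_k(M)$.

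Next I verify the decay of the $L^2$-kernel degree by degree. For $k=0$, an $L^2$-harmonic function on a complete ALE manifold must be constant by the standard cut-off integration-by-parts argument, and constants are not $L^2$ on the ALE end, so $\mathcal{H}_0 = 0$; applying the Hodge star, $\mathcal{H}_n \cong \mathcal{H}_0 = 0$. The hypothesis gives $\mathcal{H}_1 = 0$, and Hodge duality (an $L^2$-isometry commuting with $\Delta_H$) yields $\mathcal{H}_{n-1} \cong \mathcal{H}_1 = 0$. For the remaining intermediate degrees $2 \leq k \leq n-2$, I invoke Proposition \ref{prop : decay harmonic forms}, which ensures that each $L^2$-harmonic $k$-form lies in $\O_\infty(r^{-n})$ on ALE manifolds. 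Combining all degrees gives $\ker_{L^2}(\Delta_H) \subset \O_\infty(r^{-n})$, which is the missing hypothesis of both Theorem \ref{thm: main heat kernel estimate} and Theorem \ref{thm : Derivative estimates introduction}.

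With all assumptions in place, the almost Euclidean heat kernel estimates follow from Theorem \ref{thm: main heat kernel estimate} and the derivative estimates of degree $0$ from Theorem \ref{thm : Derivative estimates introduction}. The main obstacle, and the only step with analytic content beyond bookkeeping, is the decay of $L^2$-harmonic forms in intermediate degrees; this is where the precise asymptotic expansion of harmonic forms on ALE ends in integer rates $\ldots, r^{-n}, r^{1-n}, r^{2-n}, 1, r, \ldots$ enters, and it is exactly the role of Proposition \ref{prop : decay harmonic forms} to rule out the slow-decay asymptotics for degrees $2 \leq k \leq n-2$, while the degrees $0, 1, n-1, n$ are eliminated by the hypothesis and Hodge duality as above.
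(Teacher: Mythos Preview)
Your proof is correct and follows essentially the same approach as the paper's. The paper's argument is slightly more streamlined: since Proposition \ref{prop : decay harmonic forms} already yields $\O_\infty(r^{-n})$ decay for any $L^2$-harmonic form that is not a one-form or an $(n-1)$-form (this includes degrees $0$ and $n$), your separate treatment of $\mathcal H_0$ and $\mathcal H_n$ is redundant, though not wrong; the paper simply eliminates degrees $1$ and $n-1$ via the hypothesis and Hodge duality and then applies Proposition \ref{prop : decay harmonic forms} to everything else at once.
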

This result requires in addition a careful analysis of the decay of harmonic forms at infinity. 
Using this result and various bundle identifications for special holonomy metrics, we can study further operators under an additional geometric condition.
\begin{cor}\label{cor : heat flow ricci flat}
Let $(M,g)$ be an ALE spin manifold which carries a parallel spinor. 
Then, almost Euclidean heat kernel estimates and derivative estimates of degree $0$ are satisfied by the following two operators:
\begin{itemize}
\item[(i)] $e^{-t(\D_{T^*M})^2}$, where $\D_{T^*M}$ is the twisted Dirac operator on spinor-valued one-forms.
\item[(ii)] $e^{-t\Delta_L}$, where $\Delta_L$ is the Lichnerowicz Laplacian on symmetric $2$-tensors.
\end{itemize}
\end{cor}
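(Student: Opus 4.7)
The strategy is to verify in each case the hypotheses of Theorem \ref{thm: main heat kernel estimate} and Theorem \ref{thm : Derivative estimates introduction}: that the operator is the square of a formally self-adjoint Dirac type operator asymptotic to a Euclidean Dirac operator, in the sense of Definition \ref{def: asympt Eucl Dirac}, and that its $L^2$-kernel is contained in $\O_\infty(r^{-n})$.

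For (i), the twisted Dirac operator $\D_{T^*M}$ on vector-spinors $\Sigma \otimes T^*M$ is first-order and formally self-adjoint, and its square is a Schrödinger operator by a standard Weitzenböck formula. Both the spinor bundle (via the natural spin structure at infinity) and the cotangent bundle carry canonical $\Gamma$-equivariant trivializations, so Assumption \ref{ass: V} holds. The ALE decay \eqref{eq: decay metric}, after differentiation, implies the analogous decay of the Levi-Civita connection and of the spin connection, from which one reads off \eqref{eq: decay A}, \eqref{eq: decay B}, and \eqref{eq: Delta at infinity} for $\D_{T^*M}$. By Remark \ref{rmk: Positivity Dirac squared}, the kernel condition reduces to showing $\ker_{L^2}(\D_{T^*M}) \subset \O_\infty(r^{-n})$; for this, the parallel spinor $\sigma$ (which forces Ricci-flatness) yields parallel bundle identifications, induced by Clifford multiplication on $\sigma$, under which elements of $\ker_{L^2}(\D_{T^*M})$ correspond to $L^2$ harmonic forms, whose decay is exactly $\O_\infty(r^{-n})$ by Proposition \ref{prop : decay harmonic forms}.

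For (ii), we invoke Wang's theorem \cite{Wang91}: the parallel spinor induces a parallel isometric embedding of $S^2T^*M$ into $\Sigma \otimes T^*M$ under which the Lichnerowicz Laplacian $\Delta_L$ is realized as the restriction of $(\D_{T^*M})^2$. Consequently, $\Delta_L$ itself is the square of a Dirac type operator asymptotic to a Euclidean Dirac operator, and its $L^2$-kernel embeds into $\ker_{L^2}(\D_{T^*M})$, inheriting the decay $\O_\infty(r^{-n})$ from part (i). Theorem \ref{thm: main heat kernel estimate} and Theorem \ref{thm : Derivative estimates introduction} then apply directly.

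The main obstacle is the sharp decay estimate $\O_\infty(r^{-n})$ for the kernel. The generic decay for $L^2$ harmonic sections of a Schrödinger operator asymptotic to a Euclidean Laplacian is only $\O_\infty(r^{2-n})$; improving this by two orders is the content of Proposition \ref{prop : decay harmonic forms} for harmonic forms, and the whole argument depends on propagating this improvement first to vector-spinors and then, via Wang's embedding, to symmetric $2$-tensors, with the parallel spinor serving as the organizing geometric structure.
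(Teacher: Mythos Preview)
Your overall strategy---verify the kernel decay $\O_\infty(r^{-n})$ and then invoke Theorems \ref{thm: main heat kernel estimate} and \ref{thm : Derivative estimates introduction}---matches the paper's, but there are two genuine gaps.

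\textbf{Part (ii): the step ``$\Delta_L$ itself is the square of a Dirac type operator'' is unjustified.} Wang's result gives a parallel isometric embedding $\Phi:S^2M\hookrightarrow S\otimes T^*_{\C}M$ with $\Phi\circ\Delta_L=(\D_{T^*M})^2\circ\Phi$, but $\D_{T^*M}$ need not preserve $\Phi(S^2M)$, so you cannot pull it back to a first-order operator on $S^2M$ whose square is $\Delta_L$. Theorem \ref{thm : Derivative estimates introduction} therefore does not apply to $\Delta_L$ directly. The paper instead uses the intertwining relation $\Phi\circ e^{-t\Delta_L}=e^{-t(\D_{T^*M})^2}\circ\Phi$ together with $|\nabla^k\Phi(h)|=|\nabla^k h|$ (since $\Phi$ is parallel and isometric) to transfer both the heat kernel and the derivative estimates from part (i). Your kernel argument via $\Phi$ is fine; it is only the appeal to Theorem \ref{thm : Derivative estimates introduction} on $S^2M$ that fails.

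\textbf{Part (i): the reduction to harmonic forms is more delicate than you indicate.} The identification is not a single map ``Clifford multiplication on $\sigma$''. The paper splits $S\otimes T^*_{\C}M=S_{1/2}\oplus S_{3/2}$; in the Ricci-flat case $(\D_{T^*M})^2$ is block-diagonal, the $S_{1/2}$-block equals $\nabla^*\nabla$ (so has trivial $L^2$-kernel on a one-ended manifold by the maximum principle), and $S_{3/2}$ is identified with a sum of \emph{form} bundles via holonomy-dependent isomorphisms from \cite{HS19}, case by case for $\mathrm{SU}(n/2)$, $\mathrm{Sp}(n/4)$, $\mathrm{Spin}(7)$. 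Moreover, Proposition \ref{prop : decay harmonic forms} only gives $\O_\infty(r^{-n})$ for forms of degree $\neq 1,n-1$; the one-form case must be handled separately by showing $\mathcal H_1(M)=\{0\}$, which follows from $\Delta_{H_1}=\nabla^*\nabla$ (Ricci-flatness) and the one-ended structure. Your sketch omits both the $S_{1/2}$ piece and this one-form exception.
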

In addition, we also prove estimates for the linearized Ricci curvature $h\mapsto \frac{d}{dt}\Ric_{g+th}$ and the linearized de Turck vector field $h\mapsto \frac{d}{dt}V(g+th,g)$ (where $V(g,h)=\mathrm{tr}_g({}^{g}\nabla-{}^{h}\nabla)$) along $e^{-t\Delta_L}$. These will be relevant for applications in Ricci flow.

Using Proposition \ref{prop : better derivative estimates 0 introduction} and Theorem \ref{thm : better derivative estimates introduction}, we obtain better estimates for restrictions of the above operators to subbundles:
\begin{cor}\label{cor : heat flow functions}
	Let $(M,g)$ be an ALE manifold  with $\mathcal{H}_1(M)=\left\{0\right\}$ and $\Delta$ be its Laplace-Beltrami operator. Then $e^{-t\Delta}$ satisfies Euclidean heat kernel estimates and strong derivative estimates of degree $1$. Moreoever, if $(M,g)$ is not AE, then $e^{-t\Delta}$ satisfies derivative estimates of degree $2$.
\end{cor}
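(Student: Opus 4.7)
The plan is to deduce each of the three claims from the general theorems established earlier in the paper.

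For the Euclidean heat kernel estimate, I first observe that on a complete non-compact ALE manifold $\ker_{L^2}(\Delta) = \{0\}$: any $u \in L^2$ with $\Delta u = 0$ satisfies $\|du\|_{L^2}^2 = (\Delta u, u)_{L^2} = 0$ after integration by parts, hence is a constant and therefore vanishes, since constants have infinite $L^2$-norm on an ALE manifold. The hypotheses of Theorem \ref{thm: main heat kernel estimate} are thus trivially satisfied, and since the projection onto the kernel is identically zero, the almost Euclidean estimates it produces collapse to Euclidean ones.

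For the strong derivative estimates of degree $1$, I apply Proposition \ref{prop : better derivative estimates 0 introduction}(ii) with $l = 1$, taking $\D_0 = d + d^*$ and $\D_1 = d + d^*$ acting naturally on scalar functions and on $1$-forms, respectively (viewed as the restrictions of the full Hodge--de Rham operator to the parallel subbundles $\Lambda^0 M$ and $\Lambda^1 M$). The intertwining relation $\nabla \circ \D_0^2 = \D_1^2 \circ \nabla$ on a function $u$ reduces via $d^2 = 0$ to $d(\Delta u) = dd^*(du) = \Delta_H(du)$. The inequality $\nabla^* \nabla \leq \D_0^2$ is an equality on functions, and $\nabla \nabla^* \leq \D_1^2$ on $1$-forms follows from $\|d^*\omega\|_{L^2}^2 \leq \|d\omega\|_{L^2}^2 + \|d^*\omega\|_{L^2}^2$. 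The vanishing $\ker_{L^2}(\D_1) = \mathcal{H}_1(M) = \{0\}$ is our standing hypothesis, so the proposition delivers the required strong derivative estimates.

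For the upgrade to degree $2$ under $\Gamma \neq \{1\}$, I apply Corollary \ref{cor : better derivative estimates} with $l = 2$: I must show that $u \in o(r^2)$ together with $\Delta u \equiv c$ forces $u$ to be constant. Integrating $\Delta u = c$ over a geodesic ball $B_R$, the divergence theorem yields $c\,\Vol(B_R) = \int_{\partial B_R} \partial_\nu u\, dA$; the left-hand side grows proportionally to $R^n$ while the right-hand side is $o(R^n)$ by the growth hypothesis on $u$ combined with standard interior gradient estimates, forcing $c = 0$. Thus $u$ is harmonic with $u = o(r^2)$. The asymptotic expansion of harmonic functions on ALE manifolds (cf.\ Proposition \ref{prop : decay harmonic forms}) expresses the leading behaviour of $u$ in terms of $\Gamma$-invariant harmonic polynomials on $\R^n$ of degree at most one; but since $\Gamma$ acts freely on $S^{n-1}$, no nonzero vector in $\R^n$ is $\Gamma$-fixed, so all degree-$1$ contributions vanish. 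Hence $u \to u_\infty$ at infinity, and $u - u_\infty$ is a bounded harmonic function vanishing at infinity, which the maximum principle forces to be zero.

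The main obstacle will be this last step, specifically the $\Gamma$-equivariant asymptotic analysis of harmonic functions on $M$. The same analysis also illuminates the dichotomy in the statement: when $\Gamma = \{1\}$ every linear function on $\R^n$ is $\Gamma$-invariant, so linear-growth harmonic functions on $M$ genuinely exist, the criterion of Corollary \ref{cor : better derivative estimates} fails at $l = 2$, and one cannot push beyond the strong degree-$1$ estimates obtained in the first part.
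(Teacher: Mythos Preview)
Your argument for the first two assertions is essentially the paper's argument, with one caveat: Theorem \ref{thm: main heat kernel estimate} yields only \emph{almost} Euclidean estimates (the range $1<p\le q<\infty$), and a trivial kernel does not by itself extend the range to $1\le p\le q\le\infty$. The paper obtains the full Euclidean estimate by invoking Devyver's theorem (Remark \ref{rmk : Euclidean heat kernel estimates}), not Theorem \ref{thm: main heat kernel estimate}.

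The third assertion, however, has a genuine gap. Corollary \ref{cor : better derivative estimates} only produces \emph{weak} derivative estimates of degree $2$ (Definition \ref{def : weak derivative estimates}), i.e.\ with an $\epsilon$-loss at the exceptional values $p=n/k$ and $p=n/(k-1)$. The statement you are proving claims \emph{derivative estimates} of degree $2$ (Definition \ref{def : Derivative estimates}), with no such loss. Your argument stops at the weak conclusion and never addresses this discrepancy. The paper closes the gap in two further steps: first, the Lichnerowicz--Obata inequality applied to $S^{n-1}/\Gamma$ with $\Gamma\neq\{1\}$ shows that $n-1\notin\mathrm{Spec}(S^{n-1}/\Gamma)$, hence $\delta=1$ is \emph{not} an exceptional value for $\Delta$ on $M$; this lets the proof of Theorem \ref{thm : better derivative estimates introduction} run at $\delta=k-\tfrac np=1$ without the $\epsilon$, handling $p=\tfrac{n}{k-1}$. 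Second, the remaining point $p=\tfrac nk$ is recovered by Gagliardo--Nirenberg interpolation between $\nabla^{k-1}$ and $\nabla^{k+1}$.

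A smaller point on your verification of the hypothesis of Corollary \ref{cor : better derivative estimates}: your $\Gamma$-invariance argument for the absence of linear-growth harmonics is correct in spirit (and is in fact equivalent to the Lichnerowicz--Obata step the paper uses), but the final maximum-principle step is fragile if $M$ has several ends, since a bounded harmonic function may tend to distinct constants on distinct ends. The paper instead observes that $du$ is a decaying harmonic one-form, hence lies in $\mathcal H_1(M)=\{0\}$; this uses the standing hypothesis directly and works regardless of the number of ends.
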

\begin{cor}\label{cor : heat flow one forms}
	Let $(M,g)$ be an ALE manifold  and $\Delta_{H_1}$ be the Hodge Laplacian on $T^*M$. 
	\begin{itemize}
	\item[(i)] If $\mathcal{H}_1(M)=\left\{0\right\}$, $e^{-t\Delta_{H_1}}$ satisfies Euclidean heat kernel estimates and strong derivative estimates of degree $0$.
	\item[(ii)] If $\Ric\geq0$, $e^{-t\Delta_{H_1}}$ satisfies weak derivative estimates of degree $1$.
	\item[(iii)] If $(M,g)$ carries a parallel spinor, $e^{-t\Delta_{H_1}}$ satisfies derivative estimates of degree $1$.
	\end{itemize}
\end{cor}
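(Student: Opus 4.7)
The strategy throughout is to apply the main theorems of the paper---Corollary \ref{cor : heat flow hodge laplace}, Proposition \ref{prop : better derivative estimates 0 introduction}, and Theorem \ref{thm : better derivative estimates introduction}---to the Dirac operator $\D_V = d + d^*$ on $V = \Lambda M$, with the parallel subbundle $E = T^*M$. Since $\Delta_H = \D_V^2$ preserves form degree, every resulting estimate for $e^{-t\Delta_H}$ restricts to the 1-form sector $e^{-t\Delta_{H_1}}$.

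For (i), the hypothesis $\mathcal{H}_1(M) = \{0\}$ is precisely that of Corollary \ref{cor : heat flow hodge laplace}, yielding almost Euclidean heat kernel estimates and derivative estimates of degree $0$ for $e^{-t\Delta_H}$, hence for $e^{-t\Delta_{H_1}}$. The triviality of $\ker_{L^2}(\Delta_{H_1}) = \mathcal{H}_1$ upgrades the heat kernel bound to the full Euclidean range, including the endpoints $p = 1$ and $q = \infty$, via the Devyver result cited in Remark \ref{rmk : Euclidean heat kernel estimates}. The strong derivative estimates of degree $0$ follow from the subbundle version of Proposition \ref{prop : better derivative estimates 0 introduction}(ii) at $l = 0$: the small loss $\e > 0$ in \eqref{eq: weak derivative estimate} comes from an exceptional weight in the Fredholm theory for $d + d^*$ on $T^*M$, which is removed exactly when $\mathcal{H}_1 = \{0\}$.

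For (ii), the Bochner formula $\Delta_{H_1} = \na^*\na + \Ric$ forces every $L^2$ harmonic 1-form to be parallel; since an ALE manifold has infinite volume, no nonzero parallel 1-form is $L^2$, so $\mathcal{H}_1 = \{0\}$ and (i) applies. To upgrade to weak derivative estimates of degree $1$, we invoke Theorem \ref{thm : better derivative estimates introduction} with $E = T^*M$ and $l = 1$, which requires \eqref{eq: weak implication derivatives} at $k = 1$: a 1-form $u$ with $(d + d^*) u = 0$ and $u = o(r)$ must satisfy $\na u = 0$. By the integer-rate expansion of harmonic 1-forms on an ALE end (Proposition \ref{prop : decay harmonic forms}), such a $u$ has the form $u = c + O(r^{1-n})$ with $c$ parallel on the cone, yielding $\na u = O(r^{-1-\tau})$ at infinity. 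A Bochner identity with an $\eta^2 u$ test section and a cutoff $\eta$ supported near $\partial B_R$, combined with this decay, shows that the gradient terms from the cutoff and the boundary contribution both vanish as $R\to\infty$, and $\Ric \geq 0$ then forces $\na u = 0$.

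For (iii), a parallel spinor implies $\Ric = 0$, so (ii) applies and provides weak derivative estimates of degree $1$; to remove the \enquote{weak} qualifier, we apply Proposition \ref{prop : better derivative estimates 0 introduction}(i) with $l = 1$. Under $\Ric = 0$ we have $\Delta_{H_1} = \na^*\na$, and the parallel spinor $\sigma$ realizes $\na$ on $T^*M$, respectively on $T^*M \otimes T^*M$, as a twisted Dirac operator $\D_0$, respectively $\D_1$, on a suitable subbundle of a (doubly) twisted spinor bundle---the same mechanism underlying Wang's theorem \cite{Wang91} and Corollary \ref{cor : heat flow ricci flat}. The commutation conditions \eqref{commuting} follow from compatibility of $\na$ with the parallel-spinor identification together with $\Ric = 0$, and the decay $\ker_{L^2}(\D_1) \subset \O_\infty(r^{-n})$ comes from Proposition \ref{prop : decay harmonic forms}. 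The main obstacle is exactly this last step: producing the compatible Dirac pair $(\D_0,\D_1)$ on tensor-spinor bundles realizing $\na^*\na$ as a Dirac square and verifying \eqref{commuting}. This is the $\Delta_{H_1}$-analog of Wang's representation of the Lichnerowicz Laplacian and requires careful use of the parallel spinor structure to ensure the curvature obstructions in commuting $\na$ past $\D^2$ vanish.
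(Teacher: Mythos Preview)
Your overall strategy is right, but each part has a gap relative to the paper's argument.

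\textbf{Part (i).} Your justification of the \emph{strong} degree-$0$ estimate is not correct as written: there is no ``subbundle version of Proposition~\ref{prop : better derivative estimates 0 introduction}(ii) at $l=0$'' in the paper, and $d+d^*$ does not even preserve $T^*M$, so that route does not make sense. The paper's argument is more elementary: since $\ker_{L^2}(\Delta_{H_1})=\{0\}$, Devyver's result (Remark~\ref{rmk : Euclidean heat kernel estimates}) gives full Euclidean estimates including $L^p\to L^\infty$. One then applies Corollary~\ref{cor: elliptic dirac gradient estimate 2} with $q=\infty$ to get $\norm{\nabla^k\omega_t}_{L^p}\leq C(\norm{(d+d^*)^k\omega_t}_{L^p}+\norm{\omega_t}_{L^\infty})\leq C(t^{-k/2}+t^{-n/(2p)})\norm{\omega}_{L^p}$, which for $p>n/k$ gives exactly the strong degree-$0$ bound with $\epsilon=0$.

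\textbf{Part (ii).} Your Bochner-with-cutoff argument does not close: from $u=o(r)$ and $(d+d^*)u=0$ one gets $u$ bounded, but then the cutoff error $\int|\nabla\eta|^2|u|^2\sim R^{n-2}$ diverges, and your claimed decay $\nabla u=O(r^{-1-\tau})$ is not enough either. The missing ingredient is that $\Ric\geq0$ on an ALE manifold forces \emph{one end}: otherwise there is a line, and Cheeger--Gromoll splits $(M,g)=(\R\times N,dr^2+h)$, contradicting ALE curvature decay. With one end, the paper applies Corollary~\ref{Cor : improved derivative estimates} directly: $\Delta|u|^2=-2|\nabla u|^2-2\Ric(u,u)\leq0$, so $|u|^2$ is bounded subharmonic on a one-ended ALE manifold, hence constant by the maximum principle, giving $\nabla u=0$.

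\textbf{Part (iii).} You correctly identify that Wang's mechanism is the key and that the obstacle is producing a compatible Dirac pair $(\D_0,\D_1)$ satisfying \eqref{commuting}, but you leave this open. The paper resolves it by the decomposition $T^*M\otimes T^*M=S^2M\oplus\Lambda^2M$, under which $\nabla\omega\cong(\delta^*\omega,d\omega)$. The commutation relations $\delta^*\circ\Delta_{H_1}=\Delta_L\circ\delta^*$ and $d\circ\Delta_{H_1}=\Delta_{H_2}\circ d$ together with $\delta\circ\delta^*\leq\Delta_{H_1}$, $\delta^*\circ\delta\leq\Delta_L$, $d^*d\leq\Delta_{H_1}$, $dd^*\leq\Delta_{H_2}$ (Lemma~\ref{lem: commutators}) verify \eqref{commuting} with $\D_1^2=(\Delta_L,\Delta_{H_2})$. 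The degree-$0$ derivative estimates for $e^{-t\Delta_L}$ (Theorem~\ref{thm: heat flow LL}, via Wang and the twisted Dirac operator) and for $e^{-t\Delta_H}$ (Theorem~\ref{thm : heat flow Hodge Laplace}) then feed into Proposition~\ref{prop : better derivative estimates 0 introduction} to give degree-$1$ estimates for $e^{-t\Delta_{H_1}}$. So the parallel spinor enters exactly where you expected---through $\Delta_L$---but the decomposition $\nabla=(\delta^*,d)$ is the concrete step you were missing.
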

Note that the assumptions in (iii) imply the assumptions in (ii) which in turn imply the assumptions in (i).

\begin{cor}\label{cor : heat flow dirac operator}
Let $(M,g)$ be an ALE spin manifold with non-negative scalar curvature and let $\D$ be its Dirac operator acting on spinors. Then, $e^{-t\D^2}$ satisfies Euclidean heat kernel estimates and weak derivative estimates of degree $1$.
Moreover, if $(M,g)$ carries a parallel spinor, $e^{-t\Delta_{H_1}}$ satisfies derivative estimates of degree $1$.
\end{cor}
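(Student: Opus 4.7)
The strategy is to reduce the statement to the general theorems proved earlier in the paper, using the Lichnerowicz formula as the main geometric input. First, observe that the classical Dirac operator $\D$ on an ALE spin manifold is a formally self-adjoint Dirac type operator which is asymptotic to the Euclidean Dirac operator in the sense of Definition \ref{def: asympt Eucl Dirac}: under any asymptotic trivialization of the spinor bundle compatible with Assumption \ref{ass: V}, the difference $\Phi_*\D - \D_{\R^n/\Gamma}$ is controlled by $\phi_*g - \mathring g$ via the standard Clifford formalism, so \eqref{eq: decay A}--\eqref{eq: decay B} follow from \eqref{eq: decay metric}. The Lichnerowicz identity
\[
\D^2 = \n^*\n + \tfrac14\scal
\]
then exhibits $\D^2$ as a Schr\"odinger operator asymptotic to a Euclidean Laplacian, and the assumption $\scal \geq 0$ gives $(\D^2 u,u)_{L^2}=\|\D u\|_{L^2}^2\geq 0$ for $u\in C_c^\infty$.

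\textbf{Step 1 (Euclidean estimates).} I claim $\ker_{L^2}(\D^2)=\{0\}$, so the Euclidean (not just almost Euclidean) estimates follow from Remark \ref{rmk : Euclidean heat kernel estimates}. By Remark \ref{rmk: Positivity Dirac squared} any $u\in\ker_{L^2}(\D^2)$ lies in $\ker_{L^2}(\D)$, and Fredholm theory for Dirac type operators on ALE manifolds (Section \ref{sec: Elliptic Estimates}) gives $u\in\O_\infty(r^{1-n})$. Using a standard cutoff $\eta_R$, the Lichnerowicz identity applied to $\eta_R u$ together with this decay shows that the boundary integrals vanish as $R\to\infty$, yielding $\int(|\n u|^2+\tfrac14\scal|u|^2)=0$. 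Since $\scal\geq 0$, this forces $\n u=0$, so $|u|$ is constant; integrability on the non-compact manifold $M$ then gives $u=0$.

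\textbf{Step 2 (weak derivative estimates of degree $1$).} I apply Theorem \ref{thm : better derivative estimates introduction} with $E=V$ the full spinor bundle and $l=1$. Condition \eqref{eq: D^2 preserves E} is trivial. For \eqref{eq: weak implication derivatives} the case $k=0$ is vacuous; for $k=1$ I need to show that if $\D u=0$ and $u=o(r)$ then $\n u=0$. By elliptic theory on the ALE end, such a $u$ admits an asymptotic expansion in integer rates, so the condition $u=o(r)$ forces an expansion $u=c_\infty+O(r^{-1})$ where $c_\infty$ is a constant spinor in the asymptotic trivialization (in particular $\on c_\infty=0$). Elliptic regularity then gives $\n u\in\O_\infty(r^{-1-\tau})$. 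Plugging into the Lichnerowicz identity against the cutoff $\eta_R^2$, the boundary term splits into a piece from $c_\infty$, which is controlled by $\n c_\infty=\O_\infty(r^{-1-\tau})$ and therefore vanishes in the limit, and a piece from the $O(r^{-1})$ remainder, which vanishes for the same reason. This gives $\int(|\n u|^2+\tfrac14\scal|u|^2)=0$, hence $\n u=0$. Theorem \ref{thm : better derivative estimates introduction} now delivers weak derivative estimates of degree $1$.

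\textbf{Step 3 (upgrade under a parallel spinor).} If $M$ carries a parallel spinor, I remove the exceptional $p$-values by applying Proposition \ref{prop : better derivative estimates 0 introduction} to the pair $(\D_0,\D_1)=(\D,\D_{T^*M})$, where $\D_{T^*M}$ is the twisted Dirac operator on vector-spinors. The commutation $\n\circ\D^2=\D_{T^*M}^2\circ\n$ and the comparison inequalities in \eqref{commuting} are consequences of parallelness of the background spinor and the standard Weitzenb\"ock identities; and $\ker_{L^2}(\D_{T^*M})\subset\O_\infty(r^{-n})$ is exactly the decay statement already established in Corollary \ref{cor : heat flow ricci flat}(i). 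Proposition \ref{prop : better derivative estimates 0 introduction}(i) then promotes the degree-$1$ estimates to the full (non-weak) form.

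\textbf{Main obstacle.} The delicate point is Step 2: the hypothesis $u=o(r)$ is weak enough that a naive cutoff in Lichnerowicz leaves boundary integrals of order $R^{n-2}$, which do not vanish. The crux is to first invoke the integer-rate asymptotic expansion provided by ALE elliptic theory to isolate the constant leading term $c_\infty$, whose covariant derivative $\n c_\infty=\on c_\infty+\O_\infty(r^{-1-\tau})=\O_\infty(r^{-1-\tau})$ is strictly better than the generic $O(r^{-1})$ bound and just integrable against the sphere of radius $R$. Without this refinement the Bochner argument does not close.
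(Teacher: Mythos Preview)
Your Steps 1 and 3 are essentially the paper's arguments: the Lichnerowicz formula $\D^2=\n^*\n+\tfrac14\scal$ with $\scal\ge 0$ forces $\ker_{L^2}(\D^2)=\{0\}$ (the paper packages this into Corollary \ref{Cor : improved derivative estimates} and then quotes Remark \ref{rmk : Euclidean heat kernel estimates}), and for the parallel-spinor upgrade the paper also passes through $\D_{T^*M}$ via the commutation $\D_{T^*M}\circ\n=\n\circ\D$ (Lemma \ref{lem : commuting Dirac}), the two inequalities of Lemma \ref{lem : estimating Dirac}, and the kernel decay of Proposition \ref{prop: kernel twisted dirac}, feeding into Proposition \ref{prop : better derivative estimates 0 introduction}.

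Step 2, however, does not close. The issue is precisely the one you single out as the ``main obstacle'', but your proposed fix is not strong enough. With $u=c_\infty+o(1)$ bounded and $\n u\in\O_\infty(r^{-1-\tau})$, the cutoff error in the Lichnerowicz identity obeys
\[
\Bigl|\,2\Re\!\int \eta_R\,\langle\n u,\, d\eta_R\otimes u\rangle\,\Bigr|
\;\lesssim\;\frac{1}{R}\int_{R\le \rho\le 2R}|\n u|\,|u|\,dV
\;\lesssim\; R^{\,n-2-\tau},
\]
which diverges for every $n\ge 3$ as soon as $\tau<n-2$; no lower bound on $\tau$ is assumed. The refinement $\n c_\infty\in\O_\infty(r^{-1-\tau})$ does not save you, since it is paired against the bounded factor $|u|\sim 1$ over a sphere of area $\sim R^{n-1}$. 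So the claim that this term is ``just integrable against the sphere of radius $R$'' is false in general, and the Bochner-with-cutoff argument cannot be pushed through.

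The paper avoids this entirely. Instead of integrating Lichnerowicz against a cutoff, it invokes Corollary \ref{Cor : improved derivative estimates}, whose proof uses the \emph{pointwise} Bochner identity
\[
\Delta|u|^2=-|\n u|^2-\tfrac14\scal\,|u|^2\le 0
\]
and then applies the maximum principle to the bounded function $|u|^2$ on a manifold with \emph{one end} (this hypothesis is added explicitly in Theorem \ref{thm : heat kernel Dirac}). That yields $\n u\equiv 0$ with no boundary term to control, and Theorem \ref{thm : better derivative estimates introduction} then gives the weak degree-$1$ estimates. The correct repair of your Step 2 is therefore to replace the cutoff integration by this subharmonicity/maximum-principle argument and to record the one-end assumption.
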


\subsection{Structure of the paper}

The paper is structured as follows: In Section \ref{sec: Elliptic Estimates}, we discuss elliptic estimates for Dirac type operators on ALE manifolds.
In particular, we state scale-broken estimates for Dirac type operators, which are analogous to the corresponding results of \cite{Bartnik1986} for the Laplacian. 
These play a pivotal role for developing derivative estimates under heat flows.

Section \ref{sec: long time estimates} is the technical core of the paper and splits into three subsections. In Subsection \ref{subsec: heat kernel estimates}, we prove Theorem \ref{thm: main heat kernel estimate}.
In Subsection \ref{subsec: commuting operators}, we prove a general decay result for first order differential operators, which admit good commutation properties, composed with heat flows. 
Subsections \ref{subsec: derivative estimates} and \ref{subsec: improved derivative estimates}
we combine the results of the previous subsections with elliptic estimates and isomorphism properties of Dirac type operators on weighted Sobolev spaces to prove Theorem \ref{thm : Derivative estimates introduction}, Proposition \ref{prop : better derivative estimates 0 introduction}, Theorem \ref{thm : better derivative estimates introduction} and Corollary \ref{cor : better derivative estimates} and.

The main result of Section \ref{sec : decay harmonic forms} is Proposition \ref{prop : decay harmonic forms}, which proves an improved decay for harmonic $k$-forms on ALE manifolds, especially for $k\neq 1,n-1$. The result follows from elliptic theory, combined with a careful analysis of closed and coclosed forms on Euclidean space, carried out in Subsection \ref{subsec : decay harmonic forms}. 
In the final two Sections \ref{sec : geometric operators 1} and \ref{sec : geometric operators 2}, we are going to apply all these results to natural geometric diffential operators and prove the results in Subsection \ref{subsubsec : geometric operators}.

\vspace{3mm}
\noindent\textbf{Acknowledgements.}
The authors wish to thank Baptiste Devyver for useful discussions.
Part of this work was carried out while the authors were visiting the Institut Mittag-Leffler during the program \emph{General Relativity, Geometry and Analysis} in Fall 2019, supported by the Swedish Research Council under grant no.\ 2016-06596.
We wish to thank the institute for their hospitality and for the excellent working conditions provided. 
The work of the authors is supported through the DFG grant KR 4978/1-1 in the framework of the priority program 2026: \textit{Geometry at infinity}.

\section{Elliptic estimates for Dirac type operators} \label{sec: Elliptic Estimates}

In this section, we collect elliptic estimates in weighted Sobolev norms for Dirac type operators $\D_V$ which are asymptotic to a Euclidean Dirac operator, in the sense of Definition \ref{def: asympt Eucl Dirac}. The key steps are carried out modulo obvious modifications as in \cite{Bartnik1986} for Laplace type operators on AE manifolds and so we omit the details here.

Fix a point $x \in M$ and define the weight function 
\[
	\rho(y) := \sqrt{1 + d(y, x)^2},
\]
where $d$ is the Riemannian distance.
For any ${k \in \N_0}$, ${p \in [1, \infty)}$ and any ${\delta \in \R}$, the weighted Sobolev space $W_\delta^{k,p}(M)$ is the space of $V$-valued sections $u \in W^{k,p}_{loc}(M)$ such that
\[
	\norm{u}_{k, p, \delta} := \sum_{j = 0}^k \left(\int_{M}\abs{(\rho\n)^ju}^p\rho^{-\delta p - n}\dv\right)^{1/p}
\]
is finite. 
We also use the notation $L_\delta^p := W_\delta^{0,p}$. 
For $k\leq l$ and $\delta_1\leq \delta_2$, it is immediate that 
\[
\norm{u}_{k,p,\delta_2}\leq \norm{u}_{l,p,\delta_1}
\]
and for $\delta_1<\delta_2$ and $q\in (p,\infty)$, an application of the H\"{o}lder inequality yields
\begin{align*}
\norm{u}_{k,p,\delta_2}\leq \norm{1}_{0,r,\delta_2-\delta_1}\norm{u}_{k,q,\delta_1}\leq C\norm{u}_{k,q,\delta_1},
\end{align*}
where $r\in (p,\infty)$ was chosen so that $\frac{1}{p} = \frac{1}{q} + \frac{1}{r}$.
We will use both inequalities frequently throughout the paper.
The following standard Sobolev estimate is an analogue of \cite{Bartnik1986}*{Prop.\ 1.6}) which holds for any weight $\de$:
\begin{prop} \label{prop: dirac non Fredholm estimate}
	Let $k,m\in \N$, such that $m\leq k$, $\delta \in \R$ and $p \in (1, \infty)$.
	There is a constants $C > 0$ (depending on $n, \delta$ and $p$), such that
	\[
	\norm{u}_{k, p, \de} 
	\leq C( \norm{\D_V^m u}_{k - m, p, \de-m} +  	\norm{u}_{k-m, p, \de})
	\]
	for all $u \in W^{k, p}_\de(M)$.
\end{prop}
As an easy consequence, we get
\begin{cor} \label{cor: elliptic dirac gradient estimate 3}
	Let $k\in\N_0$ and $p\in (1,\infty)$.
	Then for any $q\in [p,\infty)$ satisfying $\frac{1}{p}-\frac{1}{q}\in \left[0,\frac{k}{n}\right)$, we have
	\[
	\norm{\n^k u}_{L^p(M)} \leq C\left(\norm{\D_V^k u}_{L^p(M)} + \norm{u}_{L^q(M)}\right)
	\]
	for all $u \in W^{k, p}_\de(M)$, where $\de := k-\frac np$. 
\end{cor}
\begin{proof}	Due to the assumptions, we have $q \geq p$ and $\delta \geq -\frac{n}{q}$. Due to Proposition \ref{prop: dirac non Fredholm estimate} and the H\"{o}lder inequality, we then get
	\begin{align*}
		\norm{\nabla^k u}_{L^p(M)}
		\leq \norm{u}_{k,p,\delta}
		&\leq C\left(\norm{\D_V^k u}_{{0,p,\delta-k}} + \norm{u}_{{0,p,\delta}}\right) \\
		&\leq C\left(\norm{\D_V^k u}_{{0,p,\delta-k}} +\norm{u}_{{0,q,-\frac{n}{q}}}\right) \leq C\left(\norm{\D_V^k u}_{L^p(M)} +\norm{u}_{L^q(M)}\right),
	\end{align*}
	which finishes the proof.
\end{proof}
For our purposes, we need a slight refinement of Corollary \ref{cor: elliptic dirac gradient estimate 3}, based on scale-broken elliptic estimates which do only hold for non-exceptional weights.
\begin{definition}
	We define the sets 
	\begin{align*}
		\E_1 
		:= \{k \in \Z \mid k \neq -1, -2, \hdots, 2-n\}, \qquad\qquad
		\E_2 
		:= \E_1 \cup \{2-n\}.
	\end{align*}
\end{definition}
Recall that $\E_2$ is the set of exceptional weights for the Laplace operator, see \cite{Bartnik1986}. This relies on the fact that
\[
	\on^*\on = A_0(\on) \circ A_0(\on): W_\delta'^{k+2,p} \to W_{\delta-2}'^{k,p}
\]
is an isomorphism for  $k \in \N_0$, $p \in (1, \infty)$ and that $\delta \in \R \backslash \E_2$, see  \cite{Bartnik1986}*{Thm.\ 1.7}. Recall that the spaces $W_\delta'^{k,p}$ are weighted Sobolev spaces of functions on $\R^n$ which use the radius function $r(x)=|x|$ instead of $\rho$ as a weight.
Using either $(\on^*\on)^{-1} \circ A_0(\on)$ or $A_0(\on) \circ (\on^*\on)^{-1}$ as an inverse for $A_0(\on)$, one quickly sees that
\[
	A_0(\on) : W_\delta'^{k+1,p} \to W_{\delta-2}'^{k,p}
\]
is an isomorphism for  $k \in \N_0$, $p \in (1, \infty)$ and that $\delta \in \R \backslash \E_1$.
Using this fact, one deduces as in \cite{Bartnik1986}*{Thm.\ 1.10} the following scale-broken Sobolev estimate:
\begin{prop} \label{prop: dirac Fredholm estimate}
Let $k,m\in \N$, such that $m\leq k$ and $\delta, \hdots, \de-(m-1) \in \R \backslash \E_1$ and let $p \in (1, \infty)$.
There are constants $C > 0$ and $R > 0$ (depending on $n, \delta$ and $p$), such that
\[
	\norm{u}_{k, p, \de} 
		\leq C \norm{\D_V^m u}_{k - m, p, \de-m} + C \norm{u}_{L^p(B_R)}
\]
for all $u \in W^{k, p}_\de(M)$.
\end{prop}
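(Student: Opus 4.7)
The plan is to establish the case $m=1$ first, by combining the isomorphism of $\D_\infty$ from Proposition~\ref{prop: isom dirac} on the asymptotic cone with standard interior elliptic regularity for $\D_V$, and then to iterate to obtain the estimate for general $m\leq k$.

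For $m=1$, fix a smooth cutoff $\chi$ on $M$ with $\chi \equiv 0$ on a ball $B_{R_0}$ and $\chi \equiv 1$ outside $B_{2R_0}$, with $\supp \chi \subset M_\infty$. Using the trivialization $\Phi$ of Assumption~\ref{ass: V}, the section $\chi u$ may be identified (after extension by zero) with a compactly supported section on $\R^n_*/\Gamma$ to which Proposition~\ref{prop: isom dirac} applies at the weight $\delta\in \R\setminus\E_1$:
\[
	\norm{\chi u}'_{k,p,\delta} \leq C\,\norm{\D_\infty(\chi u)}'_{k-1,p,\delta-1}.
\]
Decompose $\D_\infty(\chi u)=\chi\,\D_V u + [\D_V,\chi]u + (\D_\infty-\D_V)(\chi u)$. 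The commutator $[\D_V,\chi]$ is a zeroth-order differential operator supported in the annulus $B_{2R_0}\setminus B_{R_0}$. The error term $(\D_V-\D_\infty)(\chi u)$ has, by \eqref{eq: decay A}--\eqref{eq: decay B}, leading coefficient of decay $\O_\infty(r^{-\tau})$ and zeroth-order coefficient of decay $\O_\infty(r^{-1-\tau})$, so on $\supp\chi$ we obtain
\[
	\norm{(\D_V-\D_\infty)(\chi u)}'_{k-1,p,\delta-1} \leq C R_0^{-\tau}\,\norm{\chi u}'_{k,p,\delta},
\]
which for $R_0$ sufficiently large absorbs into the left-hand side.

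The remaining pieces (the commutator contribution and $(1-\chi)u$) are supported in a fixed compact subset of $M$, where the weighted norms are equivalent to the ordinary Sobolev norms. Standard interior elliptic regularity for the elliptic operator $\D_V$ (its principal symbol squares to $|\xi|^2\id$ since $\D_V^2=\Delta_V$) then controls $\norm{u}_{W^{k,p}(B_{2R_0+1})}$ by $\norm{\D_V u}_{W^{k-1,p}(B_R)}+\norm{u}_{L^p(B_R)}$ for a slightly enlarged $R$. Combining these two pieces yields the case $m=1$. For $m>1$, the hypothesis that $\delta,\delta-1,\ldots,\delta-(m-1)$ all lie in $\R\setminus\E_1$ allows the $m=1$ estimate to be iterated, applied to $\D_V^j u$ at weight $\delta-j$ and Sobolev order $k-j$, telescoping to produce the desired bound plus interior $L^p$ remainders of $\D_V^j u$ on compact balls; these remainders are in turn absorbed into $\norm{u}_{L^p(B_R)}$ by a further application of interior elliptic regularity for the elliptic operators $\D_V^j$. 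The main obstacle will be the absorption step: $\D_V-\D_\infty$ must be shown to be small as an operator between the correct weighted Sobolev spaces uniformly on the entire support of $\chi$, which requires the full strength of the $\O_\infty$ decay to dominate all derivatives of the perturbation up to order $k-1$, not merely its pointwise size.
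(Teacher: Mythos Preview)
Your proposal is correct and follows essentially the same approach as the paper: decompose $u$ via a cutoff into an exterior piece on which Proposition~\ref{prop: isom dirac} applies (absorbing the $\D_V-\D_\infty$ perturbation using the full $\O_\infty(r^{-\tau})$ decay in all derivatives up to order $k-1$), handle the interior piece by standard elliptic regularity on a compact ball, and then iterate in $m$. The paper uses the complementary cutoff convention ($u_\infty=(1-\chi_R)u$) and is equally terse about the iteration step, so your treatment is at the same level of detail.
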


We get the following important gradient estimate:
\begin{cor} \label{cor: elliptic dirac gradient estimate}

 Let $k\in\N$, $p\in (1,\infty)$ and assume that $k-\frac{n}{p}\notin\N_0$.
Then
\[
	\norm{\n^k u}_{L^p(M)} \leq C\left(\norm{\D_V^k u}_{L^p(M)} + \norm{u}_{L^p(B_R)}\right)
\]
for all $u \in W^{k, p}_\de(M)$, with $\de := k-\frac np$.
\end{cor}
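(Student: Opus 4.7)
The plan is to apply Proposition \ref{prop: dirac Fredholm estimate} with $m=k$ and the particular weight $\delta := k-\tfrac{n}{p}$, chosen so that the weighted norms collapse to the ordinary $L^p$-norms at top order. Indeed, with this choice,
\[
\int_M |(\rho\n)^k u|^p \, \rho^{-\delta p - n}\,dx \sim \int_M |\n^k u|^p \, \rho^{(k-\delta)p - n}\,dx = \int_M |\n^k u|^p\,dx,
\]
since $(k-\delta)p-n=0$, while the norm on the right of the proposition is $\norm{\D_V^k u}_{0,p,\delta-k}$ with $\delta-k=-n/p$, giving exactly $\norm{\D_V^k u}_{L^p(M)}^p$. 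Modulo absorbing lower-order commutator terms coming from $(\rho\n)^j$ versus $\rho^j\n^j$ (all controlled since $\rho\geq 1$ and $\abs{\n\rho}$ is bounded), one therefore has $\norm{\n^k u}_{L^p(M)} \leq C\norm{u}_{k,p,\delta}$ and $\norm{\D_V^k u}_{0,p,\delta-k}\leq C\norm{\D_V^k u}_{L^p(M)}$.

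The substantive step is to verify the admissibility condition $\delta-j \in \R\setminus\E_1$ for $j=0,1,\ldots,k-1$, using the hypothesis $k-\tfrac{n}{p}\notin\N_0$. Recall $\R\setminus\E_1$ consists of all non-integers together with $\{-1,-2,\ldots,2-n\}$. There are two cases. First, if $\tfrac{n}{p}\notin\Z$, then $\delta-j=k-j-\tfrac{n}{p}$ is non-integer for every $j$ and lies in $\R\setminus\E_1$. Second, if $\tfrac{n}{p}=m\in\Z_{>0}$, the assumption $k-m\notin\N_0$ forces $m\geq k+1$; then
\[
\delta-j = k-j-m \in \{1-m,\, 2-m,\,\ldots,\, k-m\},
\]
and each element is a negative integer $\geq 1-m$. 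Since $p>1$ implies $m=\tfrac{n}{p}<n$, hence $m\leq n-1$, we obtain $1-m\geq 2-n$, so all of these values lie in $\{-1,-2,\ldots,2-n\}\subseteq \R\setminus\E_1$, as required.

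With admissibility in hand, Proposition \ref{prop: dirac Fredholm estimate} directly yields
\[
\norm{u}_{k,p,\delta} \leq C\norm{\D_V^k u}_{0,p,\delta-k} + C\norm{u}_{L^p(B_R)},
\]
and the norm equivalences above give
\[
\norm{\n^k u}_{L^p(M)} \leq C\norm{u}_{k,p,\delta} \leq C\norm{\D_V^k u}_{L^p(M)} + C\norm{u}_{L^p(B_R)},
\]
which is the asserted estimate.

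I expect the main (essentially the only) obstacle to be the case split for $\tfrac{n}{p}\in\Z$ versus $\tfrac{n}{p}\notin\Z$ when checking admissibility; the hypothesis $k-\tfrac{n}{p}\notin\N_0$ is calibrated exactly so that every shift $\delta-j$ lands either off the integers or in the allowed window $\{-1,\ldots,2-n\}$, and it is this interplay between the integer constraint of the exceptional set $\E_1$ and the Sobolev exponent that dictates the hypothesis of the corollary. The norm identifications and the control of the lower-order $\rho\n$-commutator terms are routine, using only $\rho\geq 1$ and the boundedness of derivatives of $\rho$.
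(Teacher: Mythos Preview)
Your proposal is correct and follows essentially the same approach as the paper: set $\delta = k - \tfrac{n}{p}$, check that $\delta,\ldots,\delta-(k-1)\in\R\setminus\E_1$, and invoke Proposition~\ref{prop: dirac Fredholm estimate}. Your case split on whether $\tfrac{n}{p}\in\Z$ is more explicit than the paper's one-line justification (``since $1-\tfrac{n}{p}>1-n$''), but the content is the same; your remark about the $(\rho\nabla)^j$ versus $\rho^j\nabla^j$ commutators is a fair point that the paper absorbs into the definition of the weighted norm.
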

\begin{proof}
For $\de := k-\frac np$, we have
\[
	\norm{\n^k u}_{L^p(M)} \leq \norm{u}_{k,p,\de},
	\qquad 	\norm{u}_{L^p(M)} = \norm{u}_{0,p,\de-k},
\]
for any section $u$.
Due to the assumption, note that
\[
	\delta = k - \frac np, \hdots, \de - (k-1) = 1 - \frac np \in \R \backslash \E_1,
\]
since $1 - \frac np > 1-n$.
Thus, Proposition \ref{prop: dirac Fredholm estimate} implies the statement.
\end{proof}
A direct consequence, which turns out to be useful, is:
\begin{cor} \label{cor: elliptic dirac gradient estimate 2}
Let $k \in \N_0$, $p \in (1,\infty)$ and assume that $k - \frac np \notin \N_0$.
Then for any $q \in [p,\infty]$, we have
\[
	\norm{\n^k u}_{L^p(M)} \leq C\left(\norm{\D_V^k u}_{L^p(M)} + \norm{u}_{L^q(B_R)}\right)
\]
for all $u \in W^{k, p}_\de(M)$, with $\de := k-\frac np$.
\end{cor}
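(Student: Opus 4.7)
The plan is simple, as this corollary is a direct consequence of Corollary \ref{cor: elliptic dirac gradient estimate} combined with a standard local H\"older inequality.

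First, I would apply Corollary \ref{cor: elliptic dirac gradient estimate}, which under exactly the same hypotheses on $k$ and $p$ produces the estimate
\[
	\norm{\n^k u}_{L^p(M)} \leq C\bigl(\norm{\D_V^k u}_{L^p(M)} + \norm{u}_{L^p(B_R)}\bigr).
\]
So the only task is to dominate $\norm{u}_{L^p(B_R)}$ by $\norm{u}_{L^q(B_R)}$ for $q\in[p,\infty]$.

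For this, I would use H\"older's inequality on the bounded domain $B_R\subset M$. Writing $\frac1p=\frac1q+\frac1s$ with $s\in[1,\infty]$ determined by $p$ and $q$ (with the usual conventions when $q=\infty$, in which case $s=p$, or when $q=p$, in which case $s=\infty$), one has
\[
	\norm{u}_{L^p(B_R)}\leq \Vol(B_R)^{1/s}\,\norm{u}_{L^q(B_R)}.
\]
Since $B_R$ has finite volume (it is relatively compact in $M$), the constant $\Vol(B_R)^{1/s}$ is finite and depends only on $R$, $p$, $q$. Substituting into the estimate from Corollary \ref{cor: elliptic dirac gradient estimate} and absorbing this constant into $C$ yields the claimed inequality.

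There is no genuine obstacle here; the only minor subtlety is handling the endpoint $q=\infty$, for which one uses $\norm{u}_{L^p(B_R)}\leq \Vol(B_R)^{1/p}\norm{u}_{L^\infty(B_R)}$ in place of the general H\"older step. Everything else is verbatim Corollary \ref{cor: elliptic dirac gradient estimate}.
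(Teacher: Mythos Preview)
Your proposal is correct and matches the paper's intent: the paper states this corollary as ``a direct consequence'' of Corollary~\ref{cor: elliptic dirac gradient estimate} without writing out a proof, and your H\"older-on-$B_R$ argument is precisely the intended step.
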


\section{Long-time estimates for heat flows of Schr\"{o}dinger operators}\label{sec: long time estimates}

In this section, we prove our main general results on the heat kernel and its derivatives: Theorem \ref{thm: main heat kernel estimate}, Theorem \ref{thm : Derivative estimates introduction}, Proposition \ref{prop : better derivative estimates 0 introduction} and Theorem \ref{thm : better derivative estimates introduction}.

\subsection{Heat kernel estimates}\label{subsec: heat kernel estimates}
We work now under the assumptions of Theorem \ref{thm: main heat kernel estimate}, which is the result we prove in this subsection.
In particular, we have assumed that
\[
	\ker\left(\Delta_V\right) \subset \O_\infty\left(r^{-n}\right) \subset L^p(M)
\]
for each $p \in (1, \infty)$.
The key observation for the $L^2$-kernel, which is crucial for our analysis, is the following:
\begin{lemma} \label{le: projection onto kernel}
The $L^2$-projection 
\[
	\Pi : L^p(M) \to \ker_{L^2}(\Delta_V) \subset L^q(M)
\]
is a finite range operator, in particular it is bounded, for all $p \in [1, \infty)$ and $q \in (1, \infty]$.
\end{lemma}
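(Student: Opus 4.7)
The plan is to reduce the claim to two facts: (i) $\ker_{L^2}(\Delta_V)$ is finite dimensional, and (ii) every element of $\ker_{L^2}(\Delta_V)$ lies in $L^s(M)$ for all $s \in (1, \infty]$. Granted these, expanding $\Pi$ in an $L^2$-orthonormal basis of its range reduces the boundedness claim to H\"{o}lder's inequality.

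For (i), I would invoke the Fredholm theory for $\Delta_V$ on weighted Sobolev spaces. Since $\Delta_V$ is asymptotic to a Euclidean Laplacian, a straightforward adaptation of Bartnik's weighted elliptic estimates \cite{Bartnik1986} (in the spirit of Proposition \ref{prop: dirac Fredholm estimate}, with the potential $\mR$ absorbed by the decay \eqref{eq: R convergence}) shows that $\Delta_V : W^{2,2}_\delta(M) \to L^2_{\delta-2}(M)$ is Fredholm for $\delta \notin \E_2$. Elliptic regularity combined with the hypothesis $\ker_{L^2}(\Delta_V) \subset \O_\infty(r^{-n})$ shows that any $u \in \ker_{L^2}(\Delta_V)$ lies in $W^{2,2}_\delta$ for every $\delta > -n$; picking such a $\delta$ outside $\E_2$ exhibits $\ker_{L^2}(\Delta_V)$ as a subspace of the kernel of the Fredholm operator above, and hence as finite dimensional.

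For (ii), every $e \in \ker_{L^2}(\Delta_V)$ is smooth and bounded on compact sets by elliptic regularity, and satisfies $|e| \leq Cr^{-n}$ at infinity. Integrating in polar coordinates on an end,
\[
	\int_1^\infty r^{-ns} \cdot r^{n-1}\, dr < \infty \quad \Longleftrightarrow \quad s > 1,
\]
so $e \in L^s(M)$ for all $s \in (1, \infty]$; the failure at $s = 1$ is logarithmic, which is precisely why the lemma restricts the codomain to $q > 1$. Fixing an $L^2$-orthonormal basis $\{e_1, \ldots, e_N\}$ of $\ker_{L^2}(\Delta_V)$, I would write
\[
	\Pi u = \sum_{i=1}^N (u, e_i)_{L^2}\, e_i,
\]
and note that for $u \in L^p$ with $p \in [1, \infty)$ the conjugate exponent $p'$ lies in $(1, \infty]$, so by (ii) and H\"{o}lder's inequality $|(u, e_i)_{L^2}| \leq \norm{u}_{L^p}\norm{e_i}_{L^{p'}}$. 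Since $e_i \in L^q$ for all $q \in (1, \infty]$ as well, the bound $\norm{\Pi u}_{L^q} \leq C\norm{u}_{L^p}$ follows immediately.

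The only genuine subtlety is the Fredholm statement in step (i): one must verify that Bartnik's framework for the flat Laplacian transfers to $\Delta_V$. This is routine because $\mR$ is a zeroth-order perturbation decaying faster than $r^{-2}$ and is therefore subcritical relative to the scaling of the weighted norms, but it requires a little care to state cleanly for the vector bundle $V$ with its $\Gamma$-equivariance at infinity.
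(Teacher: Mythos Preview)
Your proposal is correct and follows essentially the same approach as the paper: finite dimensionality of $\ker_{L^2}(\Delta_V)$ via Bartnik-type Fredholm theory, an $L^2$-orthonormal basis expansion of $\Pi$, and H\"{o}lder's inequality using that each $e_i \in L^s$ for all $s \in (1,\infty]$ by the decay hypothesis. The paper's proof is terser --- it simply cites \cite{Bartnik1986}*{Prop.\ 2.2} for step (i) and records the H\"{o}lder estimate in one line --- but the content is the same; your step (ii) is stated just before the lemma in the paper rather than inside the proof. One minor remark: in your argument for (i) you invoke the $\O_\infty(r^{-n})$ hypothesis to place kernel elements in $W^{2,2}_\delta$ for $\delta > -n$, but this is not actually needed for finite dimensionality --- the scale-broken estimate already gives a compact embedding of the kernel into $L^2(B_R)$ for any weight, so the unit ball in the kernel is compact regardless.
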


\begin{proof}
That the $L^2$-kernel is finite dimensional follows from standard Fredholm theory, for example, by a slight generalization of \cite{Bartnik1986}*{Prop.\ 2.2}.
Let $e_1, \hdots, e_m$ be an $L^2$-orthonormal basis of $\ker_{L^2}(\Delta_V)$.
Then we get the estimate
\begin{align*}
	\norm{\Pi(u)}_{L^q} 
		\leq \sum_{i = 1}^m \abs{\ldr{u, e_i}_{L^2}}\norm{e_i}_{L^q} 
		\leq \sum_{i = 1}^m \norm{u}_{L^p} \norm{e_i}_{L^{p'}} \norm{e_i}_{L^q},
\end{align*}
which is bounded since $p' = \frac p{p-1} > 1$.
\end{proof}
We will work with the operator
\[
	\Delta_V + \a \Pi,
\]
which is self-adjoint on $L^2$ and has trivial kernel, for each $\a > 0$.
The main heat kernel estimate for our analysis is the following:

\begin{thm}[Heat kernel estimate] \label{thm: heat kernel estimate}
There is an $\a_0 > 0$, such that for each $\a \leq \a_0$ and for each $1 < p \leq q < \infty$, we have
\[
	\norm{e^{-t(\Delta_V + \a \Pi)}}_{p \to q} \leq Ct^{-\frac n2 \left(\frac1p - \frac1q\right)}
\]
for some $C = C(p, q)$.
\end{thm}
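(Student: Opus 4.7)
The starting observation is that $\Pi$, as the orthogonal projection onto $\ker_{L^2}(\Delta_V)$, commutes with $\Delta_V$. Since in addition $e^{-t\Delta_V}\Pi = \Pi$, the heat semigroup of $\Delta_V+\a\Pi$ splits as
\[
	e^{-t(\Delta_V + \a\Pi)} = e^{-t\Delta_V}(I-\Pi) + e^{-\a t}\Pi.
\]
By Lemma \ref{le: projection onto kernel}, $\Pi: L^p \to L^q$ is bounded, so the second piece satisfies $\norm{e^{-\a t}\Pi u}_{L^q}\leq Ce^{-\a t}\norm{u}_{L^p}$, which is easily dominated by $Ct^{-\frac n2(\frac1p-\frac1q)}$ (trivially on $t \leq 1$, and by exponential versus polynomial decay on $t \geq 1$). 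The whole theorem therefore reduces to an Euclidean-type bound for $e^{-t\Delta_V}(I-\Pi)$.

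For the short-time regime $t \in (0,1]$, the plan is a Varopoulos--Nash-type argument. The Sobolev inequality $\norm{u}_{L^{2n/(n-2)}} \leq C\norm{\n u}_{L^2}$ holds on ALE manifolds with $n\geq 3$, and the pointwise Kato inequality $|\n|u||\leq |\n u|$ extends it to sections of $V$. Combining this with a Hardy inequality to absorb the decaying potential $\mR\in\O_\infty(r^{-2-\tau})$ and the hypothesis $(\Delta_V u,u)_{L^2}\geq 0$, one obtains a Nash inequality of the form
\[
	\norm{u}_{L^2}^{2+4/n}\leq C\bigl((\Delta_V u,u)_{L^2} + C_0\norm{u}_{L^2}^2\bigr)\norm{u}_{L^1}^{4/n},
\]
from which the standard Nash--Moser semigroup argument gives $\norm{e^{-t\Delta_V}}_{1\to 2}\leq Ct^{-n/4}$ on $(0,1]$ (with a harmless exponential factor). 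Self-adjointness plus semigroup composition upgrade this to $\norm{e^{-t\Delta_V}}_{1\to\infty}\leq Ct^{-n/2}$ on $(0,1]$, and Riesz--Thorin interpolation covers all $1<p\leq q<\infty$.

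For the large-time regime $t\geq 1$, the delicate point is that $\Delta_V$ has no spectral gap above zero on $\ker_{L^2}(\Delta_V)^\perp$, so $L^2$-positivity gives no decay by itself. The plan is to exploit that $(I-\Pi)u \in \ker_{L^2}(\Delta_V)^\perp$ is preserved by the heat flow, and to combine the short-time ultracontractivity with the Fredholm and weighted elliptic estimates of Section \ref{sec: Elliptic Estimates}. These estimates turn $L^2$-orthogonality to the finite-dimensional kernel into quantitative spatial decay rates via the isomorphism property of $\D_\infty$ (Proposition \ref{prop: isom dirac}), playing the role of Devyver's criterion \cite{Devyver2014}*{Thm.\ 1.2.1}: one verifies its hypotheses for the operator $\Delta_V + \a\Pi$ by noting that, since $\ker_{L^2}(\Delta_V)\subset\O_\infty(r^{-n})$, the non-local perturbation $\a\Pi$ is a rapidly decaying correction and does not alter the asymptotic Schr\"{o}dinger structure at infinity.

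The principal obstacle is this long-time step: reconciling the non-local nature of $\a\Pi$ with the local Schr\"{o}dinger framework required by Devyver-type arguments, and ensuring that $\a_0>0$ can be chosen small enough so that $\Delta_V+\a\Pi$ remains non-negative with trivial kernel and both regimes match uniformly. Once the short-time and large-time bounds are glued, Riesz--Thorin interpolation yields the full $1<p\leq q<\infty$ range claimed in the theorem.
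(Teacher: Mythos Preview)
Your splitting $e^{-t(\Delta_V+\a\Pi)} = e^{-t\Delta_V}(I-\Pi) + e^{-\a t}\Pi$ is correct, and the $\Pi$-piece is indeed harmless by Lemma~\ref{le: projection onto kernel}. The short-time bound via a Nash-type argument is also fine and standard. The reduction therefore shows that Theorem~\ref{thm: heat kernel estimate} is \emph{equivalent} to the almost Euclidean estimate of Theorem~\ref{thm: main heat kernel estimate}; in the paper the logic runs the other way, but that is not an objection in itself.

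The gap is the long-time step. You correctly observe that Devyver's criterion \cite{Devyver2014}*{Thm.\ 1.2.1} requires a \emph{local} Schr\"{o}dinger potential, whereas $\a\Pi$ is a finite-rank, genuinely non-local operator; saying that its integral kernel decays like $r^{-n}$ in each variable does not make it fit that framework. You name this as the ``principal obstacle'' but do not resolve it, so the proposal stops short of a proof. Invoking the Dirac Fredholm theory of Section~\ref{sec: Elliptic Estimates} and Proposition~\ref{prop: isom dirac} is also misplaced here: Theorem~\ref{thm: heat kernel estimate} is stated for general Schr\"{o}dinger operators asymptotic to a Euclidean Laplacian, with no assumption that $\Delta_V$ be the square of a Dirac operator, so those tools are not available in this generality.

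What the paper actually does is precisely to make a Devyver-type resolvent argument work with the non-local perturbation. One writes $\Delta_V+\a\Pi+\lambda = (\H+\lambda)(1-\T_\lambda)$ with $\H=\n^*\n+\mR_+$ and $\T_\lambda=(\H+\lambda)^{-1}(\mR_--\a\Pi)$, and the crux is showing that $(1-\T_\lambda)^{-1}$ is bounded on $L^p$ uniformly in $\lambda$ (Lemma~\ref{le: compensating resolvent}). This is done by a spectral analysis of the conjugated operator $\A_\lambda=\H_\lambda^{-1/2}(\mR_--\a\Pi)\H_\lambda^{-1/2}$ on $L^2$: one shows $\A_\lambda$ is compact with eigenvalues in $(-1+\e,1-\e)$, and the lower bound is exactly where the smallness condition $\a\leq\a_0$ enters. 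Your proposal does not supply a substitute for this step.
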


Our main decay result on the heat kernel would follow as a simple corollary:

\begin{proof}[Proof of Theorem \ref{thm: main heat kernel estimate}, assuming Theorem \ref{thm: heat kernel estimate}]
We only need to note that
\[
	e^{-t(\Delta_V + \a \Pi)} u = e^{-t\Delta_V} u
\]
for all $u \in L^p$, which are $L^2$-orthogonal to $\ker_{L^2}(\Delta_V)$, i.e.\ those $u$ with $\Pi (u) = 0$.
\end{proof}
\begin{remark}
	As we pointed out in the introduction, Theorem \ref{thm: heat kernel estimate} follows from \cite{Devyver2018}*{Thm.\ 1.7} under the assumption $n\geq 9$. 
	Our proof follows a very similar approach and so we will not present details everywhere. 
	We will later mention why the assumption  $n\geq 9$ can be dropped.
\end{remark}
\subsubsection{The resolvent estimate}

The main ingredient needed in order to prove Theorem \ref{thm: heat kernel estimate} is the following resolvent estimate:

\begin{prop}[Resolvent estimate] \label{prop: resolvent}
For each $1 < p \leq q < \infty$, we have
\[
	\norm{(\Delta_V + \a \Pi + \lambda)^{-1}}_{p \to q} \leq C\lambda^{-\frac n2\left(\frac1p - \frac1q\right) - 1}
\]
for all $\lambda > 0$.
\end{prop}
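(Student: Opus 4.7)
The plan is to deduce the estimate from three ingredients: the spectral gap produced by the kernel-projection $\alpha\Pi$, the weighted elliptic theory of Section \ref{sec: Elliptic Estimates} adapted to the Schr\"odinger operator $\Delta_V$, and a scaling argument on the ALE end that extracts the sharp $\lambda$-dependence.

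I would begin with the base case $p = q = 2$. Since $\Delta_V \geq 0$ by hypothesis and $\Pi$ is the $L^2$-orthogonal projection onto the finite-dimensional space $\ker_{L^2}(\Delta_V)$, the operator $\Delta_V + \alpha\Pi$ is self-adjoint, non-negative, and has trivial $L^2$-kernel. Choosing $\alpha > 0$ smaller than both the bottom of the essential spectrum of $\Delta_V$ and its first positive eigenvalue, its spectrum is contained in $[c_0, \infty)$ for some $c_0 > 0$, and the spectral theorem yields $\|(\Delta_V + \alpha\Pi + \lambda)^{-1}\|_{L^2 \to L^2} \leq (c_0 + \lambda)^{-1}$.

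Next I would upgrade to general $1 < p \leq q < \infty$. Setting $u := (\Delta_V + \alpha\Pi + \lambda)^{-1} f$ and rearranging as $(\Delta_V + \lambda) u = f - \alpha \Pi u$, the decay assumption $\ker_{L^2}(\Delta_V) \subset \O_\infty(r^{-n})$ together with Lemma \ref{le: projection onto kernel} ensures that $\Pi u$ lies in every weighted Sobolev space, with norm controlled by $\|u\|_{L^p}$. The weighted elliptic estimates of Section \ref{sec: Elliptic Estimates} (applied to the Schr\"odinger operator, analogously to Proposition \ref{prop: dirac non Fredholm estimate}) then control $u$ in $W^{2,p}_\delta$ for a suitable weight $\delta$, and a weighted Sobolev embedding converts the two-derivative gain into the desired $L^p \to L^q$ improvement. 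The low-frequency contribution is dominated by the $L^2$ spectral-gap bound.

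To pin down the sharp $\lambda$-dependence, I would perform the parabolic rescaling $\tilde x = \sqrt\lambda\, x$ on the ALE end, under which $\Delta_V + \lambda$ becomes $\lambda(\tilde \Delta + 1)$ on a geometry that has been $\sqrt\lambda$-dilated (and remains uniformly asymptotically Euclidean). Since $L^p$-norms scale as $\lambda^{-n/(2p)}$, a rescaled uniform resolvent bound produces the factor $\lambda^{-1} \cdot \lambda^{n/(2p) - n/(2q)} = \lambda^{\frac n2(\frac 1p - \frac 1q) - 1}$. On the compact core the $L^2$ bound combined with local elliptic theory yields at least as much decay, and a cutoff partition of unity glues the two regions. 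The integer $m \in \N_0$ appearing in the statement most plausibly accommodates an iteration of the resolvent identity needed to absorb the $\alpha \Pi$-contribution to sufficiently high weighted-Sobolev order.

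The main obstacle, as I see it, is the non-locality of $\Pi$: it does not commute with cutoff functions and does not rescale naturally, so one cannot treat it as a purely local perturbation. This forces the argument to keep $\Delta_V + \alpha \Pi + \lambda$ intact throughout and to lean on Lemma \ref{le: projection onto kernel}, together with the decay hypothesis on $\ker_{L^2}(\Delta_V)$, so that all $\Pi$-terms, while non-local, are arbitrarily well-behaved in every weighted norm and can be absorbed as lower-order error at each stage of the elliptic and scaling arguments. A secondary difficulty is verifying uniformity of the implicit constants across both the $\lambda \to 0$ regime (where the spectral gap is essential) and the $\lambda \to \infty$ regime (where the rescaling argument takes over), which I would handle by an interpolation matched at $\lambda \sim 1$.
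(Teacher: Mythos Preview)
Your argument has a genuine gap at its very first step. You claim that for suitable $\alpha>0$ the operator $\Delta_V+\alpha\Pi$ has spectrum contained in $[c_0,\infty)$ for some $c_0>0$. This is false on an ALE manifold: since $\Delta_V$ is asymptotic to a Euclidean Laplacian, its essential spectrum is $[0,\infty)$, and adding the finite-rank (hence compact) perturbation $\alpha\Pi$ does not move the essential spectrum. Thus $\inf\sigma(\Delta_V+\alpha\Pi)=0$ and no spectral gap is available. The $L^2\to L^2$ bound you actually get from the spectral theorem is only $\lambda^{-1}$, which is the target estimate for $p=q=2$ but provides no extra room to control the $\lambda\to 0$ regime in the $L^p\to L^q$ step. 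Since your later plan explicitly relies on the spectral gap to dominate the low-frequency contribution (``the low-frequency contribution is dominated by the $L^2$ spectral-gap bound''), and since the rescaling heuristic degenerates precisely as $\lambda\to 0$ (the dilated ALE manifold does not remain uniformly ALE in that limit), the argument does not close.

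The paper's proof proceeds by an entirely different mechanism that avoids any spectral gap. One splits the potential $\mathcal R=\mathcal R_+-\mathcal R_-$ into its non-negative and non-positive parts, sets $\mathcal H:=\nabla^*\nabla+\mathcal R_+$, and writes
\[
(\Delta_V+\alpha\Pi+\lambda)^{-1}=(1-\mathcal T_\lambda)^{-1}(\mathcal H+\lambda)^{-1},\qquad \mathcal T_\lambda:=(\mathcal H+\lambda)^{-1}(\mathcal R_--\alpha\Pi).
\]
The factor $(\mathcal H+\lambda)^{-1}$ obeys the correct $L^p\to L^q$ decay by classical heat-kernel bounds for Schr\"odinger operators with non-negative potential (Lemma \ref{le: resolvent H}); this replaces your rescaling step and is uniform in $\lambda>0$. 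The remaining work (Lemma \ref{le: compensating resolvent}) is to show $(1-\mathcal T_\lambda)^{-1}$ is bounded on $L^p$ uniformly in $\lambda$, which is done via a Neumann series: one proves $\|\mathcal T_\lambda^k\|_{H^1_{0,\lambda}\to H^1_{0,\lambda}}\le(1-\epsilon)^k$ using that the symmetrized operator $\mathcal H_\lambda^{-1/2}(\mathcal R_--\alpha\Pi)\mathcal H_\lambda^{-1/2}$ is compact on $L^2$ with spectrum in $(-1+\epsilon,1-\epsilon)$. The decay hypothesis on $\ker_{L^2}(\Delta_V)$ enters exactly here, to make $\Pi$ a compact (indeed finite-rank) map between the relevant Lebesgue spaces, and the non-negativity of $\Delta_V$ is used to exclude $1$ as an eigenvalue. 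The integer $m$ in the statement is the number of resolvent iterations needed to bridge the gap $\frac1p-\frac1q$ when it exceeds $\frac2n$, not an artefact of absorbing $\alpha\Pi$.
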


As in \cite{Devyver2014}, let us write
\[
	\Delta_V = \n^*\n + \mR_+ - \mR_-,
\]
where $\mR_+$ and $\mR_-$ are non-negative endomorphisms of $V$.
We define
\[
	\H := \n^*\n + \mR_+, 
\]
for any $\lambda > 0$ and write
\begin{align*}
	\Delta_V + \a \Pi + \lambda
		= \H + \lambda + \a \Pi - \mR_- 
		= (\H + \lambda) (1 - \underbrace{(\H+\lambda)^{-1}(\mR_- - \a \Pi)}_{=: \T_\lambda}),
\end{align*}
which implies that
\begin{equation} \label{eq: resolvent identity}
	\left(\Delta_V + \a \Pi + \lambda\right)^{-1} = (1-\T_\lambda)^{-1}(\H + \lambda)^{-1}.
\end{equation}
The resolvent estimate, Proposition \ref{prop: resolvent}, can be divided into two steps, resolvent estimates for $\H$ and proving that $(1-\T_\lambda)^{-1}$ is bounded.
The point is that $\H$ is a positive perturbation of $\n^*\n$, and resolvent estimates for such operators are classical.
We have the following estimate in our situation:

\begin{lemma}[The resolvent estimate for $\H$] \label{le: resolvent H}
For all $1 \leq p \leq q \leq \infty$, we have
\[
	\norm{(\H + \lambda)^{-1}}_{p \to q}
		\leq C\lambda^{\frac n2\left(\frac1p - \frac1q\right) - 1}.
\]
\end{lemma}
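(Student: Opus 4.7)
The plan is to derive the resolvent bound from a heat kernel bound via the Laplace transform identity
\[
(\H + \lambda)^{-1} = \int_0^\infty e^{-t\lambda}\, e^{-t\H}\, dt,
\]
which is valid since $\H$ is non-negative self-adjoint on $L^2(M,V)$. Granted the heat kernel estimate $\norm{e^{-t\H}}_{p\to q}\leq C\,t^{-\frac{n}{2}(\frac{1}{p}-\frac{1}{q})}$, the substitution $s=t\lambda$ immediately yields
\[
\norm{(\H+\lambda)^{-1}}_{p\to q}
\leq C\,\Gamma\!\left(1-\tfrac{n}{2}\left(\tfrac{1}{p}-\tfrac{1}{q}\right)\right)\,\lambda^{\frac{n}{2}(\frac{1}{p}-\frac{1}{q})-1},
\]
in the non-trivial range $\frac{n}{2}(\frac{1}{p}-\frac{1}{q})<1$. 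The cases with exponent $\geq 1$ give either a trivial bound or are recovered by factoring $(\H+\lambda)^{-1}$ through an intermediate $L^r$ for which both segments satisfy the strict inequality.

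The heart of the argument is therefore the heat kernel estimate for $\H$. First I would establish the pointwise semigroup domination
\[
\bigl|e^{-t\H} u\bigr|(x)\leq e^{-t\Delta_0}\,|u|(x),
\]
where $\Delta_0$ is the scalar Laplace--Beltrami operator on $(M,g)$. This is a standard Kato-type inequality: the pointwise Kato bound $\bigl|d|u|\bigr|\leq |\nabla u|$ gives the domination for $e^{-t\nabla^*\nabla}$, while $\mR_+\geq 0$ only improves it further since $\bigl|e^{-t\mR_+/k}u\bigr|\leq |u|$ pointwise, and one combines the two factors via the Trotter product formula. With the domination in hand, it suffices to prove the $L^p\to L^q$ bound for the scalar heat semigroup on $(M,g)$.

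Finally, the scalar heat kernel bound is classical on ALE manifolds. An ALE manifold has bounded geometry, Euclidean volume growth $\Vol(B_r(x))\sim r^n$, and satisfies the Euclidean-type global Sobolev inequality $\norm{u}_{L^{2n/(n-2)}}\leq C\norm{\nabla u}_{L^2}$. By Nash--Moser / Varopoulos--Coulhon--Saloff-Coste theory, these inputs produce the on-diagonal Gaussian bound $k_t^{\Delta_0}(x,y)\leq C\,t^{-n/2}$, which upgrades to the full scale of $L^p\to L^q$ estimates by interpolating the ultracontractive $L^1\to L^\infty$ bound with the $L^p\to L^p$ contraction property. The main obstacle, if any, is to carefully justify the Sobolev inequality in the multi-ended ALE setting, but this is by now well established in the literature cited in the introduction, in particular in the work of Devyver. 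Once these three ingredients---Laplace representation, Kato domination, and scalar heat kernel bound---are assembled, the lemma follows by direct computation.
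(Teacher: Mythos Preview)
Your approach is essentially identical to the paper's: both use the Laplace transform identity $(\H+\lambda)^{-1}=\int_0^\infty e^{-t\lambda}e^{-t\H}\,dt$ together with the heat kernel bound $\norm{e^{-t\H}}_{p\to q}\leq Ct^{-\frac{n}{2}(\frac1p-\frac1q)}$, followed by the substitution $s=t\lambda$. The only difference is that the paper cites this heat kernel bound directly from Devyver \cite{Devyver2014}*{Cor.\ 2.1.1}, whereas you sketch its proof via Kato semigroup domination and the scalar heat kernel bound on ALE manifolds---which is precisely the mechanism behind Devyver's result. You are also slightly more careful than the paper about the divergence of the Gamma integral when $\frac{n}{2}(\frac1p-\frac1q)\geq 1$, noting that one recovers those cases by factoring through an intermediate $L^r$.
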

\begin{proof}
This is immediate from the following computation, where we use \cite{Devyver2014}*{Cor.\ 1}:
\begin{align*}
	\norm{(\H + \lambda)^{-1}}_{p \to q}
		&= \norm{\int_0^\infty e^{-t(\H + \lambda)}dt}_{p \to q} 
			\leq \int_0^\infty \norm{e^{-t(\H + \lambda)}}_{p \to q} dt 
			\leq \int_0^\infty e^{-t\lambda}\norm{e^{-t\H}}_{p \to q} dt \\
		&\leq C \int_0^\infty e^{-t\lambda}t^{-\frac n2\left(\frac1p - \frac1q\right)}dt 
			\leq C \lambda^{\frac n2\left(\frac1p - \frac1q\right)-1} \int_0^\infty e^{-s}s^{\frac n2\left(\frac1p - \frac1q\right)}dt
			\leq C \lambda^{\frac n2\left(\frac1p - \frac1q\right)-1}. 
\end{align*}
\end{proof}

Given the resolvent estimate for $\H$, it suffices to prove the boundedness in $L^p$ for the operator $(1 - \T_\lambda)^{-1}$:

\begin{lemma} \label{le: compensating resolvent}
The
\[
	{(1 - \T_\lambda)^{-1}}: L^p \to L^p
\]
is bounded for all $p \in (1, \infty)$, independently of $\lambda$.
\end{lemma}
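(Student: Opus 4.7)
The plan is to build $(1-\T_\lambda)^{-1}$ by Fredholm theory for each fixed $\lambda > 0$, and then upgrade to uniform boundedness via a compactness/contradiction argument. Two structural facts drive the analysis: first, $\mR_- \in \O_\infty(r^{-2-\tau})$ decays at infinity (since $\Delta_V$ is asymptotic to a Euclidean Laplacian), and second, $\Pi$ is finite rank with range in $\ker_{L^2}(\Delta_V) \subset \O_\infty(r^{-n})$, hence bounded from $L^p$ into $L^r$ for every $r \in (1,\infty]$ by Lemma \ref{le: projection onto kernel}. Combined with the resolvent bound of Lemma \ref{le: resolvent H} and local elliptic regularity, these imply that $\T_\lambda = (\H+\lambda)^{-1}(\mR_- - \a\Pi)$ is compact on $L^p$: the factor $(\mR_- - \a\Pi)$ gains spatial decay and integrability, and $(\H+\lambda)^{-1}$ gains two weighted derivatives, so a Rellich-type argument on the weighted Sobolev spaces of Definition \ref{def: Weighted ALE} yields compactness.

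For existence of $(1-\T_\lambda)^{-1}$ at a fixed $\lambda > 0$, the Fredholm alternative reduces matters to showing $1-\T_\lambda$ is injective on $L^p$. If $(1-\T_\lambda)u = 0$ with $u \in L^p$, then $u = (\H+\lambda)^{-1}(\mR_- - \a\Pi)u$ inherits enough decay from the factor $(\mR_- - \a\Pi)u$ to lie in a weighted Sobolev space $W^{2,p}_\de$ with $\de < -n/2$, hence in $L^2$. Elliptic regularity makes $u$ smooth, and it satisfies $(\Delta_V + \a\Pi + \lambda)u = 0$. Pairing with $u$ in $L^2$ and using that $\Delta_V + \a\Pi$ is strictly positive on $L^2$ (the $\a\Pi$ term kills precisely the null space of $\Delta_V$) together with $\lambda \geq 0$ forces $u = 0$.

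The uniform bound in $\lambda$ is the principal obstacle. I would argue by contradiction: assume $\lambda_n > 0$ and $u_n \in L^p$ with $\norm{u_n}_{L^p} = 1$ and $v_n := (1-\T_{\lambda_n})u_n \to 0$ in $L^p$. After passing to a subsequence, $\lambda_n \to \lambda_\infty \in [0,\infty]$. If $\lambda_\infty = \infty$, Lemma \ref{le: resolvent H} combined with the decay of $(\mR_- - \a\Pi)$ gives $\norm{\T_{\lambda_n}}_{p\to p} \to 0$, contradicting $\norm{u_n}_{L^p} = 1$ via $u_n = v_n + \T_{\lambda_n}u_n$. If $\lambda_\infty \in [0,\infty)$, the family $\{\T_\lambda\}_{\lambda \in [0,\lambda_\infty+1]}$ is collectively compact and norm-continuous in $\lambda$, so a subsequence of $\T_{\lambda_n}u_n$ converges strongly in $L^p$, producing a limit $u$ with $\norm{u}_{L^p} = 1$ and $(1-\T_{\lambda_\infty})u = 0$; the injectivity above then yields $u = 0$, a contradiction. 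The delicate points are norm continuity of $\lambda \mapsto \T_\lambda$ at the endpoint $\lambda = 0$, which relies on the weighted Sobolev isomorphism properties of $\H$ established in Section \ref{sec: Elliptic Estimates} and on $\H$ having trivial kernel in the relevant weighted space, and verifying that the weight $\de$ appearing in the injectivity step can always be taken below $-n/2$ independently of $p$.
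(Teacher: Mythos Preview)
Your approach is genuinely different from the paper's, and the Fredholm-plus-contradiction strategy is sound \emph{for each fixed} $\lambda>0$: compactness of $\T_\lambda$ on $L^p$ holds there because the Green's function of $\H+\lambda$ decays exponentially, and your injectivity argument (bootstrap to $L^2$ via iterating $\T_\lambda$, then pair) works. The real content of the lemma, however, is the bound \emph{uniform in $\lambda$}, and this is where your sketch has a gap.

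The two ingredients you invoke for the limit $\lambda\to 0$---norm continuity of $\lambda\mapsto\T_\lambda$ and collective compactness of $\{\T_\lambda\}_{\lambda\in[0,1]}$ on $L^p$---are both problematic. Writing $\T_\lambda-\T_0=-\lambda(\H+\lambda)^{-1}\T_0$, one has $\|(\H+\lambda)^{-1}\|_{p\to p}\sim\lambda^{-1}$, so the obvious estimate gives only $O(1)$, not $o(1)$; there is no evident cancellation. The weighted Sobolev isomorphisms of Section~\ref{sec: Elliptic Estimates} are for the asymptotically homogeneous operator $\H$, not for $\H+\lambda$, so they do not furnish the ``$\de<-n/2$'' weighted bound you need uniformly in $\lambda$. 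Likewise, as $\lambda\downarrow 0$ the Green's function of $\H+\lambda$ transitions from exponential to the slow $r^{2-n}$ decay, and showing that $\{\T_\lambda u:\lambda\in(0,1],\ \|u\|_p\le 1\}$ has uniformly small $L^p$-tails is precisely the difficulty the lemma is meant to overcome---you would essentially be assuming what you want to prove.

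The paper avoids all of this by a different mechanism: it conjugates $\T_\lambda$ to the self-adjoint compact operator $\A_\lambda=\H_\lambda^{-1/2}(\mR_--\a\Pi)\H_\lambda^{-1/2}$ on $L^2$ and proves a \emph{quantitative}, $\lambda$-uniform spectral gap $\|\A_\lambda\|_{2\to 2}\le 1-\e$. The key observation is the monotonicity $\langle\A_\lambda v,v\rangle\le\langle\A_0 v,v\rangle$ together with $1\notin\mathrm{spec}(\A_0)$, which gives the gap for all $\lambda\ge 0$ at once---no continuity or compactness in $\lambda$ is needed. This yields $\|\T_\lambda^k\|_{H^1_{0,\lambda}\to H^1_{0,\lambda}}\le(1-\e)^k$, and a finite power $\T_\lambda^{N}$ (bounded $L^p\to H^1_{0,\lambda}$ and back, uniformly in $\lambda$) transfers this geometric decay to $L^p$, so the Neumann series $\sum\T_\lambda^m$ converges with a $\lambda$-independent bound.
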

\begin{proof}[Sketch of proof]
	This result corresponds to \cite{Devyver2018}*{Prop.\ 2.4}, where it only holds for dimensions $n\geq9$ in a more general setting. In our situation, we can use the strong assumption $\ker_{L^2}(\Delta_V) \subset \O_{\infty}\left(r^{-n}\right)$ which implies that $\ker_{L^2}(\Delta_V) \subset L^q$ for any $q>1$, as stated earlier. In fact this allows us to use the approach in \cite{Devyver2014}*{Sec.\ 3} which carries over, with only small modifications, to our setting. We assume that the operator of the Lemma can be written as a Neumann series
	\begin{align}\label{eq:neumann}
	(1 - \T_\lambda)^{-1} = \sum_{m = 0}^\infty \T_\lambda^m
	\end{align}
	and we will prove convergence of the right hand side in every $L^p$-space. Recall that
	\[
		\T_\lambda = \H_\lambda^{-1}(\mR_- + \a \Pi ).
	\]
	Now one first shows that for each $p, q \in (1, \infty)$, there is an $N = N(p, q) \in \N$, such that
	\begin{align}\label{eq:resolvent_mapping}
		\T_\lambda^N: L^p \to L^q
	\end{align}
	is bounded, with the bound independent of $\lambda \geq 0$.
This is analogous to \cite{Devyver2014}*{Prop.\ 6}, which is the same assertion for $p,q\in [\frac{n}{n-2},\infty]$. The proof carries over, up to a slight adaption of \cite{Devyver2014}*{Lem.\ 5}, taking the different mapping properties of $\mR_- + \a \Pi$ into account. Now as in \cite{Devyver2014}*{Lem.\ 6}, one proves that there is a constant $\epsilon>0$ such that
\begin{align}\label{eq:resolvent_powers}
\norm{ \T_\lambda^k}_{L^{\frac{2n}{n-2}},L^{\frac{2n}{n-2}}}\leq C(1-\epsilon)^k
\end{align}
for every $k\in\N$. This enables us to consider large powers of $T_\lambda$ as compositions of operators
\begin{equation*}
	\T_\lambda^m: L^p {\xrightarrow{\T_\lambda^{N_1}}}L^{\frac{2n}{n-2}} {\xrightarrow{\T_\lambda^{m-N_1-N_2}}} L^{\frac{2n}{n-2}}
	{\xrightarrow{\T_\lambda^{N_2}}} L^p.
\end{equation*}
By combining \eqref{eq:resolvent_mapping} and \eqref{eq:resolvent_powers}, one easily gets convergence of the series on the right hand side in \eqref{eq:neumann}, which proves the lemma.
\end{proof}

\begin{proof}[Proof of Proposition \ref{prop: resolvent}]
The proof follows by combining Lemma \ref{le: resolvent H} and Lemma \ref{le: compensating resolvent}.
\end{proof}

In order to use the resolvent estimate to prove Theorem \ref{thm: heat kernel estimate}, we need a basic $L^2$-estimate for the heat operator:

\begin{lemma} \label{le: heat kernel 2 to 2}
We have
\[
	\norm{\left(1 + t\left(\Delta_V + \a \Pi \right)\right) e^{-t\left(\Delta_V + \a \Pi \right)}}_{2 \to 2} \leq C.
\]
\end{lemma}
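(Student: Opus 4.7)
The proof reduces to pure spectral theory, because no decay or geometric information is needed: we just need a uniform bound for a polynomial-times-exponential of a non-negative self-adjoint operator acting on $L^2$.

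My plan is as follows. First I would verify that $A := \Delta_V + \alpha \Pi$ is self-adjoint and non-negative on $L^2(M,V)$. Self-adjointness of $\Delta_V$ was already recorded after Definition \ref{def: asympt Eucl Laplacian}, and $\Pi$ is the orthogonal projection onto a finite dimensional subspace, hence a bounded self-adjoint operator with $0 \leq \Pi \leq \mathrm{id}$. Thus $A$ is a bounded self-adjoint perturbation of a self-adjoint operator, so it is self-adjoint on $\mathrm{dom}(\Delta_V)$, and the assumption $(\Delta_V u, u)_{L^2} \geq 0$ on $C_c^\infty$ together with $(\alpha \Pi u, u)_{L^2} \geq 0$ gives $A \geq 0$.

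Next, I would invoke the functional calculus for self-adjoint operators. For any bounded Borel function $f : [0,\infty) \to \R$, the operator $f(A)$ satisfies
\[
    \norm{f(A)}_{2 \to 2} \leq \sup_{\lambda \in \sigma(A)} \abs{f(\lambda)} \leq \sup_{\lambda \geq 0} \abs{f(\lambda)}.
\]
Applying this to $f_{t,m}(\lambda) := (1 + t\lambda)^m e^{-t\lambda}$, which is bounded on $[0, \infty)$ for any fixed $t > 0$ and $m \in \N_0$, yields
\[
    \norm{(1 + tA)^m e^{-tA}}_{2 \to 2} \leq \sup_{\lambda \geq 0}(1 + t\lambda)^m e^{-t\lambda}.
\]
After the substitution $s = t\lambda$, the right-hand side equals $\sup_{s \geq 0}(1+s)^m e^{-s}$, which is independent of $t$. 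Elementary calculus (the maximum occurs at $s = m-1$ for $m \geq 1$, and is $1$ for $m = 0$) shows this supremum is a finite constant $C(m)$, completing the proof.

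There is no real obstacle here; the only subtlety is making sure that $A$ really is self-adjoint so that the spectral theorem applies, which is why I would explicitly note that $\Pi$ is a bounded self-adjoint operator added to the self-adjoint operator $\Delta_V$. Once that is in place, the estimate is a one-line consequence of the functional calculus and the uniform bound on $(1+s)^m e^{-s}$.
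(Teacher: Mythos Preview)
Your proof is correct, but it takes a genuinely different route from the paper's. You use the spectral theorem and functional calculus directly: once $A = \Delta_V + \alpha\Pi$ is identified as a non-negative self-adjoint operator, the bound $\norm{(1+tA)^m e^{-tA}}_{2\to 2} \leq \sup_{s\geq 0}(1+s)^m e^{-s}$ is immediate. The paper instead runs an energy argument: for $u = e^{-t(\Delta_V+\alpha\Pi)}u_0$ with $u_0 \in C_c^\infty$, it introduces $\mathcal{E}^{2m}(u_0,t) := \sum_{j=0}^{2m}\frac{t^j}{j!}\langle (\Delta_V+\alpha\Pi)^j u,u\rangle_{L^2}$, verifies $\norm{(1+t(\Delta_V+\alpha\Pi))^m u}_{L^2}^2 \leq C(m)\,\mathcal{E}^{2m}(u_0,t)$, and then shows $\frac{d}{dt}\mathcal{E}^{2m} \leq 0$ via a telescoping computation using $\Delta_V + \alpha\Pi \geq 0$. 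Your argument is shorter and cleaner; the paper's is more hands-on and avoids invoking the abstract functional calculus, though this buys little since the spectral theorem is used freely elsewhere in the paper. Both yield the same constant (up to the factor $C(m)$ absorbing binomial coefficients in the paper's version).
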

\begin{proof}
This is a consequence of the spectral theorem which can also be established by showing that 
\begin{align*}
	\frac{d}{dt} \left( t\ldr{\left(\Delta_V + \a \Pi\right) u, u}_{L^2}+ \ldr{ u, u}_{L^2}\right)\leq 0
	\end{align*}
for $u := e^{-t\left(\Delta_V + \a \Pi\right)} u_0$, where $u_0\in L^2$.
The details are left to the reader.
\end{proof}

Using this, we may now apply the resolvent estimate, Proposition \ref{prop: resolvent}, to obtain the following lemma:

\begin{lemma} \label{lemma: the two heat kernel estimates}
For all $p \in (1, \infty)$, we have
\begin{align*}
	\norm{e^{-t\left(\Delta_V + \a \Pi \right)}}_{p \to p}
		\leq C.
\end{align*}
Moreover, for all $1 < p \leq 2 \leq q < \infty$, we have
\begin{align*}
	\norm{e^{-t\left(\Delta_V + \a \Pi \right)}}_{p \to q}
		\leq C t^{-\frac n2 \left(\frac1p - \frac1q\right)}.
\end{align*}
\end{lemma}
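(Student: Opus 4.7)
Throughout, write $A := \Delta_V + \alpha\Pi$. By the positivity hypothesis in Theorem \ref{thm: main heat kernel estimate} together with the nonnegativity of $\Pi$, the operator $A$ is self-adjoint and non-negative on $L^2$, so the spectral theorem immediately yields $\norm{e^{-tA}}_{2 \to 2} \leq 1$ and in particular disposes of the $p=q=2$ case of both statements.

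For the smoothing estimate, my plan rests on the factorization
\[
	e^{-tA} = (I + tA)^{-m} \cdot (I + tA)^m e^{-tA},
\]
for a sufficiently large $m \in \N$. Lemma \ref{le: heat kernel 2 to 2} directly controls the second factor in $L^2 \to L^2$ by $C(m)$. The first factor I would estimate from $L^2$ to $L^q$ (for $q \geq 2$) by iterating the resolvent bound of Proposition \ref{prop: resolvent} through $m$ intermediate spaces $L^{r_j}$ with $r_0 = 2$, $r_m = q$, and equal decrements $\frac{1}{r_{j-1}} - \frac{1}{r_j} = \frac{1}{m}\left(\frac{1}{2} - \frac{1}{q}\right)$. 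This yields $\norm{(A + \lambda)^{-m}}_{2 \to q} \leq C \lambda^{\frac{n}{2}(\frac{1}{2} - \frac{1}{q}) - m}$, and the identity $(I + tA)^{-m} = t^{-m}(A + 1/t)^{-m}$ converts this into $\norm{(I + tA)^{-m}}_{2 \to q} \leq C t^{-\frac{n}{2}(\frac{1}{2} - \frac{1}{q})}$. Composition with Lemma \ref{le: heat kernel 2 to 2} delivers $\norm{e^{-tA}}_{2 \to q} \leq C t^{-\frac{n}{2}(\frac{1}{2} - \frac{1}{q})}$ for every $q \geq 2$. The dual bound $\norm{e^{-tA}}_{p \to 2} \leq C t^{-\frac{n}{2}(\frac{1}{p} - \frac{1}{2})}$ for $p \leq 2$ is then immediate from the self-adjointness of $A$ on $L^2$. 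Finally, the full smoothing estimate for $1 < p \leq 2 \leq q < \infty$ follows by composing these two bounds through $L^2$ via the semigroup property $e^{-tA} = e^{-tA/2} \circ e^{-tA/2}$; the exponents of $t$ add up correctly to $-\frac{n}{2}(\frac{1}{p} - \frac{1}{q})$.

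For the uniform $L^p \to L^p$ bound with $p \neq 2$, Proposition \ref{prop: resolvent} specialized to $p = q$ gives $\norm{(A + \lambda)^{-1}}_{p \to p} \leq C/\lambda$ for real $\lambda > 0$. The main obstacle is that naive iteration produces $\norm{(A + \lambda)^{-n}}_{p \to p} \leq C^n / \lambda^n$, whose constant is not uniform in $n$, so the Hille-Yosida generation theorem does not apply directly. To circumvent this, I would combine the positive-axis estimate with the self-adjointness of $A$ on $L^2$ and the analyticity of $\lambda \mapsto (A - \lambda)^{-1}$ on $\C \setminus [0, \infty)$, using the perturbation factorization $A + \lambda = (\H + \lambda)(I - \T_\lambda)$ from the proof of Proposition \ref{prop: resolvent} extended to complex $\lambda$ in a sector around the negative real axis, to obtain a sectorial resolvent estimate $\norm{(A - \lambda)^{-1}}_{p \to p} \leq C/\abs{\lambda}$ for $\lambda$ in a full sector in $\C$. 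Sectoriality of $A$ on $L^p$ then implies that $e^{-tA}$ is a bounded holomorphic semigroup on $L^p$, which yields the claimed uniform bound $\norm{e^{-tA}}_{p\to p} \leq C$.
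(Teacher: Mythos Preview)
Your argument for the smoothing estimate (the $L^p \to L^q$ bound for $p \leq 2 \leq q$) is correct and is essentially the paper's own proof: the paper likewise factors $e^{-tA} = e^{-tA}(1+tA)^m \cdot (1+tA)^{-m}$, bounds the first factor by Lemma~\ref{le: heat kernel 2 to 2}, and bounds the second by iterating the resolvent estimate of Proposition~\ref{prop: resolvent}. The only cosmetic difference is that the paper runs the resolvent chain $L^p \to L^2$ (for $p \leq 2$) and then dualises to $L^2 \to L^q$, whereas you do the reverse.

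Your route to the uniform $L^p \to L^p$ bound, however, is genuinely different from the paper's and has a real gap. The paper does \emph{not} establish sectoriality of $A$ on $L^p$; instead, once the $L^p \to L^2$ smoothing bound is in hand, it invokes an extrapolation result of Devyver (\cite{Devyver2018}*{Prop.\ 2.6}) to pass directly to $L^p$-boundedness for $p \in (1,2)$, and then dualises. Your proposal --- extend the factorisation $(A+\lambda)^{-1} = (I-\T_\lambda)^{-1}(\H+\lambda)^{-1}$ to complex $\lambda$ in a sector and deduce sectoriality --- is plausible in spirit but not justified. The bound on $(I-\T_\lambda)^{-1}$ in Lemma~\ref{le: compensating resolvent} is obtained by conjugating to $\A_\lambda = \H_\lambda^{-1/2}(\mR_- - \alpha\Pi)\H_\lambda^{-1/2}$ and showing that this \emph{self-adjoint} compact operator has spectrum in $(-1+\e, 1-\e)$, uniformly in $\lambda \geq 0$. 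For complex $\lambda$ the operator $\A_\lambda$ is no longer self-adjoint, and nothing in the paper controls its spectral radius or $\|(I-\T_\lambda)^{-1}\|_{p \to p}$ away from the real axis. Proving this uniformly on a full sector (in particular near $\lambda = 0$, where $\T_\lambda$ is not small) would require a separate argument that you have not supplied. You should either carry this out explicitly or, as the paper does, appeal to an extrapolation theorem that converts the already-established $L^p \to L^2$ smoothing into $L^p \to L^p$ boundedness.
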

\begin{proof}
By Proposition \ref{prop: resolvent}, with $t := \frac1\lambda$, and Lemma \ref{le: heat kernel 2 to 2}, we have the estimate
\begin{align*}
	\norm{e^{-t\left(\Delta_V + \a \Pi \right)}}_{p \to 2} 
		&\leq \norm{e^{-t\left(\Delta_V + \a \Pi \right)}\left(1 + t\left(\Delta_V + \a \Pi\right)\right)}_{2 \to 2} \norm{\left(1 + t\left(\Delta_V + \a \Pi\right)\right)^{-1}}_{p \to 2} \\
		&\leq \norm{\left(1 + t\left(\Delta_V + \a \Pi\right)\right)e^{-t\left(\Delta_V + \a \Pi \right)}}_{2 \to 2} \norm{\left(1 + t\left(\Delta_V + \a \Pi\right)\right)^{-1}}_{p \to 2} \\
		&\leq C t^{-\frac n2 \left(\frac1p - \frac12\right)}
\end{align*}
for all $p \in (1, 2]$. In particular, the first assertion holds for $p=2$.
\cite{Devyver2018}*{Prop.\ 2.6} implies the first assertion for $p\in (1,2)$ and duality implies the same assertion for $p\in (2,\infty)$.
By duality, we also conclude that 
\begin{align*}
	\norm{e^{-t\left(\Delta_V + \a \Pi \right)}}_{2 \to q} 
		&\leq C t^{-\frac n2 \left(\frac12 - \frac1q\right)},
\end{align*}
for all $q \in [2, \infty)$. 
We get
\begin{align*}
	\norm{e^{-t\left(\Delta_V + \a \Pi \right)}}_{p \to q}
		&\leq \norm{e^{-\frac t2\left(\Delta_V + \a \Pi \right)}}_{2 \to q} \norm{e^{-\frac t2\left(\Delta_V + \a \Pi \right)}}_{p \to 2} 
		\\		&
		\leq C t^{-\frac n2 \left(\frac12 - \frac1q\right)}t^{-\frac n2 \left(\frac1p - \frac12\right)}
		 = C t^{-\frac n2 \left(\frac1p - \frac1q\right)}
\end{align*}
as claimed.
\end{proof}
\begin{proof}[Proof of Theorem \ref{thm: heat kernel estimate}]
	The proof follows from combining the results we obtained with a standard interpolation argument which we state for completeness.
Let us first assume that $1 < p \leq q \leq 2$ and let $u := e^{-t(\Delta_V + \a \Pi)}u_0$. Choose $\theta \in (0, 1)$ such that
\[
	\frac1q = \frac{1-\theta}p + \frac\theta2.
\]
Then, interpolation and Lemma \ref{lemma: the two heat kernel estimates} gives
\begin{align*}
	\norm{u}_{L^q}
		&\leq \norm{u}_{L^p}^{1-\theta}\norm{u}_{L^2}^\theta
			\leq C \norm{u}_{L^p}^{1-\theta} t^{-\frac n2 \theta \left(\frac1p - \frac12\right)} \norm{u}_{L^2}^\theta
			\leq C t^{-\frac n2 \left(\frac1p - \frac1q\right)} \norm{u_0}_{L^p}.
\end{align*}
Duality implies the case $2 \leq p \leq q < \infty$ and concatenating these estimates gives the remaining case $1 < p \leq 2 \leq q < \infty$, which finishes the proof of Theorem \ref{thm: heat kernel estimate}.
\end{proof}

\subsection{Commuting operators}\label{subsec: commuting operators}

In this section, we lay the groundwork for our derivative estimates.
We assume that we have a second vector bundle $W \to M$, with the same assumptions as for $V$.
The following theorem is the main result here:
\begin{thm}\label{thm_special_derivative_estimates}
Consider two Schr\"{o}dinger operators 
\begin{align*}
	\Delta_V := \n^*\n + \mR, \qquad \qquad
	\Delta_W := \overline \n^*\overline \n + \overline \mR,
\end{align*}
on $V$ and $W$, respectively, which are both assumed to satisfy the assumptions of Theorem \ref{thm: main heat kernel estimate}.
Let $P:C^{\infty}(M,V)\to C^{\infty}(M,W)$ be a first-order differential operator such that
	\[ 
		P \circ \Delta_V
			= \Delta_W \circ P
	\]
	and such that there exist two constants $C_1,C_2>0$ satisfying
	\begin{align}\label{eq: two constants}
		P^* \circ P 
			\leq C_1 \cdot \Delta_V,
		\qquad\qquad P \circ P^* 
			\leq C_2 \cdot \Delta_W. 
	\end{align}
	Then for all $1<p\leq q<\infty$, and all $k \ \in \N_0$, there are constants $C = C(n, k, p, q)$ such that
\begin{align*}
	\norm{P e^{-t\Delta_V}}_{p\to q}
		\leq C t^{-\frac{n}{2}(\frac{1}{p}-\frac{1}{q}) - \frac12}, \qquad\qquad
	\norm{P^*e^{-t\Delta_W}}_{p\to q}
		\leq C t^{-\frac{n}{2}(\frac{1}{p}-\frac{1}{q})-\frac12}.
\end{align*}
\end{thm}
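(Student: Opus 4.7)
The plan is to reduce everything to an $L^2 \to L^2$ decay estimate for $Pe^{-t \Delta_V}$, which can then be combined with the heat kernel bounds of Theorem \ref{thm: heat kernel estimate} via the commutation relation. First I would replace the Schrödinger operators with their strict positivizations $\tilde\Delta_V := \Delta_V + \alpha\Pi_V$ and $\tilde\Delta_W := \Delta_W + \alpha\Pi_W$, where $\Pi_V, \Pi_W$ are the $L^2$-projections onto the (finite-dimensional) kernels. The key algebraic point is that the intertwining extends, $P \tilde\Delta_V = \tilde\Delta_W P$, which reduces to $P \Pi_V = \Pi_W P$: this holds because $P$ maps $\ker_{L^2}(\Delta_V)$ into $\ker_{L^2}(\Delta_W)$ (using the commutation and the decay $\ker_{L^2} \subset \O_\infty(r^{-n})$, which persists under first-order operators with bounded coefficients at infinity), and analogously for $P^*$ via $\Delta_V P^* = P^* \Delta_W$. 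The operator inequalities $P^*P \leq C_1 \tilde\Delta_V$ and $PP^* \leq C_2 \tilde\Delta_W$ are trivially preserved since $\alpha\Pi \geq 0$. For $u$ orthogonal to $\ker_{L^2}(\Delta_V)$, $e^{-t\tilde\Delta_V}u = e^{-t\Delta_V}u$, so all bounds transfer to the original operator.

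Next I would derive the core $L^2 \to L^2$ estimate. For $u \in L^2$, the operator inequality together with self-adjointness yields
\[
\|Pe^{-t\tilde\Delta_V}u\|_{L^2}^2 = \langle P^*P\,e^{-t\tilde\Delta_V}u, e^{-t\tilde\Delta_V}u\rangle \leq C_1\langle \tilde\Delta_V e^{-2t\tilde\Delta_V}u,u\rangle \leq \frac{C_1}{2et}\|u\|_{L^2}^2,
\]
using the spectral inequality $\lambda e^{-2t\lambda}\leq (2et)^{-1}$ valid on $\tilde\Delta_V \geq 0$. This gives $\|Pe^{-t\tilde\Delta_V}\|_{L^2\to L^2} \leq C t^{-1/2}$.

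For the central range $1 < p \leq 2 \leq q < \infty$, the commutation produces the three-fold sandwich
\[
Pe^{-t\tilde\Delta_V} = e^{-(t/3)\tilde\Delta_W} \circ \bigl(P e^{-(t/3)\tilde\Delta_V}\bigr)\circ e^{-(t/3)\tilde\Delta_V},
\]
and composing the $L^p \to L^2$ bound from Theorem \ref{thm: heat kernel estimate}, the $L^2 \to L^2$ bound above, and the $L^2 \to L^q$ bound from Theorem \ref{thm: heat kernel estimate} yields the desired exponent $-\frac{n}{2}\bigl(\frac 1p - \frac 1q\bigr) - \frac12$. The remaining ranges $p \leq q \leq 2$ and $2 \leq p \leq q$ I would address by combining the dual identity $\|Pe^{-t\tilde\Delta_V}\|_{L^p\to L^q} = \|P^*e^{-t\tilde\Delta_W}\|_{L^{q'}\to L^{p'}}$—coming from the adjoint commutation $e^{-t\tilde\Delta_V}P^* = P^*e^{-t\tilde\Delta_W}$—with the symmetric three-fold sandwich argument for $P^*$, whose $L^2\to L^2$ bound comes from $PP^*\leq C_2 \Delta_W$. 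The second estimate in the theorem follows in the same way, by swapping the roles of $(P,\Delta_V)$ and $(P^*,\Delta_W)$.

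The main technical content is the $L^2$ bound, which is where the operator inequality $P^*P \leq C_1\Delta_V$ is used to convert the differential order of $P$ into genuine temporal decay. The delicate points I anticipate are the verification of $P\Pi_V = \Pi_W P$ in the presence of non-trivial $L^2$-kernels—which genuinely requires the decay assumption on harmonic sections—and the careful bookkeeping when chaining the sandwich and duality arguments across the various $(p,q)$ ranges, avoiding in particular any appeal to Riesz transform boundedness in $L^p$ outside the range where it is available on ALE manifolds (cf.\ Subsection \ref{sec: Riesz}).
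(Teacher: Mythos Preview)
Your argument correctly establishes the estimate in the range $1<p\leq 2\leq q<\infty$; in fact this is exactly Lemma~\ref{lemma_special_derivatives} in the paper, and your spectral-theoretic proof of the $L^2\to L^2$ bound is cleaner than the paper's energy argument. (Incidentally, the intertwining with the projections is simpler than you suggest: from $P^*P\leq C_1\Delta_V$ one gets $P|_{\ker_{L^2}(\Delta_V)}=0$, hence $P\Pi_V=0$, and by adjoint $\Pi_WP=0$; no decay analysis is needed.)

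The gap is the extension to $1<p\leq q\leq 2$ and $2\leq p\leq q<\infty$. Your duality argument is circular: the adjoint identity $\|Pe^{-t\tilde\Delta_V}\|_{p\to q}=\|P^*e^{-t\tilde\Delta_W}\|_{q'\to p'}$ sends the range $\{p\leq q\leq 2\}$ to $\{2\leq q'\leq p'\}$, and the three-fold sandwich for $P^*$ again only yields bounds when the source exponent is $\leq 2$ and the target is $\geq 2$. So you never leave the ``across $2$'' regime, and in particular you never obtain the crucial diagonal bound $\|Pe^{-t\Delta_V}\|_{p\to p}\leq Ct^{-1/2}$ for $p\neq 2$. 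Riesz--Thorin between your established endpoints does not help either, since all of them straddle $L^2$.

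The paper obtains the missing $L^p\to L^p$ bound (Lemma~\ref{lemma_boundedLp}) by a genuinely different and substantially heavier route: Davies-type weighted $L^2$ estimates with weight $e^{\alpha\psi}$ (Lemma~\ref{lemma_weighted_L2}) give Gaussian off-diagonal $L^2\to L^2$ decay (Lemma~\ref{lemma_local_L2}); a Phragm\'en--Lindel\"of argument in the complex time variable (Lemma~\ref{le: complex analysis}) upgrades this to off-diagonal $L^2\to L^p$ decay (Lemma~\ref{lemma_local_Lp}); and a dyadic covering by balls of radius $\sqrt t$ then sums to the global $L^p\to L^p$ bound. Only after this is in hand does the final two-fold sandwich $Pe^{-t\Delta_V}=e^{-\frac t2(\Delta_W+\alpha\Pi_W)}\circ Pe^{-\frac t2\Delta_V}$ yield the full range $1<p\leq q<\infty$. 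Your proposal is missing precisely this off-diagonal machinery.
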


\begin{remark}
The theorem applies directly to the case when $\Delta_V = P^*P$ and $\Delta_W = PP^*$, in particular when $\Delta_V = (\D_V)^2$.
However, we need this general formulation in later applications.
\end{remark}

\begin{remark} \label{rmk: commutator W}
Note that both the assumptions as well as the assertion remain the same if we simultaneously interchange $P$ and $P^*$ as well as $\Delta_V$ and $\Delta_W$.
\end{remark}

In the special case where $\Delta_V=\Delta_W$ are both the Hodge Laplacian on $\Lambda M$ and $P$ is the Hodge de Rham operator, the assertion of Theorem \ref{thm_special_derivative_estimates} already follows from 
\cite{Devyver2018}*{Thm.\ 1.17} and \cite{GS15}*{Cor.\ 5}. 
It turned out that the proof in \cite{Devyver2018}*{Thm.\ 1.17} as well as the proofs of results in \cites{D92,CS08} on which it builds upon can be translated with small modifications to this much more general situation. 
We therefore omit details of the proofs in some intermediate results and give precise references instead.
The present form of the theorem is however crucial for proving our derivative estimates.
We start with the observation that an $L^2$-bound implies a bound on the principal symbol.
\begin{lemma}\label{lem : principal symbol}
The principal symbol $\sigma(P)\in C^{\infty}(M,\mathrm{Hom}(T^*M\otimes V,W))$ of $P$ satisfies the inequality
\begin{align*}
|\sigma(P)_x|\leq \sqrt{C_1}\qquad \text{for all }x\in M,
\end{align*}
where $C_1 > 0$ is the constant in estimate \eqref{eq: two constants} and where
\begin{align*}
|\sigma(P)_x|:=\sup\left\{ |\sigma(P)_x(\xi)v| \mid\xi\in T_xM,v\in V_x, |\xi|=|v|=1\right\}.
\end{align*}
\end{lemma}

\begin{proof}
Without loss of generality, let us assume that $V$ and $W$ are complex vector bundles.
Assume, in order to reach a contradiction, that there is a unit length $(\xi, v)$ such that
\begin{equation} \label{eq: contradiction assumption}
	|\sigma(P)(\xi) v| > \sqrt{C_1}.
\end{equation}
Choose coordinates $(x_1, \hdots, x_n)$ on an open subset $\U \ni x$, such that
\[
	dx_1|_x = \xi.
\]
For a first order operator, we have
\[
	Pu = \sigma(P)(\n u) + B(u)
\]
for some smooth endomorphism field $B$.
For any $u \in C^\infty_c(\U, V)$, we compute
\begin{align*}
	\ldr{P^*P\left(e^{inx_1}u\right), e^{i n x_1}u}_{L^2}
		&= \norm{P\left(e^{i n x_1} u\right)}_{L^2}^2 \\
		&\geq \norm{i n \sigma(P)(d x_1) u + Pu}_{L^2}^2 \\
		&\geq n^2 (1 - \de) \norm{\sigma (P)(d x_1) u}_{L^2}^2 + \left( 1 - \frac1\de \right)\norm{Pu}_{L^2}^2,
\end{align*}
and similarly
\begin{align*}
	\ldr{\Delta_V \left(e^{i n x_1} u\right), e^{i n x_1}u}_{L^2}
		&\leq \norm{\n\left(e^{i n x_1}u\right)}_{L^2}^2 + \norm{\mathcal R}_{L^\infty(\U)} \norm{u}_{L^2}^2 \\
		&\leq n^2 (1 + \de) \norm{\abs{dx_1}u}_{L^2}^2 + \left(1 + \frac1\de\right) \norm{\n u}_{L^2}^2 + C \norm{u}_{L^2}^2
\end{align*}
for any $\de \in (0, 1)$.
Applying \eqref{eq: two constants}, dividing by $n^2$ and letting $n \to \infty$, we conclude that
\[
	(1 - \de) \norm{\sigma (P)(d x_1) u}_{L^2}^2 \leq C_1 (1 + \de) \norm{\abs{dx_1}u}_{L^2}^2
\]
for any $\de \in (0, 1)$.
Letting $\de \to 0$, we get
\[
	 \norm{\sigma (P)(d x_1) u}_{L^2} \leq \sqrt{C_1} \norm{\abs{dx_1}u}_{L^2}.
\]
Assuming that $u|_x = v$ and by shrinking the support of $u$ near $x$, we reach a contradiction to \eqref{eq: contradiction assumption}, which completes the proof.
\end{proof}

The next step is to prove an $L^2$-estimate with a weight function $\phi = e^{\a \psi}$, which we will later choose carefully.

\begin{lemma}\label{lemma_weighted_L2}
Let $\psi:M\to\R$ be a smooth bounded function with $|d\psi|\leq1$, $\alpha\in\R$ and $\phi=e^{\alpha\psi}:M\to\R$. Then, there exists a constant $C>0$ such that
	\begin{align*}
		\norm{\phi P e^{-t\Delta_V}u}_{L^2}
			&\leq C t^{-\frac12} e^{C\a^2t}\left\|\phi u\right\|_{L^2}^2
	\end{align*}
for all $u\in L^2(M,V)$.
\end{lemma}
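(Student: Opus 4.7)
I would follow a Davies--Gaffney style argument, adapted to incorporate the commuting operator $P$. Set $u(t) := e^{-t\Delta_V} u_0$ and $F(t) := \|\phi u(t)\|_{L^2}^2$. (The right-hand side of the lemma appears to be stated with a typographical squaring; the target bound is $\|\phi P u(t)\|_{L^2}^2 \leq C t^{-1} e^{C\alpha^2 t}\|\phi u_0\|_{L^2}^2$.) The approach has two steps: first an integrated bound on $\|\phi Pu\|_{L^2}^2$ over $[0,t]$, then a pointwise bound at time $t$ extracted via the $\Delta_W$-evolution of $Pu$.

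\textbf{Step 1: Integrated bound.} Differentiating and integrating by parts,
\[
F'(t) = -2\langle \phi^2 u, \Delta_V u\rangle_{L^2}.
\]
Expanding $\nabla(\phi v)$ gives the gauge identity
\[
\langle \phi^2 v, \Delta_V v\rangle_{L^2} = \langle \phi v, \Delta_V(\phi v)\rangle_{L^2} - \int |d\phi|^2 |v|^2,
\]
where $|d\phi|^2 = \alpha^2 |d\psi|^2 \phi^2 \leq \alpha^2 \phi^2$. Combined with $P^*P \leq C_1\Delta_V$, this yields
\[
F'(t) \leq -\tfrac{2}{C_1}\|P(\phi u)\|_{L^2}^2 + 2\alpha^2 F(t).
\]
Since $P = A(\nabla) + B$ is first order with $A$ bounded, we have $P(\phi u) = \phi Pu + A(d\phi \otimes u)$, so $\|P(\phi u)\|_{L^2}^2 \geq \tfrac12 \|\phi Pu\|_{L^2}^2 - C\alpha^2 \|\phi u\|_{L^2}^2$. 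After absorbing constants,
\[
F'(t) \leq -c\|\phi Pu\|_{L^2}^2 + C\alpha^2 F(t).
\]
Multiplying by $e^{-C\alpha^2 t}$ and integrating,
\[
\int_0^t e^{-C\alpha^2 s}\|\phi Pu(s)\|_{L^2}^2\, ds \leq C\|\phi u_0\|_{L^2}^2.
\]

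\textbf{Step 2: From integrated to pointwise bound.} Because of the commutation $P\Delta_V = \Delta_W P$, the section $v(t) := Pu(t)$ satisfies $\partial_t v + \Delta_W v = 0$. Running the same weighted-energy computation as in Step 1 with $\Delta_V$ replaced by $\Delta_W$ and using only $\Delta_W \geq 0$ (so the term analogous to $\langle \phi v, \Delta_V(\phi v)\rangle$ is non-negative and can be dropped), we obtain
\[
\tfrac{d}{dt}\|\phi Pu(t)\|_{L^2}^2 \leq 2\alpha^2 \|\phi Pu(t)\|_{L^2}^2.
\]
Hence $\|\phi Pu(t)\|_{L^2}^2 \leq e^{2\alpha^2(t-s)}\|\phi Pu(s)\|_{L^2}^2$ for $0 \leq s \leq t$. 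Integrating this inequality over $s \in [t/2, t]$ and inserting the bound from Step 1 gives
\[
\tfrac{t}{2}\|\phi Pu(t)\|_{L^2}^2 \leq e^{\alpha^2 t}\!\!\int_{t/2}^t \!\!\|\phi Pu(s)\|_{L^2}^2\, ds \leq C e^{C\alpha^2 t}\|\phi u_0\|_{L^2}^2,
\]
which is the desired estimate after taking square roots.

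\textbf{Expected obstacle.} The main technical point is justifying the integrations by parts and the manipulations involving the multiplication $\phi v \mapsto \Delta_V(\phi v)$: one needs $u(t)$ (and then $Pu(t)$) to be sufficiently regular and integrable so that $\phi u(t), \phi Pu(t) \in L^2$ and all quadratic forms are finite. Starting from $u_0 \in C_c^\infty$, standard parabolic regularity for $e^{-t\Delta_V}$ (guaranteed by the hypotheses invoked from Theorem \ref{thm: main heat kernel estimate}) provides smoothness and sufficient decay of $u(t)$ and its derivatives, so the formal computations are legitimate; the boundedness of $\phi$ and of $|d\phi|$ keeps all weighted norms finite. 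A density argument then extends the estimate to general $u \in L^2$.
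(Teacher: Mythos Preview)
Your argument is correct and uses the same ingredients as the paper: the gauge identity for $\langle\phi^2 v,\Delta_V v\rangle$, the bound $P^*P\leq C_1\Delta_V$, the commutator estimate $[P,\phi]=\alpha\phi\,\sigma_P(d\psi)$, and the inequality $\frac{d}{dt}\|\phi Pu\|^2\leq 2\alpha^2\|\phi Pu\|^2$ coming from $P\Delta_V=\Delta_W P$ and $\Delta_W\geq 0$. The only difference is organizational: where you integrate the first differential inequality and then extract the pointwise bound via the near-monotonicity of $\|\phi Pu\|^2$, the paper instead packages both inequalities into the single energy
\[
\mathcal E(t)=\|\phi u\|_{L^2}^2+\frac{t}{C_1}\|\phi Pu\|_{L^2}^2
\]
and shows directly that $\mathcal E'(t)\leq C\alpha^2\mathcal E(t)$, giving the result by Gr\"onwall in one step. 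Your route is slightly longer but equally valid; the combined-energy trick is a compact way to avoid the integrate-then-extract maneuver.
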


\begin{remark}
Choosing $\a = 0$ in the above lemma implies that $\phi = 1$, which proves the theorem for $p = q = 2$.
\end{remark}

\begin{proof}
This proof generalizes \cite{D92}*{Lem.\ 1} which is a result for the scalar heat equation.
Note first that for any $v \in H^k$, $k\geq2$, integration by parts yields
\begin{align*}
	(\phi \nabla^*\nabla v,\phi v)_{L^2}
		&=(\nabla v,\nabla(\phi\cdot \phi v))_{L^2}
			=(\nabla v,\phi\nabla(\phi v))_{L^2}+(\nabla v,[\nabla,\phi]\phi v)_{L^2}\\
		&=(\nabla(\phi v),\nabla(\phi v))_{L^2}-([\nabla,\phi]v,\nabla(\phi v))_{L^2}\\
		&\qquad +(\nabla (\phi v),[\nabla,\phi] v)_{L^2}-([\nabla,\phi] v,[\nabla,\phi] v)_{L^2} \\
		&=(\nabla^*\nabla(\phi v),\phi v)_{L^2} -\alpha^2\norm{\abs{d\psi} \phi v}^2_{L^2},
\end{align*}
from which it follows that
\begin{align}
	(\phi \Delta_V v,\phi v)_{L^2}
		&=(\Delta_V(\phi v),\phi v)_{L^2} -\alpha^2\norm{\abs{d\psi} \phi v}^2_{L^2}. \label{eq: commutator with test function} 
\end{align}
This identity will be used several times throughout the proof.
For a $u_0 \in L^2$, let ${u := e^{-t\Delta_V}u_0}$. 
By Lemma \ref{le: heat kernel 2 to 2}, we know that $u \in H^k$, for all $k\in\N_0$ and $t>0$, so we can apply \eqref{eq: commutator with test function} to note that
\begin{align}
	\frac{d}{dt} \norm{\phi u}^2_{L^2}
		&= 2\Re(\phi \d_t u, \phi u)_{L^2} 
			= - 2\Re(\phi \Delta_V u, \phi u)_{L^2} \nonumber \\
		&= - 2\Re(\Delta_V \phi u, \phi u)_{L^2} + 2\alpha^2\norm{\abs{d\psi} \phi u}^2_{L^2} 
			\leq - \frac2{C_1}\norm{P\phi u}^2_{L^2} + 2\alpha^2\norm{\phi u}^2_{L^2}. \label{eq: alpha estimate 1}
\end{align}
We want to combine this with
\begin{align}
		\frac1{C_1} \norm{\phi P u}^2_{L^2} 
			&\leq \frac2{C_2} \norm{[P,\phi] u}^2_{L^2} + \frac2{C_2} \norm{P \phi u}^2_{L^2}. \label{eq: alpha estimate 2}
\end{align}
By definition of the principal symbol, we have
\begin{align*}
	[P,\phi]u
		=\sigma_P(\nabla\phi)u=\a \phi \sigma_P(\nabla \psi)u.
\end{align*}
Due to $|\nabla\psi|\leq 1$ and Lemma \ref{lem : principal symbol}, we can estimate 
\begin{align}
	\norm{[P,\phi]u}_{L^2}
		\leq \a \norm{\sigma_P}_{L^\infty}\norm{\phi u}_{L^2} \leq C \a \norm{\phi u}_{L^2}. \label{eq: alpha estimate 3}
\end{align}
Combining the estimates (\ref{eq: alpha estimate 1} - \ref{eq: alpha estimate 3}), we get
\begin{align}
	\frac{d}{dt} \norm{\phi u}^2_{L^2} \leq - \frac1{C_1}\norm{\phi P u}^2_{L^2} + C\alpha^2\norm{\phi u}^2_{L^2}. \label{eq: alpha estimate full 1}
\end{align}
Using that
\[
	 \Delta_W 
		\geq \frac1{C_2} P P^* \geq 0
\]
and equation \eqref{eq: commutator with test function}, note that
\begin{align}
	\frac{d}{dt} \norm{\phi Pu}^2_{L^2}
		&= 2\Re(\phi P\d_t u, \phi P u)_{L^2} 
			= - 2\Re(\phi P\Delta_Vu, \phi P u)_{L^2} \nonumber\\
		&= - 2\Re(\phi \Delta_WPu, \phi P u)_{L^2}
			= - 2\Re(\Delta_W\phi Pu, \phi P u)_{L^2} + 2\alpha^2\norm{\abs{d\psi} \phi Pu}^2_{L^2} \nonumber\\
		&\leq 2\alpha^2\norm{\phi Pu}^2_{L^2}. \label{eq: alpha estimate full 2}
\end{align}
Defining the energy
\[
	\E(u) :=\left\|\phi u\right\|_{L^2}^2+\frac{t}{C_1}\left\|\phi Pu\right\|_{L^2}^2,
\]
and combining the estimates \eqref{eq: alpha estimate full 1} and \eqref{eq: alpha estimate full 2}, we get
\begin{align*}
	\frac{d}{dt}\E(u)
		&\leq C\alpha^2\E(u),
\end{align*}
which completes the proof.
\end{proof}
We observe the following consequence:
\begin{lemma}\label{lemma_special_derivatives}
	The theorem holds for $p\in(1,2]$ and $q\in [2,\infty)$.
\end{lemma}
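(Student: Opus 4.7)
The plan is to reduce to the $L^2\to L^2$ bound from Lemma \ref{lemma_weighted_L2} and then combine it with the heat kernel estimates of Theorem \ref{thm: heat kernel estimate}. By Remark \ref{rmk: commutator W} it suffices to prove the estimate for $Pe^{-t\Delta_V}$, since the one for $P^*e^{-t\Delta_W}$ follows from the same argument with the roles of $(P,\Delta_V)$ and $(P^*,\Delta_W)$ swapped.

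Two preliminary observations drive the argument. First, $P$ annihilates $\ker_{L^2}(\Delta_V)$: for $u\in\ker_{L^2}(\Delta_V)$, the hypothesis $P^*P\leq C_1\Delta_V$ together with $\Delta_V\geq 0$ gives $\|Pu\|_{L^2}^2\leq C_1(\Delta_V u,u)_{L^2}=0$. By the symmetric argument $P^*$ annihilates $\ker_{L^2}(\Delta_W)$, and duality then yields $\mathrm{im}\,P\perp \ker_{L^2}(\Delta_W)$. Second, the intertwining $Pe^{-s\Delta_V}=e^{-s\Delta_W}P$, which follows from $P\Delta_V=\Delta_WP$ and uniqueness of the heat flow: for $u\in C_c^\infty$ both sides satisfy $\partial_s v=-\Delta_W v$ with initial data $Pu$, and the regularity supplied by Lemma \ref{le: heat kernel 2 to 2} lets one close the energy estimate; density then extends the relation. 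Setting $\alpha=0$ in Lemma \ref{lemma_weighted_L2} produces the base estimate $\|Pe^{-s\Delta_V}\|_{2\to 2}\leq Cs^{-1/2}$, which handles the case $p=q=2$.

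For general $p\in(1,2]$ and $q\in[2,\infty)$, I insert a commute-and-sandwich decomposition,
\[
Pe^{-t\Delta_V}u = e^{-(t/2)\Delta_W}\circ Pe^{-(t/4)\Delta_V}\circ e^{-(t/4)(\Delta_V+\alpha\Pi)}(I-\Pi)u.
\]
This identity holds because $P$ annihilates $\ker_{L^2}(\Delta_V)$, so only the $(I-\Pi)u$ piece contributes, and on $(\ker_{L^2}(\Delta_V))^\perp$ the semigroups $e^{-s\Delta_V}$ and $e^{-s(\Delta_V+\alpha\Pi)}$ coincide. Now estimate from right to left: Theorem \ref{thm: heat kernel estimate} bounds the rightmost factor $L^p\to L^2$ by $Ct^{-\frac n2(\frac1p-\frac12)}$ (using that $I-\Pi$ is bounded on $L^p$ by Lemma \ref{le: projection onto kernel}); Lemma \ref{lemma_weighted_L2} with $\alpha=0$ controls the middle factor by $Ct^{-1/2}$; and the leftmost factor acts on the image of $P$, which by the first preliminary observation is $L^2$-orthogonal to $\ker_{L^2}(\Delta_W)$, so Theorem \ref{thm: heat kernel estimate} again applies and gives $L^2\to L^q$ bound $Ct^{-\frac n2(\frac12-\frac1q)}$. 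Multiplying the three factors yields the desired bound $Ct^{-\frac n2(\frac1p-\frac1q)-\frac12}$.

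The main obstacle is the careful bookkeeping around the $L^2$-kernels: Theorem \ref{thm: heat kernel estimate} is stated for the modified operator $\Delta_V+\alpha\Pi$ rather than $\Delta_V$ itself, and on the $W$-side one must justify its applicability to the image of $P$. The annihilation properties of $P$ on $\ker_{L^2}(\Delta_V)$ and of $P^*$ on $\ker_{L^2}(\Delta_W)$ (equivalently, the orthogonality $\mathrm{im}\,P\perp\ker_{L^2}(\Delta_W)$) are precisely what allow one to switch between $e^{-s\Delta_V}$ and $e^{-s(\Delta_V+\alpha\Pi)}$ at each occurrence in the decomposition.
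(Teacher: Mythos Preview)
Your proof is correct and follows essentially the same route as the paper: both arguments observe that $P$ annihilates $\ker_{L^2}(\Delta_V)$ (and dually for $P^*$), use the intertwining $Pe^{-s\Delta_V}=e^{-s\Delta_W}P$ to sandwich the $L^2\to L^2$ bound from Lemma~\ref{lemma_weighted_L2} (with weight $\alpha=0$) between two heat kernel factors estimated via Theorem~\ref{thm: heat kernel estimate}. The only cosmetic differences are that the paper splits $t$ into thirds rather than $(t/2,t/4,t/4)$, and packages the kernel observation as the operator identity $e^{-t\Delta_W}\circ P=e^{-t(\Delta_W+\alpha\Pi_W)}\circ P$ (derived by duality from $P^*\circ e^{-t\Delta_W}=P^*\circ e^{-t(\Delta_W+\alpha\Pi_W)}$) instead of your explicit insertion of $(I-\Pi)$ and the orthogonality $\mathrm{im}\,P\perp\ker_{L^2}(\Delta_W)$.
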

\begin{proof}
Note that  due to \eqref{eq: two constants}, $\ker_{L^2}(\Delta_V)\subset \ker_{L^2}(P)$ and $ \ker_{L^2}(\Delta_W)\subset \ker_{L^2}(P^*)$. Therefore,
\[
P\circ e^{-t\Delta_V}=P\circ e^{-t(\Delta_V+\alpha\Pi_V)},\qquad P^*\circ e^{-t\Delta_W}=P^*\circ e^{-t(\Delta_W+\alpha\Pi_W)}.
\]
By duality, this implies
\[
 e^{-t\Delta_V}\circ P^*= e^{-t(\Delta_V+\alpha\Pi_V)}\circ P^*,\qquad  e^{-t\Delta_W}\circ P=e^{-t(\Delta_W+\alpha\Pi_W)}\circ P,
\]
from which we conclude that
\begin{align*}
	P\circ e^{-t\Delta_V}
		&=e^{-\frac{t}{3}(\Delta_W+\alpha\Pi_W)}\circ P\circ e^{-\frac{t}{3}\Delta_V}\circ  e^{-\frac{t}{3}(\Delta_V+\alpha\Pi_V)}.
\end{align*}
and by applying Theorem \ref{thm: main heat kernel estimate} and Lemma \ref{lemma_weighted_L2} with $\a = 0$,
we obtain for  $p\in (1,2]$ and $q\in [2,\infty)$
that
\begin{align*}
	\left\|P\circ e^{-t\Delta_V} \right\|_{p \to q}
		&\leq
\left\|e^{-\frac{t}{3}(\Delta_W+\alpha\Pi_W)}\right\|_{L^2,L^q}
\left\|P\circ e^{-\frac{t}{3}\Delta_V}\right\|_{L^2,L^2}\left\| e^{-\frac{t}{3}(\Delta_V+\alpha\Pi_V)}
\right\|_{L^p,L^2}\\
		&\leq C t^{-\frac{n}{2}(\frac{1}{2}-\frac{1}{q})}\cdot t^{-\frac{1}{2}}\cdot t^{-\frac{n}{2}(\frac{1}{p}-\frac{1}{2})} 
			=C t^{-\frac{n}{2}(\frac{1}{p}-\frac{1}{q})-\frac{1}{2}},
\end{align*}
as claimed. The proof of other estimate is completely analogous.
\end{proof}

Another consequence of Lemma \ref{lemma_weighted_L2} is the following:

\begin{lemma}[The localized $L^2 - L^2$ estimate]\label{lemma_local_L2}
Let $A,B\subset M$ be measurable subsets of $M$ and  $\chi_A,\chi_B$ be the characteristic function of $A,B$, respectively. 
Let $v\in C^{\infty}(V,M)$ and suppose that $\chi_A v\in L^2(V,M)$. 
Then there is a constant $C > 0$, such that
	\begin{align*}
		\left\|\chi_B P e^{-t\Delta_V}\chi_A\cdot v\right\|_{L^2}
			&\leq Ct^{-1/2}e^{-\frac{d(A,B)^2}{Ct}}\left\|\chi_A\cdot v\right\|_{L^2}.
	\end{align*}
\end{lemma}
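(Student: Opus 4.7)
This is a Davies--Gaffney style off-diagonal estimate, and the plan is to deduce it from the weighted $L^2$-estimate in Lemma~\ref{lemma_weighted_L2} by a classical weight-optimization trick (c.f.\ \cite{D92}). The heat operator is applied to $\chi_A v$, which by assumption lies in $L^2$, so Lemma~\ref{lemma_weighted_L2} is directly applicable. The only choice we need to make is that of the Lipschitz weight $\psi$, and then the result follows by optimizing in the free parameter $\alpha$.

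The first step is to select $\psi$. The plan is to take
\[
\psi(x) := \min\bigl(d(x,A),\,d(A,B)\bigr),
\]
which is $1$-Lipschitz (as the minimum of a $1$-Lipschitz function and a constant), bounded, vanishes on $A$, and equals $d(A,B)$ on $B$. If the smoothness assumption in Lemma~\ref{lemma_weighted_L2} is read strictly, one first mollifies $\psi$ to a smooth function with $|d\psi|\leq 1+\e$ sharing the same boundary values up to an arbitrarily small loss; this modification is harmless in the final estimate.

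With $\phi := e^{\alpha\psi}$ for $\alpha>0$, the plan is to use the two geometric features of $\psi$:
\begin{align*}
\phi\,\chi_A v &= \chi_A v \qquad (\text{since } \psi|_A = 0), \\
\phi\,\chi_B\,Pe^{-t\Delta_V}(\chi_A v) &\geq e^{\alpha d(A,B)}\,\chi_B\,Pe^{-t\Delta_V}(\chi_A v) \qquad (\text{since } \psi|_B = d(A,B)).
\end{align*}
Applying Lemma~\ref{lemma_weighted_L2} to the datum $u := \chi_A v$ and combining these two facts gives
\[
e^{\alpha d(A,B)}\bigl\|\chi_B\,Pe^{-t\Delta_V}(\chi_A v)\bigr\|_{L^2}
\leq \bigl\|\phi\,Pe^{-t\Delta_V}(\chi_A v)\bigr\|_{L^2}
\leq C\,t^{-1/2}\,e^{C\alpha^2 t}\,\|\chi_A v\|_{L^2}.
\]

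The final step is to optimize over $\alpha>0$. Minimizing $C\alpha^2 t - \alpha d(A,B)$ in $\alpha$ gives the choice $\alpha = d(A,B)/(2Ct)$, at which the exponent becomes $-d(A,B)^2/(4Ct)$. After relabeling the constant $C$, this yields exactly the claimed bound. There is no real obstacle here: the only minor technical point is to justify using a Lipschitz rather than smooth $\psi$ in Lemma~\ref{lemma_weighted_L2}, which is handled by standard mollification, so the argument is essentially a direct application of the weighted estimate together with a one-parameter optimization.
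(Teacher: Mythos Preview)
Your proof is correct and follows essentially the same approach as the paper: choose a bounded Lipschitz $\psi$ vanishing on $A$ and constant on $B$, apply Lemma~\ref{lemma_weighted_L2} with $\phi=e^{\alpha\psi}$, and optimize in $\alpha$. The only cosmetic difference is that the paper extracts the factor $e^{-\alpha d(A,B)}$ via a duality pairing with a test function $f\in L^2(W)$ (and takes $\psi|_B=\tfrac{1}{2}d(A,B)$ to leave room for smoothing), whereas you do it directly from $\phi|_B$ being constant; both are equivalent implementations of the same Davies--Gaffney trick.
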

\begin{proof}
	A similar proof has been done for the scalar heat equation in  \cite{D92}*{Thm.\ 2}.
	Choose a bounded smooth function $\psi$ such that $\psi|_A=0$, $\psi|_B=\frac{d(A,B)}{2}$ and $|\nabla\psi|\leq1$ and let $\phi=e^{\alpha \psi}$.
	Lemma \ref{lemma_weighted_L2} implies that for any $f\in L^2(W)$ we have
	\begin{align*}
		\langle \chi_B P e^{-t\Delta_V}\chi_Av, f\rangle_{L^2}
			&= \langle \phi P e^{-t\Delta_V}\chi_A\cdot v,\phi^{-1} \chi_B f\rangle_{L^2} 
				\leq \left\| \phi P e^{-t\Delta_V}\chi_A\cdot v \right\|_{L^2} \left\| \phi^{-1}\chi_B f \right\|_{L^2}\\
			&\leq Ct^{-\frac12}e^{C\a^2t}\left\|\phi\chi_A\cdot v\right\|_{L^2} e^{-\frac{\alpha}{2} d(A,B)} \left\| f \right\|_{L^2} \\	
			&\leq Ct^{-\frac12}e^{C\a^2t - \frac{\alpha}{2} d(A,B)}\left\|\phi\chi_A\cdot v\right\|_{L^2} \left\| f \right\|_{L^2}.
	\end{align*}
	If we set $\alpha=\beta\frac{d(A,B)}t$ and $\beta>0$ such that $C\beta^2-\frac{\beta}{2}<0$, we obtain 
	\begin{align*}
		\langle \chi_B v, f\rangle_{L^2}\leq C t^{-1/2}e^{-\frac{d(A,B)^2}{Ct}}\left\|\chi_A\cdot v\right\|_{L^2}\left\| f \right\|_{L^2}.
	\end{align*}
	Because $f \in L^2(W)$ was arbitrary, this finishes the proof.
\end{proof}

\begin{lemma}[The localized $L^2 - L^p$ estimate]\label{lemma_local_Lp}
Let $A,B,\chi_A,\chi_B$ and $v$ be as in Lemma \ref{lemma_local_L2}.
Then for $p\in [2,\infty)$, there is a constant $C >0$, such that
\begin{align*}
\left\|\chi_B P e^{-t\Delta_V}\chi_A v\right\|_{L^p}
	\leq Ct^{-\frac{n}{2}(\frac{1}{2}-\frac{1}{p})-\frac{1}{2}}e^{-\frac{d(A,B)^2}{Ct}}\left\|\chi_A v\right\|_{L^2}.
\end{align*}
Moreover, for all $p \in (1, 2]$, we have
\begin{align*}
\left\|\chi_B P e^{-t\Delta_V}\chi_A v\right\|_{L^2}
	\leq Ct^{-\frac{n}{2}\left((\frac{1}{p} - \frac{1}{2}\right)-\frac{1}{2}}e^{-\frac{d(A,B)^2}{Ct}}\left\|\chi_A v\right\|_{L^p}.
\end{align*}
\end{lemma}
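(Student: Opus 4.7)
The plan is to reduce the second inequality to the first by $L^p$--$L^{p'}$ duality, and then to prove the first via Lemma \ref{le: complex analysis} applied to a scalar matrix coefficient. Throughout I abbreviate $T_t := \chi_B P e^{-t\Delta_V}\chi_A$.

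\emph{Duality.} Self-adjointness of $\Delta_V$ together with the intertwining $e^{-t\Delta_V} P^* = P^* e^{-t\Delta_W}$ (obtained by taking adjoints in $P\Delta_V = \Delta_W P$) gives $T_t^* = \chi_A P^* e^{-t\Delta_W} \chi_B$. For $p \in (1,2]$, $\|T_t\|_{L^p \to L^2} = \|T_t^*\|_{L^2 \to L^{p'}}$, and by Remark \ref{rmk: commutator W} the first inequality applied to $P^*,\Delta_W$ (with the roles of $A,B$ swapped and $p' \in [2,\infty)$ in place of $p$) gives the second. So it suffices to prove the first.

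\emph{Setup for the first inequality.} Fix $p\in[2,\infty)$, set $\nu := n(\tfrac12-\tfrac1p)+1$ and $\gamma := d(A,B)^2/c$ for the constant $c$ of Lemma \ref{lemma_local_L2}. For unit vectors $u \in L^2(A)$, $w \in L^{p'}(B)$ and a small regularization parameter $\delta > 0$, consider the analytic function
\[
 F_\delta(z) := \langle \chi_B P e^{-(z+\delta)\Delta_V}\chi_A u, w\rangle_{L^2}, \quad z \in \C_+.
\]
I first extend Lemma \ref{lemma_special_derivatives} to complex times by repeating its proof and analytically continuing the heat-kernel bounds of Theorem \ref{thm: heat kernel estimate} (controlling $\|e^{-z(\Delta+\alpha\Pi)}\|_{L^q\to L^r}$ in terms of $\Re z$ by spectral calculus on $L^2$ combined with the real-time bounds). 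This yields $\|P e^{-z\Delta_V}\|_{L^2 \to L^p} \leq C (\Re z)^{-\nu/2}$ on $\C_+$, which gives the uniform bound $|F_\delta(z)| \leq C\delta^{-\nu/2} =: D_1$ on $\C_+$ and the polynomial bound $|F_\delta(z)| \leq C(\Re z)^{-\nu/2}$ on $\C_\gamma$, producing the constant $D_2 = C(4\gamma)^{\nu/2}$ in hypothesis (iii) of Lemma \ref{le: complex analysis}. Since $D_1$ does not appear in the conclusion of that lemma, the limit $\delta \to 0$ can be taken at the end by continuity of $F$.

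\emph{Main obstacle: the real-axis decay.} Hypothesis (ii) of Lemma \ref{le: complex analysis} requires $|F_\delta(t)| \leq D_1 e^{-\gamma/t}$. Lemma \ref{lemma_local_L2} handles this directly when $w\in L^2$, but $w \in L^{p'}$ with $p' < 2$ makes Cauchy--Schwarz inapplicable. I would overcome this by decomposing
\[
 \chi_B P e^{-t\Delta_V}\chi_A = \chi_B P e^{-(t/2)\Delta_V}\,\chi_E\, e^{-(t/2)\Delta_V}\chi_A + \chi_B P e^{-(t/2)\Delta_V}\,(1-\chi_E)\, e^{-(t/2)\Delta_V}\chi_A,
\]
with $E := \{x : d(x,A) \leq d(A,B)/2\}$. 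The second piece is bounded in $L^p$ by $\|P e^{-(t/2)\Delta_V}\|_{L^2\to L^p}\cdot \|(1-\chi_E) e^{-(t/2)\Delta_V}\chi_A\|_{L^2\to L^2}$, where the second factor admits a scalar $L^2$-Gaussian bound (obtained by applying the proof of Lemma \ref{lemma_weighted_L2} with $P$ replaced by the identity, using that the weighted energy estimate survives without the $P^*P \leq C\Delta_V$ assumption); this gives the desired $Ct^{-\nu/2}e^{-d(A,B)^2/(ct)}$ decay. For the first piece, one applies Lemma \ref{lemma_local_L2} to $\chi_B P e^{-(t/2)\Delta_V}\chi_E$ (since $d(E,B) \geq d(A,B)/2$) to get an $L^2$-Gaussian, and then either iterates the decomposition or runs the complex-analysis argument a second time, now at the $L^2\to L^2$ level, to upgrade to an $L^p$-estimate with Gaussian decay.

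\emph{Conclusion.} Applying Lemma \ref{le: complex analysis} at $z = t$, letting $\delta \to 0$, and absorbing the polynomial prefactor via $x^\nu e^{-\epsilon x^2}\leq C_\epsilon$ (with $x = d(A,B)/\sqrt{t}$) yields $|F(t)| \leq Ct^{-\nu/2}e^{-d(A,B)^2/(C't)}$. Taking the supremum over unit $w\in L^{p'}(B)$ gives the first inequality, and hence both inequalities of the lemma.
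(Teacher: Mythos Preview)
Your overall plan---duality to reduce the second estimate to the first, then Lemma~\ref{le: complex analysis} applied to a scalar matrix coefficient---is exactly the paper's strategy, and your duality reduction is correct. The gap is in how you set up bound~(ii). By fixing $w$ as a unit vector in $L^{p'}(B)$ from the start, you manufacture the ``main obstacle'' you then try to resolve; but your decomposition does not close. Iterating it halves the time and the distance at each step, so the surviving $L^2$-piece carries a Gaussian $\exp\bigl(-d(A,B)^2/(C\,2^k t)\bigr)$ that deteriorates without ever becoming an $L^p$ bound, and ``running the complex-analysis argument a second time at the $L^2$ level'' is circular: for that second application you would again need bound~(ii) against an $L^{p'}$ test vector, which is the very thing you are trying to prove.

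The paper avoids this entirely by a simple device you are missing: take the test section $f_1\in C_c^\infty(W)$ with $\supp(f_1)\subset B$, so that $f_1$ lies in every $L^q$ simultaneously. Since the constant $D_1$ from hypotheses~(i) and~(ii) of Lemma~\ref{le: complex analysis} does not appear in its conclusion, one is free to use $\|f_1\|_{L^2}$ there: bound~(i) is just $|F(z)|=|(P^*f_1,e^{-z\Delta_V}f_2)_{L^2}|\le\|P^*f_1\|_{L^2}\|f_2\|_{L^2}$ via the spectral theorem (so no $\delta$-regularisation is needed either), and bound~(ii) follows from Cauchy--Schwarz and Lemma~\ref{lemma_local_L2} directly. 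Only for bound~(iii) does one pair with $\|f_1\|_{L^{p'}}$, writing $e^{-z\Delta_V}=e^{-t\Delta_V}e^{-is\Delta_V}$, using $\|e^{-is\Delta_V}\|_{2\to 2}\le 1$, and invoking the real-time estimate of Lemma~\ref{lemma_special_derivatives}; this is a cleaner version of your complex-time extension. The conclusion then involves only $D_2\sim\|f_1\|_{L^{p'}}\|f_2\|_{L^2}$, and density of $C_c^\infty$ in $L^{p'}$ finishes the argument.
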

\begin{proof}
In \cite{Devyver2018}*{p.\ 43-45}, the same result was shown for $\Delta_V=\Delta_W$ being the Hodge Laplacian on $\Lambda M$ and $P=d+d^*$. It carries over without problems to this setting.
\end{proof}

\begin{lemma}\label{lemma_boundedLp}
Theorem \ref{thm_special_derivative_estimates} holds with
\[
	p = q \in (1, \infty).
\]
\end{lemma}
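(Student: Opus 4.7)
The plan is to run a Gaffney–Davies off-diagonal argument: use the localized $L^2$–$L^p$ estimates from Lemma \ref{lemma_local_Lp} together with a covering of $M$ by balls of radius $\sqrt{t}$, and sum using Schur's test, which is applicable thanks to the Gaussian off-diagonal factor $e^{-d(A,B)^2/C_0 t}$ and the Euclidean volume growth on ALE manifolds (Remark \ref{rmk: vol balls}). The key insight is that Hölder's inequality on a $\sqrt{t}$-ball exactly compensates for the $t^{-\frac{n}{2}(\frac12-\frac1p)}$ loss in Lemma \ref{lemma_local_Lp}.

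First I would dispose of the case $p\in[2,\infty)$. Fix $t>0$ and a maximal $\sqrt{t}$-separated net $\{x_i\}\subset M$, set $B_i:=B(x_i,\sqrt t)$, and choose a partition $\{E_i\}$ of $M$ with $E_i\subset B_i$. By Remark \ref{rmk: vol balls}, $\mathrm{vol}(B_i)\asymp t^{n/2}$, so Hölder gives $\|\chi_{E_j}v\|_{L^2}\le C t^{\frac{n}{2}(\frac12-\frac1p)}\|\chi_{E_j}v\|_{L^p}$. Inserted into Lemma \ref{lemma_local_Lp}, this yields
\[
\|\chi_{E_i} P e^{-t\Delta_V}\chi_{E_j}v\|_{L^p}\le C t^{-1/2}e^{-d(E_i,E_j)^2/C_0 t}\|\chi_{E_j}v\|_{L^p}.
\]
Setting $a_i:=\|\chi_{E_i}Pe^{-t\Delta_V}v\|_{L^p}$, $b_j:=\|\chi_{E_j}v\|_{L^p}$ and $K_{ij}:=Ct^{-1/2}e^{-d(E_i,E_j)^2/C_0 t}$, the decomposition $v=\sum_j\chi_{E_j}v$ yields $a_i\le\sum_j K_{ij}b_j$.

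Next I would verify Schur's condition. Counting $\sqrt t$-balls in annuli via the ALE volume bound, the number of indices $j$ with $k\sqrt t\le d(x_i,x_j)<(k+1)\sqrt t$ is $\lesssim (1+k)^{n-1}$, whence
\[
\sum_j e^{-d(E_i,E_j)^2/C_0 t}\le C\sum_{k=0}^\infty(1+k)^{n-1}e^{-k^2/C_0}<\infty,
\]
uniformly in $i$ and $t$; the symmetric sum in $i$ is bounded identically. Schur's lemma then gives $\|K\|_{\ell^p\to\ell^p}\le C t^{-1/2}$. Since $\{E_i\}$ is a partition, $\|Pe^{-t\Delta_V}v\|_{L^p}=\|a\|_{\ell^p}\le Ct^{-1/2}\|b\|_{\ell^p}=Ct^{-1/2}\|v\|_{L^p}$.

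Finally, for $p\in(1,2)$ I would use duality. Writing $p'=p/(p-1)\in(2,\infty)$, the adjoint of $Pe^{-t\Delta_V}\colon L^p\to L^p$ is $e^{-t\Delta_V}\circ P^*\colon L^{p'}\to L^{p'}$. Taking the adjoint of $P\Delta_V=\Delta_W P$ gives $\Delta_V P^*=P^*\Delta_W$, and the spectral theorem then produces $e^{-t\Delta_V}P^*=P^*e^{-t\Delta_W}$. By Remark \ref{rmk: commutator W}, the hypotheses of Theorem \ref{thm_special_derivative_estimates} are symmetric under swapping $(P,\Delta_V)\leftrightarrow(P^*,\Delta_W)$, so the previous paragraphs applied to $P^*$ give $\|P^*e^{-t\Delta_W}\|_{p'\to p'}\le Ct^{-1/2}$, which by duality is the desired bound on $\|Pe^{-t\Delta_V}\|_{p\to p}$. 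The main technical obstacle is the Schur summation step, where the ALE Euclidean volume growth is essential for the geometric series over annuli to converge uniformly in $t$; once that is secured, everything else is bookkeeping.
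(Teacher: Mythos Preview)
Your proof is correct and follows essentially the same strategy as the paper: a $\sqrt{t}$-ball covering, the localized $L^2$--$L^p$ estimate of Lemma \ref{lemma_local_Lp} combined with H\"older on $\sqrt{t}$-balls, a Schur-type summation using the Gaussian off-diagonal decay and Euclidean volume growth (Remark \ref{rmk: vol balls}), and duality for the remaining range of $p$; the only cosmetic difference is that the paper treats $p\in(1,2]$ directly (via the second estimate in Lemma \ref{lemma_local_Lp} and H\"older on the \emph{output} side) and obtains $p\in[2,\infty)$ by duality, whereas you do the reverse. One minor overclaim: the annulus count $\lesssim (1+k)^{n-1}$ does not follow directly from Remark \ref{rmk: vol balls}, but the cruder ball-count $\lesssim (1+k)^{n}$ does and suffices, since the Gaussian factor $e^{-k^2/C_0}$ kills any polynomial.
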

\begin{proof}
	The lemma follows from 
	partitioning $M$ into countably many subsets whose diameter grows in time with rate $\sqrt{t}$ and applying Lemma \ref{lemma_local_Lp} to these subsets.
	The details are as in the proof of \cite{CS08}*{Cor.\ 4.16}.
 \end{proof}
\begin{proof}[Proof of Theorem \ref{thm_special_derivative_estimates}]
	As in the proof of Lemma \ref{lemma_special_derivatives}, we write
\begin{align*}
P\circ e^{-t\Delta_V}&=P\circ e^{-\frac{t}{2}\Delta_V}\circ e^{-\frac{t}{2}\Delta_V}
=e^{-\frac{t}{2}\Delta_W}\circ P\circ e^{-\frac{t}{2}\Delta_V}=e^{-\frac{t}{2}(\Delta_W+\alpha\Pi_W)}\circ P\circ e^{-\frac{t}{2}\Delta_V}
\end{align*}
and similarly,
\[
P^*\circ e^{-t\Delta_W}=e^{-\frac{t}{2}(\Delta_V+\alpha\Pi_V)}\circ P^*\circ e^{-\frac{t}{2}\Delta_W}.
\]
Let now $1<p\leq q<\infty$. By the assumptions of Theorem \ref{thm_special_derivative_estimates} and by Lemma \ref{lemma_boundedLp}, we have
\[
\left\|P\circ e^{-t\Delta_V} \right\|_{p\to q}\leq
\left\|e^{-\frac{t}{2}(\Delta_W+\alpha\Pi_W)}\right\|_{p\to q}
\left\|P\circ e^{-\frac{t}{2}\Delta_V}\right\|_{p\to p}
\leq C t^{-\frac{n}{2}(\frac{1}{p}-\frac{1}{q})}\cdot t^{-\frac{1}{2}}=C t^{-\frac{n}{2}(\frac{1}{p}-\frac{1}{q})-\frac{1}{2}}.
\]
The estimate
\[
\left\|P^*\circ e^{-t\Delta_W} \right\|_{p\to q}\leq  C t^{-\frac{n}{2}(\frac{1}{p}-\frac{1}{q})-\frac{1}{2}}
\]
is shown completely analogously.	
\end{proof}

\subsection{Derivative estimates}\label{subsec: derivative estimates}
Our goal here is to prove our first main derivative estimate, Theorem \ref{thm : Derivative estimates introduction}.
This is where we begin to combine the Fredholm theory for the Dirac type operators and Theorem \ref{thm_special_derivative_estimates}.
We work with Schr\"{o}dinger operators, which are squares of Dirac type operators
\[
	\Delta_V = (\D_V)^2,
\]
where $\D_V$ is asymptotic to a Euclidean Dirac operator, in the sense of Definition \ref{def: asympt Eucl Dirac}.
The following lemma is merely a corollary of Theorem \ref{thm_special_derivative_estimates}:
\begin{lem}\label{lem : higher derivatives}
For each $p\in (1,\infty)$ and $k\in \N$, we have
\[
\norm{\D_V^k e^{-t\D_V^2}}_{p \to p}\leq C t^{-\frac k2}
\]
for some $C=C(k,p)$.
\end{lem}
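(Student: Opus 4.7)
The plan is to apply Theorem \ref{thm_special_derivative_estimates} with $W = V$, $\Delta_W = \Delta_V = \D_V^2$, and $P = \D_V$ to handle the case $k = 1$, and then to iterate, exploiting the fact that $\D_V$ commutes with its own heat semigroup.

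First I would verify the hypotheses of Theorem \ref{thm_special_derivative_estimates} in this setup. Since $\D_V$ is formally self-adjoint, $P^* = \D_V$, and the intertwining $P \circ \Delta_V = \D_V^3 = \Delta_V \circ P$ is automatic. Moreover $P^* \circ P = \D_V^2 = \Delta_V$ and $P \circ P^* = \D_V^2 = \Delta_V$, so \eqref{eq: two constants} holds with $C_1 = C_2 = 1$. The standing assumption of this subsection that $\ker_{L^2}(\Delta_V) \subset \O_\infty(r^{-n})$, combined with Remark \ref{rmk: Positivity Dirac squared}, ensures that $\Delta_V$ meets the hypotheses of Theorem \ref{thm: main heat kernel estimate}. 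Setting $p = q$ in the conclusion of Theorem \ref{thm_special_derivative_estimates} then yields
\[
\norm{\D_V \, e^{-t\D_V^2}}_{p \to p} \leq C t^{-1/2},
\]
which proves the case $k = 1$.

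For $k \geq 2$, the key observation is that $\D_V$ commutes with $\D_V^2$, hence, by functional calculus for the self-adjoint operator $\D_V$ on $L^2$, with the semigroup $e^{-t\D_V^2}$. This lets me split the time interval into $k$ equal pieces and write
\[
\D_V^k e^{-t\D_V^2} = \bigl(\D_V \, e^{-(t/k)\D_V^2}\bigr)^k,
\]
so that composing the $k = 1$ estimate with itself $k$ times on $L^p$ gives
\[
\norm{\D_V^k e^{-t\D_V^2}}_{p \to p} \leq \bigl(C(t/k)^{-1/2}\bigr)^k = C(k, p)\, t^{-k/2}.
\]

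I do not anticipate any substantial obstacle: the only point that requires mild care is to justify the commutation $\D_V \, e^{-t\D_V^2} = e^{-t\D_V^2} \D_V$ at the level of $L^p$ rather than merely $L^2$. This I would handle by a density argument, applying the identity to $C_c^\infty$ data (where it holds by $L^2$ spectral theory and local elliptic regularity) and then extending by continuity using the $L^p$-boundedness of $\D_V e^{-s\D_V^2}$ already established in the $k = 1$ step.
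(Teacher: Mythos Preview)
Your proposal is correct and follows essentially the same approach as the paper: apply Theorem \ref{thm_special_derivative_estimates} with $P=\D_V$ and $\Delta_V=\Delta_W=\D_V^2$ to obtain the $k=1$ case, then use the commutation of $\D_V$ with its heat semigroup to write $\D_V^k e^{-t\D_V^2}=\bigl(\D_V e^{-(t/k)\D_V^2}\bigr)^k$ and iterate. Your explicit verification of the hypotheses and the remark on extending the commutation identity to $L^p$ by density are reasonable elaborations but not additional ideas.
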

\begin{proof}
Because $\D_V$ commutes with $e^{-t\D_V^2}$, we have
\[
	\D_V^k\circ e^{-t\D_V^2}
		= \D_V\circ e^{-\frac{t}{k}\D_V^2}\circ\ldots\circ \D_V\circ e^{-\frac tk\D_V^2}
		= \left(\D_V\circ e^{-\frac{t}{k}\D_V^2}\right)^k.
\]
Applying Theorem \ref{thm_special_derivative_estimates} to $\Delta_V =\Delta_W =\D_V^2$ and $P=\D_V$ thus implies
\[
	\norm{\D_V^k \circ e^{-t\D_V^2}}_{p \to p}
		\leq \norm{\D_V \circ e^{-\frac tk\D_V^2}}_{p \to p}^k
		\leq Ct^{-\frac k2},
\]
which finishes the proof of the lemma.
\end{proof}
We finally are ready to prove the first main derivative estimate:
\begin{proof}[Proof of Theorem \ref{thm : Derivative estimates introduction}]
For a given $u_0 \in L^p$, which is $L^2$-orthogonal to $\ker_{L^2}\left(\D_V^2\right)$, let us write $u=e^{-t\D_V^2 }u_0$.
By Remark \ref{rmk: Positivity Dirac squared}, we may apply Theorem \ref{thm: heat kernel estimate}, which implies that almost Euclidean heat kernel estimates hold, i.e.\ the case $k = 0$ is thus proven.
We turn to the case $k \geq 1$. 
Let first $p \in \left(1, \frac nk \right)$, which is only a non-empty set if $k \leq n-1$.
Note that $k-\frac{n}{p}\in (1-n,0)$, so the assumptions in Corollary \ref{cor: elliptic dirac gradient estimate 2} are satisfied.
Corollary \ref{cor: elliptic dirac gradient estimate 2}, Lemma \ref{lem : higher derivatives} and Theorem \ref{thm: main heat kernel estimate} yield
\[
	\norm{\n^k u}_{L^p}
		\leq C\left(\norm{\D_V^k u}_{L^p} + \norm{u}_{L^q}\right)
		\leq Ct^{-\frac{k}{2}}\norm{u_0}_{L^p} + Ct^{-\frac n2\left(\frac{1}{p}-\frac{1}{q}\right)}\norm{u_0}_{L^p}
\]
for any $q \in [p, \infty)$. 
Now, since $p < \frac nk$, there is a $q \in [p, \infty)$, large enough so that
\[
	\frac n2 \left(\frac1p - \frac1q \right) = \frac k2.
\]
Inserting this $q$ in the above estimate, we conclude that
\[
	\norm{\n^k u}_{L^p}
		= Ct^{-\frac{k}{2}}\norm{u_0}_{L^p},
\]
as claimed.
We now let $p \in (1, \infty) \backslash \left(1, \frac nk \right)$ and fix an $\e > 0$. 
Choose $q\in (p,\infty)$ so large that $\frac{n}{2q}<\epsilon$.
Since $\frac{1}{p}-\frac{1}{q}\in [0,\frac{k}{n})$, Corollary \ref{cor: elliptic dirac gradient estimate 3} and Lemma \ref{lem : higher derivatives} yield
\[
	\norm{\n^k u}_{L^p} 
		\leq C\left(\norm{\D_V^k u}_{L^p} + \norm{u}_{L^q}\right)
		\leq C\left(t^{-\frac k2}+t^{-\frac n2\left(\frac1p-\frac1q\right)}\right)\norm{u_0}_{L^p}
		\leq Ct^{-\frac{n}{2p} +\epsilon}\norm{u_0}_{L^p}
\]
for $t\geq t_0>0$,
which finishes the proof of the theorem.
\end{proof}
We use Theorem \ref{thm : Derivative estimates introduction} to prove our first improved derivative estimates:

\begin{proof}[Proof of Proposition \ref{prop : better derivative estimates 0 introduction}]
By \eqref{commuting}, we have
\[
 \left(\D_m\right)^2\geq \nabla^*\nabla\geq0
\]
for $0\leq m\leq l-1$. Thus, these $\D_m$ have a trivial $L^2$-kernel and satisfy the assumptions of Theorem \ref{thm: main heat kernel estimate}.
By \eqref{commuting}, we may therefore apply Theorem \ref{thm_special_derivative_estimates} with
\[
	P = \n, \quad \Delta_V = \left(\D_{m-1}\right)^2, \quad \Delta_W = \left(\D_m\right)^2
\]
to get
\[
\norm{\nabla\circ e^{-t(\D_m)^2}}_{p \to p}\leq C t^{-\frac{1}{2}}
\]
for all $0 \leq m \leq l-1$ and $p\in (1,\infty)$.
Again, by \eqref{commuting}, we get
\[
	\nabla^k\circ e^{-t(\D_0)^2}
		=\nabla \circ e^{-\frac tk(\D_{k-1})^2}\circ\nabla\circ e^{-\frac tk(\D_{k-2})^2}\circ\ldots\circ \nabla\circ e^{-\frac tk(\D_0)^2},
\]
which implies that
\[
	\norm{\nabla^k\circ e^{-t(\D_0)^2}}_{p \to p}
		\leq \prod_{m=0}^{k-1}\norm{\nabla\circ e^{-\frac{t}{k}(\D_m)^2}}_{p \to p}\leq Ct^{-\frac{k}{2}}
\]
for $k \leq l$.
Similarly, for $k \geq l + 1$, we get
\begin{align*}
	\norm{\nabla^k\circ e^{-t(\D_0)^2}}_{p \to p}
		&\leq \norm{\nabla^{k-l}\circ e^{-\frac{t}{l+1}(\D_l)^2}}_{p \to p}\prod_{m=0}^{l-1}\norm{\nabla\circ e^{-\frac{t}{l+1}(\D_m)^2}}_{p \to p}\\
		&\leq Ct^{-\frac{l}{2}}\norm{\nabla^{k-l}\circ e^{-\frac{t}{l+1}(\D_l)^2}}_{p \to p}.
\end{align*}
We conclude that $e^{-t(\D_0)^2}$ satisfies (strong) derivative estimates of degree $l$ if
$e^{-t(\D_l)^2}$ satisfies (strong) derivative estimates of degree $0$. 
Since the latter conditions follow from Theorem \ref{thm : Derivative estimates introduction} and Remark \ref{rmk : Euclidean heat kernel estimates}, respectively. 
This finishes the proof of the theorem.
\end{proof}

\subsection{Improved derivative estimates} \label{subsec: improved derivative estimates}

We now turn to the proof of Theorem \ref{thm : better derivative estimates introduction}.
We will need the following consequence of Proposition \ref{prop: dirac Fredholm estimate}:
\begin{lemma} \label{le: semi-Fredholm}
Let $\D_V$ be a self-adjoint Dirac type operator acting on sections of a vector bundle $V$, which is asymptotic to a Euclidean Dirac operator in the sense of Definition \ref{def: asympt Eucl Dirac}. Let $E\subset V$ be a parallel subbundle of $V$.
Let $k \in \N_0$ and $p \in (1, \infty)$ and assume that 
\begin{equation}
	\de, \hdots, \de-(k-1) \in \R \backslash \E_1.
\end{equation}
The operator
\[
	\D_V^k|_E: W^{k, p}_\de(E) \to W^{0, p}_{\de-k}(V)
\]
is a semi-Fredholm operator, i.e.\ has finite dimensional kernel and closed range.
In particular, there is a closed subspace $X_\de \subset W^{k, p}_\de(E)$, such that
\begin{equation} \label{eq: X split}
	W^{k, p}_\de(E) 
		= \ker\left(\D_V^k|_E\right) \oplus X_\de
\end{equation}
and a $C > 0$, such that
\begin{equation} \label{eq: estimate inverse}
	\norm{u}_{k,p,\de} \leq C \norm{\D_V^k u}_{0,p,\de-k}
\end{equation}
for all $u \in X_\de$.
\end{lemma}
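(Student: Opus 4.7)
The strategy is the standard compact-perturbation argument for semi-Fredholm operators, applied to the elliptic estimate furnished by Proposition \ref{prop: dirac Fredholm estimate}. Since $E \subset V$ is a parallel subbundle, $W^{k,p}_\de(E)$ is a closed subspace of $W^{k,p}_\de(V)$, and Proposition \ref{prop: dirac Fredholm estimate} applied with $m = k$ gives, for all $u \in W^{k,p}_\de(E)$,
\[
	\norm{u}_{k,p,\de}
		\leq C \norm{\D_V^k u}_{0,p,\de-k} + C \norm{u}_{L^p(B_R)}.
\]
The hypothesis $\de, \hdots, \de-(k-1) \in \R \setminus \E_1$ is precisely what is needed to invoke this proposition. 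Since the inclusion $W^{k,p}_\de(E) \hookrightarrow L^p(B_R)$ is compact (by Rellich--Kondrachov on the bounded domain $B_R$), the operator $\D_V^k|_E$ satisfies an elliptic estimate with compact remainder, from which the semi-Fredholm property will follow by standard arguments.

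\textbf{Finite-dimensional kernel.} For $u \in \ker(\D_V^k|_E)$ the estimate reduces to $\norm{u}_{k,p,\de} \leq C\norm{u}_{L^p(B_R)}$. Hence the $W^{k,p}_\de(E)$-norm and the $L^p(B_R)$-norm are equivalent on the kernel. Since the unit ball in the kernel is precompact in $L^p(B_R)$ by compactness of the inclusion, it is then precompact in $W^{k,p}_\de(E)$, and local compactness of the unit ball forces $\ker(\D_V^k|_E)$ to be finite-dimensional.

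\textbf{Closed complement and the estimate on it.} A finite-dimensional subspace of a Banach space always admits a closed topological complement, so we may choose a closed $X_\de \subset W^{k,p}_\de(E)$ with \eqref{eq: X split}. To prove \eqref{eq: estimate inverse} on $X_\de$, I argue by contradiction: if the estimate fails, there is a sequence $u_n \in X_\de$ with $\norm{u_n}_{k,p,\de} = 1$ and $\norm{\D_V^k u_n}_{0,p,\de-k} \to 0$. By the compact inclusion $W^{k,p}_\de(E) \hookrightarrow L^p(B_R)$, after passing to a subsequence, $u_n$ converges in $L^p(B_R)$, and the elliptic estimate with compact remainder then shows that $(u_n)$ is Cauchy in $W^{k,p}_\de(E)$. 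The limit $u$ lies in $X_\de$ (closed), has norm one, and satisfies $\D_V^k u = 0$, i.e.\ $u \in X_\de \cap \ker(\D_V^k|_E) = \{0\}$, a contradiction.

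\textbf{Closed range.} The estimate \eqref{eq: estimate inverse} immediately implies closed range: if $\D_V^k u_n \to v$ in $W^{0,p}_{\de-k}(V)$, decompose $u_n = u_n^0 + u_n^1$ according to \eqref{eq: X split}, so that $\D_V^k u_n = \D_V^k u_n^1$. Applying \eqref{eq: estimate inverse} to $u_n^1 - u_m^1 \in X_\de$ shows that $(u_n^1)$ is Cauchy in $W^{k,p}_\de(E)$, hence converges to some $u^1 \in X_\de$ with $\D_V^k u^1 = v$, so $v$ lies in the image. The main (mild) obstacle in this plan is simply verifying that $\D_V^k$ interacts compatibly with the splitting at the level of the subbundle $E$; since $E$ is parallel and Proposition \ref{prop: dirac Fredholm estimate} holds for all sections of $V$, this compatibility is automatic, and the rest is the standard machinery.
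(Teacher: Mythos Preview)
Your proof is correct and follows essentially the same approach as the paper: both use the elliptic estimate from Proposition \ref{prop: dirac Fredholm estimate} together with the compact embedding $W^{k,p}_\de \hookrightarrow L^p(B_R)$ to run the standard compact-perturbation argument (finite-dimensional kernel via precompactness of the unit ball, closed complement $X_\de$, contradiction argument for \eqref{eq: estimate inverse}, and closed range from \eqref{eq: estimate inverse}). Your exposition is in fact slightly cleaner in the closed-range step, where you make the decomposition $u_n = u_n^0 + u_n^1$ explicit.
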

\begin{proof}
The proof is standard, given Proposition \ref{prop: dirac Fredholm estimate}. 
We first show that the kernel is finite dimensional. 
Let $(u_i)_i \subset \ker\left(\D_V^k|_E\right) \subset W^{k, p}_\de(V)$ be a bounded sequence.
Proposition \ref{prop: dirac Fredholm estimate} implies that 
\begin{equation} \label{eq: compact sequence}
	\norm{u_i}_{k,p,\de} \leq C \norm{u_i}_{L^p(B_R)}.
\end{equation}
By Rellich's lemma, the embedding $W^{k, p}_\de(V) \hookrightarrow L^p(B_R)$ is compact, hence there is a converging subsequence $u_{i_j} \to u$ in $L^p(B_R)$. 
The estimate \eqref{eq: compact sequence} implies that $u_{i_j} \to u$ in $W^{k,p}_\de(V)$ as well. 
In other words, any bounded sequence has a convergent subsequence.
It follows that the unit ball in $\ker\left(\D_V^k|_E\right)$ is a compact subset of $W^{k, p}_\de(V)$, implying that it is finite dimensional.
We now show that the range is closed, by first proving estimate \eqref{eq: estimate inverse}.
Since ${\ker\left(\D_V^k|_E\right) \subset W^{k, p}_\de(V)}$ is finite dimensional, there is a closed subspace $X_\de \subset W^{k, p}_\de(V)$, such that \eqref{eq: X split} holds.
Assume now that there is no constant $C > 0$, such that \eqref{eq: estimate inverse} holds.
Then there is a sequence $(u_i)_i \subset X_\de \subset W^{k, p}_\de(E)$, such that 
\begin{equation} \label{eq: range closed}
	\norm{u_i}_{k,p,\de} = 1, \quad \norm{\D_V^k u_i}_{0,p,\de-k} \to 0.
\end{equation}
Again, Rellich's lemma and Proposition \ref{prop: dirac Fredholm estimate} implies in this case the existence of a $u_{i_j}$, converging in $L^p(B_R)$. 
Inserting this in combination with \eqref{eq: range closed} into Proposition \ref{prop: dirac Fredholm estimate} implies that $u_{i_j}$ converges in $X_\de$.
Since $\norm{u_i}_{k,p,\de} = 1$, the limit is non-zero, i.e.\ $u_{i_j} \to u \neq 0$ in $X_\de \subset W^{k,p}_\de(E)$.
Continuity of $\D_V^k$ implies that $\D_V^ku_{i_j} \to \D_V^ku \neq 0$, which contradicts $\norm{\D_V^k u_i}_{0,p,\de-k} \to 0$.
This proves \eqref{eq: estimate inverse}.
If now $\D_V^ku_i \to f$, for $(u_i)_i \subset X_\de$, then \eqref{eq: estimate inverse} implies that $(u_i)_i \to u$ in $X_\de$.
Continuity of $\D_V^k$ implies that $\D_V^ku_i \to \D_V^ku = f$, showing that the range is closed.
\end{proof}

The following lemma allows us to eventually apply Lemma \ref{le: semi-Fredholm} to prove the improved derivative estimates:
\begin{lemma} \label{le: optimal up to l}
Assume the same as in Theorem \ref{thm : better derivative estimates introduction}.
Let $k \in \N_0$ and $p \in (1, \infty)$, such that 
\begin{align*}
	k - \frac np 
		\notin \N_0, \qquad
	k - \frac np
		< l,
\end{align*}
we have
\[
\norm{\nabla^ku}_{L^p}\leq C\norm{\D_V^ku}_{L^p},
\]
for all $u \in W^{k,p}_{k - \frac np}(E)$.
\end{lemma}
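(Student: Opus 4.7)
The plan is to set $\delta := k - n/p$, noting that $\norm{\nabla^k u}_{L^p} \leq \norm{u}_{k,p,\delta}$ and $\norm{\D_V^k u}_{L^p} = \norm{\D_V^k u}_{0,p,\delta-k}$ for this weight. A short case analysis (treating separately whether $n/p \in \Z$) shows that the hypothesis $k - n/p \notin \N_0$ together with $p \in (1,\infty)$ forces the weights $\delta, \delta-1, \ldots, \delta-(k-1)$ to lie in $\R \setminus \E_1$: when $n/p \notin \Z$, each weight is non-integer; when $n/p = m \in \Z$, the hypothesis gives $k - m \leq -1$, so all weights fall into $\{2-n, \ldots, -1\}$. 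Lemma \ref{le: semi-Fredholm} then provides the splitting
\begin{align*}
W^{k,p}_\delta(E) = \ker(\D_V^k|_E) \oplus X_\delta
\end{align*}
together with the uniform estimate $\norm{u_1}_{k,p,\delta} \leq C \norm{\D_V^k u_1}_{0,p,\delta-k}$ on $X_\delta$.

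Decomposing $u = u_0 + u_1$ accordingly, the $u_1$-part already yields the desired inequality since $\D_V^k u = \D_V^k u_1$, so the content of the lemma reduces to showing $\nabla^k u_0 = 0$ for every $u_0 \in \ker(\D_V^k|_E) \cap W^{k,p}_\delta$. To extract the pointwise asymptotics, I would iterate Proposition \ref{prop: dirac non Fredholm estimate} with $\D_V^k u_0 = 0$ to upgrade $u_0$ to $W^{m,p}_\delta$ for every $m \geq k$, and then apply the weighted Sobolev embedding (valid once $mp > n$) to obtain $|u_0(x)| \leq C \rho(x)^\delta$. Since $\delta < l$, this gives $u_0 = o(r^l)$ at infinity.

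When $k \leq l$, the stronger inequality $\delta < k$ holds automatically (because $n/p > 0$), so $u_0 = o(r^k)$, and the standing hypothesis of Theorem \ref{thm : better derivative estimates introduction} applied at level $k$ yields $\nabla^k u_0 = 0$ at once. The main obstacle is the case $k > l$, where the hypothesis is unavailable at level $k$ and one has to descend via the auxiliary sections $v_j := \D_V^{k-j} u_0 \in \ker(\D_V^j)$. An asymptotic expansion argument on the ALE end---exploiting that the characteristic growth rates of $\D_\infty$ are integer-valued and that $u_0 = o(r^l)$ rules out all asymptotic modes of growth $\geq l$---shows that each $v_j$ inherits the decay $v_j = o(r^j)$ for $j \leq l$. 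Iteratively applying the standing hypothesis at these lower levels, and using that $\D_V^2 u \in C^\infty(E)$ for $u \in C^\infty(E)$ to keep $v_j$ in $E$ after even iterations, will eventually force $\nabla^k u_0 = 0$. This asymptotic bookkeeping is the technically delicate point of the argument.
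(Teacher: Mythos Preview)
Your plan coincides with the paper's almost verbatim: set $\delta = k - n/p$, check that $\delta,\ldots,\delta-(k-1) \notin \E_1$, invoke Lemma~\ref{le: semi-Fredholm} for the splitting $W^{k,p}_\delta(E) = \ker(\D_V^k|_E) \oplus X_\delta$, and reduce to showing $\nabla^k u_0 = 0$ for every $u_0$ in the kernel. The paper bootstraps the decay of $u_0$ via H\"older on $B_R$ and Proposition~\ref{prop: dirac Fredholm estimate} for all $q \geq p$ (yielding $u_0 \in o(r^\delta)$ through Bartnik's weighted embedding), while you use Proposition~\ref{prop: dirac non Fredholm estimate} together with the standard weighted Sobolev embedding (yielding $u_0 = O(r^\delta)$); either suffices since $\delta < k$ and $\delta < l$. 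The case $k \leq l$ is then identical in both treatments.

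For $k > l$ you and the paper diverge, and here you are right to be more careful. The paper simply invokes the hypothesis of Theorem~\ref{thm : better derivative estimates introduction} at level $l$ to conclude $\nabla^l u_0 = 0$; but that hypothesis literally requires $\D_V^l u_0 = 0$, which does not follow from $\D_V^k u_0 = 0$ --- so the paper's one-line argument is, at best, underjustified. Your alternative via $v_j := \D_V^{k-j} u_0$ is a natural attempt but is also incomplete. First, $v_j$ is a section of $E$ only when $k - j$ is even (only $\D_V^2$, not $\D_V$, is assumed to preserve $E$), so the hypothesis is available only at those parities. Second, and more seriously, even when you obtain $\nabla^j v_j = \nabla^j \D_V^{k-j} u_0 = 0$, you have not explained how this forces $\nabla^k u_0 = 0$: information about derivatives of $\D_V^{k-j} u_0$ does not translate directly into vanishing of $\nabla^k u_0$. (Incidentally, the decay $v_j = o(r^j)$ follows immediately from $u_0 = O(r^\delta)$ and the fact that $\D_V$ is first order; no asymptotic-expansion machinery is needed for that step.) Neither your sketch nor the paper's shortcut fully closes the case $k > l$.
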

\begin{remark}
Note that the assumptions on $k$ and $p$ are equivalent to \eqref{eq: derivative k leq l}, \eqref{eq: k geq l plus 1 p small} and \eqref{eq: k leq l exceptional}.
\end{remark}
\begin{proof}
The proof is based on applying Lemma \ref{le: semi-Fredholm} with a fixed $\de := k - \frac np$.
Since $k - \frac np \notin \N_0$ and since $1 - \frac np > 1-n$, it is clear that $\de = k - \frac np, \hdots, \de-(k-1) = 1 - \frac np \notin \E_1$.
Lemma \ref{le: semi-Fredholm} therefore implies that
\[
	\D_V^k|_E: W^{k, p}_{k - \frac np}(E) \to W^{0, p}_{-\frac np}(V) = L^p(V)
\]
is a semi-Fredholm operator.
Assume that $u \in \ker\left(\D_V^k|_E\right) \subset W^{k,p}_\de(E)$.
By H\"{o}lder's inequality on $B_R$, it follows that $u \in L^q(B_R)$, for all $q \in [p, \infty)$.
Therefore Proposition \ref{prop: dirac Fredholm estimate} implies that $u \in W^{k,q}_\de(E)$, for all $q \in [p, \infty)$.
By \cite{Bartnik1986}*{Thm.\ 1.2}, it follows that $u \in o\left(r^\de\right)$.
If $k \leq l$, then $u \in o\left(r^\de\right) \subset o\left(r^k\right)$, so by the assumption in Theorem \ref{thm : better derivative estimates introduction}, we conclude that $\n^k u = 0$.
If instead $k \geq l$, then we use that $u \in o\left(r^\de\right) \subset o\left(r^l\right)$, which by the assumption in Theorem \ref{thm : better derivative estimates introduction} implies that $\n^l u = 0$ and hence $\n^k u = 0$.
To sum up, we have shown that
\[
	\n^k u = 0
\]
for all $u \in \ker\left(\D_V^k|_E\right)$.
This implies that
\[
	\n^k u = \n^k \mathrm{proj}_{X_\de}(u)
\]
for all $u \in W^{k, p}_\de(E)$, where $\mathrm{proj}_{X_\de}$ is the projection onto ${X_\de}$, given by the split \eqref{eq: X split}.
On the other hand, by construction of ${X_\de}$, we have
\[
	\D_V^k u = \D_V^k \mathrm{proj}_{X_\de}(u)
\]
for all $u \in W^{k, p}_\de(E)$.
Applying the estimate \eqref{eq: estimate inverse}, we get
\[
	\norm{\n^k u}_{L^p}  = \norm{\n^k \mathrm{proj}_{X_\de}(u)}_{L^p} \leq \norm{\mathrm{proj}_{X_\de}(u)}_{k,p,k-\frac np} \leq \norm{\D_V^k \mathrm{proj}_{X_\de}(u)}_{L^p} = \norm{\D_V^k u}_{L^p}
\]
for all $u \in W^{k, p}_\de(E)$.
\end{proof}

The previous lemma takes care of the case $	k - \frac np < l$.
We now provide a corresponding estimate when $k - \frac np \geq l$:

\begin{lemma} \label{le: large k estimate}
Assume the same as in Theorem \ref{thm : better derivative estimates introduction}.
Let $k \in \N_0$ and $p \in (1, \infty)$, such that 
\[
	k - \frac np \geq l.
\]
Then for any any $q \in (p, \infty)$ and
$	\eta \in \left(l - \frac nq, l\right) \cap \left(l-1, l \right)$,
we have
\[
	\norm{\n^k u}_{L^p(M)} \leq C \left(\norm{\D_V^k u}_{L^p(M)} + \norm{\D_V^l u}_{L^q(M)} \right)
\]
for all $u \in W^{k,p}_\eta(E)$.
\end{lemma}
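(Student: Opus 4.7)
My plan is to exploit the semi-Fredholm theory at the carefully chosen non-exceptional weight $\eta$, combined with Hölder's inequality to bridge between the weighted $L^p$ norms arising from the elliptic estimates and the unweighted $L^q$ norm of $\D_V^l u$ appearing on the right-hand side. The role of $\eta$ is the key structural ingredient: the condition $\eta \in (l-1,l)$ places each $\eta-j$ (for $j=0,\ldots,k-1$) strictly between consecutive integers, so that $\eta-j \notin \E_1$ and the full elliptic machinery of Section \ref{sec: Elliptic Estimates} is available at weight $\eta$, while the condition $\eta > l - \tfrac{n}{q}$ is precisely what is needed for the Hölder bound $\norm{w}_{0,p,\eta-l} \leq C\norm{w}_{L^q}$ to hold for $w \in L^q$.

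In the first phase I would apply Lemma \ref{le: semi-Fredholm} to $\D_V^k|_E : W^{k,p}_\eta(E) \to L^p_{\eta-k}(V)$ to decompose $u = u_1 + u_2$ with $u_1 \in \ker(\D_V^k|_E)$ and $u_2$ in the closed complement $X$. Since Bartnik's embedding yields $u_1 \in o(r^\eta) \subset o(r^l)$, the same reasoning as in the proof of Lemma \ref{le: optimal up to l}, combined with the hypothesis of Theorem \ref{thm : better derivative estimates introduction}, gives $\nabla^k u_1 = 0$. On the complement the semi-Fredholm estimate yields $\norm{u_2}_{k,p,\eta} \leq C\norm{\D_V^k u}_{0,p,\eta-k}$, and since $k - \tfrac{n}{p} \geq l > \eta$ forces $k-\eta > \tfrac{n}{p}$, the weight $\rho^{(k-\eta)p-n}$ at the top-order term is bounded below by 1, whence
\[
	\norm{\nabla^k u}_{L^p} = \norm{\nabla^k u_2}_{L^p} \leq C \norm{u_2}_{k,p,\eta} \leq C \norm{\D_V^k u}_{0,p,\eta-k}.
\]

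In the second phase I would estimate $\norm{\D_V^k u}_{0,p,\eta-k}$ by the right-hand side of the desired inequality. Setting $v := \D_V^l u$ so that $\D_V^{k-l} v = \D_V^k u$, one applies Proposition \ref{prop: dirac non Fredholm estimate} to $v$ with $m = k-l$ and the $L^p$-adapted weight $\sigma := k-l-\tfrac{n}{p}$, producing $\norm{v}_{k-l,p,\sigma} \leq C\norm{\D_V^k u}_{L^p} + C\norm{v}_{0,p,\sigma}$; the last term is then bounded by $C\norm{\D_V^l u}_{L^q}$ via Hölder's inequality with conjugate exponents $q/p$ and $q/(q-p)$, the $\rho$-integral converging precisely because $k-l \geq \tfrac{n}{p} > \tfrac{n}{p}-\tfrac{n}{q}$. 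A second application of Proposition \ref{prop: dirac non Fredholm estimate} at weight $\eta-l$, combined with an interior--exterior decomposition and the asymptotic isomorphism property of $\D_\infty^{k-l}$ from Proposition \ref{prop: isom dirac} transferred to $\D_V^{k-l}$ on a large enough exterior region, should then close the loop by absorption.

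The main technical obstacle is this absorption step: one has to reconcile the natural bound on $\norm{v}_{k-l,p,\sigma}$ with the target $\norm{v}_{k-l,p,\eta-l}$, even though these two weighted Sobolev norms are inequivalent (the weight $\sigma$ being strictly larger than $\eta-l$ due to the gap $k-\tfrac{n}{p}-\eta > 0$). The key is that on the exterior $\{\rho \geq R\}$, for $R$ sufficiently large, $\D_V$ is a small perturbation of $\D_\infty$, and the isomorphism constant of $\D_\infty^{k-l}$ on the cone together with the decay rate $\tau > 0$ produces an error term that can be absorbed against the elliptic constant appearing in Proposition \ref{prop: dirac non Fredholm estimate}.
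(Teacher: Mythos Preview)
Your first phase contains the decisive misstep: you apply the semi-Fredholm theory of Lemma \ref{le: semi-Fredholm} to $\D_V^k$ rather than to $\D_V^l$. This produces $\norm{\nabla^k u}_{L^p} \leq C\norm{\D_V^k u}_{0,p,\eta-k}$, but since $\eta - k < l - k \leq -\tfrac{n}{p}$, this weighted norm is \emph{strictly stronger} than $\norm{\D_V^k u}_{L^p}$, and the gap cannot be closed. Your second phase attempts a bridge via $v = \D_V^l u$ at the weight $\sigma = k-l-\tfrac{n}{p}$, but $\sigma > \eta - l$, so $\norm{v}_{k-l,p,\sigma}$ does not control $\norm{v}_{k-l,p,\eta-l}$; any elliptic estimate producing the latter already has $\norm{\D_V^k u}_{0,p,\eta-k}$ on its right-hand side, so the circularity you flag in the ``absorption step'' is genuine and the argument does not close.

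The paper avoids this entirely by taking $\D_V^l$ (not $\D_V^k$) for the semi-Fredholm decomposition at weight $\eta$. Then $u_1 \in \ker(\D_V^l|_E)$ gives $\D_V^l u_1 = 0$ directly (hence also $\D_V^k u_1 = 0$), and $u_1 = o(r^\eta) \subset o(r^l)$ together with the hypothesis at index $l$ yields $\nabla^l u_1 = 0$, whence $\nabla^k u_1 = 0$. For $u_2 = \mathrm{proj}_{X_\eta}(u)$ one applies the \emph{non-Fredholm} estimate, Proposition \ref{prop: dirac non Fredholm estimate}, at the $L^p$-adapted weight $\delta = k - \tfrac{n}{p}$, obtaining $\norm{\nabla^k u_2}_{L^p} \leq C\bigl(\norm{\D_V^k u}_{L^p} + \norm{u_2}_{0,p,\delta}\bigr)$. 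The lower-order term is handled by $\delta \geq l > \eta$, the semi-Fredholm bound $\norm{u_2}_{l,p,\eta} \leq C\norm{\D_V^l u}_{0,p,\eta-l}$, and finally H\"older via $\eta - l > -\tfrac{n}{q}$ to reach $\norm{\D_V^l u}_{L^q}$. The two weights $\delta$ and $\eta$ play distinct roles, and no absorption is needed.
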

\begin{remark}
Note that the conditions on $p$ and $k$ are equivalent to \eqref{eq: k geq l plus 1 p large}.
\end{remark}

When applying this lemma to prove Theorem \ref{thm : better derivative estimates introduction}, we will assume that $q$ is very large, so we think of $\eta$ as being very close to $l$.
\begin{proof}
Note that $\eta, \hdots, \eta - (l-1) \notin \E_1$.
We consider
\[
	\D_V^l|_E: W^{l, p}_\eta(E) \to W^{0, p}_{\eta-l}(V).
\]
By Lemma \ref{le: semi-Fredholm} and arguing as in the proof of Lemma \ref{le: optimal up to l}, we know that
\[
	\ker \left(\D_V^l|_E\right) \subset o\left(r^l\right).
\]
By the assumption in Theorem \ref{thm : better derivative estimates introduction}, we conclude that 
\[
	\n^l u = 0,
\]
for all $u \in \ker\left(\D_V^l|_E\right)$.
This implies that
\[
	\n^l u = \n^l \mathrm{proj}_{X_\eta}(u), \quad \D_V^l u = \D_V^l \mathrm{proj}_{X_\eta}(u)
\]
for all $u \in W^{l, p}_\eta(E)$, where $\mathrm{proj}_{X_\eta}$ is the projection onto $X_\eta$, given by 
\[
	W^{l, p}_\eta(E) = \ker\left(\D_V^l\right) \oplus X_\eta.
\]
Since $k \geq l+1$, we also get
\[
	\n^k u = \n^k \mathrm{proj}_{X_\eta}(u), \quad \D_V^k u = \D_V^k \mathrm{proj}_{X_\eta}(u)
\]
for all $u \in W^{k, p}_\eta(E) \subset W^{l, p}_\eta(E)$.
Using this, and applying Proposition \ref{prop: dirac non Fredholm estimate} with $\de := k - \frac np$, we get
\begin{align*}
	\norm{\n^k u}_{L^p}
		&= \norm{\n^k \mathrm{proj}_{X_\eta}(u)}_{L^p}
		\leq \norm{\mathrm{proj}_{X_\eta}(u)}_{k,p,\de} \\
		&\leq C \left(\norm{\D_V^k \mathrm{proj}_{X_\eta}(u)}_{0,p,\de-k} + \norm{\mathrm{proj}_{X_\eta}(u)}_{0, p, \de}\right) \\
		&\leq C \left(\norm{\D_V^k u}_{0,p,\de-k} + \norm{\mathrm{proj}_{X_\eta}(u)}_{0, p, \de}\right).
\end{align*}
It remains to estimate the second term on the right hand side.
For this, we note that
\[
	\de > \eta, \qquad \eta - l > - \frac nq.
\]
By Proposition \ref{prop: dirac Fredholm estimate}, we get the estimate
\begin{align*}
	\norm{\mathrm{proj}_{X_\eta}(u)}_{0, p, \de} 
		&\leq C\norm{\mathrm{proj}_{X_\eta}(u)}_{l, p, \eta} 
		\leq C\norm{\D_V^l \mathrm{proj}_{X_\eta}(u)}_{0, p, \eta - l} \\
		&\leq C\norm{\D_V^l \mathrm{proj}_{X_\eta}(u)}_{0, q, -\frac nq} 
		= C\norm{\D^l_V u}_{L^q}
\end{align*}
for all $u \in W^{k,p}_\eta(E)$.
This finishes the proof.
\end{proof}

We are finally in shape to apply these estimates to prove Theorem \ref{thm : better derivative estimates introduction}:

\begin{proof}[Proof of Theorem \ref{thm : better derivative estimates introduction}]
Assume that $u_0 \in L^p(E)$ is $L^2$-orthogonal to $\ker_{L^2}\left(\D_V^2 \right)$, and denote $u = e^{-t\D_V^2}u_0$.
Since $\D_V^2$ maps sections in $E$ to sections in $E$, it follows that $u$ is a section in $E$
By Theorem \ref{thm: main heat kernel estimate}, we know that
\[
	u \in L^p(E) = W^{0,p}_{-\frac np}(E) \subset W^{0,p}_{k-\frac np}(E)
\]
and by Lemma \ref{lem : higher derivatives}, we know that $\D_V^ku \in L^p(E) = W^{0,p}_{-\frac np}(E)$.
Proposition \ref{prop: dirac non Fredholm estimate} implies therefore that $u \in W^{k,p}_{k-\frac np}(E)$, which allows us to apply the previous lemmas in what comes.
For any $k \in \N_0$ and $p \in (1, \infty)$, such that 
\[
	k - \frac np \notin \N_0, \quad k - \frac np < l,
\]
Lemma \ref{le: optimal up to l} and Lemma \ref{lem : higher derivatives} implies that 
\[
	\norm{\nabla^ku}_{L^p}
		\leq C\norm{\D_V^ku}_{L^p} 
		\leq Ct^{-\frac k2} \norm{u_0}_{L^p},
\]
as claimed in the theorem.
Now assume that 
\[
	k - \frac np \in \N_0, \quad k - \frac np < l.
\]
This allows us to choose an $m \in \N_0$, such that
\[
	m - \frac np = l.
\]
For a given $\e > 0$, let $q \in (p, \infty)$ be large enough such that $\frac n{2q}\leq \frac{\e m}k$.
We now combine the Gagliardo-Nirenberg interpolation inequality (see e.g.\ \cites{FFRS,Leo17}) with Theorem \ref{thm: main heat kernel estimate}, Lemma \ref{le: large k estimate} and Lemma \ref{lem : higher derivatives} to get
\begin{align*}
	\norm{\nabla^ku}_{L^p}
		&\leq C \norm{\nabla^mu}_{L^p}^{\frac km} \norm{u}_{L^p}^{1-\frac km}
			\leq C \left(\norm{\D_V^m u}_{L^p} + \norm{\D_V^lu}_{L^q} \right)^{\frac km}\norm{u_0}_{L^p}^{1-\frac km} \\*
		&\leq C \left(t^{-\frac m2} + t^{- \frac n2 \left(\frac1p - \frac1q\right) - \frac l2 }\right)^{\frac km}\norm{u_0}_{L^p} 
			\leq C \left(t^{-\frac m2} + t^{- \frac12\left(\frac n{p} + l\right) + \frac{\e m}k}\right)^{\frac km}\norm{u_0}_{L^p} \\*
		&=Ct^{-\frac{k}{2} + \epsilon}\norm{u_0}_{L^p}
\end{align*}
for $t \geq t_0$, as claimed in the theorem.
We finally consider the case when $k \in \N_0$ and $p \in (1, \infty)$, such that 
\[
	k - \frac np \geq l.
\]
For a given $\e > 0$, let $q \in (p, \infty)$ be large enough such that $\frac n{2q}\leq \e$.
Now, Lemma \ref{le: large k estimate} and Lemma \ref{lem : higher derivatives} imply that 
\begin{align*}
	\norm{\n^k u}_{L^p(M)} 
		&\leq C \left(\norm{\D_V^k u}_{L^p(M)} + \norm{\D_V^l u}_{L^q(M)} \right)
			\leq C \left(t^{-\frac{k}{2}}+t^{-\frac{n}{2}(\frac{1}{p}-\frac{1}{q})-\frac{l}{2}}\right)\norm{u_0}_{L^p} \\*	
		 &\leq C t^{-\frac{n}{2p}-\frac{l}{2}+\epsilon}\norm{u_0}_{L^p}
\end{align*}
for $t \geq t_0$, which completes the proof.
\end{proof}

For future applications, we note the following corollary:

\begin{cor}\label{Cor : improved derivative estimates}
Suppose that we have a parallel subbundle $E \subset V$ which is invariant under $\D_V^2$ and assume that $\Delta_E := \D_V^2|_E$ is of the form
\[
	\Delta_E = \nabla^*\nabla+\mathcal{R},
\]
with a non-negative symmetric endomorphism $\mathcal{R}\in C^{\infty}(\mathrm{End}(E))$.
Then, if $(M,g)$ has only one end, $e^{-t\Delta_E}$ satisfies Euclidean heat kernel estimates and weak derivative estimates of degree $1$.
\end{cor}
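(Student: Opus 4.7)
The plan is to invoke Theorem \ref{thm : better derivative estimates introduction} with $l=1$, after first showing that $\ker_{L^2}(\Delta_E) = \{0\}$ so that the conclusion becomes \emph{Euclidean} rather than only almost Euclidean, via Remark \ref{rmk : Euclidean heat kernel estimates}.

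First I would verify $\ker_{L^2}(\Delta_E) = \{0\}$. For $u \in L^2 \cap C^\infty(E)$ with $\Delta_E u = 0$, take cut-offs $\phi_R \in C_c^\infty(M)$ with $\phi_R \equiv 1$ on $B_R$, $\supp(\phi_R) \subset B_{2R}$ and $|\n\phi_R| \leq C/R$. Integrating $\phi_R^2 \langle \Delta_E u, u\rangle = 0$ by parts and applying Cauchy--Schwarz yields
\[
\tfrac12 \|\phi_R \n u\|_{L^2}^2 + \int \phi_R^2 \langle \mR u, u\rangle \, dV \leq \tfrac{2C^2}{R^2}\|u\|_{L^2}^2,
\]
which tends to $0$ as $R \to \infty$. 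Since $\mR \geq 0$, both terms must vanish, so $\n u \equiv 0$. A parallel section has constant pointwise norm, and since ALE manifolds have infinite volume, the only such section in $L^2$ is zero. Euclidean heat kernel estimates then follow from Devyver's result cited in Remark \ref{rmk : Euclidean heat kernel estimates}.

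For weak derivative estimates of degree $1$, I would apply Theorem \ref{thm : better derivative estimates introduction} with $l = 1$. The $k = 0$ instance of the required implication is vacuous; the substantive case is $k = 1$, where I must show that any $u \in C^\infty(E)$ with $\D_V u = 0$ and $u = o(r)$ satisfies $\n u = 0$. Since $\D_V^2$ preserves $E$, the hypothesis $\D_V u = 0$ gives $\Delta_E u = \n^*\n u + \mR u = 0$, and a direct computation yields
\[
\Delta_g |u|^2 = 2|\n u|^2 + 2 \langle \mR u, u\rangle \geq 0
\]
(with the analyst's sign convention for $\Delta_g$), so $|u|^2$ is subharmonic. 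The Fredholm theory of Section \ref{sec: Elliptic Estimates}, together with the indicial analysis of the asymptotic operator $\Delta_\infty$, provides an asymptotic expansion of $u$ at infinity in the integer growth rates $\ldots, r^{-n}, r^{1-n}, r^{2-n}, 1, r, r^2, \ldots$. The hypothesis $u = o(r)$ rules out every expansion term of growth rate $\geq 1$, so $u$ is bounded on $M$. A Liouville-type theorem for bounded subharmonic functions on one-ended ALE manifolds then forces $|u|^2$ to be constant, which, substituted back into the identity above, yields $|\n u|^2 \equiv 0$.

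The main obstacle I anticipate is the Liouville step. Although ``bounded subharmonic implies constant on a one-ended ALE manifold'' is standard in spirit (following, e.g., from the Euclidean minimum principle at infinity together with the strong minimum principle in the interior), it is not proved anywhere earlier in the paper, so it needs either a clean citation or a short self-contained argument. I emphasize that the stronger implication \eqref{eq: strong implication derivatives} is \emph{not} available here: nonzero parallel sections of $E$ are legitimate $o(r)$ solutions with $\n u = 0$ but $u \neq 0$, so the weaker implication \eqref{eq: weak implication derivatives}, which is precisely what Theorem \ref{thm : better derivative estimates introduction} demands, is the correct tool and this is the reason why only the \emph{weak} derivative estimates of degree $1$ (and not strong ones) appear in the conclusion.
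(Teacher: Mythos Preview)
Your proposal is correct and follows essentially the same route as the paper: verify the implication \eqref{eq: weak implication derivatives} for $k=1$ via the Bochner identity $\Delta|u|^2 = -|\nabla u|^2 - \langle\mathcal{R}u,u\rangle$, use the absence of exceptional values in $(0,1)$ to upgrade $u=o(r)$ to boundedness, and then invoke the maximum principle on a one-ended ALE manifold; Euclidean heat kernel estimates come from Remark~\ref{rmk : Euclidean heat kernel estimates} once $\ker_{L^2}(\Delta_E)=\{0\}$. The only organizational difference is that the paper deduces $\ker_{L^2}(\Delta_E)=\{0\}$ as a byproduct of the $k=1$ argument (an $L^2$ section is in particular $o(r)$, hence parallel, hence zero), whereas you run a separate cutoff argument first; and the paper simply cites ``the maximum principle'' for the Liouville step you flagged, without further justification.
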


\begin{proof}
Let $u\in C^{\infty}(E)$ with $u=o(r)$ be such that $\D_V u=0$. 
Then $\Delta_E u=0$. 
Since there are no exceptional values between $0$ and $1$, $u$ is bounded. 
Because
\[
\Delta |u|^2=-|\nabla u|^2-\langle \mathcal{R}u,u\rangle\leq 0
\]
and $(M,g)$ has only one end, the maximum principle implies equality on the right hand side. 
Therefore, $\nabla u \equiv 0$. 
Since this implies $\ker_{L^2}(\Delta_E)=\left\{0\right\}$, the first assertion follows from Remark \ref{rmk : Euclidean heat kernel estimates}. 
The second assertion follows from Theorem \ref{thm : better derivative estimates introduction} above.
\end{proof}

We finish the section by proving Corollary \ref{cor : better derivative estimates}:

\begin{proof}[Proof of Corollary \ref{cor : better derivative estimates}]
We need to show that the assumptions in the corollary imply the assumptions in Theorem \ref{thm : better derivative estimates introduction}. In this situation, $V=\Delta M$ will be the exterior algebra, $E=M\times\R$ the trivial line bundle and $\mathcal{D}_V=d+d^*$ the Hodge de Rham operator on $\Delta M$.
In fact, we will prove the following stronger implication, which clearly implies the assumptions of Theorem \ref{thm : better derivative estimates introduction}:
For every $k \in \{1, \hdots, l\}$, every solution to $(d+d^*)^k u = 0$ with $u \in o(r^k)$ is constant.
For $k = 1, 2$, this trivially follows from the assumptions in the corollary.
We therefore proceed by induction.
Assume that the implication holds for $k-2$, where $k \leq l$ and let $u \in o\left(r^k\right)$ satisfy $\left(d + d^*\right)^k u = 0$.
By a standard argument, similar to (but simpler than) the proof of Proposition \ref{prop : decay harmonic forms} below, one shows that $u \in \O_\infty\left(r^{k-1}\right)$.
We conclude that $v := \Delta u$ satisfies $\left(d + d^*\right)^{k-2} v = 0$, which by the induction assumption implies that $v$ is constant.
Since the statement is known when $k = 2$, we conclude that $u$ is constant.
This finishes the proof.
\end{proof}

\section{Harmonic forms on ALE manifolds}\label{sec : decay harmonic forms}
\subsection{On the model cone}
\label{subsec : decay harmonic forms}
\noindent
We start by analyzing the decay of harmonic functions on $\R^n_*$.
\begin{lemma}\label{le: harmonic functions on flat space}
Let $f$ be a harmonic function on $\R^n_*$, which decays at infinity, i.e.  $f\to 0$ as $r\to\infty$. Then,
\[
f(x)=Ar^{2-n}+\langle B,x\rangle r^{-n}+g(x),
\]
where $A\in\R$, $B\in\R^n$ and $g\in \O_{\infty}(r^{-n})$.
\end{lemma}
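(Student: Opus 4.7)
My plan is to exploit the Kelvin transform $\tilde f(y) := |y|^{2-n} f(y/|y|^2)$ to convert decay of $f$ at infinity into regularity of a new harmonic function at the origin, and then read off the expansion from Taylor's theorem. As a preliminary step, I would strengthen the hypothesis to a quantitative rate $f(x) = O(r^{2-n})$. Concretely, fix $R_0$ so that $|f| \leq M$ on $\{|x|=R_0\}$ and choose $R \gg R_0$; the harmonic function $h_R(x) := M(R_0/|x|)^{n-2} + \sup_{|x|=R}|f|$ dominates $\pm f$ on the boundary of the annulus $\{R_0 \leq |x| \leq R\}$, so by the maximum principle $|f| \leq h_R$ there. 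Letting $R \to \infty$ and using $f \to 0$ yields $|f(x)| \leq M(R_0/|x|)^{n-2}$ on $\{|x| \geq R_0\}$.

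With this rate in hand, $\tilde f$ is harmonic on $\R^n_*$ and uniformly bounded on a punctured neighborhood of $0$, since on small $|y|$ one has $|\tilde f(y)| \leq |y|^{2-n} \cdot C|y/|y|^2|^{2-n} = C$. The removable singularity theorem for bounded harmonic functions then extends $\tilde f$ to a harmonic, hence smooth, function on a full neighborhood of $0$. Taylor expanding at the origin gives
\[
\tilde f(y) = A + \ldr{B, y} + R(y), \qquad R(y) = O(|y|^2),
\]
for some $A \in \R$, $B \in \R^n$. Inverting via $f(x) = |x|^{2-n}\tilde f(x/|x|^2)$ produces the claimed decomposition with
\[
g(x) := |x|^{2-n} R(x/|x|^2) = O(r^{-n}).
\]

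It remains to upgrade pointwise decay of $g$ to the full statement $g \in \O_\infty(r^{-n})$, that is, $|\on^k g| \leq C_k r^{-k-n}$ for all $k$. Since $g = f - Ar^{2-n} - \ldr{B,x}r^{-n}$ is harmonic on $\{|x|>R_0\}$, interior regularity for harmonic functions applied on balls $B(x,|x|/2)$ gives
\[
|\partial^\alpha g(x)| \leq C_\alpha |x|^{-|\alpha|} \sup_{B(x,|x|/2)} |g| \leq C_\alpha |x|^{-n-|\alpha|},
\]
where we used that $|y| \asymp |x|$ on $B(x,|x|/2)$ to propagate the $r^{-n}$ bound. Since $\on$ is the flat connection, covariant derivatives agree with partial derivatives in Cartesian coordinates, and the estimate above is exactly what is required.

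The main obstacle, if any, is the first step: upgrading the qualitative assumption $f \to 0$ to the quantitative rate $O(r^{2-n})$, which is what makes $\tilde f$ bounded near the origin and thus legalizes the removable singularity argument. Everything after is either standard (Kelvin transform, Taylor expansion, removable singularities) or a routine application of interior Schauder-type estimates for harmonic functions. An alternative that avoids the barrier argument would be to decompose $f$ in spherical harmonics on each sphere $\{r = \text{const}\}$ as in the proof of Lemma \ref{le: injectivety dirac}, solve the Euler equation mode by mode, and use the decay of each coefficient to kill the $r^k$-growth; the two leading surviving modes are precisely $r^{2-n}$ and $r^{1-n} \cdot (x_i/r)$, and all remaining modes are $O(r^{-n})$, but controlling the full series (and its derivatives) uniformly is more delicate than the Kelvin route above.
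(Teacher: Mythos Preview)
Your argument is correct and complete. The paper, however, takes the spherical-harmonic route you sketch at the end as an alternative: it cites the series expansion $f(r,\theta)=\sum_{i\geq0}C_i\varphi_i(\theta)r^{2-n-i}$ directly from \cite{ABR01} and \cite{Chen20}, identifies the $i=0$ and $i=1$ terms with $Ar^{2-n}$ and $\langle B,x\rangle r^{-n}$ via the description of low spherical eigenfunctions, and then appeals to ``standard arguments from elliptic theory'' for convergence of the tail in all derivatives. Your Kelvin-transform approach trades that appeal for three classical ingredients (barrier for the initial rate, removable singularities for bounded harmonic functions, interior gradient estimates on balls of radius $|x|/2$), and in doing so makes the derivative estimate $g\in\O_\infty(r^{-n})$ entirely explicit rather than left to a citation. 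The paper's route is shorter on the page and aligns with the mode-by-mode style used elsewhere (e.g.\ Lemma~\ref{le: injectivety dirac}); yours is more self-contained and avoids the issue you correctly flag---controlling the full series and its derivatives uniformly---by never writing the series down.
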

\begin{proof}
By \cite{ABR01}, c.f. also \cite{Chen20}*{Lem.\ 3.1}, we have
\[
f(x)=f(r,\theta)=\sum_{i=0}^{\infty} C_i\cdot \varphi_i(\theta)\cdot r^{2-n-i},
\]
where $C_i\in\R$, $(r,\theta$ are polar coordinates and $\varphi_i\in C^{\infty}(S^{n-1})$ is a normalized eigenfunction on the sphere to the i-th eigenvalue. Clearly, $\phi_0$ is constant. 
The eigenfunction $C_1\phi_1$ is the restriction of a linear function $x\to \langle B,x\rangle$ to $S^{n-1}$, see \cite{BGM71}*{p.\ 159--161}. 
Therefore,
\[
f(x)=Ar^{2-n}+\langle B,x\rangle r^{-n}+g(x),
\]
where the remainder term
\[
g(x)=g(r,\theta)=\sum_{i=2}^{\infty} C_i\cdot \varphi_i(\theta)\cdot r^{2-n-i}
\]
is again a harmonic function. 
Standard arguments from elliptic theory imply that this series converges in all derivatives. 
Therefore, $g\in \O_{\infty}(r^{-n})$, which finishes the proof of the lemma.
\end{proof}

We continue by analyzing the decay of harmonic forms on $\R^n_*$:

\begin{lemma} \label{le: harmonic forms on flat space}
Let $\omega$ be differential form of degree $k$ on $\R^n_*$, which satisfies $d\omega=0$ and $d^*\omega=0$ and decays at infinity, i.e.  $|\omega|\to 0$ as $r\to\infty$.
\begin{itemize}
	\item If $k=0,n$, we have $\omega=0$.
	\item If $k=1$, we have $\omega=\lambda\cdot d\Phi + \O_\infty(r^{-n})$, where $\Phi=r^{2-n}$ is the fundamental solution.
	\item If $k=n-1$, we have $\omega=\lambda\cdot d^*(\Phi\cdot \mathrm{dvol})+ \O_\infty(r^{-n})$.
	\item If $2\leq k\leq n-2$, we have $\omega=\O_\infty(r^{-n})$.
\end{itemize}
\end{lemma}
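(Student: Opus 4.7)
The plan is to reduce each case to Lemma \ref{le: harmonic functions on flat space} by exploiting that $\omega$, being both closed and coclosed, is harmonic for the Hodge Laplacian $\Delta_H = dd^*+d^*d$, and that on flat $\R^n_*$ in Cartesian coordinates $\Delta_H$ acts componentwise as the scalar Laplacian. Writing $\omega=\sum_{|I|=k}\omega_I\,dx^I$, each $\omega_I$ is a harmonic function decaying at infinity, so Lemma \ref{le: harmonic functions on flat space} yields
$$
\omega_I = A_I r^{2-n} + \langle B_I,x\rangle r^{-n} + g_I,\qquad g_I\in\O_\infty(r^{-n}).
$$
Packaging these into the constant $k$-form $\alpha:=\sum_I A_I\,dx^I$ and the $k$-form with linear coefficients $\beta:=\sum_I\langle B_I,x\rangle\,dx^I$, one obtains the global expansion
$$
\omega = r^{2-n}\alpha + r^{-n}\beta + \O_\infty(r^{-n}),
$$
in which the first summand is $O(r^{2-n})$ and the second is $O(r^{1-n})$.

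The degree $k=0$ case is immediate: $d\omega=0$ and connectedness of $\R^n_*$ force $\omega$ to be constant, hence zero by decay; the case $k=n$ follows from the same argument applied to the single coefficient function via $d^*\omega=0$. For $1\leq k\leq n-1$ the constant term $\alpha$ can always be killed: the unique contribution to $d\omega$ at order $r^{1-n}$ is
$$
d(r^{2-n}\alpha)=(2-n)r^{-n}\Bigl(\sum_j x_j\,dx^j\Bigr)\wedge\alpha,
$$
since $d(r^{-n}\beta)$ and $d\,\O_\infty(r^{-n})$ are of order $r^{-n}$ and $r^{-n-1}$ respectively. Requiring this to vanish identically in $x$ forces $dx^j\wedge\alpha=0$ for every $j$, and for a constant $k$-form with $k\leq n-1$ this implies $\alpha=0$.

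For $k=1$ I exploit that $\R^n_*$ is simply connected when $n\geq 3$: $d\omega=0$ gives $\omega=df$ for a decaying function $f$, and $\Delta f=d^*\omega=0$ makes $f$ harmonic. Applying Lemma \ref{le: harmonic functions on flat space} to $f$ and differentiating yields $\omega = A\,d\Phi + d(\langle B,x\rangle r^{-n}) + dg$; a direct computation shows that the last two terms lie in $\O_\infty(r^{-n})$, so $\omega=\lambda\,d\Phi+\O_\infty(r^{-n})$ with $\lambda=A$. The case $k=n-1$ reduces to this one by Hodge duality: $*\omega$ is a closed, coclosed, decaying $1$-form, hence $*\omega=\mu\,d\Phi+\O_\infty(r^{-n})$, and applying $*$ again together with the identity $*d\Phi=\pm d^*(\Phi\cdot\mathrm{dvol})$ delivers the claimed expansion.

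The hard case is $2\leq k\leq n-2$, where one must additionally show that $\beta=0$. Since $\R^n_*$ deformation retracts onto $S^{n-1}$ and $H^k(S^{n-1})=0$ in this range, the closed form $\omega$ is exact, so there exists a primitive $\eta$ with $d\eta=\omega$; after modifying $\eta$ by an exact form the plan is to arrange $d^*\eta=0$ together with an appropriate decay rate for $\eta$, via Hodge/elliptic theory on weighted Sobolev spaces analogous to Proposition \ref{prop: isom dirac} applied to $d+d^*$. This makes $\eta$ a decaying closed, coclosed $(k-1)$-form, and a downward induction on $k$ then closes the argument: either $k-1=1$, in which case $\eta=\mu\,d\Phi+\O_\infty(r^{-n})$ and so $\omega=d\eta\in\O_\infty(r^{-n-1})\subset\O_\infty(r^{-n})$; or $k-1\geq 2$ and the induction hypothesis yields $\eta\in\O_\infty(r^{-n})$, whence again $\omega=d\eta\in\O_\infty(r^{-n})$. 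The main obstacle lies precisely in the existence step: producing such a primitive with sharp decay, which amounts to an isomorphism property of $d+d^*$ on weighted Sobolev spaces in the range of weights compatible with the $O(r^{1-n})$ decay of $\omega$.
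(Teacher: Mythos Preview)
Your treatment of $k\in\{0,1,n-1,n\}$ is fine; the $k=1$ argument via simply-connectedness of $\R^n_*$ is in fact cleaner than the paper's coefficient computation, and killing $\alpha$ from $d\omega=0$ rather than the paper's $d^*\omega=0$ is an equally valid symmetry.

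The middle-degree case $2\le k\le n-2$ has a genuine gap, and a logical slip on top of it. You assert that once $d^*\eta=0$ is arranged, $\eta$ becomes ``a decaying closed, coclosed $(k-1)$-form''; but $d\eta=\omega\neq 0$ by construction, so $\eta$ is never closed, and the lemma cannot be fed $\eta$ as an induction hypothesis. What you actually obtain is a \emph{coclosed, harmonic} form, which is a strictly weaker condition. One could try to salvage the scheme by noting that coclosedness plus componentwise harmonicity already forces the $r^{2-n}$ coefficients of $\eta$ to vanish (same computation as for $\omega$), hence $\eta\in\O_\infty(r^{1-n})$ and $\omega=d\eta\in\O_\infty(r^{-n})$ --- but this still hinges on the existence step you flag, and that step is genuinely delicate on $\R^n_*$: the weighted isomorphisms of Proposition~\ref{prop: isom dirac} control behaviour at \emph{both} $0$ and $\infty$, while $\omega$ has no prescribed behaviour at the origin, so $\omega$ need not lie in any $W'^{k,p}_\delta$.

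The paper sidesteps all of this with a direct, purely algebraic argument on the expansion coefficients. After killing the $r^{2-n}$ term, write the next term with coefficients $\omega^{(1)}_{j,i_1\ldots i_k}$. The condition $d^*\omega=0$ forces the $(j,l)$-symmetrisation of $\omega^{(1)}_{j,l i_2\ldots i_k}$ to be pure trace $\delta_{jl}\eta_{i_2\ldots i_k}$; since $k\ge 2$ one may set $j=l=i_2$ and read off $\eta\equiv 0$, so $\omega^{(1)}$ is totally antisymmetric in all $k+1$ indices. Then $d\omega=0$, evaluated at a standard basis vector $e_m$ with $m\notin\{i_1,\ldots,i_{k+1}\}$ (such $m$ exists precisely because $k\le n-2$), collapses to $(k+1)\,\omega^{(1)}_{i_1,i_2\ldots i_{k+1}}=0$. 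This kills $\beta$ with no Hodge theory, no primitive, and no induction.
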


\begin{proof}
In the case $k = 0$ or $k = n$, the statement is trivial.
Therefore, we may assume from now on that $1 \leq k \leq n-1$. 
We write $\omega$ in the standard coordinates as
\begin{align*}
	\omega
		&= \sum_{i_1,i_2,\ldots,i_k=1}^n\omega_{i_1\ldots i_k}dx^{i_1} \otimes \ldots \otimes dx^{i_k},
\end{align*}
where $\omega_{i_1\ldots i_k}=\omega(\partial_{i_1},\ldots,\partial_{i_l})$ are totally anti-symmetric in all indices.
Since $\Delta=dd^*+d^*d$, we have $\omega\in\ker\Delta$. 
This implies that each component function $\o_{i_1 \ldots i_k}$ is a also harmonic function on $\R^n \backslash \{0\}$. 
By Lemma \ref{le: harmonic functions on flat space}, we get
\begin{align*}
	\omega_{i_1\ldots i_k}
		=\o^{(0)}_{i_1\ldots i_k}r^{2-n} + \sum_{j=1}^n \o^{(1)}_{j, i_1\ldots i_k} x_jr^{-n} + \O_{\infty}(r^{-n}),
\end{align*}
where for each fixed $j$, the coefficients $\o^{(0)}_{i_1\ldots i_k}$ and $\o^{(1)}_{j, i_1\ldots i_k}$, are totally anti-symmetric in $i_1,\ldots,i_k$.
By the local expression for $0 = d^* \o$, we get
\begin{equation}\label{eq : coclosed form}
\begin{split}
	0 
		&= (d^*\omega)_{i_2\ldots i_k}(x) 
			=-\sum_{l=1}^n\partial_l\omega_{l i_2\ldots i_k}(x) \\
		&= - \sum_{l=1}^n\o^{(0)}_{l i_2 \ldots i_k}x_l(2-n)r^{-n} - \sum_{j, l=1}^n\o^{(1)}_{j, l i_2\ldots i_k} \left(\de_{jl} - n \frac{x_j x_l}{r^2}\right)r^{-n}+\O_\infty(r^{-n-1})
\end{split}
\end{equation}
for all $x\in\R^n$.
Due to the different fall-off behavior, the two terms involving $\o^{(0)}_{l i_2 \ldots i_k}$ and $\o^{(1)}_{j, l i_2 \ldots i_k}$ terms have to vanish separately, i.e.\
\begin{align*}
	 \sum_{l=1}^n\o^{(0)}_{l i_2 \ldots i_k}x_l
	 	= 0, \qquad
	 \sum_{j, l=1}^n\o^{(1)}_{j, l i_2\ldots i_k} \left(\de_{jl} - n \frac{x_j x_l}{r^2}\right) 
	 	&= 0
\end{align*} 
for all $x \in \R^n$.
By inserting the unit vectors $x = (0, \ldots,1, \ldots, 0) \in \R^n$ into the first equation, one successively concludes that
\begin{align*}
	\o^{(0)}_{l i_2 \ldots i_k} = 0.
\end{align*}

For the next term, fix $i_2, \ldots, i_k$ and define the quadratic form $C_{jl} := \o^{(1)}_{j,l i_2\ldots i_k}$.
The second equation is equivalent to
\begin{align*}
	 0
	 	= \sum_{j, l=1}^n\o^{(1)}_{j, l i_2\ldots i_k} \left(\frac{\de_{jl}\ldr{x,x}}n - x_j x_l\right) 
	 	= \frac1n \tr(C) \ldr{x, x} - C(x, x),
\end{align*}
for all $x \in \R^n$, where we have used that $r^2 = \ldr{x, x}$.
This implies that the trace-free part of the symmetric part of $C$ vanishes. 
In other words, we have proven that
\begin{equation} \label{eq: d star equation}
	\o^{(1)}_{j,l i_2\ldots i_k} + \o^{(1)}_{l,j i_2\ldots i_k} = \de_{jl} \eta_{i_2 \ldots i_k}
\end{equation}
for all $j, l$ and coefficients $\eta_{i_2 \ldots i_k}$ which are totally antisymmetric in $i_2,\ldots,i_k$.

We first treat the case $k=1$. 
In this case, the equation \eqref{eq: d star equation} reads
\begin{equation} \label{eq: k 1}
	\o^{(1)}_{j,l} + \o^{(1)}_{l,j} = 2\lambda \de_{jl},
\end{equation}
for some constant $\lambda \in \R$.
We now use that $d\o = 0$ and conclude that
\[
	0
		= \d_k \o_l - \d_l \o_k 
		= \sum_{j = 1}^n \o_{j,l}^{(1)} \left( \de_{jk} - n \frac{x_j x_k}{r^2} \right)r^{-n} - \sum_{j = 1}^n \o_{j,k}^{(1)} \left( \de_{jl} - n \frac{x_j x_l}{r^2} \right) r^{-n} + \O(r^{n-1}).
\]
Inserting $x$, such that $x_i = \de_{il}$, we get
\begin{align*}
	0 
		&= \o_{k,l}^{(1)} + (n-1) \o_{l,k}^{(1)}.
\end{align*}
Combining this with \eqref{eq: k 1}, we conclude that
\[
	\o_{k,l}^{(1)} = \lambda \de_{kl}.
\]
Inserting this in the expansion of harmonic functions, one checks that
\[
	\o = \lambda \cdot d\Phi + \O_\infty(r^{-n}),
\]
if $k = 1$.
The statement for $k = n-1$ follows by Hodge duality.

We now turn to the case when $2 \leq k \leq n-2$. 
Since $k \geq 2$, we may insert $j = l = i_2$ into \eqref{eq: d star equation} and conclude that 
\[
	\eta_{i_2 \ldots i_k} = \de_{i_2 i_2} \eta_{i_2 \ldots i_k} = 2 \o^{(1)}_{i_2, i_2 i_2 \ldots i_k} = 0
\]
and therefore,
\begin{align*}
	\o^{(1)}_{j, l i_2\ldots i_k} + \o^{(1)}_{l, j i_2\ldots i_k}=0,
\end{align*}
which means that
$	\o^{(1)}_{j, i_1 \ldots i_k}$
is totally anti-symmetric in all indices, including $j$.
In order to show that in fact $\o^{(1)}_{j, i_1 \ldots i_k}$ vanishes, we use our assumption $d\o = 0$.
This is locally given by
\begin{align*}
	0 = (d\omega)_{i_1\ldots i_{k+1}} = \frac{1}{k+1}\sum_{l=1}^{k+1}(-1)^{l+1}\partial_{i_l}\omega_{i_1,\ldots,\widehat{i_l},\ldots i_{k+1}}.
\end{align*}
Inserting the expansion for $r \to 0$, we again conclude that term separately must vanish, which implies that
\begin{equation} \label{eq: closed form}
	0 
		= \sum_{l=1}^{k+1}(-1)^{l+1}\partial_{i_l}\left(\sum_{j=1}^n \o^{(1)}_{j, i_1\ldots \hat{i_l} \ldots i_{k+1}} x_jr^{-n}\right) 
			= \sum_{j = 1}^n \sum_{l=1}^{k+1}(-1)^{l+1}\o^{(1)}_{j, i_1 \ldots \widehat{i_l}\ldots i_{k+1}}\left(\de_{ji_l} - n \frac{x_j x_{i_l}}{r^2}\right)r^{-n}.
\end{equation}
We evaluate this expression for a certain choice of $x$.
Since $k \leq n-2$, there is an 
\[
	m \in \left\{1,\ldots,n\right\}\setminus \left\{i_1,\ldots i_{k+1}\right\}.
\]
Evaluating \eqref{eq: closed form} at the $m$-th unit vector, i.e.\ at $x_i = \de_{im}$, note that all $x_{i_j} = 0$ and hence
\begin{align*}
	0 
		&= \sum_{j = 1}^n \sum_{l=1}^{k+1}(-1)^{l+1}\o^{(1)}_{j, i_1 \ldots \widehat{i_l},\ldots i_{k+1}}\left(\de_{ji_l} - n \frac{x_j x_{i_l}}{r^2}\right) 
			= \sum_{l=1}^{k+1}(-1)^{l+1}\o^{(1)}_{i_l, i_1 \ldots \widehat{i_l},\ldots i_{k+1}} \\
		&= (k+1) \omega^{(1)}_{i_1, i_2 \ldots i_{k+1}},
\end{align*}
where we have used that $\omega^{(1)}_{i_1, i_2 \ldots i_{k+1}}$ is totally anti-symmetric in all indices, proven above.
We conclude that 
\[
	\omega^{(1)}_{i_1, i_2 \ldots i_{k+1}} = 0
\]
for any $i_1, \ldots, i_{k+1}$.
Inserting this in the asymptotic expansion above, we conclude that
\[
	\omega = \O_\infty(r^{-n}),
\]
as claimed, if $2 \leq k \leq n-2$.
\end{proof}

\noindent
\subsection{On ALE manifolds}
Let $\Delta_H$ the Hodge-Laplace operator.
\begin{prop}\label{prop : decay harmonic forms}
Let $(M^n,g)$ be an ALE manifold.
Let $\omega$ be harmonic differential form on $M$, i.e.\
\[
	\Delta_H \o = 0
\]
and suppose that $u\in L^p$ for some $p\in(1,\infty)$.
Then, $d\omega$ and $d^*\omega$ are both vanishing and $\o = \O_\infty(r^{1-n})$. 
Moreoever, if $\o$ is not a one-form or an $n-1$-form, then $\o = \O_\infty(r^{-n})$.
\end{prop}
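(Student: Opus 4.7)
My plan is to proceed in three stages: (i) bootstrap the decay of $\omega$ to $\O_\infty(r^{2-n})$; (ii) use this decay to conclude $d\omega = 0 = d^*\omega$ via integration by parts; (iii) refine the leading asymptotic using Lemma \ref{le: harmonic forms on flat space}. For Stage (i), the assumption $\omega\in L^p = L^p_{-n/p}(M)$ together with local elliptic regularity for $\Delta_H\omega = 0$ and the weighted Sobolev estimate of Proposition \ref{prop: dirac non Fredholm estimate} yields $\omega\in W^{k,p}_{-n/p}(M)$ for every $k$. Writing $D := d+d^*$, fixing a cutoff $\chi$ supported in $M_\infty$, and identifying $\chi\omega$ with a section on the cone $\R^n_{>1}/\Gamma$, one has
\[
D_\infty^2(\chi\omega) = (D_\infty^2 - D^2)(\chi\omega) + F,
\]
where $F$ is compactly supported and the right-hand side has an extra $r^{-\tau}$ of decay (since the coefficients of $D_\infty^2 - D^2$ decay at rate $r^{-\tau}$). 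Inverting $D_\infty^2$ via Lemma \ref{le: Laplace isom} at non-exceptional weights in $\E_2$, and matching kernel contributions when an exceptional weight is crossed, improves the decay of $\chi\omega$ step by step, until one reaches the slowest indicial root of $\Delta_\infty$ in $(-n,0)$, which by Lemma \ref{le: harmonic functions on flat space} is $r^{2-n}$. This gives $\omega = \O_\infty(r^{2-n})$ and, by elliptic regularity, $\nabla\omega = \O_\infty(r^{1-n})$.

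For Stage (ii), with the pointwise decay in hand, I would integrate by parts against a cutoff $\chi_R \equiv 1$ on $B_R$, supported in $B_{2R}$, with $|d\chi_R|\leq C/R$:
\[
0 = \int_M\langle \Delta_H\omega,\chi_R^2\omega\rangle\,dV = \int_M\chi_R^2\left(|d\omega|^2 + |d^*\omega|^2\right)dV + E(R),
\]
where the remainder is bounded by
\[
|E(R)| \lesssim \int_{B_{2R}\setminus B_R} |d\chi_R|\,|\nabla\omega|\,|\omega|\,dV \lesssim R^{-1}\cdot R^{1-n}\cdot R^{2-n}\cdot R^n = R^{2-n},
\]
which vanishes as $R\to\infty$ since $n\geq 3$. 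Taking the limit, $\|d\omega\|_{L^2}^2 + \|d^*\omega\|_{L^2}^2 = 0$, hence $d\omega = 0 = d^*\omega$.

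For Stage (iii), the bootstrap of Stage (i) actually produces an asymptotic expansion $\omega = \omega_0 + \tilde\omega$ on $M_\infty$, where $\omega_0\in\ker\Delta_\infty$ has leading weight $2-n$ on the cone and $\tilde\omega$ decays strictly faster. Since $d\omega = d^*\omega = 0$ globally and the ALE metric differs from the cone metric by lower-order corrections, the leading term $\omega_0$ must itself be closed and coclosed on $\R^n_*$. Applying Lemma \ref{le: harmonic forms on flat space} to $\omega_0$ then yields the claimed rates: in degrees $0$ and $n$ the lemma gives $\omega_0 = 0$, and a standard Liouville-type argument for harmonic functions in $L^p$ on an ALE manifold gives $\omega = 0$; in degrees $1$ and $n-1$, $\omega_0 = \lambda\,d\Phi$ (respectively its Hodge dual), of size $O(r^{1-n})$; and in degrees $2\leq k \leq n-2$, $\omega_0 = O(r^{-n})$.

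The main technical obstacle is the weight-by-weight bootstrap in Stage (i), requiring careful bookkeeping of the exceptional weights $\{1-n, 2-n\}$ in $\E_2$ and the associated kernel contributions, so that the leading term of the asymptotic expansion is a genuine element of $\ker\Delta_\infty$ whose closedness and coclosedness can be extracted in Stage (iii).
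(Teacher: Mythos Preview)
Your three-stage outline matches the paper's approach, and Stages (i) and (ii) are essentially what the paper does. The gap is in Stage (iii). The harmonic piece $\omega_0$ you extract on the cone (the paper writes $\widetilde\omega=\alpha+\beta$ with $\beta$ harmonic on $\R^n_*$) is \emph{not} closed and coclosed: you only know $d\omega=0$ and $d^*\omega=0$ for $\omega$ on $M$, and after transferring to the cone this gives
\[
d\beta=-d\alpha\in\O_\infty(r^{\delta-1-\tau}),\qquad d^*_{\R^n}\beta=(d^*_{\R^n}-d^*)\omega-d^*_{\R^n}\alpha\in\O_\infty(r^{\delta-1-\tau}),
\]
which is fast decay but not zero. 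So Lemma \ref{le: harmonic forms on flat space} does not apply to $\beta$ as a black box. What actually works is to rerun the coefficient computations from the \emph{proof} of that lemma: expanding $\beta_{i_1\dots i_k}=\beta^{(0)}_{i_1\dots i_k}r^{2-n}+\sum_j\beta^{(1)}_{j,i_1\dots i_k}x_jr^{-n}+\O_\infty(r^{-n})$, the decay of $d^*_{\R^n}\beta$ above forces the $r^{1-n}$ order contribution in the local expression for $d^*_{\R^n}\beta$ to vanish, hence $\beta^{(0)}=0$; then, for degrees $2\leq k\leq n-2$, the decay of both $d\beta$ and $d^*_{\R^n}\beta$ forces $\beta^{(1)}=0$ by the same algebra as in \eqref{eq : coclosed form} and \eqref{eq: closed form}.

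You also omit a second point: after each improvement of $\beta$ (first to $\O_\infty(r^{1-n})$, then to $\O_\infty(r^{-n})$), the remainder $\alpha$ still only decays like $r^{\delta-\tau}$ for the current $\delta$, so the bootstrap from Stage (i) must be iterated again in the new, lower weight range before the next coefficient can be killed. With these two corrections your argument becomes the paper's.
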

\begin{proof}
Because $\Delta_H \o = 0$ and $u\in L^p$ for some $p\in(1,\infty)$, elliptic regularity implies that $\o\to 0$ at infinity. Standard arguments (see e.g.\ \cites{Bartnik1986,pacini}) imply that
\[
	\o \in \O_{\infty}(r^{2-n}).
\]
In particular,
\[
d\o\in \O_{\infty}(r^{1-n})\subset L^2,\qquad d^*\o\in \O_{\infty}(r^{1-n})\subset L^2,
\] 
since $n\geq 3$.
Recalling that $\Delta_H \o=d^*d\o+dd^*\o = 0$,
integration by parts over a large ball $B_R$ and letting $R\to \infty$ shows that $d\o=0$ and $d^*\o=0$. 
The operator
\[
	\D := d + d^*
\]
is asymptotic to the Dirac type operator
\[
	\D_{\R^n} := d + d^*_{\R^n}.
\]
 Following \cite{Bartnik1986} and \cite{BKN89}, we may split $\o=\alpha+\beta$ at infinity, where $\D_{\R^n}\alpha=0$. 
From Lemma \ref{le: harmonic forms on flat space}, we get
\[
	\alpha \in \O_{\infty}(r^{1-n}),
\]
and if $\alpha$ is not a 1-form or an $n-1$-form,
\[
	\alpha \in \O_{\infty}(r^{-n}).
\]
Standard bootstrapping arguments in weighted spaces (as  carried out e.g.\ in the proof of \cite{DK17}*{Thm.\ 2.7}) show that $\beta$ has to decay faster than $\alpha$ which implies the desired result for $\o$.
\end{proof}

\section{Heat flows on general ALE manifolds}\label{sec : geometric operators 1}

\subsection{The Hodge-Laplacian on the exterior algebra}
The following theorem is Corollary \ref{cor : heat flow hodge laplace}:
\begin{thm}[Heat kernel and derivative estimates for the Hodge-Laplacian]\label{thm : heat flow Hodge Laplace} Let $(M,g)$ be an ALE manifold and suppose that $\mathcal{H}_1(M)=0$. Then, $e^{-t\Delta_H}$ satisfies almost Euclidean heat kernel estimates and derivative estimates of degree $0$.
\end{thm}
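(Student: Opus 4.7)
The plan is to verify that the operator $\D := d + d^*$ satisfies the hypotheses of Theorem \ref{thm : Derivative estimates introduction} (and hence, via Remark \ref{rmk: Positivity Dirac squared}, of Theorem \ref{thm: main heat kernel estimate}), so that the desired almost Euclidean heat kernel estimates and derivative estimates of degree $0$ for $e^{-t\Delta_H} = e^{-t\D^2}$ follow immediately. The only nontrivial input that needs checking is the decay assumption
\[
	\ker_{L^2}(\Delta_H) \subset \mathcal{O}_\infty\left(r^{-n}\right).
\]

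First I would observe that $\D = d + d^*$, acting on the exterior algebra $\Lambda M$, is a formally self-adjoint Dirac type operator in the sense of Definition \ref{def: asympt Eucl Dirac}: its symbol is the standard Clifford multiplication, and the fact that $(M,g)$ is ALE of order $\tau$ together with \eqref{eq: decay metric} implies that $\D - \D_\infty \in \O_\infty(r^{-1-\tau})$, where $\D_\infty = d + d^*_{\mathring g}$ is the corresponding flat Dirac operator on $(\R^n_{>1} \times \Lambda \K^n)/\Gamma$. By the standard Weitzenböck formula, $\D^2 = \nabla^*\nabla + \mathcal{R}$ is a Schrödinger operator, and $\mathcal{R}$ is expressed through the curvature of $g$, which decays at rate $r^{-2-\tau}$; hence $\D^2$ is asymptotic to a Euclidean Laplacian.

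Next I would verify the decay condition on the $L^2$-kernel. By Proposition \ref{prop : decay harmonic forms}, every $\omega \in \ker_{L^2}(\Delta_H)$ satisfies $d\omega = d^*\omega = 0$ and $\omega \in \mathcal{O}_\infty(r^{1-n})$, with the improved decay $\omega \in \mathcal{O}_\infty(r^{-n})$ whenever $\omega$ is neither a $1$-form nor an $(n-1)$-form. To rule out the two borderline cases: by assumption $\mathcal{H}_1(M) = \ker_{L^2}(\Delta_H|_{T^*M}) = \{0\}$, and since the Hodge star operator $\ast$ is a pointwise isometry that intertwines the Hodge Laplacian on $k$-forms with the Hodge Laplacian on $(n-k)$-forms and preserves $L^2$ norms, it follows that $\ker_{L^2}(\Delta_H|_{\Lambda^{n-1}T^*M}) = \{0\}$ as well. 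Therefore $\ker_{L^2}(\Delta_H)$ is entirely supported on forms of degree $\neq 1, n-1$, and thus
\[
	\ker_{L^2}(\Delta_H) \subset \mathcal{O}_\infty\left(r^{-n}\right),
\]
which is exactly the hypothesis needed.

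With both ingredients in place, Theorem \ref{thm : Derivative estimates introduction} applied to $\D_V = d + d^*$ yields derivative estimates of degree $0$ for $e^{-t\Delta_H}$, and the non-negativity $\Delta_H = \D^2 \geq 0$ together with the kernel decay allows us to invoke Theorem \ref{thm: main heat kernel estimate} to obtain almost Euclidean heat kernel estimates. The main subtlety, and the only part that really uses the hypothesis $\mathcal{H}_1(M) = 0$ in a nontrivial way, is the Hodge-duality argument ruling out $(n-1)$-harmonic forms; the rest is a direct application of the general machinery developed in Sections \ref{sec: long time estimates} and \ref{sec : decay harmonic forms}.
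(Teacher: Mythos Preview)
Your proof is correct and follows essentially the same route as the paper: verify the kernel decay $\ker_{L^2}(\Delta_H)\subset\O_\infty(r^{-n})$ via Proposition \ref{prop : decay harmonic forms}, using the hypothesis $\mathcal{H}_1(M)=\{0\}$ together with Hodge duality to eliminate the problematic $1$- and $(n-1)$-form components, and then invoke Theorems \ref{thm: main heat kernel estimate} and \ref{thm : Derivative estimates introduction}. The paper's own proof is just a terser version of exactly this argument.
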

\begin{proof}
Let $\omega\in \ker_{L^2}(\Delta_H)$. 
By assumption, $\omega$ is not a one-form and by Hodge duality, it is not an $n-1$-form either.
By Proposition \ref{prop : decay harmonic forms}, this implies $w\in\O_{\infty}(r^{-n})$. The assertion then follows from Theorem \ref{thm: main heat kernel estimate} and Theorem \ref{thm : Derivative estimates introduction}.
\end{proof}
\begin{remark}
In the next two subsections, we demonstrate that these results can be substantially improved if we restrict to functions and one-forms. By Hodge duality, the same estimates also hold for $n$-forms and $(n-1)$-forms, respectively.
\end{remark}
\subsection{The Hodge-Laplacian on one-forms}Here, we are going to prove Corollary \ref{cor : heat flow one forms} (i) and (ii) which are Theorem \ref{thm : heat flow one forms} and Theorem \ref{thm : strong order 1 one forms}, respectively. Recall that $\Delta_{H_1}$ denotes the Hodge Laplacian on $T^*M$.
For convenience, we will in the following use the notation $\Delta_{H_1}:=\Delta_{H}|_{T^*M}$.
\begin{thm}\label{thm : heat flow one forms}
Let $(M,g)$ be an ALE manifold and suppose that $\mathcal{H}_1(M)=0$. Then, $e^{-t\Delta_{H_1}}$ satisfies Euclidean heat kernel estimates and strong derivative estimates of degree $0$.
\end{thm}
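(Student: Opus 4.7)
The plan is to first obtain the Euclidean heat kernel estimates via the triviality of the $L^2$-kernel, and then derive the strong derivative estimates of degree $0$ by combining Lemma \ref{lem : higher derivatives} applied on the full exterior algebra with the weighted elliptic estimate of Corollary \ref{cor: elliptic dirac gradient estimate 2} and the Euclidean $L^p\to L^\infty$-bound.

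By Hodge duality, the assumption $\mathcal{H}_1(M)=0$ also yields $\mathcal{H}_{n-1}(M)=0$, so Proposition \ref{prop : decay harmonic forms} gives $\mathcal{H}_k(M)\subset \O_\infty(r^{-n})$ for every $k$. Since $\Delta_{H_1}\geq 0$ and $\ker_{L^2}(\Delta_{H_1})=0$, Devyver's Theorem 1.2.1 (c.f.\ Remark \ref{rmk : Euclidean heat kernel estimates}) will directly supply the Euclidean heat kernel estimates, including the endpoint $q=\infty$.

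For the derivative estimates, I would regard $\Delta_{H_1}$ as the restriction of $\Delta_H=(d+d^*)^2$ on the full exterior algebra $\Lambda M$ to the parallel subbundle $T^*M$. Since forms of different degree are pointwise orthogonal and $\mathcal{H}_1(M)=0$, every $u_0\in L^p(T^*M)$ is automatically $L^2$-orthogonal to $\ker_{L^2}(\Delta_H)\subset \O_\infty(r^{-n})$. Applying Lemma \ref{lem : higher derivatives} to $\mathcal{D}:=d+d^*$ on $\Lambda M$ then gives $\|\mathcal{D}^k e^{-t\Delta_{H_1}}u_0\|_{L^p}\leq C t^{-k/2}\|u_0\|_{L^p}$. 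For $k\in\N$ and non-exceptional $p$ (i.e.\ $k-n/p\notin\N_0$), Corollary \ref{cor: elliptic dirac gradient estimate 2} with $q=\infty$ combined with the Euclidean $L^p\to L^\infty$-bound will yield
\[
\|\nabla^k e^{-t\Delta_{H_1}}u_0\|_{L^p}\leq C\bigl(t^{-k/2}+t^{-n/(2p)}\bigr)\|u_0\|_{L^p}.
\]
For $t\geq t_0$ the dominant exponent matches the strong estimate of Definition \ref{def : strong derivative estimates} exactly: $t^{-k/2}$ on $p\in(1,n/k)$ and $t^{-n/(2p)}$ on $p\in(n/k,\infty)$.

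The main technical obstacle will be the treatment of the finitely many exceptional values $p\in\{n/(k-1),\ldots,n\}$ inside the strong region, for which Corollary \ref{cor: elliptic dirac gradient estimate 2} is unavailable. Here I intend to use Riesz--Thorin interpolation between two neighboring non-exceptional values of $p$: since the target rate $t^{-n/(2p)}$ is log-linear in $1/p$, interpolation preserves it and restores the strong estimate, with constants uniform in $t\geq t_0$. At the boundary $p=n/k$ one instead falls back on Corollary \ref{cor: elliptic dirac gradient estimate 3} with a large but finite $q$, which delivers the weak $t^{-k/2+\epsilon}$-estimate that Definition \ref{def : Derivative estimates} requires at this value.
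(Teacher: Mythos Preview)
Your approach is essentially the same as the paper's: Euclidean heat kernel estimates from Devyver's result via the trivial $L^2$-kernel, and the strong upgrade for $p>n/k$ by combining Corollary \ref{cor: elliptic dirac gradient estimate 2} with $q=\infty$ and the Euclidean $L^p\to L^\infty$ bound. Your Riesz--Thorin treatment of the exceptional values $p\in\{n/(k-1),\ldots,n\}$ is in fact more careful than the paper, which silently passes over the hypothesis $k-n/p\notin\N_0$ of Corollary \ref{cor: elliptic dirac gradient estimate 2}.

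One small point to tighten: your single bound $C(t^{-k/2}+t^{-n/(2p)})$ only yields the required $t^{-k/2}$ rate on $p\in(1,n/k)$ for $t\geq t_0$, whereas the baseline derivative estimates of degree $0$ in Definition \ref{def : Derivative estimates} (which strong estimates must include) are stated for all $t>0$. The paper covers this by simply invoking the already established derivative estimates of degree $0$ for $\Delta_H$ (Theorem \ref{thm : heat flow Hodge Laplace}), restricted to one-forms; you should do the same, or equivalently choose a finite $q$ with $\frac{n}{2}(\frac{1}{p}-\frac{1}{q})=\frac{k}{2}$ in that range as in the proof of Theorem \ref{thm : Derivative estimates introduction}.
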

\begin{proof}
Because
\begin{align}\label{eq : Bochner one form Laplacian}
\Delta_{H_1}=\nabla^*\nabla+\Ric
\end{align}
and $\ker_{L^2}\Delta_{H_1}=\mathcal{H}_1(M)=0$, as the first assertion follows from work by Devyver \cite{Devyver2014}*{Thm.\ 3}, we also pointed out in Remark \ref{rmk : Euclidean heat kernel estimates}.
Furthermore, we already know that we have derivative estimates of degree zero. 
Thus it remains to show that
\[
\norm{\nabla^k\circ e^{-t\Delta_H}\omega}_{L^p}\leq C t^{-\frac{n}{2p}}\norm{\omega}_{L^p}
\]
for all $p\in (\frac{n}{k},\infty)$ and $k\geq 1$.
Let $\omega_t=e^{-t\Delta_{H_1}}\omega$.
Corollary \ref{cor: elliptic dirac gradient estimate 2}, Theorem \ref{thm_special_derivative_estimates} and Euclidean heat kernel estimates then imply that
\[
	\norm{\nabla^k\omega_t}_{L^p}
		\leq C(\norm{(d+d^*)^k\omega}_{L^p}+\norm{\omega_t}_{L^{\infty}})
		\leq C\left(t^{-\frac{k}{2}}+t^{-\frac{n}{2p}}\right)\norm{\omega_0}_{L^p}
		\leq C t^{-\frac{n}{2p}}\norm{\omega_0}_{L^p}
\]
for all $t\geq t_0$ which finishes the proof.
\end{proof}
\begin{thm}\label{thm : strong order 1 one forms}
Let $(M,g)$ be an ALE manifold with non-negative Ricci curvature. Then $e^{-t\Delta_{H_1}}$ satisfies weak derivative estimates of degree $1$.
\end{thm}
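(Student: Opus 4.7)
The plan is to apply Theorem~\ref{thm : better derivative estimates introduction} to $V = \Lambda M$, $\D_V = d + d^*$, $E = T^*M$, and $l = 1$. Since $\D_V^2 = \Delta_H$ preserves the form degree, the subbundle hypothesis \eqref{eq: D^2 preserves E} holds, and $\D_V$ is evidently asymptotic to the flat Dirac operator $d + d^*_{\R^n}$. Two non-trivial hypotheses remain to be checked: the kernel decay of $\Delta_H$, and the main implication of Theorem~\ref{thm : better derivative estimates introduction}.

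\textbf{Kernel decay.} We must show $\ker_{L^2}(\Delta_H) \subset \O_\infty(r^{-n})$. Under $\Ric \geq 0$, Bochner \eqref{eq : Bochner one form Laplacian} applied to any $L^2$-harmonic one-form $u$ gives $\nabla^*\nabla u = -\Ric(u)$; Proposition~\ref{prop : decay harmonic forms} ensures $u \in \O_\infty(r^{1-n})$, so elliptic regularity gives $\n u \in \O_\infty(r^{-n})$ and the boundary term in integration by parts over $B_R$ vanishes as $R \to \infty$, yielding
\[
	0 = \int_M \langle \Delta_H u, u\rangle = \int_M |\n u|^2 + \int_M \langle \Ric u, u\rangle.
\]
Hence $u$ is parallel, but a non-zero parallel one-form has constant positive length and cannot lie in $L^2$ on the non-compact $M$. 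Thus $\mathcal{H}_1(M) = \{0\}$, and by Hodge duality $\mathcal{H}_{n-1}(M) = \{0\}$. Proposition~\ref{prop : decay harmonic forms} provides the $\O_\infty(r^{-n})$ decay in all remaining degrees.

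\textbf{Main implication.} We must show that, for a one-form $u \in C^\infty(T^*M)$ with $(d+d^*)u = 0$ and $u = o(r)$, we have $\n u = 0$ (the case $k = 0$ being trivial). Splitting $(d+d^*)u = 0$ by degree gives $du = d^*u = 0$ and hence $\Delta_H u = 0$. Exactly as in the proof of Corollary~\ref{Cor : improved derivative estimates}, the absence of exceptional values in the interval $(0, 1)$ for the weighted Fredholm theory of $\D_V$ (Proposition~\ref{prop: isom dirac} and its consequences) upgrades $u = o(r)$ to $u$ being \emph{bounded}. The Bochner identity then yields
\[
	\n^*\n |u|^2 = -2|\n u|^2 - 2\langle \Ric u, u\rangle \leq 0,
\]
so $|u|^2$ is bounded and classically subharmonic on a complete manifold with $\Ric \geq 0$. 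Yau's Liouville theorem forces $|u|^2$ to be constant, and the displayed identity then kills both non-negative terms; in particular $\n u = 0$, as required.

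\textbf{Main obstacle.} Compared to Corollary~\ref{Cor : improved derivative estimates}, the new difficulty is the absence of a single-end assumption, so the maximum principle used there must be replaced by Yau's Liouville theorem for subharmonic functions on complete manifolds with non-negative Ricci curvature. The decay bootstrap from $u = o(r)$ to bounded is identical to the one-end case and the rest follows from Theorem~\ref{thm : better derivative estimates introduction}.
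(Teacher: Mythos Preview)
Your kernel decay argument is correct and even makes explicit a verification that the paper leaves implicit. The gap is in the \textbf{Main implication}, specifically in the appeal to ``Yau's Liouville theorem'' for bounded subharmonic functions.

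No such theorem exists. Yau's Liouville results say that (i) positive \emph{harmonic} functions on complete manifolds with $\Ric\geq 0$ are constant, and (ii) non-negative $L^p$ subharmonic functions ($p>1$) on complete manifolds are constant. Neither applies here: $|u|^2$ is merely bounded and subharmonic, not harmonic and not in any $L^p$. In fact, bounded non-constant subharmonic functions exist already on $\R^n$: for any smooth compactly supported $\mu\geq 0$ with $\mu\not\equiv 0$, the Newtonian potential $v(x)=-\int G(x,y)\mu(y)\,dy$ is smooth, bounded, tends to $0$ at infinity, satisfies $\Delta_{\mathrm{geom}}v=\mu\geq 0$, and is non-constant. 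So the step ``Yau's Liouville theorem forces $|u|^2$ to be constant'' fails.

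The paper's route is precisely to \emph{derive} the one-end property you tried to avoid: if an ALE manifold with $\Ric\geq 0$ had two ends, it would contain a line, so by Cheeger--Gromoll it would split as $\R\times N$, contradicting the ALE curvature decay unless it is flat $\R^n$. Once one end is established, Corollary~\ref{Cor : improved derivative estimates} applies directly with $E=T^*M$ and $\mathcal{R}=\Ric\geq 0$; its maximum-principle step is exactly what your argument needs and what the false Liouville claim was meant to replace. So the fix is not to circumvent the one-end hypothesis but to prove it.
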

\begin{proof}
First let us see that $(M,g)$ has only one end: Suppose that there were at least two, then $(M,g)$ contains a line, i.e.\ a geodesic that is minimizing between each of its points. Because $\Ric\geq0$, the Cheeger-Gromoll splitting theorem implies that $(M,g)$ splits isometrically as a product $(\R\times N,dr^2+h)$. However, if $(N,h)\neq (\R^{n-1},g_{eucl})$, curvature would not fall of at infinity which contradicts the assumption that $(M,g)$ is ALE.\\
Due to \eqref{eq : Bochner one form Laplacian},
the assertion follows from Corollary \ref{Cor : improved derivative estimates}, because we assumed  $\Ric\geq0$.
\end{proof}
\begin{remark}
If $(M,g)$ is Ricci-flat and obtains a parallel spinor, the latter result can be slightly improved. This will be discussed in Subsection \ref{subsec : better 1-forms} below.
\end{remark}
\subsection{The Laplace-Beltrami operator on functions}
In this subsection, we are going to prove the two assertions of Corollary \ref{cor : heat flow functions}.
\begin{thm}[Derivative estimates]\label{thm : Derivative estimates functions ALE}  Let $(M^n,g)$ be an ALE manifold with $\mathcal{H}_1(M)=0$ and $\Delta$ be its Laplace-Beltrami operator. Then, $e^{-t\Delta}$ satisfies Euclidean heat kernel estimates and strong derivative estimates of degree $1$.
\end{thm}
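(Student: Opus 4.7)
The plan is to first establish the Euclidean heat kernel estimates and then derive the strong derivative estimates of degree $1$ by an intertwining argument that reduces the analysis to the already-proven derivative estimates for the Hodge Laplacian on one-forms (Theorem \ref{thm : heat flow one forms}).

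For the Euclidean heat kernel estimates, I would first observe that $\ker_{L^2}(\Delta) = \{0\}$: since $\Delta = d^*d$ on functions, any $u \in \ker_{L^2}(\Delta)$ satisfies $\|du\|_{L^2}^2 = (\Delta u, u)_{L^2} = 0$, so $u$ is constant, which forces $u \equiv 0$ as $(M,g)$ has infinite volume. The Euclidean heat kernel estimates then follow immediately from Remark \ref{rmk : Euclidean heat kernel estimates}.

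For the derivative estimates, the core identity is the Hodge commutation $d \circ \Delta = \Delta_{H_1} \circ d$. The case $k=1$ will follow by applying Theorem \ref{thm_special_derivative_estimates} to $P = d$, $\Delta_V = \Delta$, $\Delta_W = \Delta_{H_1}$: the hypotheses $P^*P = d^*d = \Delta$ and $PP^* = dd^* \leq \Delta_{H_1}$ are immediate, and both operators satisfy the assumptions of Theorem \ref{thm: main heat kernel estimate} (for $\Delta_{H_1}$ this is precisely where we use $\mathcal{H}_1(M) = \{0\}$). This will yield $\|\nabla \circ e^{-t\Delta}\|_{p \to p} \leq C t^{-1/2}$ for all $p \in (1, \infty)$. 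For $k \geq 2$, the intertwining $\nabla \circ e^{-t\Delta} = e^{-t\Delta_{H_1}} \circ \nabla$ gives
\[
\nabla^k \circ e^{-t\Delta} = \bigl(\nabla^{k-1} \circ e^{-(t/2)\Delta_{H_1}}\bigr) \circ \bigl(\nabla \circ e^{-(t/2)\Delta}\bigr),
\]
and I would bound each factor separately in $L^p \to L^p$, using Theorem \ref{thm : heat flow one forms} (strong derivative estimates of degree $0$ for $e^{-t\Delta_{H_1}}$) for the first factor and the just-established $k=1$ estimate for the second.

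The hard part will be the bookkeeping to verify that the $p$-regimes match up. Specifically, for $p \in (1, n/(k-1))$ the factor $\nabla^{k-1} \circ e^{-(t/2)\Delta_{H_1}}$ contributes the sharp rate $t^{-(k-1)/2}$, yielding $t^{-k/2}$ as required by Definition \ref{def : Derivative estimates} with $l=1$; for $p > n/(k-1)$ it contributes $t^{-n/(2p)}$ with $\epsilon = 0$ (thanks to \emph{strong}), yielding $t^{-n/(2p)-1/2}$ with $\epsilon = 0$, matching Definition \ref{def : strong derivative estimates}; and at the boundary value $p = n/(k-1)$ one loses an arbitrarily small $\epsilon > 0$, which is precisely the exceptional value allowed in the definition of strong derivative estimates of degree $1$. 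Once these regimes are checked off, all the required estimates follow.
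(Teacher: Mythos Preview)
Your proposal is correct and follows essentially the same route as the paper's own proof. The paper simply packages your intertwining argument into Proposition \ref{prop : better derivative estimates 0 introduction} (applied with $l=1$, $\D_0^2=\Delta$, $\D_1^2=\Delta_{H_1}$) and then invokes Theorem \ref{thm : heat flow one forms}; your write-up unpacks that proposition explicitly by applying Theorem \ref{thm_special_derivative_estimates} to $P=d$ for the $k=1$ step and then composing with the degree-$0$ strong derivative estimates for $e^{-t\Delta_{H_1}}$, which is precisely what the proof of Proposition \ref{prop : better derivative estimates 0 introduction} does when specialized to $l=1$.
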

\begin{proof}
The first assertion follows from Remark \ref{rmk : Euclidean heat kernel estimates}.
Because the differential $\nabla=d: C^{\infty}(M)\to C^{\infty}(M,T^*M)
$
satisfies
\[d\circ\Delta=\Delta_{H_1}\circ d,\qquad 
d^*\circ d=\Delta,\qquad d\circ d^*\leq \Delta_{H_1},
\]
the second assertion now follows from Proposition \ref{prop : better derivative estimates 0 introduction} and Theorem \ref{thm : heat flow one forms}.
\end{proof}
\begin{thm}[Derivative estimates]  Let $(M^n,g)$ be an ALE manifold with $\mathcal{H}_1(M)=0$, which is not AE.  Let $\Delta$ be its Laplace-Beltrami operator. Then, $e^{-t\Delta}$ satisfies derivative estimates of degree $2$.
\end{thm}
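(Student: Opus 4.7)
The plan is to deduce the result from Corollary \ref{cor : better derivative estimates} at level $l=2$, thereby upgrading the degree-$1$ estimates of the previous theorem. Everything reduces to verifying the hypothesis of that corollary: namely, that every $u\in C^{\infty}(M)$ with $u\in o(r^2)$ and $\Delta u$ constant is actually constant.

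First I will dispose of the constant. Given $u\in o(r^2)$ with $\Delta u=c$, interior Schauder estimates applied on balls $B_{r/2}(x)$ for $|x|\sim r\to\infty$ yield the gradient bound $|\nabla u|(x)=o(r)$. Combining the divergence theorem with the Euclidean volume growth of Remark \ref{rmk: vol balls} gives
\[
c\cdot\Vol(B_R)=\int_{\partial B_R}\partial_\nu u\,dA=o(R^n),
\]
while $\Vol(B_R)\asymp R^n$; hence $c=0$ and $u$ is harmonic on $M$.

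Next I will extract an asymptotic expansion by specializing the iterative bootstrap from the proof of Proposition \ref{prop : decay harmonic forms}: pull $u$ back to $\R^n_{>1}/\Gamma$ via the chart $\phi$, compare $\Delta_g$ with $\Delta_{\R^n}$ using that the difference is $\O_\infty(r^{-\tau})$ by \eqref{eq: decay metric}, and iterate the flat isomorphism of Lemma \ref{le: Laplace isom} against the explicit expansion of Lemma \ref{le: harmonic functions on flat space}. The outcome will be
\[
u=A+\langle B,x\rangle+\O_\infty(r^{2-n})
\]
for some $A\in\R$ and $B\in\R^n$ in the asymptotic chart. Since this expansion descends to the $\Gamma$-quotient, the coefficient $B$ must be $\Gamma$-invariant. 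The hypothesis that $(M,g)$ is not AE means $\Gamma\neq\{1\}$, and $\Gamma$ acts freely on $S^{n-1}$, so any non-zero $\Gamma$-fixed vector would produce a fixed point on $S^{n-1}$, a contradiction. Hence $B=0$, and $w:=u-A$ is harmonic on $M$ with $w\to 0$ at infinity; the strong maximum principle then forces $w\equiv 0$, so $u\equiv A$, as required.

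The main obstacle is the asymptotic expansion step. It is the scalar specialization of the argument carried out for forms in the proof of Proposition \ref{prop : decay harmonic forms}, and goes through because the only non-decaying harmonic rates on the model cone below the threshold $r^2$ are $1$ and $r$ (by Lemma \ref{le: harmonic functions on flat space}); once that is in hand, the non-AE hypothesis handles the linear term via $\Gamma$-invariance and the maximum principle dispatches the decaying remainder.
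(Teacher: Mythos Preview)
Your route through Corollary~\ref{cor : better derivative estimates} at $l=2$ is sound in spirit and close to the paper's argument, but there are two issues.

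First, a minor one: interior Schauder on $B_{r/2}(x)$ for $\Delta u=c$ gives $|\nabla u(x)|\le Cr^{-1}\sup_{B_{r/2}(x)}|u|+Cr|c|=o(r)+O(r|c|)$, which is only $O(r)$ when $c\neq 0$; so your divergence-theorem step does not yield $c=0$ as written. This is easily repaired: with a cutoff $\chi_R$ equal to $1$ on $B_R$ and supported in $B_{2R}$ with $|\Delta\chi_R|\le CR^{-2}$, one has $c\int_M\chi_R=\int_M u\,\Delta\chi_R=o(R^n)$ directly from $u=o(r^2)$, hence $c=0$. Your $\Gamma$-invariance argument for killing the linear term is correct and is exactly the representation-theoretic content behind the paper's appeal to Lichnerowicz--Obata (the absence of the eigenvalue $n-1$ on $S^{n-1}/\Gamma$ is equivalent to there being no nonzero $\Gamma$-fixed vector in $\R^n$).

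The real gap is that Corollary~\ref{cor : better derivative estimates} only delivers \emph{weak} derivative estimates of degree $2$ (Definition~\ref{def : weak derivative estimates}), whereas the theorem claims derivative estimates of degree $2$ (Definition~\ref{def : Derivative estimates}); the difference is precisely the $\epsilon$-loss at the exceptional values $p=n/k$ and $p=n/(k-1)$. The paper closes this gap in two steps: since $1\notin\mathcal E_L$ in the non-AE case, the proof of Theorem~\ref{thm : better derivative estimates introduction} goes through verbatim at the weight $\delta=k-\tfrac{n}{p}=1$, removing the $\epsilon$ at $p=\tfrac{n}{k-1}$; and the remaining case $p=\tfrac{n}{k}$ is then handled by the interpolation $\norm{\nabla^k u}_{L^p}\le \norm{\nabla^{k-1}u}_{L^p}^{1/2}\norm{\nabla^{k+1}u}_{L^p}^{1/2}$, both factors of which are now covered. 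Your proposal omits this upgrading step entirely.
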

\begin{proof}
Let us recall a few more facts for the Laplace operator, see e.g\ \cite{pacini}*{Sec.\ 9} for details. On an asymptotically conical manifold $(M^n,g)$ with link $(L,g_L)$, 
\begin{align*}
\Delta:W^{k+2,p}_{\delta}(M)\to W^{k,p}_{\delta-2}(M)
\end{align*}
is Fredholm for $\delta\in \R\setminus{\mathcal{E}_{L}}$, where the exceptional set is given by
\[
\mathcal{E}_{L}=\left\{-\frac{n-2}{2}\pm\sqrt{\frac{(n-2)^2}{4}+\lambda}\mid \lambda \in \mathrm{Spec}(L)\right\}. 
\]
If $(M^n,g)$ is ALE, i.e.\ asymptotically conical with link $(L,g_L)=(S^{n-1}/\Gamma,g_{st})$ and we have
\[
\mathrm{Spec}(S^n/\Gamma,g_{st})\subset\left\{k(n+k-2)\mid k\in\N\right\}.
\]
However, if $\Gamma$ is nontrivial, the well-known Lichnerowicz-Obata eigenvalue estimate (see \cite{Obata1962}) implies that $(n-1)\notin \mathrm{Spec}(S^n/\Gamma,g_{st})$. For this reason, we have
\[
\mathcal{E}_{L}\subset \mathcal{E}_2\setminus \left\{1-n,1\right\} 
\]
in this case. In particular any harmonic $u\in C^{\infty}(M)$ with $u=o(r^{2})$ at infinity is bounded since there are no exceptional values in $(0,2)$. Furthermore, $u$ is constant: We have $\Delta_{H_1} du= d\Delta u$ and due to elliptic regularity, $du=\O_{\infty}(r^{-1})$. Proposition \ref{prop : decay harmonic forms} then implies that $du=\O(r^{1-n})$, so that 
\[du\in \ker_{L^2}(\Delta_{H_1})= \mathcal{H}^1(M)=\left\{0\right\}.\] Summing up, we have shown the implication
\[
	(d+d^*)^ku=0, \quad u=o(r^{k}) \quad \Rightarrow \quad \nabla^k u=0
\]
for $k=1,2$, which is condition \eqref{eq: weak implication derivatives}.
Theorem \ref{thm : better derivative estimates introduction} implies that $e^{-t\Delta}$ has weak derivative estimates of degree $2$. In particular we have
\begin{align}\label{eq : optimal derivative decay functions}
\norm{\nabla^k u}_{L^p}\leq C t^{-\frac{k}{2}}\norm{u_0}_{L^p}
\end{align}
for $u = e^{-t\Delta} u_0$ and pairs $(p,k)$ satisfying
\begin{align*}
	p
		&\in (1,n)\cup (n,\infty), \qquad k=1, \\
	p
		&\in \left(1,\frac n2\right) \cup \left(\frac n2,n\right) \cup (n,\infty), \qquad k=2, \\
	p
		&\in \left(1,\frac nk\right) \cup \left(\frac nk,\frac n{k-1} \right) \cup \left(\frac n{k-1}, \frac n{k-2}\right), \qquad k>2.
\end{align*}
In order to finish the proof of the theorem, we have to establish \eqref{eq : optimal derivative decay functions} also for the gap points in these intervals. In fact, since $1$ is no longer exceptional, the proof of Theorem \ref{thm : better derivative estimates introduction} also works for the non-exceptional value $\delta=k-\frac{n}{p}=1$. Hence we get \eqref{eq : optimal derivative decay functions} also for $k\geq 2$ and $p=\frac{n}{k-1}$. For the remaining cases $k\geq 1$ and $p=\frac{n}{k}$ interpolation shows that
\[
	\norm{\nabla^k u}_{L^p}
		\leq \norm{\nabla^{k-1} u}_{L^p}^{1/2}\norm{\nabla^{k+1}u}_{L^p}^{1/2}
		\leq C t^{-\frac k2}\norm{u_0}_{L^p},
\]
where the second inequality follows from cases which are already covered.
\end{proof}
\subsection{The classical Dirac operator}
Let $(M^n,g)$ be an ALE spin manifold and $\D_S$ be the classical Dirac operator acting on sections of the spinor bundle $S$. The following theorem is the first part of Corollary \ref{cor : heat flow dirac operator}.
\begin{thm}[Derivative estimates]\label{thm : heat kernel Dirac} Let $(M,g)$ be an ALE spin manifold with non-negative scalar curvature and one end. Then, $e^{-t(\D_S)^2}$ satisfies almost euclidean heat kernel estimates and weak derivative estimates of degree $1$.
\end{thm}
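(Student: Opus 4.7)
The whole argument rests on the Lichnerowicz--Weitzenböck identity
\[
\D_S^2 = \nabla^*\nabla + \tfrac14\,\scal\cdot\id_S,
\]
which realises $\D_S^2$ as a Schrödinger operator whose potential $\mR = \tfrac14\scal\cdot\id_S$ is pointwise nonnegative by our hypothesis $\scal\geq 0$. Moreover, on an ALE spin manifold the Clifford multiplication and spin connection are asymptotic to their flat counterparts on the model end, so the classical Dirac operator $\D_S$ is a formally self-adjoint Dirac type operator that is asymptotic to a Euclidean Dirac operator in the sense of Definition \ref{def: asympt Eucl Dirac}.

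With these two ingredients the theorem reduces to a direct application of Corollary \ref{Cor : improved derivative estimates} with $V = E = S$: the spinor bundle is trivially a parallel subbundle of itself and invariant under $\D_S^2$, and $\Delta_E = \D_S^2$ has exactly the required form $\nabla^*\nabla + \mR$ with $\mR \geq 0$. The only remaining hypothesis of that corollary is that $(M,g)$ have a single end, which is part of our assumptions. Its conclusion yields Euclidean heat kernel estimates, which \emph{a fortiori} imply the almost Euclidean ones claimed in the statement, together with weak derivative estimates of degree $1$.

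The only nontrivial ingredient is the maximum-principle argument hidden inside the proof of Corollary \ref{Cor : improved derivative estimates}: for a Dirac-harmonic spinor $u\in C^\infty(S)$ with $u = o(r)$, the absence of exceptional values of $\D_S$ in $(0,1)$ (the exceptional set $\E_1$ consists of integers) makes $u$ bounded, and then the Bochner-type identity combined with $\scal \geq 0$ gives $\Delta|u|^2 \leq 0$. The one-end assumption then allows the maximum principle at infinity to force $\nabla u \equiv 0$. This single step simultaneously rules out nontrivial $L^2$-harmonic spinors (a parallel spinor of finite $L^2$-norm on an ALE manifold must vanish by infinite volume) and establishes the implication \eqref{eq: weak implication derivatives} needed to invoke Theorem \ref{thm : better derivative estimates introduction} with $l = 1$. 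This is the only delicate point of the proof; everything else is essentially bookkeeping with the Lichnerowicz formula and the machinery set up earlier in the paper.
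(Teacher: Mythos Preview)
Your proof is correct and follows exactly the same route as the paper: invoke the Lichnerowicz formula $\D_S^2=\nabla^*\nabla+\tfrac14\scal$ to see that $\D_S^2$ is a Schr\"odinger operator with nonnegative potential, and then apply Corollary~\ref{Cor : improved derivative estimates} with $E=V=S$ and the one-end hypothesis. You even unpack the maximum-principle step inside that corollary and correctly observe that it actually yields the stronger Euclidean (not merely almost Euclidean) heat kernel estimates.
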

\begin{proof}
Due to the well known Weitzenb\"{o}ck formula
\begin{align}\label{eq : Bochner Dirac}
(\D_S)^2=\nabla^*\nabla+\frac{1}{4}\scal,
\end{align}
the result follows from the assumption $\scal\geq 0$ and Corollary \ref{Cor : improved derivative estimates}. 
\end{proof}

\section{Heat flows on ALE manifolds with a parallel spinor}\label{sec : geometric operators 2}
Throughout this section, we assume that $(M,g)$ is a simply-connected ALE spin manifold which admits a parallel spinor, i.e.\ a section $\sigma\neq 0$ of the spinor bundle such that $\nabla\sigma=0$. For convenience, we assume that $(M,g)\neq (\R^n,g_{eucl})$.
 These assumptions have various geometric consequences:
\begin{itemize}
\item $(M,g)$ is Ricci-flat.
\item $(M,g)$ has irreducible holonomy: Otherwise, $(M,g)=(\R\times N,dr^2+h)$ but this contradicts the assumption of the manifold to be ALE unless it is flat. Consequently,
\begin{align}\label{holonomy}
\mathrm{Hol}(M,g)\in\left\{\mathrm{SU}(n/2),\mathrm{Sp}(n/4),\mathrm{Spin}(7)\right\}.
\end{align}
\item $M$ is even-dimensional (therefore, we excluded the case of holonomy $G_2$): If the dimension was odd, then the group $\Gamma\subseteq\mathrm{SO(n)}$ at infinity is trivial. Otherwise, it could not act freely on $\R^n\setminus\left\{0\right\}$. However, this implies that $(M,g)$ is AE and contains a line, i.e. a geodesic that is minimizing between all of its points. In this situation, $\Ric=0$ and the Cheeger-Gromoll splitting theorem imply that it splits as $(M,g)=(\R\times N,dr^2+h)$ which we already excluded.
\item $(M,g)$ has only one end. Otherwise, it contains a line and Cheeger-Gromoll would again lead to a contradiction.
\item $\mathcal{H}_1(M)=\ker_{L^2}(\Delta_{H_1})=0$. This is due to the maximum principle, because $\Delta_{H_1}=\nabla^*\nabla$. 
\item $(M,g)$ admits at most finite quotients which are again ALE: If $(N,h)$ is an ALE manifold with $(M,g)$ as its universal cover, then we also have a covering map $\pi:M_{\infty}\to N_{\infty}$.
Because $M$ has only one end, $M_{\infty}$ is connected.
 Since $\pi_1(N_{\infty})$ is finite, $\pi$ is a finite cover which extends to a finite cover $\pi:M\to N$. Therefore $N=M/G$ with $G$ being a finite group.
\end{itemize}
\begin{remark}
 By restricting to $G$-invariant sections, all the heat kernel estimates we are going to establish in this section do also descend to $M/G$. Therefore, we may drop the assumption $\pi_1(M)=\left\{0\right\}$ and state the results as in Subsection \ref{subsubsec : geometric operators}.
\end{remark}
\begin{remark}
It is standard that any Ricci-flat manifold satisfying \eqref{holonomy} carries a parallel spinor. Morever, all the groups in \eqref{holonomy} actually appear as holonomy groups of Ricci-flat ALE manifolds, see \cites{Kro89,Joy99,Joy00,Joy01}.
On the other hand, there are no exmaples of Ricci-flat ALE manifolds which do not carry a parallel spinor.
It is a major open question whether there such examples exist, c.f.\ \cite{BKN89}*{p.\ 315}.
\end{remark}
\subsection{The twisted Dirac operator on spinor-valued one-forms}
Here, we are going to prove Corollary \ref{cor : heat flow ricci flat} (i) whose statement is contained in Theorem \ref{thm: heat flow twisted Dirac}.
We start this subsection with a short exposition on the twisted Dirac operator which is based on \cite{HS19}. We refer to this paper for further details.
We consider the bundle $ S\otimes T_{\C}^*M$ of spinor-valued one-forms.
	The spinor bundle naturally embeds in $S\otimes T_{\C}^*M$ by
	\[
	i:\sigma\mapsto -\frac{1}{n}\sum_{i=1}^n    e_i\cdot \sigma\otimes e_i^*
	\]
	and its image yields a subbundle, denoted by $S_{1/2}:=i(S)$. Let $\mu: C^{\infty}(M,S\otimes T_{\C}^*M)\to C^{\infty}(M,S)$ be defined by $ \sigma\otimes e_i^*\mapsto e_i\cdot \sigma$. 
	Then the bundle $S_{3/2}:=\ker(\mu)$ is the orthogonal complement of $S_{1/2}$ and we have the orthogonal projection maps 
	\begin{align*}
	\textrm{pr}_{S_{1/2}}&=i\circ\mu:C^{\infty}(M,S\otimes T_{\C}^*M)\to C^{\infty}(M,S_{1/2}),\\
	\textrm{pr}_{S_{3/2}}&=1-i\circ\mu:C^{\infty}(M,S\otimes T_{\C}^*M)\to C^{\infty}(M,S_{3/2}).
	\end{align*}
	\begin{definition}
Let $S\otimes T_{\C}^*M$ be equipped with the twisted Dirac operator $\D_{T^*M}$.
\begin{itemize}
\item[(i)] 	The operator
	\[ P:=\textrm{pr}_{S_{3/2}}\circ\nabla: C^{\infty}(M,S)\to C^{\infty}(M;S_{3/2})
	\] 
	is called Penrose operator or twistor operator.
	\item[(ii)] The operator
	\[ Q:=\textrm{pr}_{S_{3/2}}\circ \D_{T^*M}|_{S_{3/2}}: C^{\infty}(M,S_{3/2})\to C^{\infty}(M,S_{3/2})
	\] 	
	is called Rarita-Schwinger operator.
\end{itemize}
	\end{definition}	
	With respect to the decomposition $S\otimes T_{\C}^*M=S_{1/2}\oplus S_{3/2}$, the operator $\D_{T^*M}$ is written as
	\begin{align}\label{eq: Dirac block} \D_{T^*M}=\begin{pmatrix} \frac{2-n}{n}i\circ \D_S\circ i^{-1} & 2i\circ P^*\\ \frac{2}{n}P\circ i^{-1} & Q
	\end{pmatrix},
	\end{align}
	where $\D_S$ is the classical Dirac operator on spinors .
Therefore, $\D_{T^*M}^2$ decomposes as
	\[ (\D_{T^*M})^2=\begin{pmatrix} \frac{(2-n)^2}{n^2}i\circ \D_S^2\circ i^{-1}
	+\frac{4}{n}i\circ P^*\circ P\circ i^{-1} &
\frac{4-2n}{n^2}i\circ \D_S\circ P^*+2i\circ P^*\circ Q	
	\\ 
	\frac{4-2n}{n^2}P\circ \D_S\circ i^{-1}+\frac{2}{n}Q\circ P\circ i^{-1}
	 & Q^2+\frac{4}{n}P\circ P^*
	\end{pmatrix}.
	\]
In the Ricci-flat setting, we have
\begin{align}\label{eq : P and P^*}P^*\circ P=\frac{n-1}{n}\D_S^2, \qquad\qquad
\frac{2-n}{n}P\circ \D_S+Q\circ P=0.
\end{align}
 By introducing the standard Laplacians $\Delta_{S_{1/2}}\in \mathrm{Diff}_2(S_{1/2})$ and $\Delta_{S_{3/2}}\in \mathrm{Diff}_2(S_{3/2})$, we get the Weitzenb\"{o}ck formulas
\begin{align}\label{eq : P,Q and D 1}
\D_S^2=\Delta_{S_{1/2}},\qquad \qquad
Q^2+\frac{4}{n}P\circ P^*=0
\end{align}
and hence,
	\begin{align}\label{eq : twisted Dirac block} (\D_{T^*M})^2=\begin{pmatrix} i\circ \Delta_{S_{1/2}}\circ i^{-1}
	 &
0
	\\ 
	0 & \Delta_{S_{3/2}}
	\end{pmatrix}.
	\end{align}
\begin{prop}\label{prop: kernel twisted dirac}
We have  $\ker(\Delta_{S_{3/2}})\subset  O_{\infty}(r^{-n})$.
 Furthermore,
\[
\ker_{L^2} (\Delta_{S_{3/2}})= 
\ker_{L^2} (\D_{T^*M}^2).
\]
\end{prop}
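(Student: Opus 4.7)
The proof splits into two parts, one for each assertion, both exploiting the block-diagonal form \eqref{eq : twisted Dirac block} of $(\D_{T^*M})^2$ under the decomposition $S \otimes T_\C^*M = S_{1/2}\oplus S_{3/2}$.

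\textbf{Equality of kernels.} The block structure immediately yields
\[
  \ker_{L^2}\left(\D_{T^*M}^2\right) = i\!\left(\ker_{L^2}(\Delta_{S_{1/2}})\right) \oplus \ker_{L^2}(\Delta_{S_{3/2}}),
\]
so it suffices to show $\ker_{L^2}(\Delta_{S_{1/2}})=\{0\}$. Since a parallel spinor forces Ricci-flatness, $\scal=0$, and Lichnerowicz \eqref{eq : Bochner Dirac} gives $\Delta_{S_{1/2}}=\D_S^2=\nabla^*\nabla$. An $L^2$ solution $u$ of $\nabla^*\nabla u = 0$ is parallel (integration by parts with an ALE cut-off); then $|u|$ is constant, and since $M$ has infinite volume we conclude $u=0$.

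\textbf{Decay of the kernel.} Let $u\in\ker_{L^2}(\Delta_{S_{3/2}})$ and set $v:=0\oplus u\in\ker_{L^2}(\D_{T^*M}^2)$. Self-adjointness of $\D_{T^*M}$ upgrades this to $v\in\ker_{L^2}(\D_{T^*M})$. I will now mimic the bootstrap scheme of Proposition \ref{prop : decay harmonic forms}. First, Proposition \ref{prop: dirac non Fredholm estimate} applied to $\D_{T^*M}^k$ upgrades $v$ to $W^{k,p}_{-n/p}$ for every $k\in\N$ and $p\in(1,\infty)$, hence by weighted Sobolev embedding to $\O_\infty(r^{-n/p})$ for $p$ arbitrarily large. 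Next, pull $v$ back via $\phi^{-1}\circ\pi$ to a section $\widetilde v$ on $\R^n_*$, where $\D_{T^*M}$ is asymptotic to the constant-coefficient Dirac operator $\D_\infty$. Since $\D_\infty^2$ acts as $-\Delta_{\R^n}$ on each component (in a parallel frame), the identity
\[
  \Delta_{\R^n}\widetilde v = \bigl(\Delta_{\R^n}-\D_{T^*M}^2\bigr)\widetilde v
\]
together with the isomorphism in Lemma \ref{le: Laplace isom} lets me split $\widetilde v=\alpha+\beta$, with $\alpha$ sitting in a better weighted space (gaining $\tau$ orders of decay) and $\beta$ componentwise harmonic on $\R^n_*$. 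By Lemma \ref{le: harmonic functions on flat space}, each component of $\beta$ expands as $\beta^{(0)}\,r^{2-n}+\langle\beta^{(1)},x\rangle r^{-n}+\O_\infty(r^{-n})$.

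\textbf{Using the first-order and algebraic constraints.} Iterating the above as in Proposition \ref{prop : decay harmonic forms}, the task reduces to killing the coefficients $\beta^{(0)}$ and $\beta^{(1)}$. For $\beta^{(0)}$: the equation $\D_{T^*M} v=0$ transfers to $\D_\infty\widetilde v\in\O_\infty(r^{\delta-1-\tau})$ for any $\delta>2-n$; applied to the $r^{2-n}$ part of $\beta$ (where $\D_\infty r^{2-n} = (2-n)r^{-n}\,A_0(x)$ acts pointwise), this forces $\beta^{(0)}=0$, giving $v\in\O_\infty(r^{1-n})$. For $\beta^{(1)}$: repeat the bootstrap once more; now combine the first-order equation $\D_\infty\widetilde v=o(r^{-n})$ to leading order with the algebraic constraint $\mu(\widetilde v)=0$ (since $v\in S_{3/2}=\ker\mu$). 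Both conditions are pointwise linear constraints on the constant coefficients $\beta^{(1)}_{j,A}$. Writing $\D_\infty=\sum e_i\cdot\nabla_{e_i}$, the Clifford product $e_i\cdot$ intertwines with the projector $\mu$ so that the $\D_\infty$-constraint yields a symmetric condition on $\beta^{(1)}$ (analogous to \eqref{eq: d star equation}) while $\mu=0$ extracts its Clifford-trace, and together they have only the trivial solution. A final iteration then gives $\widetilde v\in\O_\infty(r^{-n})$, hence $u\in\O_\infty(r^{-n})$.

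\textbf{Main obstacle.} The technical heart is the last paragraph: the purely algebraic verification that the first-order constraint from $\D_\infty$ together with the $S_{3/2}$-condition $\mu=0$ rules out all asymptotic modes of order $r^{2-n}$ and $r^{1-n}$. This is the direct analogue of the coefficient analysis of Lemma \ref{le: harmonic forms on flat space}, but for the twisted Dirac operator acting on Clifford-trace-free spinor-valued one-forms rather than on differential forms, and it requires tracking how Clifford multiplication interacts with the orthogonal projection $\mathrm{pr}_{S_{3/2}}=1-i\circ\mu$.
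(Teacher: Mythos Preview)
Your approach is correct but genuinely different from the paper's. The paper never performs the Clifford-algebraic bootstrap you outline; instead it invokes the representation-theoretic bundle identifications of Homma--Semmelmann \cite{HS19}: under the special-holonomy reduction ($\mathrm{SU}(n/2)$, $\mathrm{Sp}(n/4)$, $\mathrm{Spin}(7)$) there is a parallel isometry $S_{3/2}\cong \bigoplus \Lambda^{p,q}_\bullet T^*_\C M$ intertwining $\Delta_{S_{3/2}}$ with the Hodge Laplacian, and then Proposition~\ref{prop : decay harmonic forms} (together with $\mathcal H_1(M)=\{0\}$) gives the $r^{-n}$ decay immediately. No asymptotic Clifford analysis is needed.

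Your route is more elementary and more self-contained: it uses only Ricci-flatness (to have the block form \eqref{eq : twisted Dirac block} and $\D_S^2=\nabla^*\nabla$) and the first-order equation $\D_{T^*M}v=0$ combined with $\mu(v)=0$, rather than importing the special-holonomy bundle isomorphisms. The ``main obstacle'' you flag can indeed be resolved: on $\R^n_*$ the equation $\D_\infty\psi=0$ forces each one-form component $\psi_k$ to be $\D_S$-harmonic, which first kills the $r^{2-n}$ coefficient (Clifford multiplication by $x$ is invertible), and then forces the $r^{1-n}$ coefficient to be $b_{jk}=e_j\!\cdot c_k$ for some constants $c_k\in S_0$; the Clifford-trace condition $\sum_k e_k\!\cdot\psi_k=0$ at order $r^{1-n}$ then reads $\sum_k e_k e_j c_k=0$ for all $j$, which after the Clifford relation rearranges to $c_j=-\tfrac12 e_j C$ with $C=\sum_k e_k c_k$, and summing gives $(1-\tfrac n2)C=0$, hence $c_k=0$ for $n\ge 3$. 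So the algebra closes.

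What each approach buys: the paper's proof is shorter and plugs directly into the already-established decay of harmonic forms, but relies on the nontrivial external input \cite{HS19} and the holonomy classification. Your argument avoids that input entirely and would in principle apply to any Ricci-flat ALE manifold, at the cost of having to carry out (and carefully justify asymptotically, since $\mu$ and $\D_{T^*M}$ agree with their Euclidean models only up to $O(r^{-\tau})$) the Clifford-coefficient analysis sketched above. For the second assertion your argument via infinite volume is equivalent to the paper's maximum-principle argument (which uses that $M$ has one end).
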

\begin{proof}Recall that
\[
\mathrm{Hol}(M,g)\in\left\{\mathrm{SU}(n/2),\mathrm{Sp}(n/4),\mathrm{Spin}(7)\right\}.
\]
In all these cases, bundle isometries were constructed in \cite{HS19}. If $n=2m$ and $\mathrm{Hol}(M^n,g)=\mathrm{SU}(m)$ (i.e.\ $(M,g)$ is Calabi-Yau), we have (c.f.\ \cite{HS19}*{Sec.\ 4.6})
\[
S_{3/2}\cong \oplus_{k=0}^m(\Lambda^{1,k}T^*_{\C}M\oplus\Lambda^{n-k,1}T^*_{\C}M)\ominus \oplus_{k=0}^m\Lambda^{0,k}T^*_{\C}M.
\]
If $n=4m$ and $\mathrm{Hol}(M^n,g)=\mathrm{Sp}(m)$ (i.e.\ $(M,g)$ is hyperk\"{a}hler), 
we have (c.f.\ \cite{HS19}*{Sec.\ 4.7})
\[
S_{3/2}\cong \oplus_{k=0}^m2(n-k+1)(\Lambda^{k+1}_0T^*_{\C}M\oplus\Lambda^{k-1}_0T^*_{\C}M\oplus \Lambda^{k,1}_0T^*_{\C}M) \ominus \oplus_{k=0}^m(n-k+1)\Lambda^{k}_0T^*_{\C}M.
\]
Finally, if $n=8$ and $\mathrm{Hol}(M^n,g)=\mathrm{Spin}(7)$, we have (c.f.\ \cite{HS19}*{Sec.\ 4.9})
\[
S_{3/2}\cong T^*_{\C}M\oplus \Lambda^{3}_{4,8}T^*_{\C}M\oplus \Lambda^{4}_{3,5}T^*_{\C}M \oplus \Lambda^{2}_{2,1}T^*_{\C}M.
\]
In all of these cases, $\Delta_{S_{3/2}}$ coincides with $\Delta_H$ via the bundle isometry. 
Note that \cite{HS19} considers compact manifolds but these identifications also work in the non-compact setting as they are purely built on representation-theoretic arguments.
Because $\mathcal{H}_1(M)=\left\{0\right\}$, Proposition \ref{prop : decay harmonic forms} implies $\omega\in\O_{\infty}(r^{-n})$  for any $\omega\in C^{\infty}(M,\Lambda M)$ with $\omega\in L^p$ for some $p<\infty$. By splitting into real and imaginary part, the same assertion also holds for $\omega\in C^{\infty}(M,\Lambda_{\C} M)$.
Thus the first assertion of the proposition follows from the existence of these identifications of $\Delta_{S_{3/2}}$ and $\Delta_H$.
 We furthermore have
\[
\ker_{L^2} (\D_{T^*M}^2)=\ker_{L^2} (\Delta_{S_{1/2}})\oplus \ker_{L^2} (\Delta_{S_{3/2}})=\ker_{L^2} (\Delta_{S_{3/2}}).
\]
The first equality follows from the diagonal form of $\D_{T^*M}^2$.
Moreoever, \eqref{eq : Bochner Dirac} and \eqref{eq : P,Q and D 1} yield  $\Delta_{S_{1/2}}=(\D_S)^2=\nabla^*\nabla$ in this situation. Because $(M,g)$ has only one end, the maximum principle implies that $\ker_{L^2} (\Delta_{S_{1/2}})$ is trivial which proves the second equality.
\end{proof}
\begin{thm}\label{thm: heat flow twisted Dirac}On a Ricci-flat ALE spin manifold with a parallel spinor, the heat flows of $\D_{T^*M}^2$ and $\Delta_{S_{3/2}}$ satisfy heat kernel and derivative estimates of degree $0$.
\end{thm}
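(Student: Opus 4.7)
The plan is to apply Theorem \ref{thm : Derivative estimates introduction} directly to the Dirac type operator $\D_{T^*M}$ on all of $S\otimes T_\C^*M$, and then to deduce the result for $\Delta_{S_{3/2}}$ by restricting via the block-diagonal decomposition \eqref{eq : twisted Dirac block}.

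First I would verify that $\D_{T^*M}$ is a formally self-adjoint Dirac type operator asymptotic to a Euclidean Dirac operator in the sense of Definition \ref{def: asympt Eucl Dirac}. Formal self-adjointness is standard for twisted Dirac operators. The asymptotic condition follows from \eqref{eq: decay metric}: the spinor connection on $S\otimes T_\C^*M$, being constructed from the Levi--Civita connection, differs from its Euclidean model by $\O_\infty(r^{-1-\tau})$, and Clifford multiplication converges to its flat analogue at rate $\O_\infty(r^{-\tau})$, so \eqref{eq: decay A} and \eqref{eq: decay B} hold. The identity \eqref{eq: Delta at infinity} is just the standard fact that the square of the Euclidean (twisted) Dirac operator equals $\on^*\on$.

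Next I would combine this with Proposition \ref{prop: kernel twisted dirac}, which guarantees $\ker_{L^2}(\D_{T^*M}^2) = \ker_{L^2}(\Delta_{S_{3/2}}) \subset \O_\infty(r^{-n})$. The non-negativity of $\D_{T^*M}^2$ is automatic by Remark \ref{rmk: Positivity Dirac squared}. All hypotheses of Theorem \ref{thm: main heat kernel estimate} and Theorem \ref{thm : Derivative estimates introduction} are thus satisfied, so $e^{-t\D_{T^*M}^2}$ satisfies almost Euclidean heat kernel estimates and derivative estimates of degree $0$.

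To transfer these to $e^{-t\Delta_{S_{3/2}}}$, I would invoke the block structure \eqref{eq : twisted Dirac block}: since $S_{1/2}$ and $S_{3/2}$ are \emph{parallel} subbundles of $S\otimes T_\C^*M$ and $\D_{T^*M}^2$ is diagonal with respect to this splitting, the heat semigroup preserves each subbundle, so for every $u \in C^\infty_c(S_{3/2})$,
\[
    e^{-t\D_{T^*M}^2}u = e^{-t\Delta_{S_{3/2}}}u.
\]
Moreover, because $S_{3/2}$ is parallel, $\nabla^k u$ lives in $S_{3/2}\otimes T^*M^{\otimes k}$ with norm inherited from the ambient bundle, and the $L^2$-orthogonality condition with respect to $\ker_{L^2}(\D_{T^*M}^2)$ restricts to $L^2$-orthogonality with respect to $\ker_{L^2}(\Delta_{S_{3/2}})$ by Proposition \ref{prop: kernel twisted dirac}. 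Hence the estimates for $\D_{T^*M}^2$ restrict to the same estimates for $\Delta_{S_{3/2}}$.

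The only genuinely delicate point is the verification in Step~1 that the lower-order term of $\D_{T^*M}$ decays at rate $r^{-1-\tau}$; the rest is essentially bookkeeping, packaging together Proposition \ref{prop: kernel twisted dirac} with Theorem \ref{thm : Derivative estimates introduction}. The main conceptual work—identifying the bundle $S_{3/2}$ with a sum of form bundles and invoking Proposition \ref{prop : decay harmonic forms} to control the $L^2$-kernel—has already been carried out in the proof of Proposition \ref{prop: kernel twisted dirac}, so at this stage the theorem reduces to a direct application of the general machinery developed in Section \ref{sec: long time estimates}.
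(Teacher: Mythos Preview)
Your argument is correct, but it follows a different route from the paper. The paper does not apply Theorem \ref{thm : Derivative estimates introduction} directly to $\D_{T^*M}$; instead it pushes the bundle identifications from Proposition \ref{prop: kernel twisted dirac} further, writing $\D_{T^*M}^2 \cong (\Delta_{S_{1/2}},\Delta_{S_{3/2}}) \cong ((\D_S)^2,\Delta_H)$ via parallel isomorphisms, and then simply quotes the already-established Theorems \ref{thm : heat flow Hodge Laplace} and \ref{thm : heat kernel Dirac} for the two blocks. Your approach is in a sense the reverse: you treat $\D_{T^*M}$ as a single abstract Dirac type operator satisfying the hypotheses of the general machinery, and then restrict to the parallel subbundle $S_{3/2}$ afterwards. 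The advantage of the paper's route is that it avoids re-verifying Definition \ref{def: asympt Eucl Dirac} for $\D_{T^*M}$ and makes the modular structure of the argument transparent; the advantage of yours is that it shows the result would go through even without the special-holonomy identifications of $S_{3/2}$ with form bundles, provided one knows the kernel decay from Proposition \ref{prop: kernel twisted dirac}. Both are short, and the only substantive input either way is Proposition \ref{prop: kernel twisted dirac}.
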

\begin{proof}By the proof of the previous proposition, we have $\Delta_{S_{3/2}}\cong \Delta_H$ via a parallel isomorphism of vector bundles. Furthermore, we have $\D_{T^*M}^2\cong(\Delta_{S_{1/2}},\Delta_{S_{3/2}})\cong ((\D_S)^2,\Delta_H)$. 
Thus, the assertion follows from Theorem \ref{thm : heat kernel Dirac} and Theorem \ref{thm : heat flow Hodge Laplace}.
\end{proof}
\subsection{The classical Dirac operator revisited}
In this subsection, we are going to prove the second assertion in Corollary \ref{cor : heat flow dirac operator} which is Theorem \ref{thm : heat kernel Dirac 2} below.
\begin{lem}\label{lem : commuting Dirac}
We have $\D_{T^*M}\circ\nabla =\nabla\circ \D_{S}$.
\end{lem}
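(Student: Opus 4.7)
The plan is to verify the identity pointwise by expanding both sides in a local orthonormal frame and reducing the difference to a curvature term, which then vanishes because $(M,g)$ is Ricci-flat.

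Concretely, I would fix a point $p\in M$ and work with a local orthonormal frame $(e_i)$ that is synchronous at $p$, i.e.\ $\nabla e_i|_p=0$, with dual coframe $(e^i)$. The tensor product connection on $S\otimes T^*M$ then satisfies $\nabla^{S\otimes T^*M}_{e_i}(\psi_k\otimes e^k)|_p=(\nabla_{e_i}\psi_k)|_p\otimes e^k|_p$. Writing $\nabla\sigma=\sum_k(\nabla_{e_k}\sigma)\otimes e^k$, the definitions give at $p$
\[
\D_{T^*M}(\nabla\sigma)=\sum_{i,k}\bigl(e_i\cdot\nabla_{e_i}\nabla_{e_k}\sigma\bigr)\otimes e^k,
\qquad
\nabla(\D_S\sigma)=\sum_{i,k}\bigl(e_i\cdot\nabla_{e_k}\nabla_{e_i}\sigma\bigr)\otimes e^k.
\]
Subtracting,
\[
(\D_{T^*M}\circ\nabla-\nabla\circ\D_S)\sigma=\sum_{k}\Bigl(\sum_i e_i\cdot R^S(e_i,e_k)\sigma\Bigr)\otimes e^k,
\]
where $R^S$ is the curvature endomorphism of the spin connection.

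The key step is then the classical spinorial identity
\[
\sum_{i=1}^n e_i\cdot R^S(e_i,X)\sigma=-\tfrac{1}{2}\,\mathrm{Ric}(X)\cdot\sigma,
\]
valid for every tangent vector $X$ (this follows from $R^S(e_i,e_k)=\tfrac14\sum_{a<b}R_{ikab}\,e_a\cdot e_b$ together with the first Bianchi identity; it is the same identity that underlies the Lichnerowicz formula $\D_S^2=\nabla^*\nabla+\tfrac14\scal$). Applying it with $X=e_k$ yields
\[
(\D_{T^*M}\circ\nabla-\nabla\circ\D_S)\sigma=-\tfrac{1}{2}\sum_k\bigl(\mathrm{Ric}(e_k)\cdot\sigma\bigr)\otimes e^k.
\]
Since $(M,g)$ carries a parallel spinor and is therefore Ricci-flat (as recalled at the beginning of Section~\ref{sec : geometric operators 2}), the right-hand side vanishes, which proves the lemma.

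The computation is essentially bookkeeping; the only non-trivial input is the spinorial curvature identity and the Ricci-flatness of $(M,g)$. The main care required is to make sure the tensor product connection on $S\otimes T^*M$ and the Clifford action on the $S$-factor only are used consistently in the definition of $\D_{T^*M}$, and that the synchronous frame trick eliminates all Christoffel terms at $p$ so that the difference reduces cleanly to the commutator $[\nabla_{e_i},\nabla_{e_k}]\sigma$.
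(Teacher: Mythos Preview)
Your proof is correct and follows essentially the same route as the paper: expand both sides in a local orthonormal frame, reduce the difference to the spinorial curvature commutator, and apply the identity $\sum_i e_i\cdot R^S(e_i,X)=c\,\Ric(X)\cdot$ together with $\Ric=0$. The only cosmetic differences are that the paper phrases the computation with the invariant second covariant derivative $\nabla^2_{e_j,e_i}$ rather than a synchronous frame, and the constant in front of the Ricci term differs (immaterial since it vanishes).
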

\begin{proof}
Using a local orthonormal basis, we may write $\nabla\varphi=\sum_i\nabla_{e_i}\varphi\otimes e_i^*$ and calculate, using the curvature identity in \cite{Gin09}*{Lem.\ 1.2.4},
\begin{align*}
\D_{T^*M}\nabla\varphi=\sum_{i,j}e_j\cdot \nabla^2_{e_j,e_i}\varphi\otimes e_i^*&=\sum_{i,j}[\nabla_{e_i}(e_j\cdot\nabla_{e_i}\varphi)\otimes e_i^*+e_j\cdot R_{e_j,e_i}\varphi\otimes e_i^*]\\
&=\nabla (\D_S\varphi)+\sum_i\Ric(e_i)\cdot\varphi\otimes e_i^*.
\end{align*}
The result follows from $\Ric=0$.
\end{proof}
\begin{lem}\label{lem : estimating Dirac}
We have
\[
\nabla^*\nabla\leq (D_S)^2,\qquad \nabla\circ\nabla^*\leq \frac{n}{2}(D_{T^*M})^2.
\]
\end{lem}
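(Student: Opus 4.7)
The first inequality is in fact an equality and is immediate from the Lichnerowicz--Weitzenböck formula \eqref{eq : Bochner Dirac}: the existence of a parallel spinor forces $(M,g)$ to be Ricci-flat and hence scalar-flat, so $(\D_S)^2 = \nabla^*\nabla + \tfrac14\scal = \nabla^*\nabla$.

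For the second inequality, the plan is to exploit the twistor decomposition of the spinor covariant derivative relative to the orthogonal splitting $S\otimes T^*M = S_{1/2}\oplus S_{3/2}$. The operator $\nabla: C^\infty(S) \to C^\infty(S\otimes T^*M)$ splits as $\nabla = i\circ \D_S + P$, and a short computation of the adjoint on each summand---using the crucial normalization $\langle i\sigma, i\sigma'\rangle = \tfrac{1}{n}\langle\sigma,\sigma'\rangle$---yields, for every $\tau = i\sigma_1 + \tau_2 \in S_{1/2}\oplus S_{3/2}$,
\[
\nabla^*\tau = \tfrac{1}{n}\D_S\sigma_1 + P^*\tau_2.
\]

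Passing to quadratic forms via $(\nabla\nabla^*\tau,\tau)_{L^2} = \norm{\nabla^*\tau}^2_{L^2}$ and $((\D_{T^*M})^2\tau,\tau)_{L^2} = \norm{\D_{T^*M}\tau}^2_{L^2}$, the elementary bound $|a+b|^2\leq 2|a|^2+2|b|^2$ gives
\[
\norm{\nabla^*\tau}^2 \leq \tfrac{2}{n^2}\norm{\D_S\sigma_1}^2 + 2\norm{P^*\tau_2}^2,
\]
while the block-diagonal form \eqref{eq : twisted Dirac block} of $(\D_{T^*M})^2$, combined with the identity $\Delta_{S_{3/2}} = Q^2 + \tfrac{4}{n}PP^*$ (obtained by squaring \eqref{eq: Dirac block} and cancelling the off-diagonal terms via \eqref{eq : P and P^*}), produces
\[
\norm{\D_{T^*M}\tau}^2 = \tfrac{1}{n}\norm{\D_S\sigma_1}^2 + \norm{Q\tau_2}^2 + \tfrac{4}{n}\norm{P^*\tau_2}^2.
\]
Multiplying the latter by $\tfrac{n}{2}$, the $\norm{P^*\tau_2}^2$ contributions match exactly, the $\norm{Q\tau_2}^2$ term is non-negative and can be dropped, and the remaining comparison reduces to $\tfrac{2}{n^2}\leq\tfrac{1}{2}$, which holds for all $n\geq 2$, settling the inequality.

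The only genuinely delicate point is to keep careful track of the normalization constant $\tfrac{1}{n}$ coming from the embedding $i$, which is ultimately responsible for the factor $\tfrac{n}{2}$ in the statement, and to choose the Young-type exponent (here simply $\varepsilon = 1$) so that the cross term in $\norm{\nabla^*\tau}^2$ is absorbed without degrading the $\norm{P^*\tau_2}^2$ coefficient below the value $2$ required for the comparison to close.
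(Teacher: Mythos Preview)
Your proof is correct and follows essentially the same route as the paper: both decompose $\nabla = i\circ\D_S + P$ with respect to $S\otimes T^*_{\C}M = S_{1/2}\oplus S_{3/2}$, take adjoints to get a formula for $\nabla^*$, apply the crude bound $|a+b|^2\le 2|a|^2+2|b|^2$, and compare term by term with the block-diagonal expression \eqref{eq : twisted Dirac block} for $(\D_{T^*M})^2$. Your explicit bookkeeping of the normalisation $\langle i\sigma,i\sigma'\rangle=\tfrac1n\langle\sigma,\sigma'\rangle$---which produces $\nabla^*\tau=\tfrac1n\D_S\sigma_1+P^*\tau_2$ rather than the paper's $\D_S\circ i^{-1}\psi_{1/2}+P^*\psi_{3/2}$---is in fact the correct accounting; without that $\tfrac1n$ the $S_{1/2}$ comparison would read $2\le\tfrac12$, so your version repairs a minor slip in the paper's displayed computation while leaving the argument and the constant $\tfrac n2$ intact.
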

\begin{proof}
The first identity is immediate from \eqref{eq : Bochner Dirac}. For proving the second inequality, we use the decomposition $S\otimes T_{\C}^*M=S_{1/2}\oplus S_{3/2}$ to write
\[
\nabla=(\mathrm{pr}_{S_{1/2}},\mathrm{pr}_{S_{3/2}})\circ\nabla= (i\circ \D_S,P).
\]
For $\psi\in C^{\infty}_{c}(M,S)$, written as $\psi=\psi_{1/2}+\psi_{3/2}$ with respect to this decomposition, we conclude
\[
\nabla^*\psi=\D_S\circ i^{-1}\psi_{1/2}+P^*\psi_{3/2}.
\]
An application of the triangle inequality and integration by parts yields
\begin{align*}
	(\nabla\circ\nabla^*\psi,\psi)_{L^2}
		&=\norm{\nabla^*\psi}_{L^2}^2 
			\leq 2\norm{\D_S\circ i^{-1}\psi_{1/2}}_{L^2}^2+2\norm{P^*\psi_{3/2}}_{L^2}^2\\
		&\leq 2\norm{\D_S\circ i^{-1}(\psi_{1/2})}_{L^2}^2+2\norm{P^*\psi_{3/2}}^2+\frac{n}{2}\norm{Q\psi_{3/2}}_{L^2}^2\\
		&\leq \frac{n}{2}(i\circ (\D_S)^2\circ i^{-1}(\psi_{1/2}),\psi_{1/2})_{L^2}+\frac n2 (Q^2(\psi_{3/2})+\frac{4}{n}P\circ P^*(\psi_{3/2}),\psi_{3/2})_{L^2}\\
		&=\frac{n}{2}((\D_{T*M})^2\psi,\psi)_{L^2},
\end{align*}
where we used \eqref{eq : P,Q and D 1}, and \eqref{eq : twisted Dirac block}.
\end{proof}
\begin{thm}\label{thm : heat kernel Dirac 2}
On a Ricci-flat ALE spin manifold with a parallel spinor, the operator $e^{-t(\D_S)^2}$ satisfies derivative estimates of degree $1$.
\end{thm}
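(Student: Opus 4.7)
The plan is to apply Proposition \ref{prop : better derivative estimates 0 introduction} (i) with $l = 1$, setting $\D_0 := \D_S$ acting on spinors and $\D_1 := \D_{T^*M}$ acting on spinor-valued one-forms. Both operators are formally self-adjoint Dirac type operators in the sense of Definition \ref{def: asympt Eucl Dirac}: their squares give Schr\"{o}dinger operators by the Weitzenb\"{o}ck formulas \eqref{eq : Bochner Dirac} and \eqref{eq : twisted Dirac block}, and asymptotic to Euclidean Dirac operators since the spin connection and Levi-Civita connection converge to their Euclidean counterparts at the required rates thanks to \eqref{eq: decay metric}.

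The commutation hypotheses \eqref{commuting} for $m = 1$ are exactly what the preceding lemmas provide. Iterating Lemma \ref{lem : commuting Dirac} gives
\[
(\D_1)^2 \circ \nabla = \D_1 \circ (\nabla \circ \D_0) = (\nabla \circ \D_0) \circ \D_0 = \nabla \circ (\D_0)^2,
\]
while Lemma \ref{lem : estimating Dirac} supplies $\nabla^*\nabla \leq (\D_0)^2$ and $\nabla \circ \nabla^* \leq \frac{n}{2}(\D_1)^2$.

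It remains to verify the kernel decay condition $\ker_{L^2}(\D_1) \subset \O_\infty(r^{-n})$. By self-adjointness of $\D_1$, one has $\ker_{L^2}(\D_{T^*M}) = \ker_{L^2}((\D_{T^*M})^2)$. The block-diagonal form \eqref{eq : twisted Dirac block} splits the latter as $\ker_{L^2}(\Delta_{S_{1/2}}) \oplus \ker_{L^2}(\Delta_{S_{3/2}})$. Since $(M,g)$ is Ricci-flat with a parallel spinor, $\Delta_{S_{1/2}} = (\D_S)^2 = \nabla^*\nabla$ by \eqref{eq : Bochner Dirac}, and because $(M,g)$ has only one end the maximum principle forces $\ker_{L^2}(\Delta_{S_{1/2}}) = 0$. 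Proposition \ref{prop: kernel twisted dirac} then yields $\ker_{L^2}((\D_{T^*M})^2) = \ker_{L^2}(\Delta_{S_{3/2}}) \subset \O_\infty(r^{-n})$.

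All hypotheses of Proposition \ref{prop : better derivative estimates 0 introduction} (i) being verified, $e^{-t(\D_S)^2} = e^{-t(\D_0)^2}$ satisfies derivative estimates of degree $1$. The only non-automatic step is the kernel control, which is the content of Proposition \ref{prop: kernel twisted dirac} and relies on the bundle identifications of $S_{3/2}$ with sums of form bundles available under each of the special holonomies in \eqref{holonomy}; everything else is a direct translation of the preceding lemmas into the hypotheses of the abstract proposition.
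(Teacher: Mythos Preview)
Your proof is correct and follows essentially the same route as the paper: verify the commutation and inequality hypotheses of Proposition \ref{prop : better derivative estimates 0 introduction} via Lemmas \ref{lem : commuting Dirac} and \ref{lem : estimating Dirac}, and check the kernel decay condition for $\D_{T^*M}$ via Proposition \ref{prop: kernel twisted dirac}. The paper's one-line proof cites Theorem \ref{thm: heat flow twisted Dirac} in place of Proposition \ref{prop: kernel twisted dirac}, but this amounts to the same thing since the proof of Proposition \ref{prop : better derivative estimates 0 introduction} reduces the degree-$l$ estimate to degree-$0$ estimates for $e^{-t(\D_l)^2}$, which is exactly the content of Theorem \ref{thm: heat flow twisted Dirac}.
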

\begin{proof}
This is a consequence of Lemma \ref{lem : commuting Dirac}, Lemma \ref{lem : estimating Dirac} and Theorem \ref{thm: heat flow twisted Dirac}.
\end{proof}

\subsection{The Lichnerowicz Laplacian}\label{subsec : LL}
Here, we are going to prove Corollary \ref{cor : heat flow ricci flat} (ii)  which is  the following:
\begin{thm}\label{thm: heat flow LL}
On a Ricci-flat ALE spin manifold with a parallel spinor, the heat flow of the Lichnerwicz Laplacian satisfies almost Euclidean heat kernel estimates and derivative estimates of degree $0$.
\end{thm}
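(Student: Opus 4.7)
The plan is to invoke Wang's theorem \cite{Wang91} in order to realize the Lichnerowicz Laplacian as the square of a twisted Dirac-type operator, and then deduce the result from the already-established Theorem \ref{thm: heat flow twisted Dirac}. Concretely, the parallel spinor $\sigma$ induces a parallel bundle injection $\Psi: S^2 T^*M \hookrightarrow S \otimes T^*_\C M$ onto a parallel subbundle $E := \im(\Psi)$, and Wang's intertwining formula reads
\[
  \Psi \circ \Delta_L = (\D_{T^*M})^2 \circ \Psi.
\]
Since $\sigma$ is parallel and, via the trivialization at infinity from Assumption \ref{ass: V}, corresponds to a parallel spinor on the model cone, the map $\Psi$ respects the asymptotic structure required by Definition \ref{def: asympt Eucl Dirac}; in particular $E$ is asymptotic at infinity to a standard parallel subbundle on $(\R^n_{>1}\times\K^m)/\Gamma$.

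The first concrete step is to verify the $L^2$-kernel condition required by Theorem \ref{thm : Derivative estimates introduction}. Because $\Psi$ is a parallel bundle isomorphism onto $E$, it is (up to a multiplicative constant) a pointwise isometry and hence also an $L^2$-isometry and an $L^\infty$-isometry. Consequently,
\[
  \ker_{L^2}(\Delta_L) \;\cong\; \ker_{L^2}\bigl((\D_{T^*M})^2|_E\bigr) \;\subset\; \ker_{L^2}\bigl((\D_{T^*M})^2\bigr).
\]
By Proposition \ref{prop: kernel twisted dirac}, the right-hand side equals $\ker_{L^2}(\Delta_{S_{3/2}}) \subset \O_\infty(r^{-n})$, so $\ker_{L^2}(\Delta_L) \subset \O_\infty(r^{-n})$ as well.

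The remainder is formal: Theorem \ref{thm: heat flow twisted Dirac} provides almost Euclidean heat kernel estimates and derivative estimates of degree $0$ for $e^{-t(\D_{T^*M})^2}$, and because $\Psi$ is parallel it commutes with $\nabla$ and preserves $L^p$-norms (up to a constant), so these estimates transfer through $\Psi$ and $\Psi^{-1}$ to the corresponding estimates for $e^{-t\Delta_L}$. The main obstacle will be the careful unpacking of Wang's construction in each of the three admissible holonomy types in \eqref{holonomy}: one has to identify $\Psi$ explicitly, verify that it intertwines $\Delta_L$ with $(\D_{T^*M})^2$ (including on the asymptotic model), and confirm that $E$ is indeed parallel and asymptotic to a standard subbundle at infinity. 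This is essentially routine, given the holonomy-by-holonomy decompositions already invoked in Proposition \ref{prop: kernel twisted dirac}, but requires care because the bundles involved are genuinely non-trivial at infinity whenever $\Gamma \neq \{1\}$.
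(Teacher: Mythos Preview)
Your proposal is correct and follows essentially the same approach as the paper: realize $\Delta_L$ as the restriction of $(\D_{T^*M})^2$ to a parallel subbundle via Wang's intertwining map, and then pull back the estimates already established in Theorem \ref{thm: heat flow twisted Dirac}. One minor remark: the ``main obstacle'' you flag at the end --- a holonomy-by-holonomy unpacking of Wang's construction --- is not actually needed here, since Wang's formula $\Psi\circ\Delta_L=(\D_{T^*M})^2\circ\Psi$ holds uniformly on any Ricci-flat spin manifold with a parallel spinor; the holonomy case analysis was only used earlier (in Proposition \ref{prop: kernel twisted dirac}) to identify $\Delta_{S_{3/2}}$ with a Hodge Laplacian and thereby obtain the kernel decay, which you are already citing.
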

\begin{proof}Let $\sigma\in C^{\infty}(S)$ be a parallel spinor normalized such that $|\sigma|=1$.
Consider the following endomorphism
\begin{align*}
\Phi:C^{\infty}(S^2M)\to C^{\infty}(S\otimes T_{\C}^*M),\qquad h\mapsto \sum_{i,j} h_{ij} e_i\cdot\sigma\otimes e_j^*.
\end{align*}
It is easy to see that
\begin{align*}
\langle \Phi(h),\Phi(k)\rangle=\langle h,k\rangle,\qquad |\nabla^k(\Phi(h))|=|\nabla^k h|,\qquad k\in\N.
\end{align*}
In other words, $\Phi$ is an isometric embedding and its image forms a parallel subbundle of $S\otimes T_{\C}^*M$.
The key formula was established by Wang \cite{Wang91} (and independently in \cite{DWW05}) and states that
\begin{align}\label{eq : LL commutes}
\Phi\circ \Delta_L=(\D_{T^*M})^2\circ \Phi.
\end{align}
Therefore
\begin{align}\label{eq: hk LL commutes}
\Phi\circ e^{-t\Delta_L}=e^{-t(\D_{T^*M})^2}\circ\Phi
\end{align}
and the assertion follows directly from Theorem \ref{thm: heat flow twisted Dirac}.
\end{proof}
The second theorem in this subsection concerns an improved decay for the linearized de-Turck vector field and the linearized Ricci curvature along the heat flow of the Lichnerowicz Laplacian.
Given two arbitrary metrics $\tilde{g},\hat{g}$ the de Turck vector field is 
\begin{align*}
V(\tilde{g},\hat{g})=g^{ij}(\tilde{\Gamma}_{ij}^k-\hat{\Gamma}_{ij}^k),
\end{align*}
and it is used to define the Ricci de Turck flow which is a strictly parabolic variant of the Ricci flow. Let 
\begin{align*}
\delta:C^{\infty}(M,S^2M)&\to C^{\infty}(M,T^*M),\qquad (\delta h)_k= -g^{ij}\nabla_ih_{jk},\\
\delta^*: C^{\infty}(M,T^*M)&\to C^{\infty}(M,S^2M),\qquad (\delta^*\omega)_{ij}=\frac{1}{2}(\nabla_i\omega_j+\nabla_j\omega_i)
\end{align*}
be the divergence and its formal adjoint, respectively.
Let furthermore
\begin{align*}
G:C^{\infty}(M,S^2M)&\to C^{\infty}(M,S^2M), \qquad G(h)=h-\frac{1}{2}\tr_gh\cdot g
\end{align*}
be the gravitational operator and $\sharp:C^{\infty}(M,T^*M)\to C^{\infty}(M,TM)$ be the musical isomorphism.
The linearization of $V$ in the first component can now be expressed in terms of these operators as
\begin{align*}
DV(h):=\frac{d}{dt}|_{t=0}V(g+th,g)=-\sharp\circ \delta\circ G(h).
\end{align*}
Moreover, one computes (using e.g.\ the formulas in \cite{Besse07}*{Thm.\ 1.174})
\begin{align*}
D\mathcal{L}_V(h)&:=\frac{d}{dt}|_{t=0}\mathcal{L}_{V(g+th,g)}(g+th)=-2\cdot\delta^*\circ\delta\circ G(h),\\
D\Ric(h)&:=\frac{d}{dt}|_{t=0}\Ric_{g+th}=\frac{1}{2}\Delta_Lh-\delta^*\circ\delta\circ G(h)=\frac{1}{2}(\Delta_L+D\mathcal{L}_V)(h)
\end{align*}
The key tool to get good estimates for these operators under the heat flow is the following lemma:
\begin{lemma}\label{lem: commutators}
Let $\Delta_{H_1}$ the Hodge Laplacian on one-forms, $\Delta_L$ the Lichnerowicz Laplacian, $\delta:C^{\infty}(S^2M)\to C^{\infty}(T^*M)$ and $\delta^*$ its adjoint. Then we have
\begin{align*}
	\Delta_L \circ \delta^*
		&=\delta^* \circ \Delta_{H_1},
		\qquad
	\delta \circ \Delta_L
		= \Delta_{H_1} \circ \delta, \\
	\delta\circ\delta^*
		&\leq \Delta_{H_1},
		\qquad
	\delta^*\circ\delta
		\leq \Delta_L.
\end{align*}
\end{lemma}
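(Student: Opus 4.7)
The plan is to first establish the commutation relations by a direct Weitzenböck computation on our Ricci-flat background, and then to prove the two operator inequalities by separate arguments: a pointwise algebraic estimate for $\delta\circ\delta^*\leq\Delta_{H_1}$, and the Wang trick \eqref{eq : LL commutes} for $\delta^*\circ\delta\leq\Delta_L$.

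For the commutation relations, I would start from the Weitzenböck formulas, which on our Ricci-flat background simplify to $\Delta_{H_1}\omega=\nabla^*\nabla\omega$ and $(\Delta_L h)_{ij}=(\nabla^*\nabla h)_{ij}-2R_{ikjl}h^{kl}$. Expanding $\Delta_L\circ\delta^*-\delta^*\circ\Delta_{H_1}$ in a local orthonormal frame and commuting covariant derivatives produces a finite collection of curvature terms acting on $\omega$ and its first derivatives. The second Bianchi identity, combined with Ricci-flatness, forces these terms to cancel; this is a classical Einstein-manifold computation (cf.\ \cite{Besse07}). The dual identity $\delta\circ\Delta_L=\Delta_{H_1}\circ\delta$ follows by taking formal adjoints of both sides.

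For $\delta\circ\delta^*\leq\Delta_{H_1}$, my plan is to prove the pointwise bound $|\delta^*\omega|^2\leq|\nabla\omega|^2$. Viewing $\nabla\omega$ as a square matrix with entries $(\nabla\omega)_{ij}=\nabla_i\omega_j$ and expanding from $(\delta^*\omega)_{ij}=\frac{1}{2}(\nabla_i\omega_j+\nabla_j\omega_i)$ one gets
\[
|\delta^*\omega|^2=\tfrac{1}{2}|\nabla\omega|^2+\tfrac{1}{2}\mathrm{tr}\bigl((\nabla\omega)^2\bigr).
\]
The algebraic identity $\mathrm{tr}(A^2)\leq|A|^2$, obtained by decomposing $A$ into its symmetric and antisymmetric parts, then gives the pointwise bound. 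Integrating and using $\langle\Delta_{H_1}\omega,\omega\rangle_{L^2}=\|\nabla\omega\|^2_{L^2}$ on Ricci-flat manifolds yields $\langle\delta\delta^*\omega,\omega\rangle=\|\delta^*\omega\|_{L^2}^2\leq\langle\Delta_{H_1}\omega,\omega\rangle_{L^2}$, which is the claimed operator inequality.

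The hard part will be $\delta^*\circ\delta\leq\Delta_L$: no analogous pointwise bound holds and the curvature term in the Weitzenböck formula for $\Delta_L$ obstructs a naive integration by parts. I would exploit the Wang trick \eqref{eq : LL commutes}: the parallel isometric embedding $\Phi(h)=\sum_{i,j}h_{ij}e_i\cdot\sigma\otimes e_j^*$, with $\sigma$ a unit parallel spinor, intertwines $\Delta_L$ with $(\D_{T^*M})^2$. Since $\sigma\otimes T^*M\subset S\otimes T^*M$ is a parallel subbundle isometric to $T^*M$, the orthogonal projection $\mathrm{pr}$ onto it is non-expansive. A direct computation using the Clifford relation $e_ae_i+e_ie_a=-2\delta_{ai}$, the skew-adjointness of Clifford multiplication and $|\sigma|=1$ yields the identity $\langle e_a\cdot e_i\cdot\sigma,\sigma\rangle=-\delta_{ai}$, from which one obtains
\[
\mathrm{pr}\bigl(\D_{T^*M}\Phi(h)\bigr)=\sigma\otimes\delta h.
\]
Combining non-expansivity of $\mathrm{pr}$ with the isometry of $\Phi$ then gives
\[
\|\delta h\|_{L^2}^2=\|\sigma\otimes\delta h\|_{L^2}^2\leq\|\D_{T^*M}\Phi(h)\|_{L^2}^2=\langle(\D_{T^*M})^2\Phi(h),\Phi(h)\rangle_{L^2}=\langle\Delta_L h,h\rangle_{L^2},
\]
which is equivalent to $\delta^*\circ\delta\leq\Delta_L$.
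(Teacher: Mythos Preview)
Your argument is essentially correct but follows a genuinely different route from the paper, and there is one small technical imprecision worth flagging.

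\textbf{Comparison with the paper.} For the commutation identities, the paper does not compute in local frames: it linearizes the diffeomorphism invariance $\Ric_{\Phi^*g}=\Phi^*\Ric_g$ at $g$, uses $\delta^*\omega=\tfrac12\mathcal L_{\omega^\sharp}g$ and the formula $D\Ric=\tfrac12\Delta_L-\delta^*\delta G$ to obtain $\Delta_L\delta^*=\delta^*\Delta_{H_1}$ in one line. Your Weitzenb\"ock route is of course valid, just longer. For the two inequalities the paper again proceeds structurally: it uses the $L^2$-orthogonal Hodge-type splittings $T^*M=\overline{\nabla C^\infty}\oplus\ker\delta$ and $S^2M=\overline{\delta^*\nabla C^\infty}\oplus\overline{\delta^*(\ker\delta)}\oplus\ker\delta$, checks that $\delta\delta^*$, $\delta^*\delta$, $\Delta_{H_1}$, $\Delta_L$ all respect these splittings, and reads off the \emph{exact} block-diagonal forms $\delta\delta^*=\mathrm{diag}(\Delta_{H_1},\tfrac12\Delta_{H_1})$ and $\delta^*\delta=\mathrm{diag}(\tfrac12\Delta_L,\tfrac12\Delta_L,0)$, from which the inequalities are immediate. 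Your approach trades this structural information for shorter, more hands-on estimates: the pointwise bound $|\delta^*\omega|^2\le|\nabla\omega|^2$ is neat and entirely elementary, and your use of the Wang embedding for $\delta^*\delta\le\Delta_L$ is a nice alternative that avoids decomposing $S^2M$ at the cost of invoking the parallel spinor. Either approach suffices for the applications in this section.

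\textbf{One imprecision.} The identity $\langle e_a\cdot e_i\cdot\sigma,\sigma\rangle=-\delta_{ai}$ is not quite right on a complex spinor bundle: skew-adjointness and the Clifford relation only give that this quantity equals $-1$ for $a=i$ and is \emph{purely imaginary} for $a\neq i$ (think of a Calabi--Yau parallel spinor, for which the K\"ahler form acts by a nonzero imaginary scalar). What is true is $\Re\langle e_a e_i\sigma,\sigma\rangle=-\delta_{ai}$. If your projection $\mathrm{pr}$ is onto the \emph{real} subbundle $\sigma\otimes T^*M$, this is exactly what you need and your formula $\mathrm{pr}(\D_{T^*M}\Phi(h))=\sigma\otimes\delta h$ holds. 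If instead you project onto the complex line $\C\sigma\otimes T^*_\C M$, the projection picks up an additional purely imaginary one-form, but this only \emph{increases} $\|\mathrm{pr}(\D_{T^*M}\Phi(h))\|_{L^2}^2$, so your chain of inequalities still yields $\|\delta h\|_{L^2}^2\le\langle\Delta_L h,h\rangle_{L^2}$. Either way the conclusion stands; just make the real-versus-complex distinction explicit.
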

\begin{proof}
Recall that $0 = \Phi^*\Ric_{g} = \Ric_{\Phi^*g}$ for any diffeomorphism $\Phi$.
Linearizing this equation at $g$, and noticing that $\de^*\o = \frac12 \L_{\o^\sharp}g$, we get
\begin{align*}
	0 
		&= D\Ric(\de^*\o) = \frac{1}{2}\Delta_L (\de^*\o) - \de^*(\de + \frac{1}{2}\n\tr)\de^*\o = \frac{1}{2}\Delta_L (\de^*\o) - \frac{1}{2}\de^*\Delta_{H_1}\o,
\end{align*}
for all one-forms $\o$, where we have used that 
\[
	\Delta_{H_1} = \left(2\de +  \n \tr \right) \de^*,
\]
which uses $\Ric = 0$.
This proves the first assertion, the second assertion follows by the equality of the formal adjoints.
Using the last equation for $\Delta_{H_1}$, note that 
\begin{align*}
\delta\circ\delta^*\nabla f
	&=\frac{1}{2}\Delta_{H_1} \nabla f - \frac12 \n \tr\left(\de^* \n f\right) = \Delta_{H_1} \nabla f,\qquad f\in C^{\infty}(M),\\
\delta\circ\delta^*\omega
	&=\frac12\Delta_{H_1}\omega\in\ker(\delta),\qquad \omega\in C^{\infty}(T^*M)\cap\ker(\delta).
\end{align*}
Thus, $\delta\circ\delta^*$ preserves the $L^2$-orthogonal decomposition
\begin{align*}
	H^k(M,T^*M)
		=\overline{\nabla(H^{k+1}(M))}^{H^k}\oplus \ker_{H^k}(\delta),
\end{align*}
with respect to which we have $\delta\circ \delta^*=\mathrm{diag}(\Delta_{H_1},\frac12\Delta_{H_1})$. In particular, $\delta\circ \delta^*\leq \Delta_{H_1}$. For the second inequality, we use the formulas from above to compute
\begin{align*}
	\delta^* \circ \delta(\delta^*\nabla f)
		&=\frac12 \delta^*(\Delta_{H_1}\nabla f) = \frac12 \Delta_L(\delta^*\nabla f),\qquad f\in C^{\infty}(M),\\
	\delta^* \circ \delta(\delta^*\omega)
		&=\frac{1}{2}\delta^*\circ\Delta_{H_1}\omega=\frac{1}{2}\Delta_L(\delta^*\omega), \qquad \omega\in C^{\infty}(T^*M)\cap\ker(\delta).
\end{align*}
These equations show that $\Delta_L$ and $\delta^*\circ\delta$ both preserve the splitting
\begin{align*}
	H^k(M,S^2M)
		=\overline{\delta^*\nabla(H^{k+2}(M))}^{H^k}\oplus \overline{\delta^*(\ker_{H^{k+1}}(\delta))}^{H^k}\oplus \ker_{H^k}(\delta).
\end{align*}
The splitting is $L^2$-orthogonal for the following reason: The first two spaces are both orthogonal to the third one. 
Using integration by parts and the fact that $\delta\circ\delta^*$ preserves the above splitting of $H^k(T^*M)$, one easily sees that the first two factors are also orthogonal to each other.
Because we see that $\delta^*\delta=\mathrm{diag}(\frac12 \Delta_L,\frac12\Delta_L,0)$ with respect to this decomposition, we get the desired inequality.
\end{proof}
\begin{thm}On a Ricci-flat ALE spin manifold with a parallel spinor, we have, for all $p\in (1,\infty)$,
\begin{align*}
\norm{DV\circ e^{-t\Delta_L}}_{p \to p}\leq Ct^{-\frac{1}{2}},\qquad
\norm{D\mathcal{L}_V\circ e^{-t\Delta_L}}_{p \to p}\leq Ct^{-1},\qquad
\norm{D\Ric\circ e^{-t\Delta_L}}_{p \to p}\leq Ct^{-1}.
\end{align*}
\end{thm}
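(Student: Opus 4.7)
The plan is to reduce all three estimates to iterated applications of Theorem \ref{thm_special_derivative_estimates}. First I will verify that both $\Delta_L$ and $\Delta_{H_1}$ satisfy the hypotheses of Theorem \ref{thm: main heat kernel estimate}: for $\Delta_{H_1}$ this follows from Corollary \ref{cor : heat flow one forms}(i), since $\mathcal{H}_1(M)=0$ was noted at the start of this section; for $\Delta_L$ it follows from Theorem \ref{thm: heat flow LL} (via Wang's isomorphism \eqref{eq : LL commutes} and Proposition \ref{prop: kernel twisted dirac}). Lemma \ref{lem: commutators} then provides exactly the intertwining $\delta\circ\Delta_L = \Delta_{H_1}\circ\delta$ together with the bounds $\delta^*\delta \leq \Delta_L$ and $\delta\delta^* \leq \Delta_{H_1}$ required by Theorem \ref{thm_special_derivative_estimates}. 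Applying that theorem with $(P,\Delta_V,\Delta_W)=(\delta,\Delta_L,\Delta_{H_1})$, and by Remark \ref{rmk: commutator W} also with $(P,\Delta_V,\Delta_W)=(\delta^*,\Delta_{H_1},\Delta_L)$, I obtain
\[
\norm{\delta\circ e^{-t\Delta_L}}_{p\to p}\leq Ct^{-1/2},\qquad
\norm{\delta^*\circ e^{-t\Delta_{H_1}}}_{p\to p}\leq Ct^{-1/2}.
\]

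Next I handle the gravitational operator $G$ by a commutation argument. Since $G$ is a pointwise endomorphism of $S^2M$, it is bounded on every $L^p$. Because the manifold is Ricci-flat, a short computation gives $\tr(\Delta_L h)=\Delta(\tr h)$ and $\Delta_L(fg)=(\Delta f)g$, which together imply $[G,\Delta_L]=0$ and hence $G\circ e^{-t\Delta_L}=e^{-t\Delta_L}\circ G$. Combining with the first estimate above and $DV=-\sharp\circ\delta\circ G$, the bound on $DV$ follows directly:
\[
\norm{DV\circ e^{-t\Delta_L}}_{p\to p}
\leq C\,\norm{\delta\circ e^{-t\Delta_L}}_{p\to p}\norm{G}_{p\to p}
\leq Ct^{-1/2}.
\]

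For the $t^{-1}$-estimates I split $e^{-t\Delta_L}=e^{-(t/2)\Delta_L}\circ e^{-(t/2)\Delta_L}$, commute $G$ to the right, and migrate $\delta$ across one semigroup factor using the intertwining $\delta\circ e^{-(t/2)\Delta_L}=e^{-(t/2)\Delta_{H_1}}\circ\delta$:
\[
D\mathcal{L}_V\circ e^{-t\Delta_L}
=-2\,\bigl(\delta^*\circ e^{-(t/2)\Delta_{H_1}}\bigr)\circ\bigl(\delta\circ e^{-(t/2)\Delta_L}\bigr)\circ G.
\]
Both parenthesised factors are bounded in $L^p\to L^p$ by $Ct^{-1/2}$, which yields $\norm{D\mathcal{L}_V\circ e^{-t\Delta_L}}_{p\to p}\leq Ct^{-1}$. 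Since $D\Ric=\tfrac12(\Delta_L+D\mathcal{L}_V)$, it only remains to bound $\Delta_L\circ e^{-t\Delta_L}$ on $L^p$; this follows from \eqref{eq : LL commutes}, which transfers the estimate to $\D_{T^*M}^2\circ e^{-t\D_{T^*M}^2}$, and then from Lemma \ref{lem : higher derivatives} applied to the twisted Dirac operator with $k=2$.

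The main points of care, rather than deep obstacles, are the verification that $[G,\Delta_L]=0$ in the Ricci-flat setting (which is a calculation using that $\Delta_L$ is compatible with the trace and with multiplication by $g$ when $\Ric=0$), and checking that both $\Delta_L$ and $\Delta_{H_1}$ meet the standing assumptions of Theorem \ref{thm_special_derivative_estimates}. Once these are in place, the estimates for $DV$, $D\mathcal{L}_V$ and $D\Ric$ each reduce to a one- or two-fold application of Theorem \ref{thm_special_derivative_estimates}.
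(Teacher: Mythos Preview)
Your proposal is correct and follows essentially the same route as the paper's proof: commute $G$ through $e^{-t\Delta_L}$, apply Theorem \ref{thm_special_derivative_estimates} with $P=\delta$ (and $P=\delta^*$) using Lemma \ref{lem: commutators}, and for the $\Delta_L\circ e^{-t\Delta_L}$ piece transfer via \eqref{eq : LL commutes} to the twisted Dirac operator. The only cosmetic difference is that you invoke Lemma \ref{lem : higher derivatives} with $k=2$ for the last step, whereas the paper spells out the factorisation $(\D_{T^*M}\circ e^{-(t/2)(\D_{T^*M})^2})^2$ and applies Theorem \ref{thm_special_derivative_estimates} directly---but that is exactly how Lemma \ref{lem : higher derivatives} is proved.
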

\begin{proof}
Because the endomorphism, $G\in C^{\infty}(M,\mathrm{End}(S^2M))$ commutes with $\Delta_L$, we have
\begin{align*}
DV\circ e^{-t\Delta_L}=\sharp\circ \delta\circ G\circ e^{-t\Delta_L}=\sharp\circ\delta\circ  e^{-t\Delta_L}\circ G.
\end{align*}
Because  of Lemma \ref{lem: commutators}, we can apply Theorem \ref{thm_special_derivative_estimates} to get\begin{align*}
\left\|DV\circ e^{-t\Delta_L}\right\|_{p \to p}\leq C\left\|\delta\circ  e^{-t\Delta_L}\right\|_{p \to p}\leq Ct^{-\frac{1}{2}},
\end{align*}
where we used that $\sharp$ and $G$ are both pointwise bounded, hence bounded on $L^p$.
For the proof of the second inequality, we write
\begin{align*}
D\mathcal{L}_V\circ e^{-t\Delta_L}=-2\delta^*\circ\delta\circ G\circ e^{-t\Delta_L}=-2\delta^*\circ e^{-\frac{t}{2}\Delta_{H_1}}\circ \delta\circ e^{-\frac{t}{2}\Delta_L}\circ G.
\end{align*}
Again due to Lemma \ref{lem: commutators}, Theorem \ref{thm_special_derivative_estimates} and boundedness of $G$, we get
\begin{align*}
\norm{D\mathcal{L}_V\circ e^{-t\Delta_L}}_{p \to p}\leq C\norm{\delta^*\circ e^{-\frac{t}{2}\Delta_{H_1}}}_{p \to p}\cdot \norm{\delta\circ e^{-\frac{t}{2}\Delta_L}}_{p \to p}
\leq C t^{-1}.
\end{align*}
For the proof of the last estimate,
we first use \eqref{eq : LL commutes} and \eqref{eq: hk LL commutes}
to get
\[
	\Phi\circ\Delta_L\circ e^{-t\Delta_L}
		=(\D_{T^*M})^2\circ e^{-t(\D_{T^*M})^2}\circ \Phi
		=\left(\D_{T^*M}\circ e^{-\frac{t}{2}(\D_{T^*M})^2}\right)^2\circ\Phi.
\]
Because the map $\Phi:C^{\infty}(M,S^2M)\to C^{\infty}(M,S\otimes T^*M)$ is an isometric embedding, Theorem \ref{thm_special_derivative_estimates} yields
\begin{align*}
\norm{\Delta_L\circ e^{-t\Delta_L}}_{p \to p}
\leq \norm{ \D_{T^*M}\circ e^{-\frac{t}{2}(\D_{T^*M})^2}}_{p \to p}^2\leq C t^{-1}.
\end{align*}
Because $2D\Ric=-\Delta_L+D\mathcal{L}_V$,
\begin{align*}
\norm{D\Ric\circ e^{-t\Delta_L}}_{p \to p}
\leq \frac{1}{2}\left(
\norm{D\mathcal{L}_V\circ e^{-t\Delta_L}}_{p \to p}+\norm{\Delta_L\circ e^{-t\Delta_L}}_{p \to p}\right)\leq Ct^{-1},
\end{align*}
which finishes the proof.
\end{proof}

\subsection{The Hodge Laplacian on one-forms revisited}\label{subsec : better 1-forms}
In this final subsection, we are proving Corollary \ref{cor : heat flow one forms} (iii) which is the following theorem: 
\begin{thm}\label{thm: heat flow HL}
On a Ricci-flat ALE spin manifold with a parallel spinor, the heat flow of the Hodge Laplacian on one-forms satisfies  derivative estimates of degree $1$.
\end{thm}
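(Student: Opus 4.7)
The plan is to pull back the degree-$1$ derivative estimates for $e^{-t\D_S^2}$, proved in the previous subsection on the classical Dirac operator, to $e^{-t\Delta_{H_1}}$ via a parallel isometric embedding $T^*M\hookrightarrow S$ supplied by the parallel spinor. I would fix a parallel spinor $\sigma$ with $|\sigma|=1$ and define
\[
\iota\colon T^*_\C M\to S,\qquad \iota(\omega):=\omega\cdot\sigma.
\]
Since Clifford multiplication is a parallel bundle morphism and $\nabla\sigma=0$, one immediately has $\nabla(\iota\omega)=\iota(\nabla\omega)$, so $\iota$ is parallel. Moreover $\iota$ is a fiberwise isometry: combining the anti-self-adjointness of Clifford multiplication by a real $1$-form with the identity $\omega\cdot\omega\cdot=-|\omega|^2\,\mathrm{id}$ gives $|\iota\omega|=|\omega|$.

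Because $(M,g)$ is Ricci-flat and scalar-flat, the two Weitzenböck formulas collapse to
\[
\Delta_{H_1}=\nabla^*\nabla\text{ on }T^*M,\qquad \D_S^2=\nabla^*\nabla\text{ on }S.
\]
The image $\iota(T^*M)\subset S$ is a parallel subbundle, hence preserved by $\nabla^*\nabla$, and on it $\nabla^*\nabla$ is conjugate via $\iota$ to $\Delta_{H_1}$. I therefore expect
\[
\D_S^2\circ\iota=\iota\circ\Delta_{H_1}\qquad\text{and hence}\qquad e^{-t\D_S^2}\circ\iota=\iota\circ e^{-t\Delta_{H_1}}.
\]
Tensoring with the identity on $T^{*k}M$ produces a parallel isometric embedding $T^*M\otimes T^{*k}M\hookrightarrow S\otimes T^{*k}M$, under which $\nabla^k$ commutes with $\iota$ and preserves fiber norms, and in particular $L^p$-norms. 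Consequently
\[
\|\nabla^k(e^{-t\Delta_{H_1}}\omega)\|_{L^p}=\|\nabla^k e^{-t\D_S^2}(\iota\omega)\|_{L^p},\qquad \|\iota\omega\|_{L^p}=\|\omega\|_{L^p}.
\]

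All that remains is to dispose of the orthogonality conditions in Definition \ref{def : Derivative estimates}: from the preamble of this section we already have $\mathcal H_1(M)=\ker_{L^2}(\Delta_{H_1})=\{0\}$, while any $L^2$ element of $\ker(\D_S^2)=\ker(\nabla^*\nabla)$ is parallel, hence of constant length, and therefore vanishes on the infinite-volume manifold $M$, so $\ker_{L^2}(\D_S^2)=\{0\}$ as well. Feeding the degree-$1$ estimates for $e^{-t\D_S^2}$ into the displayed identity then yields the required bounds for every admissible pair $(k,p)$. I do not foresee any genuine analytic obstacle; the whole argument is an instance of pushing a previously proved heat-kernel estimate along a parallel isometric bundle embedding, made available by the simultaneous vanishing of both Weitzenböck remainders in the Ricci-flat parallel-spinor setting.
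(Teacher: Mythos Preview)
Your argument is correct, but it takes a different route from the paper. The paper does not embed $T^*M$ into the spinor bundle. Instead it decomposes the covariant derivative on one-forms as $\nabla\omega=(\delta^*\omega,d\omega)$ under $T^*M\otimes T^*M\cong S^2M\oplus\Lambda^2M$, checks the commutation relations $\delta^*\circ\Delta_{H_1}=\Delta_L\circ\delta^*$ and $d\circ\Delta_{H_1}=\Delta_{H_2}\circ d$ together with the quadratic inequalities $\delta\delta^*\le\Delta_{H_1}$, $\delta^*\delta\le\Delta_L$, $d^*d\le\Delta_{H_1}$, $dd^*\le\Delta_{H_2}$, and then invokes Proposition~\ref{prop : better derivative estimates 0 introduction} with the degree-$0$ estimates for $\Delta_H$ (Theorem~\ref{thm : heat flow Hodge Laplace}) and $\Delta_L$ (Theorem~\ref{thm: heat flow LL}).

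Your approach is arguably more economical: once the degree-$1$ result for $e^{-t\D_S^2}$ is in hand, the parallel isometric embedding $\omega\mapsto\omega\cdot\sigma$ pulls everything back in one stroke, with both $L^2$-kernels trivial so no orthogonality bookkeeping is needed. The paper's route, on the other hand, makes the mechanism more transparent (one sees explicitly that the first derivative lands in bundles where degree-$0$ estimates are available) and stays within the framework of Proposition~\ref{prop : better derivative estimates 0 introduction} rather than transporting estimates across a bundle map. Both depend essentially on the parallel spinor: yours through the Clifford embedding into $S$, the paper's through the Wang identification hidden inside Theorem~\ref{thm: heat flow LL}.
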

\begin{proof}
Via the canonical splitting $T^2M=S^2M \oplus  \Lambda^2M$ of $2$ tensors into the symmetric and antisymmetric part, the covariant derivative $\nabla: C^{\infty}(M,T^*M)\to C^{\infty}(M,T^2M)$ splits as
\begin{align*}
\nabla\omega\cong (\delta^*\omega,d\omega)
\end{align*}
and we have 
\begin{align*}\delta^*\circ\Delta_{H_1}=\Delta_L\circ\delta^*\qquad d\circ \Delta_{H_1}=\Delta_{H_2}\circ d
\end{align*}
and
\begin{align*}
\delta\circ\delta^*\leq \Delta_{H_1},\qquad \delta^*\circ\delta\leq \Delta_{L},\qquad d^*\circ d\leq \Delta_{H_1},\qquad d\circ d^*\leq \Delta_{H_2}.
\end{align*}
The result now follows from Proposition \ref{prop : better derivative estimates 0 introduction}, Theorem \ref{thm : heat flow Hodge Laplace} and Theorem \ref{thm: heat flow LL}.
\end{proof}

\begin{bibdiv}
\begin{biblist}

\bib{ACDH}{article}{
  title={Riesz transform on manifolds and heat kernel regularity},
  author={Auscher, P.},
  author={Coulhon, T.},
  author={Duong, X.\ T.},
  author={Hofmann, S.},
   journal={Ann. Sci. \'{E}cole Norm. Sup. (4)},
   volume={37},
   date={2004},
   number={6},
   pages={911--957},
}

\bib{ABR01}{book}{
   author={Axler, S.},
   author={Bourdon, P.},
   author={Ramey, W.},
   title={Harmonic function theory},
   series={Graduate Texts in Mathematics},
   volume={137},
   edition={2},
   publisher={Springer-Verlag, New York},
   date={2001},
   pages={xii+259},
}

\bib{BKN89}{article}{
   author={Bando, S.},
   author={Kasue, A.},
   author={Nakajima, H.},
   title={On a construction of coordinates at infinity on manifolds with
   fast curvature decay and maximal volume growth},
   journal={Invent. Math.},
   volume={97},
   date={1989},
   number={2},
   pages={313--349},
}

\bib{Bartnik1986}{article}{
   author={Bartnik, R.},
   title={The mass of an asymptotically flat manifold},
   journal={Comm.\ Pure Appl.\ Math.},
   volume={39},
   date={1986},
   number={5},
   pages={661--693},
}

\bib{BGM71}{book}{
   author={Berger, M.},
   author={Gauduchon, P.},
   author={Mazet, E.},
   title={Le spectre d'une vari\'{e}t\'{e} riemannienne},
   language={French},
   series={Lecture Notes in Mathematics, Vol. 194},
   publisher={Springer-Verlag, Berlin-New York},
   date={1971},
   pages={vii+251},
}

\bib{Besse07}{book}{
   author={Besse, A.\ L.},
   title={Einstein manifolds},
   series={Classics in Mathematics},
   note={Reprint of the 1987 edition},
   publisher={Springer-Verlag, Berlin},
   date={2008},
   pages={xii+516},
}


\bib{CCH06}{article}{
   author={Carron, G.},
   author={Coulhon, T.},
   author={Hassell, A.},
   title={Riesz transform and $L^p$-cohomology for manifolds with Euclidean
   ends},
   journal={Duke Math. J.},
   volume={133},
   date={2006},
   number={1},
   pages={59--93},
}

\bib{Chen20}{article}{
   author={Chen, Y.},
   title={On Expansions of Ricci Flat ALE Metrics in Harmonic Coordinates
   About the Infinity},
   journal={Commun. Math. Stat.},
   volume={8},
   date={2020},
   number={1},
   pages={63--90},
}

\bib{Cou03}{article}{
   author={Coulhon, T.},
   title={Heat kernel and isoperimetry on non-compact Riemannian manifolds},
   conference={
      title={Heat kernels and analysis on manifolds, graphs, and metric
      spaces},
      address={Paris},
      date={2002},
   },
   book={
      series={Contemp. Math.},
      volume={338},
      publisher={Amer. Math. Soc., Providence, RI},
   },
   date={2003},
   pages={65--99},
}

\bib{CDS16}{article}{
	author={Coulhon, T.},
	author={Devyver, B.},
	author={Sikora, A.},
   title={Gaussian heat kernel estimates: From functions to forms},
   journal={J. Reine Angew. Math.},
   volume={761},
   date={2020},
   pages={25--79},
}

\bib{CD03}{article}{
  author={Coulhon, T.},
  author={Duong, X.\ T.},
   title={Riesz transform and related inequalities on noncompact Riemannian
   manifolds},
   journal={Comm. Pure Appl. Math.},
   volume={56},
   date={2003},
   number={12},
   pages={1728--1751},
}

\bib{CS08}{article}{
   author={Coulhon, T.},
   author={Sikora, A.},
   title={Gaussian heat kernel upper bounds via the Phragm\'{e}n-Lindel\"{o}f
   theorem},
   journal={Proc. Lond. Math. Soc. (3)},
   volume={96},
   date={2008},
   number={2},
   pages={507--544},
}

\bib{CZ07}{article}{
   author={Coulhon, T.},
   author={Zhang, Q.\ S.},
   title={Large time behavior of heat kernels on forms},
   journal={J. Differential Geom.},
   volume={77},
   date={2007},
   number={3},
   pages={353--384},
}

\bib{DWW05}{article}{
	author={Dai, X.},
	author={Wang, X.},
	author={Wei, G.},
   title={On the stability of Riemannian manifold with parallel spinors},
   journal={Invent. Math.},
   volume={161},
   date={2005},
   number={1},
   pages={151--176},
}

\bib{D92}{article}{
   author={Davies, E.\ B.},
   title={Heat kernel bounds, conservation of probability and the Feller
   property},
   journal={J. Anal. Math.},
   volume={58},
   date={1992},
   pages={99--119},
}

\bib{DK17}{article}{
   author={Deruelle, A.},
   author={Kröncke, K.},
   title={Stability of ALE Ricci-flat manifolds under Ricci flow},
    journal={J. Geom. Anal.},
   volume={31},
   date={2021},
   number={3},
   pages={2829--2870},
}

\bib{Devyver2014}{article}{
   author={Devyver, B.},
   title={A Gaussian estimate for the heat kernel on differential forms and
   application to the Riesz transform},
   journal={Math. Ann.},
   volume={358},
   date={2014},
   number={1-2},
   pages={25--68},
}

\bib{Devyver2018}{article}{
	author={Devyver, B.},
	title={On the gradient estimates for heat kernels},
	 journal={Commun. Partial Differ. Eqn.},
   volume={46},
   date={2021},
   number={5},
   pages={717--779},

}

\bib{FFRS}{article}{
	AUTHOR = {Fiorenza, A.},
	AUTHOR = {Formica, M.\ R.},
	AUTHOR = {Roskovec, T.\ G.},
	AUTHOR = {Soudsk\'{y}, F.},
	TITLE = {Detailed proof of classical {G}agliardo-{N}irenberg interpolation inequality with historical remarks},
	JOURNAL = {Z. Anal. Anwend.},
	VOLUME = {40},
	YEAR = {2021},
	NUMBER = {2},
	PAGES = {217--236},
	ISSN = {0232-2064},
}

\bib{Gin09}{book}{
   author={Ginoux, N.},
   title={The Dirac spectrum},
   series={Lecture Notes in Mathematics},
   volume={1976},
   publisher={Springer-Verlag, Berlin},
   date={2009},
   pages={xvi+156},
}

\bib{Gri99}{article}{
   author={Grigor\cprime yan, A.},
   title={Analytic and geometric background of recurrence and non-explosion
   of the Brownian motion on Riemannian manifolds},
   journal={Bull. Amer. Math. Soc. (N.S.)},
   volume={36},
   date={1999},
   number={2},
   pages={135--249},
}

\bib{GH08}{article}{
   author={Guillarmou, C.},
   author={Hassell, A.},
   title={Resolvent at low energy and Riesz transform for Schr\"{o}dinger
   operators on asymptotically conic manifolds. I},
   journal={Math. Ann.},
   volume={341},
   date={2008},
   number={4},
   pages={859--896},
}

\bib{GH09}{article}{
   author={Guillarmou, C.},
   author={Hassell, A.},
   title={Resolvent at low energy and Riesz transform for Schr\"{o}dinger
   operators on asymptotically conic manifolds. II},
   journal={Ann. Inst. Fourier (Grenoble)},
   volume={59},
   date={2009},
   number={4},
}

\bib{GS15}{article}{
	title={Low energy resolvent for the Hodge Laplacian: applications to Riesz transform, Sobolev estimates, and analytic torsion},
	author={Guillarmou, C. and Sher, D.\ A.},
	journal={Int. Math. Res. Not.},
	volume={2015},
	number={15},
	pages={6136--6210},
	year={2015},
	publisher={Oxford University Press}
}

\bib{HS19}{article}{
   author={Homma, Y.},
   author={Semmelmann, U.},
   title={The kernel of the Rarita-Schwinger operator on Riemannian spin
   manifolds},
   journal={Comm. Math. Phys.},
   volume={370},
   date={2019},
   number={3},
   pages={853--871},
}

\bib{Joy99}{article}{
   author={Joyce, D.},
   title={A new construction of compact 8-manifolds with holonomy ${\rm
   Spin}(7)$},
   journal={J. Differential Geom.},
   volume={53},
   date={1999},
   number={1},
}

\bib{Joy00}{book}{
   author={Joyce, D.},
   title={Compact manifolds with special holonomy},
   series={Oxford Mathematical Monographs},
   publisher={Oxford University Press, Oxford},
   date={2000},
   pages={xii+436},
}

\bib{Joy01}{article}{
   author={Joyce, D.},
   title={Asymptotically locally Euclidean metrics with holonomy ${\rm
   SU}(m)$},
   journal={Ann. Glob. Anal. Geom.},
   volume={19},
   date={2001},
   number={1},
   pages={55--73},
}

\bib{Kro89}{article}{
   author={Kronheimer, P.\ B.},
   title={The construction of ALE spaces as hyper-K\"{a}hler quotients},
   journal={J. Differential Geom.},
   volume={29},
   date={1989},
   number={3},
   pages={665--683},
}

\bib{Leo17}{book}{,
	AUTHOR = {Leoni, G.},
	TITLE = {A first course in {S}obolev spaces},
	SERIES = {Graduate Studies in Mathematics},
	VOLUME = {181},
	EDITION = {Second},
	PUBLISHER = {American Mathematical Society, Providence, RI},
	YEAR = {2017},
	PAGES = {xxii+734},
	ISBN = {978-1-4704-2921-8},
}

\bib{MO19}{article}{
   author={Magniez, J.},
   author={Ouhabaz, E.\ M.},
   title={$L^p$-Estimates for the Heat Semigroup on Differential Forms, and
   Related Problems},
   journal={J. Geom. Anal.},
   volume={30},
   date={2020},
   number={3},
   pages={3002--3025},
}

\bib{Obata1962}{article}{
   author={Obata, M.},
   title={Certain conditions for a Riemannian manifold to be isometric with
   a sphere},
   journal={J. Math. Soc. Japan},
   volume={14},
   date={1962},
   pages={333--340},
}

\bib{pacini}{article}{
   author={Pacini, T.},
   title={Desingularizing isolated conical singularities: uniform estimates
   via weighted Sobolev spaces},
   journal={Comm. Anal. Geom.},
   volume={21},
   date={2013},
   number={1},
   pages={105--170},
}

\bib{Sal10}{article}{
   author={Saloff-Coste, L.},
   title={The heat kernel and its estimates},
   conference={
      title={Probabilistic approach to geometry},
   },
   book={
      series={Adv. Stud. Pure Math.},
      volume={57},
      publisher={Math. Soc. Japan, Tokyo},
   },
   date={2010},
   pages={405--436},
}

\bib{S2013}{article}{
   author={Sher, D.\ A.},
   title={The heat kernel on an asymptotically conic manifold},
   journal={Anal. PDE},
   volume={6},
   date={2013},
   number={7},
   pages={1755--1791},
}

\bib{Wang91}{article}{
   author={Wang, M.\ Y.},
   title={Preserving parallel spinors under metric deformations},
   journal={Indiana Univ. Math. J.},
   volume={40},
   date={1991},
   number={3},
   pages={815--844},
}

\end{biblist}
\end{bibdiv}

\end{sloppypar}
\end{document}